\documentclass[10pt]{amsart}

\usepackage{fullpage}

% packages-----------------------------------------
\usepackage{amsmath}
\usepackage{amsfonts}
\usepackage{amssymb}
\usepackage{graphicx}
\usepackage{mathrsfs}
\usepackage{amsthm}
\usepackage{enumerate}
\usepackage{leftidx}
\usepackage{hyperref}
\usepackage{extarrows}
\usepackage[all]{xy}
\usepackage{stmaryrd}
\usepackage{wasysym}
\usepackage[OT2,T1]{fontenc}
\usepackage{epstopdf}
\usepackage {hyperref}
%\hypersetup{
 %   bookmarks=true,         % show bookmarks bar?
  %  colorlinks=false,       % false: boxed links; true: colored links
    %linkcolor=red,          % color of internal links (change box color with linkbordercolor)
   % citecolor=green,        % color of links to bibliography
    %filecolor=magenta,      % color of file links
   % urlcolor=cyan           % color of external links
%}
% ----------------------------------------------------------------
\vfuzz2pt % Don't report over-full v-boxes if over-edge is small
\hfuzz2pt % Don't report over-full h-boxes if over-edge is small
%-----------------------------------------------------------------
% \topmargin 0mm
 % \oddsidemargin 8mm
 % \evensidemargin 8mm
 % \textwidth 145mm
 % \textheight 214mm

\setlength{\itemsep}{0pt}
 \setcounter{totalnumber}{3}
 \setcounter{topnumber}{1}
 \setcounter{bottomnumber}{3}
 \setcounter{secnumdepth}{3}

% THEOREMS -------------------------------------------------------
\numberwithin{equation}{subsection}
\newtheorem{thm}[subsubsection]{Theorem}
\newtheorem*{thm*}{Theorem}

\newtheorem{cor}[subsubsection]{Corollary}
\newtheorem{lem}[subsubsection]{Lemma}
\newtheorem{prop}[subsubsection]{Proposition}
\theoremstyle{definition}
\newtheorem{defn}[subsubsection]{Definition}
\newtheorem{ass}[subsubsection]{Assumption}
\theoremstyle{remark}
\newtheorem{rem}[subsubsection]{Remark}
\newtheorem{eg}[subsubsection]{Example}

% Adding parathenses to the counter "equation" ---------------------------
%\renewcommand{\theequation}{  (\arabic{section}.\arabic{subsection}.\arabic{equation})  }
%\makeatletter
%\def\tagform@#1{\maketag@@@{\ignorespaces#1\unskip\@@italiccorr}}
%\makeatother

% Format of (sub)section -------------------------------------------
%\renewcommand{\thesubsection}{\textbf{(\arabic{section}.\arabic{subsection})}}
%\renewcommand{\thesection}{\S\arabic{section}}

% MATH -----------------------------------------------------------

\DeclareMathOperator{\ad}{ad}

\DeclareMathOperator{\der}{der}

\DeclareMathOperator{\et}{\acute{e}t }

\DeclareMathOperator{\Gal}{Gal}

\DeclareMathOperator{\Sh}{Sh}

%Groups

\DeclareMathOperator{\ord}{ord}

%-------------------------------------------------------
\newcommand{\C}{\ensuremath{\mathbb{C}}}
\newcommand{\Z}{\ensuremath{\mathbb{Z}}}

\newcommand{\A}{\ensuremath{\mathbb{A}}}

\newcommand{\Fpbar}{\ensuremath{\overline{\mathbb{F}}_p}}

\newcommand{\Q}{\ensuremath{\mathbb{Q}}}

\newcommand{\Ok}{\ensuremath{\mathcal{O}}}

\newcommand{\pdiv}{\ensuremath{\mathscr{G}}}
\newcommand{\Spec}{\ensuremath{\mathrm{Spec}\ }}

\newcommand{\s}{\ensuremath{\tilde{s}}}

\newcommand{\Adm}{\ensuremath{\text{Adm}}}

\newcommand{\fka}{\ensuremath{\mathfrak{a}}}

\newcommand{\fkc}{\ensuremath{\mathfrak{c}}}
\newcommand{\fkd}{\ensuremath{\mathfrak{d}}}

\newcommand{\fkf}{\ensuremath{\mathfrak{f}}}

\newcommand{\fkh}{\ensuremath{\mathfrak{h}}}

\newcommand{\fkl}{\ensuremath{\mathfrak{l}}}
\newcommand{\fkm}{\ensuremath{\mathfrak{m}}}

\newcommand{\fkp}{\ensuremath{\mathfrak{p}}}
\newcommand{\fkq}{\ensuremath{\mathfrak{q}}}

\newcommand{\fks}{\ensuremath{\mathfrak{s}}}

\newcommand{\fkB}{\ensuremath{\mathfrak{B}}}

\newcommand{\fkG}{\ensuremath{\mathfrak{G}}}

\newcommand{\bbA}{\ensuremath{\mathbb{A}}}

\newcommand{\bbC}{\ensuremath{\mathbb{C}}}
\newcommand{\bbD}{\ensuremath{\mathbb{D}}}

\newcommand{\bbF}{\ensuremath{\mathbb{F}}}
\newcommand{\bbG}{\ensuremath{\mathbb{G}}}

\newcommand{\bbP}{\ensuremath{\mathbb{P}}}
\newcommand{\bbQ}{\ensuremath{\mathbb{Q}}}
\newcommand{\bbR}{\ensuremath{\mathbb{R}}}
\newcommand{\bbS}{\ensuremath{\mathbb{S}}}

\newcommand{\bbZ}{\ensuremath{\mathbb{Z}}}

\newcommand{\bfA}{\ensuremath{\mathbf{A}}}
\newcommand{\bfB}{\ensuremath{\mathbf{B}}}

\newcommand{\bfE}{\ensuremath{\mathbf{E}}}

\newcommand{\bfK}{\ensuremath{\mathbf{K}}}
\newcommand{\bfL}{\ensuremath{\mathbf{L}}}

\newcommand{\bfR}{\ensuremath{\mathbf{R}}}
\newcommand{\bfS}{\ensuremath{\mathbf{S}}}
\newcommand{\bfT}{\ensuremath{\mathbf{T}}}

\newcommand{\rmE}{\ensuremath{\mathrm{E}}}
\newcommand{\rmF}{\ensuremath{\mathrm{F}}}

\newcommand{\rmH}{\ensuremath{\mathrm{H}}}

\newcommand{\rmJ}{\ensuremath{\mathrm{J}}}
\newcommand{\rmK}{\ensuremath{\mathrm{K}}}

\newcommand{\rmR}{\ensuremath{\mathrm{R}}}
\newcommand{\rmS}{\ensuremath{\mathrm{S}}}
\newcommand{\rmT}{\ensuremath{\mathrm{T}}}

\newcommand{\scrE}{\ensuremath{\mathscr{E}}}

\newcommand{\scrG}{\ensuremath{\mathscr{G}}}

\newcommand{\scrI}{\ensuremath{\mathscr{I}}}

\newcommand{\scrL}{\ensuremath{\mathscr{L}}}

\newcommand{\scrO}{\ensuremath{\mathscr{O}}}

\newcommand{\scrR}{\ensuremath{\mathscr{R}}}
\newcommand{\scrS}{\ensuremath{\mathscr{S}}}

\newcommand{\scrV}{\ensuremath{\mathscr{V}}}

\newcommand{\calA}{\ensuremath{\mathcal{A}}}

\newcommand{\calD}{\ensuremath{\mathcal{D}}}
\newcommand{\calE}{\ensuremath{\mathcal{E}}}
\newcommand{\calF}{\ensuremath{\mathcal{F}}}
\newcommand{\calG}{\ensuremath{\mathcal{G}}}

\newcommand{\calM}{\ensuremath{\mathcal{M}}}
\newcommand{\calN}{\ensuremath{\mathcal{N}}}
\newcommand{\calO}{\ensuremath{\mathcal{O}}}

\newcommand{\calV}{\ensuremath{\mathcal{V}}}

\newcommand{\calX}{\ensuremath{\mathcal{X}}}

\newcommand{\OKE}{\ensuremath{\mathcal{O}_{\mathbf{E}_\lambda}}}
\newcommand{\kl}{\ensuremath{k_\lambda}}

\newcommand{\rmSh}{\ensuremath{\mathrm{Sh}}}

\newcommand{\sshg}{\ensuremath{\mathscr{S}_{K}(G)}}
\newcommand{\sshG}{\ensuremath{\mathscr{S}_{K}(G_{\rmS,\rmT})}}
\newcommand{\sshGa}{\ensuremath{\mathscr{S}_{K}(G_{\rmS_\fka,\rmT_\fka})}}

\newcommand{\ignore}[1]{{}}
%------------definition of Sha
\DeclareSymbolFont{cyrletters}{OT2}{wncyr}{m}{n}
\setcounter{tocdepth}{1}
% ----------------------------------------------

\usepackage{enumitem}

\setlist[enumerate]{leftmargin=*}
\setlist[itemize]{leftmargin=*}

\linespread{1.2}

\title{Motivic Cohomology of Quaternionic Shimura varieties and level raising}
\author{Rong Zhou}
\email{rzhou@ias.edu} \thanks{}
\address{Institute for Advanced Study, 1 Einstein Drive, Princeton, NJ 08540.}

\subjclass[2010]{11F33, 11F41, 14F42, }
\keywords{Shimura varieties,  motivic cohomology, level raising, Ihara's Lemma}

\date{\today}

\begin{document}
	\begin{abstract}
		We study the motivic cohomology of the special fiber of quaternionic Shimura varieties at a prime of good reduction. We exhibit  classes in these motivic cohomology groups and use this to give an explicit geometric realization of level raising between Hilbert modular forms.  
		The main ingredient for our construction is a form of Ihara's Lemma for compact quaternionic Shimura surfaces which we prove by generalizing a method of Diamond--Taylor. Along the way we also verify the Hecke orbit conjecture for these quaternionic Shimura varieties which is a key input for our proof of Ihara's Lemma.
	\end{abstract}
\maketitle

\tableofcontents

\section{Introduction}
\ignore{\subsection{Motivic cohomology of schemes} Let $X$ be a scheme of finite type over a field $k$. For integers $p,q$, the motivic cohomology groups $\rmH_{\calM}^i(X,\bbZ(j))$ are an important algebraic invariant of $X$. Indeed when $i\geq 0$, the motivic cohomology group $\rmH_{\calM}^{2j}(X,\bbZ(j))$ recovers the usual Chow group of codimension $j$ cycles on $X$ modulo rational equivalence. Many of the most interesting problems number theory and algebraic geometry concern properties of motivic cohomology groups.

 Following work of many authors, it is known that the motivic cohomology groups of $X$ satisfy a lot of the formal properties of a cohomology theory. For example, there is a localization exact sequence for $Z\subset X$ a closed subscheme extending the exact localization sequence for ordinary Chow groups, and if $X$ is smooth and proper, there is a Poincar\'e duality isomorphism. However there are still many things about motivic cohomology we do not understand. For example it is not even known whether the motivic cohomology groups are finitely generated, or whether $\rmH_{\calM}^i(X,\bbZ(j))=0$ for $i<0$. In the cases when these motivic cohomology groups are expected to be non-zero, it is also a difficult problem to produce explicit classes in motivic cohomology. We refer to  \cite{Geisser} for a short survey of the properties, known and conjectural, concerning motivic cohomology groups. 
 
 The aim of this paper  is then two-fold. Firstly we would like to produce many examples where  motivic cohomology groups are non-zero by producing explicit classes in these groups. Moreover the situation we consider is quite a non-trivial one, where $X$ is the special fiber of a certain Shimura variety. Secondly we will use these  classes to exhibit a geometric realization of level raising  for Hilbert modular forms. Indeed this second aim leads to the new observation that the motivic cohomology of the special fiber of Shimura varieties can encode very rich arithmetic information.  We now give some more details about the construction.
 
 When $X$ is a smooth scheme over $k$, the motivic cohomology groups $\rmH_{\calM}^i(X,\bbZ(j))$ are identified with the higher Chow groups $\mathrm{Ch}^j(X,2j-i)$ defined by Bloch \cite{Bloch}. As we have already mentioned, when $2j-i=0$, $\mathrm{Ch}^j(X,0)$ is  non other than the usual Chow group; when $2j-i>0$ one can think of classes in $\mathrm{Ch}^j(X,2j-i)$ as being cycles together with a function on the cycle. We refer to \S\ref{sec: Higher Chow groups} for the precise definition of these groups. From this point of view, classes in motivic cohomology can be identified with higher Chow classes; this is the approach we will use. 
 
 Using the interpretation in terms of higher Chow groups, we can exhibit classes in motivic cohomology by constructing cycles with functions on them satisfying certain properties. However it is difficult to show that the image of the class in motivic cohomology is non-zero as there could be many relations between  classes. For example, for  usual Chow groups, this amounts to understanding rational equivalence between cycles.
 
  Now the motivic cohomology groups are equipped with cycle class maps
\begin{equation}
\label{eq: intro cycle class map}
\mathrm{cyc}_l:\mathrm{Ch}^j(X, 2j-i)\rightarrow\rmH^i_{\mathrm{\acute{e}t}}(X,\bbZ_l(j))\end{equation} where $l$ is a prime coprime to the characteristic of $k$ and $\rmH^i_{\mathrm{\acute{e}t}}(X,\bbZ_l(j))$ is the continuous \'etale cohomology group of \cite{Jann}. Then one way to show the  classes are non-zero is to compute  the image in \'etale cohomology. 
 
 We will consider a variant of this construction using torsion coefficients. Let $k_\lambda$ be a finite extension of $\bbF_l$. Then we may define motivic cohomology and higher Chow groups with $k_\lambda$ coefficients. We write $\rmH_{\calM}^i(X,k_\lambda(j))$ and $\mathrm{Ch}^i(X,2j-i,k_\lambda)$ for these groups, which are isomorphic as before. Then there is a cycle class map  \begin{equation}
 \mathrm{Ch}^j(X,2j-i,k_\lambda)\rightarrow\rmH^i_{\mathrm{\acute{e}t}}(X,k_\lambda(j)).\end{equation} 
The group on the right is the \textit{absolute} \'etale cohomology group (i.e. we do not base change $X$ to the algebraic closure of $k$). Voevodsky's proof of the motivic Bloch--Kato conjecture \cite{Voe1} implies that the map (\ref{eq: intro cycle class map mod l}) is an isomorphism  when $j\geq i$. However when $j<i$,  very little is known; this is the case we will consider.
}
 \subsection{Main Theorem} The aim of this paper is to study the motivic cohomology of the special fiber of certain quaternionic Shimura varieties. For a scheme of finite type over a field, its motivic cohomology groups are a generalization of the usual Chow groups, and the main new observation of this paper is that for certain Shimura varieties, these groups can encode very rich arithmetic information. 
 More precisely, we will show that the cycle class map from motivic cohomology to \'etale cohomology gives a geometric realization of level raising between Hilbert modular forms.
 
 We now state our main result. Let $F$ be a totally real field of even degree $[F:\bbQ]=g$ and $p$ a prime which is \textit{inert} in $F$. Let $B$ be a totally indefinite quaternion algebra over $F$ which is unramified at the unique prime $\fkp$ above $p$ and $G$ the associated reductive group over $\bbQ$. Let $K$ be a sufficiently small compact open subgroup of $G(\bbA_f)$ such that $K=K_pK^p$ where $K_p\subset G(\bbQ_p)=GL_2(F_{\fkp})$ is the standard hyperspecial maximal compact $GL_2(\calO_{F_{\fkp}})$ and $K^p\subset G(\bbA_f^p)$. Then there is a Shimura variety $\rmSh_K(G)$ defined over $\bbQ$; it extends to a smooth integral model $\underline{\rmSh}_K(G)$ over $\bbZ_{(p)}$. We let $\scrS_K(G)$ denote its special fiber over $\bbF_p$ and $\scrS_K(G)_{\bbF_{p^g}}$ its base change to $\bbF_{p^g}$.
 
  Fix an irreducible cuspidal automorphic representation $\Pi$ of $GL_2(F)$ of parallel weight 2 defined over a number field $\bfE$.
 Let $\rmR$ be a finite set of places of $F$ not containing $\fkp$ and away from which $\Pi$ is unramified and $K$ is hyperspecial. We also choose a prime $\lambda$ of $\calO_{{\mathbf{E}}}$ whose residue characteristic is coprime to $p$ and write $k_\lambda=\calO_{{\mathbf{E}}}/\lambda$. We write $\rmH_\calM^i(\scrS_K(G)_{\bbF_{p^g}},k_\lambda(j))$ for the motivic cohomology group with $k_\lambda$ coefficients defined in \cite{SuVo}. By \cite{Voe2}, we may identify this with the higher Chow group $\mathrm{Ch}^j(\scrS_K(G)_{\bbF_{p^g}},2j-i,k_\lambda)$ defined in \cite{Bloch}. When $2j=i$, this group is  just the usual Chow group of codimension $j$ cycles  modulo rational equivalence (with coefficients in $k_\lambda$). The group $\mathrm{Ch}^j(\scrS_K(G)_{\bbF_{p^g}},2j-i,k_\lambda)$ is equipped with the following  cycle class map to the absolute \'etale cohomology:
\begin{equation}\label{eq: intro cycle class map mod l}\mathrm{Ch}^j(\scrS_K(G)_{\bbF_{p^g}},2j-i,k_\lambda)\rightarrow \rmH_{\text{\'et}}^i(\scrS_K(G)_{\bbF_{p^g}},k_\lambda(j)).\end{equation}
 
  We let $\bfT_{\rmR}$ denote the abstract  Hecke algebra of $GL_2(F)$ away from $\rmR$; it is the $\bbZ$-algebra generated by elements $T_{\fkq}, S_{\fkq}$ where $\fkq$ runs over primes of $F$ away from $\rmR$. Then the Hecke eigenvalues of $\Pi$ induce a map $$\phi^{\Pi}_\lambda:\bfT_{\rmR}\rightarrow \calO_{{\mathbf{E}}}\rightarrow k_\lambda.$$
 We write $\fkm_{\rmR}:=\ker(\phi^{\Pi}_\lambda)$ a maximal ideal of $\bfT_{\rmR}$ and $\fkm\subset \bfT_{\rmR\cup\{\fkp\}}$ the preimage in $\bfT_{\rmR\cup\{\fkp\}}$.
 
 The Hecke algebra $\bfT_{\rmR\cup\{\fkp\}}$ acts on the \'etale cohomology $\rmH_{\mathrm{\acute{e}t}}^\bullet(\scrS_K(G)_{\Fpbar},k_\lambda(-))$  and  higher Chow groups $\mathrm{Ch}^j(\scrS_K(G)_{\bbF_{p^g}},2j-i,k_\lambda)$ of $\scrS_K(G)$. Upon making a large image assumption on the$\mod\lambda$ Galois representation associated to $\Pi$ (see Assumption \ref{ass: property of l}) and localizing at the maximal ideal $\fkm$,  there is an isomorphism $$\rmH_{\et}^{g+1}(\scrS_K(G)_{\bbF_{p^{g}}},k_\lambda(g/2+1)))_{\fkm}\cong\rmH^1(\bbF_{p^{g}},\rmH_{\et}^g(\scrS_K(G)_{\Fpbar},k_\lambda(g/2+1))_{\fkm}).$$
 The cycle class map then induces the \textit{Abel--Jacobi map}:\begin{equation}\label{eq: intro Abel--Jacobi}\mathrm{Ch}^{g/2+1}(\scrS_K(G)_{\bbF_{p^{g}}},1,k_\lambda)_{\fkm}\rightarrow \rmH^1(\bbF_{p^{g}},\rmH_{\et}^g(\scrS_K(G)_{\Fpbar},k_\lambda(g/2+1))_{\fkm}).\end{equation}

In \S\ref{sec: Motivic Cohomology and Level-raising}, we will define a subgroup $\mathrm{Ch}^{g/2+1}_{\mathrm{lr}}(\scrS_K(G)_{\bbF_{p^{g}}},1,k_\lambda)_{\fkm}$ of $\mathrm{Ch}^{g/2+1}(\scrS_K(G)_{\bbF_{p^{g}}},1,k_\lambda)_{\fkm}$ using the geometry of Goren--Oort cycles\footnote{In fact the cycles we consider  arise from the supersingular locus.} on $\scrS_K(G)_{\bbF_{p^{g}}}$ as studied in \cite{TX}, \cite{TX1} and \cite{LT}. As the notation suggests, this subgroup is related to level raising. The main Theorem of the paper is the following; we refer to \S\ref{sec: Motivic Cohomology and Level-raising} for the precise statement.
\begin{thm}\label{thm:intro main}Suppose  that $p$ is a $\lambda$-level raising prime in the sense of Definition \ref{def: level raising prime} and that Assumptions \ref{ass: property of l} and \ref{ass: Dim Jacquet Langlands} are satisfied; in particular $T_\fkp\equiv p^g+1\mod\fkm_{\rmR}$ and $S_\fkp\equiv 1\mod \fkm_{\rmR}$. Then the map 
	$$\mathrm{Ch}_{\mathrm{lr}}^{g/2+1}(\scrS_K(G)_{\bbF_{p^{g}}},1,k_\lambda)/{\fkm}\rightarrow \rmH^1(\bbF_{p^{g}},\rmH^g(\scrS_K(G)_{\Fpbar},k_\lambda(g/2+1))/\fkm)$$ induced by (\ref{eq: intro Abel--Jacobi}) is surjective.
\end{thm}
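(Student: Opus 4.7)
The plan is to match both sides of the Abel--Jacobi map through Rapoport--Zink uniformization of the supersingular locus and the Jacquet--Langlands correspondence, and then to reduce the surjectivity to Ihara's Lemma for a compact quaternionic Shimura surface.

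First I would analyze the target. Assumptions \ref{ass: property of l} and \ref{ass: Dim Jacquet Langlands} force $\rmH_\et^\bullet(\scrS_K(G)_{\Fpbar}, k_\lambda)_\fkm$ to be concentrated in middle degree $g$, with
\[\rmH_\et^g(\scrS_K(G)_{\Fpbar}, k_\lambda(g/2+1))/\fkm \cong \bar\rho_\Pi^{\oplus m}\]
as a Hecke/Galois module, for some multiplicity $m$ controlled by the Jacquet--Langlands transfer of $\Pi$ to $B$. Hochschild--Serre then identifies the target with the cokernel of $\mathrm{Frob}_{p^g}-1$ on this module. The congruences $T_\fkp \equiv p^g+1$ and $S_\fkp \equiv 1 \pmod{\fkm_\rmR}$ force the characteristic polynomial of $\mathrm{Frob}_\fkp$ on $\bar\rho_\Pi$ to factor with a repeated eigenvalue (after the Tate twist), so $\mathrm{Frob}_{p^g}-1$ has a non-trivial $m$-dimensional cokernel; this is the group we must hit.

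Second, I would rewrite this cokernel as a space of mod $\lambda$ automorphic forms on an inner form. Rapoport--Zink uniformization of the basic locus, together with the Goren--Oort description of the supersingular components in \cite{TX,TX1,LT}, identifies the $\mathrm{Frob}_\fkp$-coinvariants with a Hecke module of automorphic forms on the inner form $B^\flat$ obtained from $B$ by switching Hasse invariants at $\fkp$ and a chosen archimedean place. Concretely, the target becomes a specific quotient of $\bfS_{B^\flat}(K^p, k_\lambda)_\fkm$ in which the quotient encodes the non-trivial Jordan block structure of $\mathrm{Frob}_\fkp$. Third, I would construct explicit higher Chow classes using the codimension-$g/2$ supersingular/Goren--Oort components $\{Z_\alpha\}$ of $\scrS_K(G)_{\bbF_{p^{g}}}$: to each $Z_\alpha$ I attach a rational function $f_\alpha \in k(Z_\alpha)^\times$ whose divisor is supported on deeper Goren--Oort strata, chosen so that $\sum_\alpha \mathrm{div}_{Z_\alpha}(f_\alpha)=0$ as a codimension-$(g/2+1)$ cycle. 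These pairs $(Z_\alpha, f_\alpha)$ yield classes in $\mathrm{Ch}^{g/2+1}(\scrS_K(G)_{\bbF_{p^{g}}}, 1, k_\lambda)$ that by definition span the level-raising subgroup $\mathrm{Ch}_{\mathrm{lr}}^{g/2+1}$, and a computation through the Gersten/localization sequence identifies their image under the Abel--Jacobi map with the values of a degeneracy pushforward
\[\delta_\ast : \bfS_{B^\flat}(K^p K_p^{\mathrm{Iw}}, k_\lambda)_\fkm \longrightarrow \bfS_{B^\flat}(K^p, k_\lambda)_\fkm^{\oplus 2}\]
between mod $\lambda$ forms at Iwahori versus hyperspecial level at $\fkp$.

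Under the identifications of the first two steps, the surjectivity in the theorem reduces to surjectivity of $\delta_\ast$ onto the relevant summand, equivalently (by duality) to injectivity of the two $\fkp$-degeneracy pullbacks on the $\fkm$-localized space of forms at hyperspecial level. This is precisely Ihara's Lemma for the compact quaternionic Shimura surface $\rmSh_{B^\flat}$. The main obstacle is therefore establishing this Ihara's Lemma: the Diamond--Taylor argument for modular curves must be extended to the higher-dimensional compact setting, with the Hecke orbit conjecture for $\rmSh_{B^\flat}$ in characteristic $p$ as a critical geometric input. Once Ihara's Lemma is in hand, the remaining steps (Hochschild--Serre, Jacquet--Langlands, Rapoport--Zink uniformization, and the cycle-class bookkeeping above) are formal, so all the essential arithmetic content concentrates in this single ingredient.
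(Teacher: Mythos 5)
Your overall skeleton does track the paper's strategy: classes in $\mathrm{Ch}^{g/2+1}_{\mathrm{lr}}$ built from supersingular/Goren--Oort cycles carrying rational functions, and an eventual reduction to a cohomological Ihara's Lemma for a compact quaternionic Shimura surface proved by extending Diamond--Taylor with the Hecke orbit conjecture. But the middle of your argument has a genuine gap. The passage from the $g$-dimensional variety to the surface case is neither formal nor supplied by Rapoport--Zink uniformization. Uniformization describes the basic locus as a variety; it does not show that the classes of these cycles generate the Frobenius structure on $\rmH^g(\scrS_K(G)_{\Fpbar},k_\lambda)/\fkm$, which is what your step two asserts. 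In the paper this is exactly the surjectivity of $\sum_{\fka}\mathrm{Gys}(\fka)\bmod\fkm$ (Proposition \ref{prop:Gysin map surjective}), whose proof is the Liu--Tian/Tian--Xiao argument: one shows injectivity of the dual map using the explicit intersection matrices of the Goren--Oort cycles and then counts dimensions, and this is precisely where conditions (2) and (4) of Definition \ref{def: level raising prime}, the freeness and middle-degree concentration of Proposition \ref{prop: middle cohomology non-zero}, and the Jacquet--Langlands multiplicity Assumption \ref{ass: Dim Jacquet Langlands} are consumed. Declaring these steps ``formal'' after an appeal to uniformization omits the main arithmetic-geometric input besides Ihara's Lemma.

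Second, the group you reduce to is the wrong one, and this makes your final reduction fail to parse. The surfaces that actually occur are $\scrS_K(G_{\rmS_\fka,\rmT_\fka})$ for the quaternion algebra that is \emph{split} at $\fkp$ and ramified at $g-2$ archimedean places (the Shimura sets being for the totally definite algebra split at $\fkp$), not your $B^\flat$ obtained by switching invariants at $\fkp$ and one infinite place: an algebra ramified at $\fkp$ has no hyperspecial/Iwahori degeneracy maps at $\fkp$, so ``injectivity of the two $\fkp$-degeneracy pullbacks'' for $\rmSh_{B^\flat}$ is not a meaningful statement, and if you instead mean Ihara's Lemma for functions on a definite Shimura set, that is elementary (strong approximation) and far too weak. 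What is needed is injectivity of $\pi_1^*+\pi_2^*$ on $\rmH^2_{\mathrm{\acute{e}t}}$ of the compact surface with Iwahori level at $\fkp$ (Theorem \ref{thm: Ihara's Lemma}), and identifying the Abel--Jacobi map with the dual of that map is itself a substantive step: it goes through cohomology with supports on the union of the two Goren--Oort divisors, Hochschild--Serre, and crucially the Stamm-type structure theorem for the special fiber of the Iwahori-level surface (Corollary \ref{cor:Iwahori level structure}), for which your single phrase ``by duality'' offers no substitute. Until both of these points are supplied, the proposal does not yield the surjectivity claimed in the theorem.
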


We note that as in \cite[Remark 4.2, 4.6]{LT}, if there exist rational primes inert in $F$, and $\Pi$ is not dihedral and not isomorphic to a twist by a character of any of its internal conjugates, then for all but finitely many $\lambda$, the set of primes $p$ which are $\lambda$-level raising primes has positive density.

In general it is difficult problem to produce non-zero classes in motivic cohomology. The key input to proving the surjectivity in Theorem \ref{thm:intro main} is a form of Ihara's Lemma which we prove by generalizing a method of Diamond--Taylor \cite{DT}; see the next subsection for more details.

We now give an example of the construction of $\mathrm{Ch}_{\mathrm{lr}}^{g/2+1}(\scrS_K(G)_{\bbF_{p^{g}}},1,k_\lambda)$ which makes clear the relationship with level raising. We assume $g=2$ so that $\dim\scrS_K(G)=2$. 

 We write $B'$ for the totally definite quaternion algebra which agrees with $B$ at all finite places. We fix an isomorphism $$B'\otimes_{\bbQ}\bbA_f\cong B\otimes_{\bbQ}\bbA_f$$ which allows us to consider $K$ as a compact open subgroup of $B'\otimes_{\bbQ}\bbA_f$.
We let $\calX'$ and $\calX'_0(\fkp)$ denote the discrete Shimura sets
%In $\scrS_K(G)_{\bbF_{p^{g}}}$, the supersingular locus is the union of two  divisors $\scrS_{\tau_1}$, $\scrS_{\tau_2}$, known as Goren--Oort divisors, which are $\bbP^1$ fibrations over the discrete Shimura set 
$$\calX':=B'\backslash B'\otimes_{\bbQ}\bbA_f/K,\quad \calX'_0(\fkp):=B'\backslash B'\otimes_{\bbQ}\bbA_f/K_0(\fkp)$$
where the compact open subgroup $K_0(\fkp)\subset B'\otimes_{\bbQ}\bbA_f$ agrees with $K$ away from $\fkp$ and is the standard Iwahori subgroup of $GL_2(F_{\fkp})$ at $\fkp$. We let $$\pi_1,\pi_2:\calX'_0(\fkp)\rightarrow \calX'$$
denote the natural degeneracy maps so that the diagram $$\calX'\xleftarrow{\pi_1}\calX'_{0}(\fkp)\xrightarrow{\pi_2}\calX'$$ is the usual Hecke correspondence for $\calX'$. For any finite set $S$, we write $\Gamma(S,k_\lambda)$ for the abelian group of $k_\lambda$-valued functions on $S$.

We may think of $\scrS_K(G)$ as a moduli space of abelian varieties with multiplication by some maximal order $\calO_B$ in $B$. We let  $\scrS_K(G)^{\mathrm{ss}}$ be the locus where the underlying abelian variety is supersingular.
 Using the geometry of $\scrS_K(G)^{\mathrm{ss}}$ one can show that under the assumptions of Theorem \ref{thm:intro main}, $\mathrm{Ch}^{2}(\scrS_K(G)_{\bbF_{p^{2}}},1,k_\lambda)_{\fkm}$ admits a map from $$\rmK_{\fkm}:=\ker\left((\pi_{1*},\pi_{2*}):\Gamma(\calX'_{0}(\fkp),k_\lambda)\rightarrow \Gamma(\calX',k_\lambda)\right)_{\fkm}.$$ The construction  uses an interpretation of classes in $\mathrm{Ch}^{2}(\scrS_K(G)_{\bbF_{p^{2}}},1,k_\lambda)_{\fkm}$ as cycles together with a rational function on the cycle; see \S\ref{sec: Motivic Cohomology and Level-raising} for the details. Then $\mathrm{Ch}^{2}_{\mathrm{lr}}(\scrS_K(G)_{\bbF_{p^{2}}},1,k_\lambda)_{\fkm}$ is defined to be the image of $\rmK_{\fkm}$.  Theorem \ref{thm:intro main} in this case follows from the following stronger result:
\begin{thm}\label{thm:intro surface}Let $g=2$. Suppose that $p$ is a $\lambda$-level raising prime and that Assumption \ref{ass: property of l} is satisfied. Then the map \begin{equation}
	\label{eq: intro abel jacobi surface}
\rmK_\fkm\rightarrow \rmH^1(\bbF_{p^{2}},\rmH_{\et}^2(\scrS_K(G)_{\Fpbar},k_\lambda(2))_\fkm)	\end{equation} is surjective.
\end{thm}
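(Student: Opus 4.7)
The plan is to reduce, via Hochschild--Serre and a Gysin sequence, to a combinatorial statement about the supersingular locus that is controlled by an Ihara-type lemma.

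First I would unpack the target. As already noted, the Hochschild--Serre spectral sequence combined with the vanishing of the Frobenius-invariants in degree three (which follows from the level-raising hypothesis after localization at $\fkm$) identifies
$$\rmH^1(\bbF_{p^{2}},\rmH_{\et}^2(\scrS_K(G)_{\Fpbar},k_\lambda(2))_\fkm) \;\cong\; \rmH^3_{\et}(\scrS_K(G)_{\bbF_{p^{2}}},k_\lambda(2))_{\fkm}.$$
Write $Z := \scrS_K(G)^{\mathrm{ss}}$ for the supersingular locus of $\scrS_K(G)_{\bbF_{p^2}}$. By \cite{TX,TX1,LT}, $Z$ is a nodal curve whose irreducible components are projective lines indexed by two copies of $\calX'$ (one per embedding $F_\fkp\hookrightarrow\overline{\bbQ}_p$), and whose superspecial nodes are indexed by $\calX'_0(\fkp)$. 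The Gysin sequence together with the vanishing of the \'etale cohomology of the ordinary locus after localization at $\fkm$ gives a surjection
$$\rmH^1_{\et}(Z, k_\lambda(1))_\fkm \twoheadrightarrow \rmH^3_{\et}(\scrS_K(G)_{\bbF_{p^2}}, k_\lambda(2))_\fkm,$$
while the Mayer--Vietoris sequence for the nodal curve $Z$ identifies $\rmH^1_{\et}(Z,k_\lambda(1))$ Galois-equivariantly with
$$\mathrm{coker}\bigl(\pi_1^* - \pi_2^*\colon \Gamma(\calX',k_\lambda(1))^{\oplus 2}\to \Gamma(\calX'_0(\fkp),k_\lambda(1))\bigr).$$

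Next I would produce the cycle classes. A class in $\mathrm{Ch}^2(\scrS_K(G)_{\bbF_{p^2}},1,k_\lambda)$ is represented by a formal sum $\sum_i(C_i,f_i)$ of curves with rational functions satisfying $\sum_i\mathrm{div}(f_i)=0$. Given $\phi\in\rmK_\fkm$ I would choose on each supersingular component $C_x\cong\bbP^1$ a rational function $f_x$ whose divisor, supported at the two superspecial points of $C_x$, matches the values of $\phi$ at those points; the hypotheses $\pi_{1*}\phi=\pi_{2*}\phi=0$ are exactly what is required to make the resulting global divisor vanish. A diagram chase through the Gysin sequence should identify the image of $\sum_x(C_x,f_x)$ in the cokernel description above with the class of $\phi$ itself, so that (\ref{eq: intro abel jacobi surface}) factors as
$$\rmK_\fkm\;\hookrightarrow\;\Gamma(\calX'_0(\fkp),k_\lambda)_\fkm\;\twoheadrightarrow\;\mathrm{coker}(\pi_1^* - \pi_2^*)_\fkm.$$

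Surjectivity of (\ref{eq: intro abel jacobi surface}) then becomes the assertion that $\rmK_\fkm+\mathrm{image}(\pi_1^*-\pi_2^*)_\fkm=\Gamma(\calX'_0(\fkp),k_\lambda)_\fkm$, equivalently that $(\pi_{1*},\pi_{2*})\circ(\pi_1^*,\pi_2^*)$ is surjective onto the image of $(\pi_{1*},\pi_{2*})$ after localization at $\fkm$. This is precisely the form of Ihara's Lemma for compact quaternionic Shimura surfaces that the paper proves by generalizing the method of Diamond--Taylor. The main obstacle is exactly this Ihara's Lemma: the geometric reductions above are relatively formal, but Ihara requires genuinely new input producing congruences between automorphic forms of different levels, and this in turn rests on the Hecke orbit conjecture for $\scrS_K(G)$ established earlier in the paper.
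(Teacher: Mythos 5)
Your outline parallels the paper up to the point where the real difficulty begins, and there it substitutes an unproved assertion. The surjection $\rmH^1_{\et}(Z,k_\lambda(1))_\fkm\twoheadrightarrow\rmH^3_{\et}(\scrS_K(G)_{\bbF_{p^2}},k_\lambda(2))_\fkm$ is claimed to follow from the Gysin sequence ``together with the vanishing of the \'etale cohomology of the ordinary locus after localization at $\fkm$''. No such vanishing is available, and it is not formal: the obstruction to surjectivity of $\rmH^3_{Z}(\scrS_K(G)_{\bbF_{p^2}},k_\lambda(2))_\fkm\rightarrow\rmH^3(\scrS_K(G)_{\bbF_{p^2}},k_\lambda(2))_\fkm$ lies in $\rmH^3(\scrS_K(G)^{\mathrm{ord}}_{\bbF_{p^2}},k_\lambda(2))_\fkm$, whose Hochschild--Serre graded piece $\rmH^1(\bbF_{p^2},\rmH^2(\scrS_K(G)^{\mathrm{ord}}_{\Fpbar},k_\lambda(2))_\fkm)$ has no reason to vanish; killing the relevant image would essentially amount to the statement that the supersingular cycle classes generate $\rmH^2(\scrS_K(G)_{\Fpbar},k_\lambda(1))_\fkm$, a mod-$\lambda$ Tate-conjecture/level-raising type assertion that the paper neither proves nor uses. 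In the paper this surjectivity (the map $\Phi'_\fkm$ in the proof of Proposition \ref{prop: AJ surjective for surface}) is exactly the hard step, and it is established in a completely different way: one dualizes (Poincar\'e duality together with duality of Galois cohomology over $\bbF_{p^2}$) and identifies the dual map, using the mod $p$ geometry of the Iwahori-level surface $\scrS_{K_0(\fkp)}(G)$ (Corollary \ref{cor:Iwahori level structure}: the components $\calF(X)$, $\calV(X)$, the supersingular part, and the essential Frobenius), with the map whose injectivity is Theorem \ref{thm: Ihara's Lemma}. A secondary point: $Z$ is nodal, so purity does not directly give $\rmH^3_Z\cong\rmH^1(Z,k_\lambda(1))$; one needs the d\'evissage through the superspecial points, and both this identification and your construction of the functions $f_x$ (killing the $k_S^\times\otimes_\bbZ k_\lambda$ ambiguity and trivializing the Frobenius action) require conditions (3) and (4) of Definition \ref{def: level raising prime}, which you do not invoke.

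The endpoint of your reduction is also not the lemma the paper supplies. Theorem \ref{thm: Ihara's Lemma} is the injectivity of $\pi_1^*+\pi_2^*$ on $\rmH^2$ of the \emph{indefinite} quaternionic Shimura surface over $\overline{\bbQ}$ at level $K_0(\fkp)$; your assertion $\rmK_\fkm+\mathrm{im}(\pi_1^*-\pi_2^*)_\fkm=\Gamma(\calX'_0(\fkp),k_\lambda)_\fkm$ is a statement about the definite quaternion Shimura set and does not follow from it. Moreover it is stronger than what surjectivity of (\ref{eq: intro abel jacobi surface}) requires, and it is doubtful at a level-raising prime: if $f\in\Gamma(\calX',k_\lambda)_\fkm$ is a genuine eigenvector with $T_\fkp f=\epsilon(p^2+1)f$, $S_\fkp f=f$, $\epsilon=\pm1$, then $h:=(\pi_1^*-\epsilon\pi_2^*)f$ is nonzero (otherwise $f$ would factor through the reduced norm and $\fkm$ would be Eisenstein), lies in $\rmK_\fkm$ by the formulas $\pi_{1*}\pi_1^*=p^2+1$, $\pi_{1*}\pi_2^*=T_\fkp$, $\pi_{2*}\pi_1^*=S_\fkp^{-1}T_\fkp$, and lies in $\mathrm{im}\,\pi_1^*+\mathrm{im}\,\pi_2^*$, i.e.\ in the orthogonal complement of $\rmK+\mathrm{im}(\pi_1^*-\pi_2^*)$ under the standard pairing; such $\fkp$-old classes obstruct the naive equality you need. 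The paper never asks $\rmK_\fkm$ to cover the full cokernel: the duality argument through $X_0(\fkp)$ isolates exactly the quotient that maps isomorphically to $\rmH^1(\bbF_{p^2},\rmH^2_{\et}(\scrS_K(G)_{\Fpbar},k_\lambda(2))_\fkm)$, and that is where the surface-level Ihara's Lemma (and hence the Hecke orbit conjecture) genuinely enters. As written, your proposal leaves both the key surjectivity and the correct form of the Ihara input unproved.
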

The relationship with level-raising should now be clear. Indeed under the Jacquet--Langlands correspondence, $\bfT_{\rmR\cup\{\fkp\}}$ acts on left hand side of (\ref{eq: intro abel jacobi surface}) via the $\fkp^{\mathrm{new}}$ quotient in the sense of \cite{Ribet}, whereas it is well known that it acts via the $\fkp^{\mathrm{old}}$ quotient  on the right hand side. In this sense, the Abel--Jacobi map gives an explicit realization of the congruence between old and new forms.

It is known by the work of many authors that the motivic cohomology groups satisfy many of the formal properties of a cohomology theory. However there is much that is still not understood, we refer to \cite{Geisser} for a brief survey. We may use Theorem \ref{thm:intro surface} to show that in certain cases of Shimura surfaces, motivic cohomology coincides with \'etale cohomology, upon localizing at $\fkm$. 
\begin{thm}Let $g=2$. Suppose that $p$ is a $\lambda$-level raising prime and that Assumption \ref{ass: property of l} is satisfied. Then the cycle class map induces an isomorphism $$\mathrm{H}^3_{\calM}(\scrS_K(G)_{\bbF_{p^{2}}},k_\lambda(2))_\fkm\xrightarrow\sim \rmH_{\et}^3(\scrS_K(G)_{\bbF_{p^{2}}},k_\lambda(2))_\fkm.$$
\end{thm}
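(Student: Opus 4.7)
My plan is to deduce the statement from Theorem~\ref{thm:intro surface} by combining a Hochschild--Serre computation of the target with a length bound on the source.

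First, the non-Eisenstein part of Assumption~\ref{ass: property of l} ensures that $\rmH^i_{\et}(\scrS_K(G)_{\Fpbar},k_\lambda(2))_\fkm$ is concentrated in the middle degree $i=2$. The Hochschild--Serre spectral sequence over $\bbF_{p^2}$ (which has cohomological dimension one) then yields the identification, already recorded in the introduction,
$$\rmH^3_{\et}(\scrS_K(G)_{\bbF_{p^2}},k_\lambda(2))_\fkm\cong\rmH^1\bigl(\bbF_{p^2},\rmH^2_{\et}(\scrS_K(G)_{\Fpbar},k_\lambda(2))_\fkm\bigr),$$
under which the cycle class map in the statement becomes the Abel--Jacobi map~(\ref{eq: intro abel jacobi surface}). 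The map of Theorem~\ref{thm:intro surface} factors through $\mathrm{Ch}^2(\scrS_K(G)_{\bbF_{p^2}},1,k_\lambda)_\fkm$ via
$$\rmK_\fkm\twoheadrightarrow\mathrm{Ch}_{\mathrm{lr}}^2(\scrS_K(G)_{\bbF_{p^2}},1,k_\lambda)_\fkm\hookrightarrow\mathrm{Ch}^2(\scrS_K(G)_{\bbF_{p^2}},1,k_\lambda)_\fkm,$$
so surjectivity of the cycle class map is immediate.

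For injectivity---the main point---I would compare $\bfT_{\rmR\cup\{\fkp\}}$-lengths of both sides after localisation at $\fkm$. On the \'etale side, Jacquet--Langlands together with multiplicity one for $\Pi$ identifies the target with a computable Hecke eigenspace, essentially the $\fkp$-old part of $\Gamma(\calX',k_\lambda)_\fkm^{\oplus 2}$. To bound the motivic side from above, I would invoke the localisation exact sequence for higher Chow groups along the closed immersion of the supersingular curve $Z:=\scrS_K(G)^{\mathrm{ss}}\hookrightarrow\scrS_K(G)_{\bbF_{p^2}}$ with open complement $U$:
$$\mathrm{Ch}^1(Z,1,k_\lambda)\to\mathrm{Ch}^2(\scrS_K(G)_{\bbF_{p^2}},1,k_\lambda)\to\mathrm{Ch}^2(U,1,k_\lambda).$$
The Goren--Oort description of $Z$ as an iterated $\bbP^1$-bundle over $\calX'_0(\fkp)$ (cf.\ \cite{TX,TX1,LT}) identifies $\mathrm{Ch}^1(Z,1,k_\lambda)_\fkm$ with a subquotient of $\rmK_\fkm$, yielding an explicit upper bound.

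The main obstacle is the vanishing $\mathrm{Ch}^2(U,1,k_\lambda)_\fkm=0$, which is needed to conclude that $\mathrm{Ch}^2(\scrS_K(G)_{\bbF_{p^2}},1,k_\lambda)_\fkm$ is itself a quotient of $\rmK_\fkm$. Morally $U$ is arithmetically tame away from the supersingular locus, so any higher Chow class supported there ought to force the attached Hecke system into an Eisenstein component, contradicting the non-Eisenstein hypothesis in Assumption~\ref{ass: property of l}; making this rigorous will likely require a Gersten-resolution analysis on $U$ combined with the same Newton-stratification input used in proving Theorem~\ref{thm:intro surface}. Once this vanishing is in hand, the resulting surjection $\rmK_\fkm\twoheadrightarrow\mathrm{Ch}^2(\scrS_K(G)_{\bbF_{p^2}},1,k_\lambda)_\fkm$, together with the matching $\bfT_{\rmR\cup\{\fkp\}}$-lengths and Theorem~\ref{thm:intro surface}, forces the cycle class map to be bijective.
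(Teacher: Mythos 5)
Your surjectivity argument is fine and coincides with the paper's: the Hochschild--Serre identification of $\rmH^3_{\et}(\scrS_K(G)_{\bbF_{p^2}},k_\lambda(2))_\fkm$ with $\rmH^1(\bbF_{p^2},\rmH^2_{\et}(\scrS_K(G)_{\Fpbar},k_\lambda(2))_\fkm)$ turns the cycle class map into the Abel--Jacobi map, and since the map of Theorem \ref{thm:intro surface} (equivalently Proposition \ref{prop: AJ surjective for surface}) factors through $\mathrm{Ch}^2(\scrS_K(G)_{\bbF_{p^2}},1,k_\lambda)_\fkm$, surjectivity is immediate. The problem is the injectivity half, where your proposal has a genuine gap and, more importantly, misses the one idea that makes the statement a two-line corollary. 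The vanishing $\mathrm{Ch}^2(U,1,k_\lambda)_\fkm=0$ on the ordinary locus, which you yourself flag as the main obstacle, is nowhere established, and the heuristic that a class supported on $U$ "forces an Eisenstein Hecke system" has no obvious rigorous content: a localized higher Chow class on an open surface does not come with a Galois or Hecke eigensystem in the way an étale class does, and the Gersten/Newton-stratification analysis you gesture at is not sketched. The length comparison is also unjustified: a surjection $\rmK_\fkm\twoheadrightarrow\mathrm{Ch}^2(\scrS_K(G)_{\bbF_{p^2}},1,k_\lambda)_\fkm$ would bound the source by the length of $\rmK_\fkm$, which is governed by $\fkp$-new forms, while the étale target is $\fkp$-old; the level-raising congruence identifies these only modulo $\fkm$, not as $\bfT_{\rmR\cup\{\fkp\}}$-modules of equal length, so "matching lengths" does not follow.

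The missing input is that injectivity requires no geometry at all: with $k_\lambda$ (torsion, prime to $p$) coefficients, the cycle class map $\rmH^i_{\calM}(X,k_\lambda(j))\rightarrow\rmH^i_{\et}(X,k_\lambda(j))$ is an isomorphism for $i\leq j$ and injective for $i=j+1$, by Voevodsky's resolution of the Bloch--Kato/Beilinson--Lichtenbaum conjecture; here $i=3=j+1$ with $j=2$, so injectivity is exactly \cite[Theorem 6.17]{Voe1}, which is what the paper invokes. Thus the paper's proof is simply: surjectivity from Proposition \ref{prop: AJ surjective for surface}, injectivity from Voevodsky. If you want to salvage your route, you would have to actually prove the vanishing on $U$ and control the localization sequence, which is far harder than the statement itself and is not needed.
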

When $i\leq j$,  Voevodsky \cite{Voe1} has shown that $\mathrm{Ch}^j(X,2j-i,k_\lambda)$ is isomorphic to  $\rmH_{\et}^i(X,k_\lambda(j))$, for $X$ proper smooth over any base field. For $i>j$, not much seems to be known.

\begin{rem}When $g$ is odd, there is an Abel--Jacobi map \begin{equation}\label{eq: Liu-Tian}\mathrm{Ch}^{\frac{g-1}{2}}(\scrS_K(G)_{\bbF_{p^{2g}}},k_\lambda)_\fkm\rightarrow \rmH^1(\bbF_{p^{2g}},\rmH_{\et}^{g}(\scrS_K(G)_{\Fpbar},k_\lambda(\lfloor g/2\rfloor+1))_\fkm)\end{equation}
	In this case the supersingular locus  $\scrS_K(G)^{\mathrm{ss}}_{\bbF_{p^{g}}}$   is equidimensional of dimension $\frac{g-1}{2}$ and we may consider the subgroup  $\mathrm{Ch}_{\mathrm{lr}}^{\frac{g-1}{2}}(\scrS_K(G)_{\bbF_{p^{2g}}},k_\lambda)_\fkm$ generated by the irreducible components in
	 $\scrS_K(G)^{\mathrm{ss}}_{\bbF_{p^{2g}}}$. Then \cite[Theorem 1.3]{LT} have shown the surjectivity of (\ref{eq: Liu-Tian})  modulo $\fkm$ restricted to this subgroup. Thus our Theorem \ref{thm:intro main} can be thought of as the even dimensional analogue of the Theorem of Liu--Tian. The main new observation of this work is that we are able to produce certain classes in motivic cohomology, or higher Chow groups, as opposed to ordinary Chow groups. Its conceptual importance lies in the fact that we are able to obtain a geometric interpretation of \textit{even} dimensional Galois cohomology.
	 
	\end{rem}

\begin{rem}
		 In \cite{LT}, the geometric realization of level raising was a key ingredient in their proof of certain cases of the Bloch--Kato conjecture, see \cite[Theorem 5.7]{LT}. Our work should have applications to cases of this conjecture for non-central $L$-values; we aim to carry this out in a future work.
\end{rem}

\subsection{Proof of main result and Ihara's Lemma}
We now explain the proof of Theorem \ref{thm:intro main}. Our approach follows that of \cite{LT}, but there are many new difficulties in the even dimensional case. 

Firstly, using the construction of the group $\mathrm{Ch}_{\mathrm{lr}}^{g/2+1}(\scrS_K(G)_{\bbF_{p^{g}}},1,k_\lambda)$
and the intersection pairing between certain Goren--Oort strata proved  in \cite{TX1}, we reduce to proving the surjectivity statement in the case of  quaternionic Shimura surfaces, see Proposition \ref{prop: AJ surjective for surface}. The statement in this case follows from the following form of  Ihara's Lemma. These type of results first appeared in Ribet's ICM article \cite{Ribet1} for the case of modular curves and over the last thirty years they have seen many important arithmetic applications. Therefore our result in the case of surfaces should certainly be of independent interest.

For simplicity, we only state the result in the totally indefinite case; we refer to Theorem \ref{thm: Ihara's Lemma} for the more general statement. Thus we assume $g=2$ as in the example of the previous subsection.

\begin{thm}[Ihara's Lemma]\label{thm:intro Ihara}Under the  Assumption \ref{ass: surface}, the map $$\pi_1^*+\pi_2^*:\rmH_{\et}^2(\rmSh_K(G)_{\overline{\bbQ}},k_\lambda)^2_\fkm\rightarrow \rmH_{\et}^2(\rmSh_{K_0(\fkp)}(G)_{\overline{\bbQ}},k_\lambda)_\fkm$$
	is injective.
\end{thm}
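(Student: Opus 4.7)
The plan is to adapt the method of Diamond--Taylor \cite{DT} to the surface case, using the Hecke orbit conjecture established earlier in the paper as the decisive new geometric input.

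First I would pass to the special fiber via smooth-proper base change (valid since $K$ is hyperspecial at $p$), identifying $\rmH^2_{\et}(\rmSh_K(G)_{\overline{\bbQ}}, k_\lambda)$ with $\rmH^2_{\et}(\scrS_K(G)_{\Fpbar}, k_\lambda)$; the scheme $\rmSh_{K_0(\fkp)}(G)$ admits a semistable integral model whose cohomology is accessible via the weight spectral sequence. Under this reduction, $\pi_1$ and $\pi_2$ correspond to remembering the source or the target of the universal $\fkp$-isogeny between $\calO_B$-abelian varieties. The standard Hecke identities $\pi_{1*}\pi_1^* = \pi_{2*}\pi_2^* = N(\fkp)+1$ and $\pi_{1*}\pi_2^* = T_\fkp$ then show that any element of $\ker(\pi_1^* + \pi_2^*)$ is annihilated by a fixed polynomial in $T_\fkp, S_\fkp$ and $N\fkp$; after $\fkm$-localization this polynomial is a unit at every non-level-raising maximal ideal, and the proof concentrates on the remaining level-raising case.

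Second, I would dualize via Poincar\'e duality so that the claim becomes a surjectivity statement for the pushforward on the top-degree cohomology. Using the weight spectral sequence for $\rmSh_{K_0(\fkp)}(G)$ together with Assumption \ref{ass: surface} --- which forces $\bar{\rho}_\Pi$ to have large image and hence eliminates Eisenstein contributions upon $\fkm$-localization --- only the middle terms of the spectral sequence survive. This cuts the statement down to a surjectivity of maps between spaces of $k_\lambda$-valued functions on the components of the supersingular locus $\scrS_K(G)^{\mathrm{ss}}_{\Fpbar}$ and on the points where distinct such components meet, a direct surface-level analogue of the combinatorial surjectivity underlying Ihara's original lemma for modular curves.

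The main obstacle is this final combinatorial surjectivity. Following Diamond--Taylor, I would interpret it as the connectedness of a certain finite \'etale cover built from the $\fkp$-Hecke correspondence on the supersingular locus. Strong approximation for the totally definite inner form $G'$ presents this locus as a double coset space $G'(\bbQ)\backslash G'(\bbA_f)/K$, and reduces the connectedness to controlling the image of a specific element of $G'(\bbQ_p) = \GL_2(F_\fkp)$ in $G'(\bbQ_p)/K_p$. It is here that the Hecke orbit conjecture for $\scrS_K(G)$ enters decisively: it forces the Hecke-orbit closure of any ordinary point to exhaust a connected component, yielding the large monodromy needed for connectedness. Combined with the large image Assumption \ref{ass: surface} to rule out congruence obstructions, this gives the desired injectivity of $\pi_1^* + \pi_2^*$ on $\fkm$-localized cohomology.
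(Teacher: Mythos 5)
Your proposal works at the wrong prime, and this is fatal to both of its key steps. The coefficients $k_\lambda$ have residue characteristic $l\neq p$, and the paper's proof takes place on the mod $l$ fiber $\calX_l$ (where $K$ is hyperspecial at $l$), not on the mod $p$ fiber: Fontaine--Laffaille theory, Faltings' comparison theorem and the degeneration of the Hodge--de Rham spectral sequence (together with Proposition \ref{prop: middle cohomology non-zero}, the Eichler--Shimura relation and Dimitrov's lemma) reduce the injectivity of $\pi_1^*+\pi_2^*$ to the injectivity of the corresponding map $\rmH^0(\calX_l,\Omega^2_{\calX_l/\overline{\bbF}_l})^2\rightarrow \rmH^0(\calX_0(\fkp)_l,\Omega^2_{\calX_0(\fkp)_l/\overline{\bbF}_l})$ of coherent cohomology (Proposition \ref{prop: Ihara coherent}). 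On $\calX_l$ the $\fkp$-Hecke correspondence is a prime-to-$l$ finite \'etale correspondence, so the Hecke orbit theorem (Theorem \ref{thm:Hecke-Orbit quat}, proved for prime-to-characteristic orbits) legitimately forces the divisor of a kernel element onto the non-ordinary locus, and an intersection-number computation with the Goren--Oort divisors and the bundles $\omega_{\tau_1},\omega_{\tau_2}$ (via Kodaira--Spencer) yields the contradiction. In your setup the characteristic is $p$ and the relevant correspondence is at $\fkp$ itself; the Hecke orbit conjecture proved in \S\ref{sec: Hecke orbit conjecture} says nothing about orbits under the correspondence at the residue characteristic (which is not even \'etale on the special fiber, the level at $\fkp$ being Iwahori), so the ``large monodromy'' step has no support.

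The intermediate reduction is also not justified. Passing to the mod $p$ weight spectral sequence and dualizing does not cut the kernel of $\pi_1^*+\pi_2^*$ down to $k_\lambda$-valued functions on supersingular components: after localizing at $\fkm$ the classes in question live in the genuine middle cohomology $\rmH^2(\scrS_K(G)_{\Fpbar},k_\lambda)^2_\fkm$, which is not spanned by supersingular cycle classes, and the exact sequences coming from the decomposition of the Iwahori special fiber (Corollary \ref{cor:Iwahori level structure}) relate $\pi_1^*+\pi_2^*$ to the supersingular combinatorics only in the \emph{opposite} logical direction in this paper: Ihara's Lemma, established at $l$, is the input used in \S\ref{sec:lr} to identify $\Psi_\fkm$ with $\Phi_\fkm^*$ and deduce surjectivity of the Abel--Jacobi map, so your route would be circular as stated. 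The strong-approximation argument you sketch does prove an Ihara-type statement for functions on the definite Shimura set, but without a valid reduction from $\rmH^2$ of the surface to such functions it does not bear on the theorem. (A minor point: the large-image hypothesis you invoke is Assumption \ref{ass: property of l}, not Assumption \ref{ass: surface}, which merely says the variety is a compact surface.)
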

Here $\rmSh_{K_0(\fkp)}(G)$ denotes the quaternionic Shimura surfaces with Iwahori level structure at $p$ and $\pi_1,\pi_2$ are the natural degeneracy maps. In fact the appropriate Abel--Jacobi map in this case can be related to the  map $\pi_1^*+\pi_2^*$  in the statement of Theorem \ref{thm:intro Ihara}; they are essentially dual to one another. To show the existence of this duality requires a careful analysis of the  global geometry of the mod $p$ fiber of the quaternionic Shimura surface \textit{with Iwahori level structure} at $p$. We note that in this case the Shimura surface has bad reduction at $p$. The main result which is Corollary \ref{cor:Iwahori level structure} is proved in an appendix and is analogous to the results of \cite{Stamm} in the case of Hilbert modular surfaces.

 We now describe our approach to Theorem \ref{thm:intro Ihara}. The result in the case of Hilbert modular varieties has been proved by Dimitrov \cite{Dim}. However his proof relied crucially on the existence of a $q$-expansion. Note that when $g>2$, even if one is interested in Theorem \ref{thm:intro main} for Hilbert modular varieties, the reduction to the case of surfaces will necessitate that we consider compact Shimura surfaces where a $q$-expansion is not available.
We therefore take another approach by generalizing a method of Diamond--Taylor who proved the result for Shimura curves \cite{DT}.

We first apply a crystalline comparison isomorphism to reduce the problem to proving  injectivity of a certain map between global sections of line bundles over the$\mod l$ reduction of the Shimura surface  (Proposition \ref{prop: Ihara coherent}). The property that  a non-zero section lies in the kernel implies that the divisor $D$ corresponding to this section is invariant under  $p$-power Hecke operators. In the case of Shimura curves, it's known that the image of an ordinary point under $p$-power Hecke operators is infinite; this constrains $D$ to be supported on the supersingular locus. Since $p$-power Hecke operators act transitively on supersingular points, the support contains the supersingular locus and this is enough to deduce a contradiction for degree reasons.

In the case of surfaces, we need a stronger result to constrain the support of the divisor $D$.  In section \S\ref{sec: Hecke orbit conjecture}, we prove a  version of the Hecke orbit conjecture of Chai--Oort \cite{ChOort} for the ordinary locus on quaternionic Shimura varieties. We assume $l$ is a prime where the compact open $K$ is hyperspecial and we write $\mathscr{S}^l_K(G)$ for the$\mod l$ reduction of the integral model $\underline{\mathrm{Sh}}^l_K(G)$ at a prime of the reflex field above $l$. We write $\mathscr{S}^l_K(G)^{\mathrm{ord}}$ for the locus where the universal abelian variety is ordinary.

\begin{thm}[Hecke orbit Conjecture]\label{thm:intro Hecke orbit} Let $x\in \mathscr{S}^l_K(G)^{\mathrm{ord}}(\overline{\bbF}_l)$. Then the prime-to-$l$ Hecke orbit is Zariski dense in $\mathscr{S}^l_K(G)$.
\end{thm}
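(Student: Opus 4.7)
The plan is to follow Chai's strategy for the ordinary case of the Hecke orbit conjecture on Siegel modular varieties, adapted to the quaternionic setting. Let $Z \subset \mathscr{S}^l_K(G)_{\overline{\bbF}_l}$ denote the Zariski closure of the prime-to-$l$ Hecke orbit of $x$. The goal is to show $Z = \mathscr{S}^l_K(G)_{\overline{\bbF}_l}$; by Hecke-stability of $Z$ and Hecke-transitivity on the (finite) set of connected components, it suffices to show that $Z$ contains an open neighborhood of $x$ in its connected component.

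First I would apply Serre--Tate theory at $x$: since $x$ is ordinary, the formal completion $\widehat{\mathscr{S}^l_K(G)}_x$ is canonically isomorphic to a formal torus $\widehat{T}$ whose cocharacter lattice is a rank-$g$ free $\calO_F$-module cut out by the PEL datum, with $x$ itself corresponding to the identity (the canonical lift). The formal germ $\widehat{Z}_x$ is then a closed formal subscheme of $\widehat{T}$ passing through the identity.

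The heart of the argument is to show that $\widehat{Z}_x$ is stable under multiplication by a Zariski-dense subset of scalars in $\calO_{F_\fkq}^\times$, for suitable primes $\fkq \ne p,l$, and then invoke Chai's rigidity theorem for formal subschemes of a formal torus to conclude that $\widehat{Z}_x$ is a formal subtorus. To produce these scalars, I would choose primes $\fkq \ne p,l$ split in the quaternion algebra $B$ so that $G(\bbQ_q) \supset GL_2(\calO_{F_\fkq})$ acts through prime-to-$l$ isogenies whose effect on the Serre--Tate coordinates is computed (following the Katz--Oort calculation) to be multiplication by elements of $\calO_{F_\fkq}^\times$ on the cocharacter lattice. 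Since the Hecke correspondences match the Serre--Tate formal tori at $x$ and at nearby points of the orbit, the Hecke-stability of $Z$ translates into stability of $\widehat{Z}_x$ under these multiplications. Varying $\fkq$ over a set of primes whose Frobenius generates a dense subgroup of $\bigl(\calO_F \otimes \bbZ_l\bigr)^\times$ gives enough scalars to force $\widehat{Z}_x = \widehat{T}$.

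With $\widehat{Z}_x = \widehat{T}$ in hand, $Z$ is open (hence closed, by Hecke-stability and irreducibility of the connected component of the ordinary locus) in its connected component of $\mathscr{S}^l_K(G)_{\overline{\bbF}_l}$; the irreducibility of the ordinary locus in each connected component follows from the nonvanishing of the partial Hasse invariants together with a standard monodromy argument. Finally transitivity of the prime-to-$l$ Hecke action on $\pi_0$ reduces to a class group computation using strong approximation for $G^{\der}$. The step I expect to be the main obstacle is the rigidity step: the Serre--Tate torus carries an $\calO_F$-structure rather than being a pure $\widehat{\bbG}_m^g$, and one must ensure that the scalars arising from the chosen Hecke operators act on the cocharacter lattice with image Zariski-dense in the ambient torus of multiplications, which requires care about the relationship between the $\calO_F$-action on the lattice and the Frobenius tori at split primes $\fkq$.
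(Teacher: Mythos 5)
Your proposal goes wrong at the step where you produce the scalar action on the formal germ, and even if that were repaired it would not yield $\widehat{Z}_x=\widehat{T}$. A prime-to-$l$ Hecke correspondence at an auxiliary prime $\fkq$ does not fix $x$: it identifies the Serre--Tate germ at $x$ with the germ at \emph{another} point $y$ of the orbit, so Hecke-stability of $Z$ only gives $\widehat{Z}_x\cong\widehat{Z}_y$ compatibly with coordinates, not stability of $\widehat{Z}_x$ under any multiplication; moreover ``multiplication by elements of $\calO_{F_\fkq}^\times$'' does not act on the cocharacter lattice, which is an $\calO_F\otimes\bbZ_l$-module. The correct mechanism is the local stabilizer principle: one uses structure-preserving self-quasi-isogenies $\gamma\in I_x(\bbQ)$ (which do return $x$ to itself, cf.\ Proposition \ref{prop: isog classes}) acting on the deformation space through their image in $J_x(\bbQ_l)$, and then weak approximation in a maximal torus $T^1\subset I^1_x$ (legitimate because $I_x$ and $J_x$ have equal rank) together with the explicit computation of the action (through $a\mapsto a^2$ or a norm) to get stability under an open subgroup of $(\calO_F\otimes\bbZ_l)^\times$; this is Proposition \ref{prop:formal nbd stable}. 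Your ``Frobenius at $\fkq$ generating a dense subgroup'' heuristic conflates Chebotarev-type density in a Galois group with producing elements of the stabilizer, and as written the computation does not typecheck.

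More seriously, even granting stability under \emph{all} of $(\calO_F\otimes\bbZ_l)^\times$, Chai's rigidity only forces $\widehat{Z}_x$ to be a product of the factors of the Serre--Tate group indexed by the primes of $F$ above $l$ (Corollary \ref{cor: rigidity}); since $l$ is only assumed unramified and may split in $F$, proper such products exist and are stable under every scalar, so no choice of auxiliary primes can exclude them --- the obstacle you flag in your last paragraph is not fixable inside your framework. The paper therefore needs a second, independent half of the argument which your proposal omits entirely: $Z$ contains a supersingular point, by properness together with quasi-affineness of the Ekedahl--Oort strata (Proposition \ref{prop:orbit contains supersingular}); at such a point $s$ the group $I^1_s$ is an inner form of $G'_{\mathrm{der}}$ with $I^1_s\otimes_{\bbQ}\bbQ_l\cong J^1_s$ (Proposition \ref{prop:Tate supersingular}), so weak approximation makes $\widehat{Z}_s$ stable under the full compact group $\rmJ^1_s$, and a formal-curve/tangent-sheaf argument using Corollary \ref{cor:rigidity globalized} then forces every $\fkp_i$-direction to occur, giving $Z$ equal to the whole connected component. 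Only when $l$ is inert in $F$ would the ordinary-point rigidity alone suffice. Finally, note that the stronger statement actually proved (and used for Ihara's Lemma), for $G_{\rmS,\rmT}$ with $\rmS_\infty\neq\emptyset$, concerns the $\mu$-ordinary locus, where the germ is a Moonen formal $p$-divisible group of fractional slope rather than a formal torus, so the classical Serre--Tate input in your first step would also have to be replaced there.
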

In fact we prove this result in a more general situation; we refer to \S\ref{sec: 3 statement} for the statement.  Using the strong approximation theorem, we deduce that the $p$-power Hecke orbit of $x\in\mathscr{S}^l_K(G)^{\mathrm{ord}}(\overline{\bbF}_l)$ is Zariski dense in the connected component of $\mathscr{S}^l_K(G)_{\overline{\bbF}_l}$ containing it. This allows one to show that $D$ is supported on the complement of $\mathscr{S}^l_K(G)^{\mathrm{ord}}$. A computation involving intersection numbers of $D$ with certain cycles on $\mathscr{S}^l_K(G)$ then gives the desired contradiction.
\begin{rem}
	In \cite{LT}, the authors reduce their surjectivity result to a form of Ihara's Lemma for Shimura curves, for which the method of \cite{DT} is directly applicable. In our case, the most pertinent case is that of Shimura surfaces which, as explained above, is more delicate.
\end{rem}
We note that many of the quaternionic Shimura varieties we consider do not admit good moduli interpretations. Thus in order to obtain the geometric results we need, we study the geometry of certain auxiliary unitary Shimura varieties which are of PEL-type. Using \cite[\S2]{TX}, the results for unitary Shimura varieties transfer easily to the quaternionic side. The moduli interpretation for the unitary Shimura varieties allow us to adapt many proofs in the  case of Hilbert modular varieties to the quaternionic case.

\subsection{Outline of paper}In \S2 we begin with some basics on Shimura varieties and define the quaternionic Shimura varieties of interest. We recall the construction of the auxiliary unitary Shimura varieties of PEL-type as in \cite[\S3]{TX}, and recall the description of  Goren--Oort cycles obtained in \cite{TX1}. In \S3 we prove the Hecke orbit conjecture for quaternionic Shimura varieties. We deduce our results from the corresponding statement for the auxiliary unitary Shimura varieties. Using the moduli interpretation, the proof  in the unitary case follows the strategy  of \cite{Chai} who proved the result for Hilbert modular varieties. A key input here is Moonen's generalization of Serre--Tate theory for ordinary abelian varieties \cite{Mo}. In \S4 we study the intersection pairing of cycles on the$\mod l$ reduction of Shimura surfaces and use this to prove Theorem \ref{thm:intro Ihara}. Finally in \S5, we recall the definition of  motivic cohomology groups and higher Chow groups, paying extra attention in the most pertinent case of surfaces, and we construct the level raising subgroup $\mathrm{Ch}^{g/2+1}_{\mathrm{lr}}(\scrS_K(G)_{\bbF_{p^{g}}},1,k_\lambda)_{\fkm}$. We then prove Theorem \ref{thm:intro main} using the strategy outlined above. In the Appendix we describe the bad reduction of quaternionic Shimura surfaces with Iwahori level structure.

\textit{Acknowledgments:} The author would like to thank Tony Feng, Bao Le Hung, Chao Li, Ananth Shankar, Richard Taylor, Liang Xiao and Xinwen Zhu for useful discussions and comments about this work. Above all the author would  like to thank Akshay Venkatesh for his suggestion that there could be interesting arithmetic information contained in the motivic cohomology of Shimura varieties and for many hours of enlightening and enjoyable discussions. The author was partially supported by NSF grant No. DMS-1638352 through membership at the Institute for Advanced Study.

\subsection{Notations}
\begin{itemize}
	
	\item If $F$ is a number field we write $\calO_F$ for its ring of integers. If $v$ is a  place of $F$, we write $F_{v}$ for the completion of $F$ at $v$ and if $v$ is finite, we write $k_v$ for its residue field at $v$.
	\item If $F$ is a local field we write $\calO_F$ for its ring of integers.
	\item For any field $F$, we write $\overline{F}$ for a fixed algebraic closure of $F$.
	\item We write  $\bbA$ for the  ring of adeles and $\bbA_f$ the ring of finite adeles. If $p$ is a prime, $\bbA_f^p$ denotes the finite adeles with trivial $p$-component.
	\item If $R\rightarrow S $ is a map of algebras and $X$ is an $R$-scheme, we write $X_S$ for the base change of $X$ to $S$.
\item If $X$ is a scheme, we write $\scrO_X$ for its structure sheaf.	We write $\rmH^\bullet(X,-)$ for the \'etale cohomology of $X$.  For any closed subscheme $Y\subset X$, we write $\rmH^\bullet_Y(X,-)$ for the \'etale cohomology supported on $Y$.

\end{itemize}

\section{Geometry of quaternionic Shimura varieties and Goren--Oort strata}\label{sec: Section1}
In this section we recall the results concerning the geometry of quaternionic Shimura varieties and Goren--Oort cycles following \cite{TX} and \cite{TX1} that we will need. 

\subsection{Basics on quaternionic Shimura varieties}\label{sec:basics}\ignore{Let $G$ be a reductive group over $\Q$ and $X$ a conjugacy class of homomorphisms
$$h:\mathbb{S}:=\text{Res}_{\mathbb{C}/\mathbb{R}}\mathbb{G}_m\rightarrow G_\mathbb{R}$$
such that $(G,X)$ is a Shimura datum in the sense of \cite[2.1]{De}. In other words, we require the data to satisfy the following conditions:

(SV1) For any $h\in X$, the induced Hodge structure on $\text{Lie}G_{\C}$ is of type $(1,-1),(0,0),(-1,1)$.

(SV2) For any $h\in X$, the adjoint action $h(i)$ on $G_{\ad,\mathbb{R}}$ is a Cartan involution.

(SV3) $G_{\ad}$ has no $\mathbb{Q}$-factor $H$ such that $H(\mathbb{R})$ is compact.

Let $c$ be the complex conjugation. Then $\text{Res}_{\C/\mathbb{R}}(\C)\cong(\C\otimes_{\mathbb{R}}\C)^\times\cong \C^\times \times c^*(\C^\times)$ and we write $\mu_h$ for the cocharacter given by $$\C^\times\rightarrow \C^\times\times c^*(\C^\times)\xrightarrow h G(\C).$$
We set $w_h:=\mu_h^{-1}(\mu_h^{c})^{-1}$.

Let $K_p\subset G(\Q_p)$ and $K^p\subset G(\A_f^p)$ be compact open subgroups and write $K:=K_pK^p$. Then for $K^p$ sufficiently small
$$\text{Sh}_{K}(G,X)_{\C}=G(\Q)\backslash X\times G(\A_f)/K$$
has the structure of an algebraic variety over $\C$. This has a model over the reflex field $E(G,X)$, which is a number field and is the field of definition of the conjugacy class of $\mu_h$; it is naturally a subfield of $\bbC$.

We will also need to consider data $(G,X)$ as above which satisfy only (SV1), (SV2) but not (SV3). Such a datum will be called a {\it weak Shimura datum}. In fact all the weak Shimura data we consider will satisfy the following extra condition:

(SV3') $G_{\ad}(\mathbb{R})$ is compact.

By \cite[p.277]{Bor}, this implies the image of $h$ lands in the center $Z_{\mathbb{R}}$ of $G_{\mathbb{R}}$.}
 Let $F$ be a totally real field with $[F:\Q]=g$ such that $p$ is \textit{unramified} in $F$. We are mainly interested in  the case when $p$ is inert in $F$; this is the case considered in \cite{TX} and \cite{LT}. However, we will sometimes need to consider the reduction mod $l$ of these Shimura varieties, so we will keep the more general assumption for now. We write $\Sigma_p$ (resp. $\Sigma_\infty$) for the set of $p$-adic  places (resp. infinite places). We fix once and for all an isomorphism $\iota_p:\mathbb{C}\cong \overline{\Q}_p$, which we will use to identify $\Sigma_\infty$ with the set of $p$-adic embeddings of $F$. For $\mathfrak{p}\in\Sigma_p$ we let $g_\fkp:=[F_\fkp:\mathbb{Q}_p]$ and $\Sigma_{\infty/\fkp}$ the set of $p$-adic embeddings $\tau\in\Sigma_{\infty}$ which induce $\fkp$. As $p$ is unramified in $F$, the $p$-Frobenius $\sigma$ induces an action on $\Sigma_{\infty/\fkp}$.

We fix a totally indefinite quaternion algebra $B$ over $F$ which is split at all the places above $p$.  Let $\mathrm{S}\subset\Sigma_{\infty}\cup\Sigma_p$ be a subset of even cardinality. We set $\rmS_\infty=\rmS\cap\Sigma_{\infty}$ and for each $\fkp$ we set  $\rmS_{\infty/\fkp}=\rmS\cap\Sigma_{\infty/\fkp}$.  We will make the assumption that $\fkp\in\rmS$ only if $\rmS_{\infty/\fkp}=\Sigma_\infty$.

%(2) If $\fkp\notin \rmS$, then $\rmS_{\infty/\fkp}$ is a disjoint union of chains of even cardinality under the Frobenius action on $\Sigma_{\infty/\fkp}$.

We write $B_{\mathrm{S}}$ for the quaternion algebra over $F$ whose ramification set is precisely the union of $\mathrm{S}$ and the places in $F$ over which $B$ ramifies. For each $\fkl$ a place of $F$ away from the ramification set for $B_{\rmS}$, we fix an isomorphism $B_{\rmS}\otimes_F F_\fkl\cong GL_2(F_\fkl).$ We define $G_{\mathrm{S}}$ to be the reductive group over $\Q$ such that for any $\Q$-algebra $R$ we have
$$G_{\rmS}(R)=(B_{\rmS}\otimes_{\Q}R)^\times.$$
When $\rmS=\emptyset$, we simply write $G$ for the above group. For $v\notin \rmS$ we have an isomorphism $G(\mathbb{Q}_v)\cong G_\rmS(\mathbb{Q}_v)$. Hence we may fix an isomorphism $$G(\mathbb{A}_f^p)\cong G_\rmS(\mathbb{A}_f^p).$$ 

Let $\rmT\subset \rmS_\infty$ and $\rmT_\fkp:=\rmS_{\infty/\fkp}\cap \rmT$. %; in this paper we will assume $\rmT_{\fkp}:=\rmS_{\infty/\fkp}\cap\rmT$ has cardinality $\#\rmS_{\fkp}/2$.
 We consider the following homomorphism:
$$h_{\rmS,\rmT}:\mathbb{S}(\bbR)\cong \mathbb{C}^\times\longrightarrow B_{\rmS}(\mathbb{R})\cong GL_2(\mathbb{R})^{\Sigma_{\infty}-\rmS_\infty}\times\mathbb{H}^{\rmT_\infty}\times \mathbb{H}^{\rmS_\infty-\rmT_\infty}$$ $$
x+yi\longrightarrow \left((x+yi)^{\Sigma_\infty-\rmS{\infty}}, (x^2+y^2)^{\rmT_\infty}, 1^{\rmS_\infty-\rmT_\infty}\right).$$
Then $G_{\rmS}$ and the conjugacy class of $h_{\rmS,\rmT}$ forms a (weak) Shimura datum in the sense of \cite[\S2.2]{TX}. We let $\bfE_{\rmS,\rmT}$ denote the reflex field which is the subfield of the Galois closure $\tilde{F}$ of $F$ in $\bbC$ fixed by the subgroup of $\Gal(\tilde{F}/\bbQ)$ stabilizing $\rmS_{\infty}$ and $\rmT$. We let $v$ be the $p$-adic place of $\bfE_{\rmS,\rmT}$ induced by the embedding $\bfE_{\rmS,\rmT}\hookrightarrow \bbC\cong \overline{\bbQ}_{p}$. We  define the compact open subgroup $K_p:=\prod_{\fkp\in\Sigma_p}K_{\fkp}\subset G_{\rmS}(\bbQ_p)$, where

$\bullet$ $K_\fkp=GL_2(\calO_{F_\fkp})$ if $\fkp\notin \rmS$.

$\bullet$ $K_\fkp=\calO_{B_\fkp}$ the unique maximal compact of $B_{\rmS}\otimes_F F_{\fkp}$ if $\fkp\in\rmS$.

For a sufficiently small compact open subgroup $K^p\subset G(\bbA_f^p)$, we write $K=K_pK^p$ and let $\mathrm{Sh}_K(G_{\rmS,\rmT})$ denote the Shimura variety associated to the above data. We use the notation of \cite{TX} so that $\rmT$ determines the Deligne homomorphism. It is an algebraic variety over $\bfE_{\rmS,\rmT}$ whose complex points are given by
$$\text{Sh}_K(G_{\rmS,\rmT})(\bbC)=G_{\rmS}(\bbQ)\backslash (\fkh^\pm)^{\Sigma_{\infty}-\rmS_{\infty}}\times G_\rmS(\bbA_f)/K$$
where $\fkh^\pm$ is the union of the complex upper and lower half planes. We note that the algebraic variety $\text{Sh}_K(G_{\rmS,\rmT})_{\overline{\bbQ}}$ is independent of $\rmT$. However, different choices of $\rmT$ will give rise to different $\bfE_{\rmS,\rmT}$  varieties, see for example \cite[p9]{TX1}. We also point out the abuse of notation here, where the compact open subgroup $K$ implicitly depends on the choice of $\rmS$. When $\rmS_{\infty}=\Sigma_{\infty}$, $\mathrm{Sh}_K(G_{\rmS,\rmT})(\overline{\bbQ})$ is a discrete set and the  action of $\Gal(\overline{\bbQ}/\bfE_{\rmS,\rmT})$ can be described explicitly as in \cite[\S2.1]{LT}.

We also set $$\Sh_{K_p}(G_{\rmS,\rmT}):=\lim\limits_{\leftarrow K^p}\Sh_{K}(G_{\rmS,\rmT})$$ and we write $\Sh_{K_p}^\circ(G_{\rmS,\rmT})$ for the neutral connected component of $\Sh_{K_p}(G_{\rmS,\rmT})_{\overline{\bbQ}}$. This is the  component containing the image of the point $$(i^{\Sigma_{\infty}-\rmS_{\infty}},1)\in(\fkh^\pm)^{\Sigma_{\infty}-\rmS_{\infty}}\times G_\rmS(\bbA_f).$$

\subsection{Unitary Shimura varieties}\label{sec:unitary}
In this section we define an auxiliary unitary Shimura variety which is of PEL-type in order to define integral models for $\mathrm{Sh}_{K}(G_{\rmS,\rmT})$.

Let $E/F$ be a CM-extension such that the following two conditions are satisfied:

(1) $E/F$ is inert at every place that $B_\rmS$ is ramified

(2) For $\fkp\in\Sigma_p$, $E/F$ is split at $\fkp$ if $\fkp\notin \rmS$ and $\Sigma_{\infty}-\rmS_{\infty/\fkp}$  is even, and is inert  if $\fkp\notin \rmS$ and $\Sigma_{\infty}-\rmS_{\infty/\fkp}$  is odd or if $\fkp\in\rmS$.

Let $\Sigma_{E,\infty}$ denote the set of complex embeddings of $E$, which we identify with the set of $p$-adic embeddings via $\iota_p$. For $\tilde{\tau}\in \Sigma_{E,\infty}$, we let $\tilde{\tau}^c$ denote its complex conjugate. For $\fkp\in \Sigma_p$, let $\fkp\in\Sigma_{E,\infty/\fkp}$ denote the subset of $p$-adic places of $E$ inducing $\fkp$. Similarly for $\fkq$ a $p$-adic place of $E$, we let $\Sigma_{E,\infty/\fkq}$ denote the set of $p$-adic places of $E$ inducing $\fkq$.

We choose a subset $\tilde{\rmS}_\infty\subset \Sigma_{E,\infty}$ satisfying the  property that for each $\fkp\in\Sigma_p$, the natural restriction map $\Sigma_{E,\infty/\fkp}\rightarrow \Sigma_{\infty/\fkp}$ induces a bijection $\tilde{\rmS}_{\infty/\fkp}\xrightarrow{\sim} \rmS_{\infty/\fkp}$, where $\tilde{\rmS}_{\infty/\fkp}=\tilde{\rmS}_{\infty}\cap\Sigma_{E,\infty/\fkp}$

%(2) For each place $p$-adic place $\fkq$ of $E$ above $\fkp$ a place of $F$, the cardinality of $\tilde{S}_{\infty/\fkq}$ is half the cardinality of the preimage of $\rmS_{\infty/\fkp}$ in $\Sigma_{E,\infty/\fkq}$.

For each $\tilde{\tau}\in \Sigma_{E,\infty}$, the choice of $\tilde{\rmS}_\infty$ determines a collection of numbers $s_{\tilde{\tau}}\in\{0,1,2\}$  given by:
\begin{equation}\label{eq:s tau}
s_{\tilde{\tau}}=\begin{cases} 0 & \text{if $\tilde{\tau}\in\tilde{\rmS}$}\\
	2 & \text{if $\tilde{\tau}^c\in\tilde{\rmS}$}\\
	1 &\text{otherwise}
\end{cases}
\end{equation}

Let $\tilde{\rmS}=(\rmS,\tilde{\rmS}_\infty)$ and $T_{E}:=\mathrm{Res}_{E/\bbQ}(\bbG_m)$. We let $K_{E,p}\subset T_E(\bbQ_p)$ denote the compact open  $(\calO_E\otimes_{\Z}\bbZ_p)^\times$. We define the homomorphism$$h_{E,\tilde{\rmS},\rmT}:\mathbb{S}(\bbR)\cong \mathbb{C}^\times\longrightarrow T_E(\mathbb{R})\cong \prod_{\tau\in\Sigma_{\infty}}(E\otimes_{F,\tau}\bbR)^\times \cong (\bbC^\times)^{\rmS_{\infty}-\rmT}\times (\bbC^\times)^{\rmT}\times(\bbC^\times)^{\Sigma_{\infty}-\rmS_{\infty}} $$ $$
z=x+yi\longrightarrow \left((\overline{z},\cdots,\overline{z}), (z^{-1},\cdots, z^{-1}), (1,\cdots,1)\right)$$
Here for $\tau\in\rmS_\infty$, we identify $(E\otimes_{F,\tau}\bbR)^\times$ with $\bbC^\times$  via the embedding $\tilde{\tau}:E\rightarrow \bbC$, where $\tilde{\tau}\in \tilde{\rmS}_{\infty}$ is the unique lift of $\tau$.
We use the above data to define a Shimura datum for a unitary similitude group which will give rise to a moduli interpretation of the unitary Shimura variety. 

 Let $D_{\rmS}:=B_{\rmS}\otimes E$, which is isomorphic to $\mathrm{Mat}_2(E)$ by our assumptions on $E$. We let $b\mapsto \overline{b}$ denote the involution on $D_{\rmS}$ defined by the product of canonical involution on $B_{\rmS}$ and complex conjugation on $E/F$. Fix a totally imaginary element $\alpha\in E$ such that $\alpha$ is a $\fkp$-adic unit for every place above $p$. Choose an element $\delta\in D_{\rmS}$ such that $\overline{\delta}=\delta$ as in \cite[Lemma 3.8]{TX}. We define a new involution of $D_\rmS$ by $b\mapsto b^*=\delta^{-1}\overline{b}\delta$.

We consider $W:=D_{\rmS}$ as a right $D_{\rmS}$-module of rank 1. It is equipped with the following pairing \begin{equation}
\label{eq:pairing}
\psi:W\times W\rightarrow \bbQ,\ \ \psi(x,y)=\mathrm{Tr}_{E/\bbQ}(\mathrm{Tr}^\circ_{D_\rmS/E}(\alpha x\delta y^*))\end{equation}where $\mathrm{Tr}^\circ_{D_\rmS/E}$ is the reduced trace. It is easy to see this pairing is alternating and non-degenerate. Moreover it satisfies the following property
$$\psi(bx,y)=\psi(x,b^*y), \ b\in D_{\rmS}^\times.$$
The unitary similitude group is defined to be $$G'_{\tilde{\rmS}}(\bbQ):=\{g\in GL_{D_{\rmS}}(W)| \psi(xg,yg)=c(g)\psi(x,y), \text{ for some } c(g)\in \bbQ^\times\}$$
which arises as the $\bbQ$-points of a reductive group $G'_{\tilde{\rmS}}$ over $\bbQ$. We may also describe this group in the following way. Since $GL_{D_{\rmS}}(W)\cong D_{\rmS}^\times,$ we have $g\in D_{\rmS}^\times$ lies in $G'_{\tilde{\rmS}}(\bbQ)$ if and only if $$\mathrm{Tr}_{E/\bbQ}(\mathrm{Tr}^\circ_{D_\rmS/E}(\alpha xg\delta g^*y^*))=c(g)\mathrm{Tr}_{E/\bbQ}(\mathrm{Tr}^\circ_{D_\rmS/E}(\alpha x\delta y^*)), \ \forall x,y\in D_{\rmS}.$$
This identity is equivalent to  $g\delta g^*=c(g)\delta$, i.e. $g\overline{g}=c(g)\in \bbQ^\times$. Thus $$G'_{\tilde{\rmS}}(\Q)=\{g=(b,t)\in B_{\rmS}^\times\times^{F^\times} E^\times| \nu_{\rmS}(b)\mathrm{Nm}_{E/F}(t)\in\bbQ^\times\}$$
where $\nu_{\rmS}:B_\rmS\rightarrow F$ is the reduced norm. Here $B_{\rmS}^\times\times^{F^\times} E^\times$ denotes the quotient of $B_{\rmS}^\times\times E^\times$ by the central embedding $F^\times\rightarrow B_{\rmS}^\times\times E^\times$ given by $x\mapsto (x,x^{-1})$.

Let $T_F$ denote the torus $\mathrm{Res}_{F/\bbQ}\bbG_m$. We define the group $$G''_{\tilde{\rmS}}:=G_{\tilde{\rmS}}\times^{T_F} T_E.$$Applying the above to points valued in a $\bbQ$-algebra, we see that $G'_{\tilde{\rmS}}$ is identified with the subgroup of $G_{\tilde{\rmS}}''$ corresponding to the preimage of $\bbG_m\subset T_F$ under the map $$N:G_{\tilde{\rmS}}''\rightarrow T_F, \ \ (b,t)\mapsto \nu_{\rmS}(b)\mathrm{Nm}_{E/F}(t).$$

We now let $h''_{\tilde{\rmS}}:\bbS\rightarrow G''_{\tilde{\rmS}}$ denote the morphism induced by $(h_{{\rmS},\rmT},h_{E,\tilde{\rmS},\rmT})$; it is independent of $\rmT$. The image of $h''_{{\tilde{\rmS}}}$ lies in $G'_{\tilde{\rmS}}$ and we let $h'_{\tilde{\rmS}}$ denote the induced map. Let $K_p''\subset G''_{\tilde{\rmS}}(\bbQ_p)$ the compact open  subgroup given by the image of $K_p\times K_{E,p}$ and $K'_p=G'_{\tilde{\rmS}}(\bbQ_p)\cap K_p''$. For sufficiently small compact open subgroups $K''^{p}\subset G_{\tilde{\rmS}}''(\bbA_f^p)$ and $K'^{p}\subset G'_{\tilde{\rmS}}(\bbA_f^p)$, we obtain Shimura varieties $\mathrm{Sh}_{K''}(G''_{\tilde{\rmS}}), \mathrm{Sh}_{K'}(G'_{\tilde{\rmS}})$ where $K''=K''_pK''^p$ and $K'=K'_pK'^p$. We also set $$\mathrm{Sh}_{K'_p}(G'_{\tilde{\rmS}}):=\varprojlim_{K'^p}\mathrm{Sh}_{K'}(G'_{\tilde{\rmS}}),\ \ \mathrm{Sh}_{K_p''}(G''_{\tilde{\rmS}}):=\varprojlim_{K''^p}\mathrm{Sh}_{K''}(G''_{\tilde{\rmS}})$$

Let $\bfE_{\tilde{\rmS}}$ denote the common reflex field of these Shimura varieties, which is a subfield of $\bbC$.  The isomorphism $\iota_p:\overline{\bbQ}_p\cong \bbC$ induces a $p$-adic place $\tilde{v}$ of $\bfE_{\tilde{\rmS}}$. We let $\mathrm{Sh}_{K''_p}(G''_{\tilde{S}})^\circ$ (resp. $\mathrm{Sh}_{K'_p}(G'_{\tilde{\rmS}})^\circ$) denote the neutral geometric connected component of $\mathrm{Sh}_{K''_p}(G''_{\tilde{\rmS}})_{\overline{\bbQ}}$ (resp. $\mathrm{Sh}_{K'_p}(G'_{\tilde{\rmS}})_{\overline{\bbQ}}$). Then both $\mathrm{Sh}_{K''_p}(G''_{\tilde{\rmS}}))_{\overline{\bbQ}_p}^\circ$ and $\mathrm{Sh}_{K'_p}(G'_{\tilde{\rmS}})_{\overline{\bbQ}_p}^\circ$  can be descended to $\bbQ_p^{\mathrm{ur}}$.

We have the following diagram of groups $$G_{\rmS}\leftarrow G_{\rmS}\times T_E\rightarrow G''_{\tilde{\rmS}}\leftarrow G'_{\tilde{S}}$$ compatible with Deligne homomorphisms and such that the induced maps on the derived and adjoint groups are isomorphisms. By \cite[Corollary 2.16]{TX} this induces  isomorphisms
$$\mathrm{Sh}_{K_p}(G_{\rmS})_{\overline{\bbQ}_p}^\circ\xleftarrow{\sim}\mathrm{Sh}_{K''_p}(G''_{\tilde{\rmS}})_{\overline{\bbQ}_p}^\circ\xrightarrow{\sim}\mathrm{Sh}_{K'_p}(G'_{\tilde{\rmS}})_{\overline{\bbQ}_p}^\circ$$
of neutral geometric connected components. Since Shimura varieties may be constructed from its neutral connected component and the action of Hecke and Galois, we may transfer integral models from one group to the other. See \cite[\S2.11]{TX1} for the details.
\subsection{Moduli interpretation for unitary Shimura varieties and integral models}\label{sec:moduli interpretation}
The Shimura variety $\mathrm{Sh}_{K'}(G'_{\tilde{\rmS}})$ is a Shimura variety of PEL-type and thus admits an integral model as a moduli space. Recall the $D_\rmS$-module  $W$ together with the non-degenerate alternating form $\psi$ introduced in the last subsection. We also fix some integral PEL data. Let $\calO_{D_{\rmS}}\subset D_{\rmS}$ be an order which is maximal at $p$ and $\Lambda\subset W$ an $\calO_{D_{\rmS}}$-lattice such that $\psi(\Lambda,\Lambda)\subset\Z$ and $\Lambda\otimes \bbZ_p$ is self-dual. Let $\rmK'^p$ be a sufficiently small compact open subgroup of $G'_{\tilde{S}}$ stabilizing $\Lambda\otimes\widehat{\bbZ}^p$.
We consider the moduli functor $\underline{\mathrm{Sh}}_{K'}(G'_{\tilde{\rmS}})$ that associates to each $\calO_{\bfE_{\tilde{\rmS},\tilde{v}}}$-scheme $S$ the set of isomorphism classes of triples $(A,\iota,\lambda,\epsilon_{K'^p})$ where:

$\bullet$ $A$  is an abelian scheme  over $S$ of dimension $4[F:\bbQ]$.

$\bullet$ $\iota:\calO_{D_{\rmS}}\hookrightarrow\mathrm{End}_S(A)$ is an embedding such that the induced action of $a\in\calO_{D_{\rmS}}$ acting on $\mathrm{Lie}(A/S)$ satisfies
\begin{equation}\label{eq:char poly 1}\det(T -\iota(a)| \mathrm{Lie}(A/S)) = \prod_{\tilde{\tau}\in\Sigma_{E,\infty}}(T-\tilde{\tau}(a))^{2s_{\tilde{\tau}}}\end{equation}

$\bullet$ $\lambda:A\rightarrow A^\vee$ is a  polarization such that:

-The Rosati involution on $\mathrm{End}_S(A)$ defined  by $\lambda$ induces the involution $b\mapsto b^*$ on $\calO_{D_{\rmS}}$.

-If $\fkp\notin \rmS$, $\Lambda$ induces an isomorphism of $p$-divisible groups $A[\fkp^\infty]\cong A^\vee[\fkp^\infty]$.

-If $\fkp\in \rmS$, $(\ker\lambda)[\fkp^\infty]$ is a finite flat group scheme contained in $A[\fkp]$ of rank $p^{4g_{\fkp}}$ and the cokernel of the map $\lambda_*:\rmH_1^{\mathrm{dR}}(A/S)\rightarrow \rmH_1^{\mathrm{dR}}(A^\vee/S)$ is a locally free module of rank 2 over $\calO_S\otimes_{\Z_p}\calO_E/\fkp$. Here $\rmH_1^{\mathrm{dR}}(A/S)$ denotes the relative de Rham homology.

$\bullet$ $\epsilon_{K'^p}$ is a $K'^p$-level structure, i.e. a  $K'^p$-orbit of isomorphisms
$$\epsilon_{K'^p}: \Lambda\otimes_{\Z}\widehat{\Z}^p\cong \widehat{T}^pA$$ which respects the action of $\calO_{D_{\rmS}}$ on both sides and preserves the pairings on both sides.  Here $\widehat{T}^pA:=\lim\limits_{\leftarrow p\nmid n}A[n]$ denotes the prime-to-$p$ Tate module of $A$.

By \cite[Theorem 3.14]{TX}, the moduli problem $\underline{\mathrm{Sh}}_{K'}(G'_{\tilde{\rmS}})$ is representable by a smooth quasi-projective scheme over $\calO_{\mathbf{E}_{\tilde{\rmS},\tilde{v}}}$, and its generic fiber is identified with $\mathrm{Sh}_{K'}(G'_{\tilde{\rmS}})_{\bfE_{\rmS,\tilde{v}}}$. Moreover it is an integral canonical model for $\mathrm{Sh}_{K'}(G'_{\tilde{\rmS}})$ in the sense of \cite[\s2.4]{TX}, see also \cite{Milne}. We write $$\underline{\mathrm{Sh}}_{K'_p}(G'_{\tilde{\rmS}}):=\lim\limits_{\leftarrow K'^p}\underline{\mathrm{Sh}}_{K'_pK'^p}(G'_{\tilde{\rmS}}).$$Taking the closure of $\mathrm{Sh}_{K'_p}(G'_{\tilde{\rmS}})^\circ$ in $\underline{\mathrm{Sh}}_{K'_p}\otimes \bbZ_p^{\mathrm{ur}}$ and using Deligne's recipe to transfer across to $G_{\rmS}$ we obtain an integral canonical model $\underline{\mathrm{Sh}}_{K}(G_{\rmS,\rmT})$ for $\mathrm{Sh}_{K}(G_{\rmS,\rmT})$; see \cite[\S2.11]{TX}. We write $\mathscr{S}_{K}(G_{\rmS,\rmT})$ (resp. $\mathscr{S}_{K'}(G'_{\tilde{\rmS}})$) for the special fiber of $\underline{\mathrm{Sh}}_K(G_{\rmS,\rmT})$  (resp. $\underline{\mathrm{Sh}}_{K'}(G'_{\tilde{\rmS}}$)) over $k_v$ (resp. $k_{\tilde{v}}$).  We let $k_0$ be any finite field containing all the residue fields of the $p$-adic places of $E$.

We fix an isomorphism $\calO_{D_{\rmS,p}}:=\calO_{D_{\rmS}}\otimes_{\bbZ}\bbZ_p \cong M_{2\times 2}(\calO_E\otimes_{\bbZ}\bbZ_p)$ and let $e$ denote the idempotent of $M_2(\calO_{E}\otimes_{\bbZ}\bbZ_p)$ given by $$e=\left(\begin{matrix}
	1 & 0 \\
	0 & 0
\end{matrix}\right).$$   Suppose $M$ is a module with an action of $\calO_{D_{\rmS,p}}$. We write $M^\circ$ for the sub $\calO_{E}\otimes_{\bbZ}\bbZ_p$-module $eM\subset M$.

Now suppose $k$ is a perfect field of characteristic $p$ containing all residue fields at the $p$-adic places of $E$. Let $(A,\iota,\lambda,\epsilon_{K'^p})$ be the data corresponding to an $S$-point of $\rmSh_{K'}(G_{\tilde{\rmS}})$ where $S$ is a $k_{\tilde{v}}$-scheme. Then we have an exact sequence:
\[\xymatrix{0 \ar[r]& \omega_{A^\vee}^\circ\ar[r]& \rmH_1^{\mathrm{dR}}(A/S)^\circ\ar[r]& \mathrm{Lie}(A)^\circ \ar[r]& 0 }.\]
Here $\omega_{A^\vee}$ is the  module of invariant differential forms for the dual abelian variety $A^\vee$ and $\rmH_1^{\mathrm{dR}}(A/S)$ is the first relative de Rham homology; see \cite[4.1]{TX}. For $\tilde{\tau}\in\Sigma_{E,\infty}$, we use a subscript $\tilde{\tau}$ to denote the subspace over which $\calO_{E,p}:=\calO_E\otimes_{\bbZ}\bbZ_p$ acts via $\tilde{\tau}$. Then the above induces an exact sequence
\begin{equation}\label{eq3}
\xymatrix{0 \ar[r]& \omega_{A,\tilde{\tau}}^\circ\ar[r]& \rmH_1^{\mathrm{dr}}(A/S)_{\tilde{\tau}}^\circ\ar[r]& \mathrm{Lie}(A)_{\tilde{\tau}}^\circ \ar[r]& 0 }\end{equation}
and the dimensions of these three factors are $2-s_{\tilde{\tau}}$, $2$ and $s_{\tilde{\tau}}$ respectively. In particular, for $\tau\in\Sigma_{\infty}-\rmS_\infty$, and $\tilde{\tau}$ a lift of $\tau$, $\omega_{\tilde{\tau}}^\circ$ is a line bundle over $S$.

 For simplicity, we will write $X'$ for $\scrS_{K'}(G'_{\tilde{\rmS}})_{k_0}$  and $(\calA',\iota,\lambda,\epsilon_{K'^p})$ the universal abelian variety over ${X'}$. Recall  the Kodaira--Spencer map $$\text{KS}_{(\calA',\iota,\lambda,\epsilon_{K'^p})} : \omega_{\calA'}\otimes_{\scrO_{X'}}\omega_{\calA'^\vee}\rightarrow\Omega^1_{X'/k_{0}}$$
which induces a map $$\text{KS}^\circ_{(\calA',\iota,\lambda,\epsilon_{K'^p})} : \omega_A^\circ\otimes_{\scrO_{X'}}\omega_{\calA'^\vee}^\circ\rightarrow\Omega^1_{{X'}/k_{0}}.$$

\begin{prop}\label{prop: KS}
The  map $\text{KS}^\circ_{({\calA'},\iota,\lambda,\epsilon_{K'^p})}$ factors as $$\omega^\circ_{\calA'}\otimes_{\scrO_{X'}}\omega_{{\calA'}^\vee}^\circ\rightarrow\omega^\circ_{\calA'}\otimes_{\scrO_{X'}\otimes\calO_{E_{\fkq}}}\omega^\circ_{{\calA'}^\vee}\rightarrow \Omega^{1}_{{X'}/k_{0}}.$$ 
and induces an isomorphism $\omega^\circ_{\calA'}\otimes_{\scrO_{X'}\otimes\calO_{E_{\fkq}}}\omega^\circ_{{\calA'}^\vee}\cong \Omega^{1}_{{X'}/k_{0}}.$
\end{prop}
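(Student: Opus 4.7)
The Kodaira--Spencer map $\mathrm{KS}$ arises from the Gauss--Manin connection $\nabla$ on $\rmH_1^{\mathrm{dR}}(\calA'/X')$ via the composition
\[
\omega_{{\calA'}^\vee}\hookrightarrow \rmH_1^{\mathrm{dR}}(\calA'/X')\xrightarrow{\nabla}\rmH_1^{\mathrm{dR}}(\calA'/X')\otimes\Omega^1_{X'/k_0}\twoheadrightarrow \mathrm{Lie}(\calA')\otimes\Omega^1_{X'/k_0},
\]
which, after adjoining via $\mathrm{Lie}(\calA')\cong\omega_{\calA'}^\vee$, yields the pairing $\omega_{\calA'}\otimes_{\scrO_{X'}}\omega_{{\calA'}^\vee}\to\Omega^1_{X'/k_0}$. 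Applying the idempotent $e$ of \S\ref{sec:moduli interpretation} throughout produces $\mathrm{KS}^\circ$.

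For the factorization, the Gauss--Manin connection $\nabla$ is $\calO_{D_\rmS}$-equivariant, since the $\iota$-action on $\rmH_1^{\mathrm{dR}}(\calA'/X')$ is induced by functoriality of de Rham homology under $\iota$. For any $a\in\calO_E\subset\calO_{D_\rmS}$ one therefore has $\mathrm{KS}^\circ(\iota(a)x\otimes y)=\mathrm{KS}^\circ(x\otimes\iota(a)y)$, which forces the map to descend through the tensor product over $\scrO_{X'}\otimes\calO_{E_\fkq}$ on the relevant $\fkq$-summand (using the decomposition $\calO_E\otimes_\bbZ\bbZ_p=\prod_\fkq\calO_{E_\fkq}$ and the induced splitting of $\omega^\circ_{\calA'}$ and $\omega^\circ_{{\calA'}^\vee}$).

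For the isomorphism assertion I argue \'etale-locally on $X'$. By the theory of local models for PEL-type Shimura varieties recalled in \cite[\S3]{TX}, every closed point of $X'$ has an \'etale neighborhood admitting an \'etale map to the local model $\prod_{\tilde\tau\in\Sigma_{E,\infty/\fkq}}\Gr(s_{\tilde\tau},2)$, where the factor at $\tilde\tau$ parametrizes the Hodge filtration $\omega^\circ_{{\calA'}^\vee,\tilde\tau}\subset \rmH_1^{\mathrm{dR}}(\calA'/X')^\circ_{\tilde\tau}$ as a rank-$(2-s_{\tilde\tau})$ subspace of the fixed rank-$2$ module. The universal property of Grassmannians identifies the tangent space at a point with $\bigoplus_{\tilde\tau}\mathrm{Hom}(\omega^\circ_{{\calA'}^\vee,\tilde\tau},\mathrm{Lie}(\calA')^\circ_{\tilde\tau})\cong \bigoplus_{\tilde\tau}\omega^\circ_{\calA',\tilde\tau}\otimes_{\scrO_{X'}}\omega^\circ_{{\calA'}^\vee,\tilde\tau}$, which matches the $\tilde\tau$-summand decomposition of $\omega^\circ_{\calA'}\otimes_{\scrO_{X'}\otimes\calO_{E_\fkq}}\omega^\circ_{{\calA'}^\vee}$. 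Under the standard interpretation of Kodaira--Spencer as the derivative of the classifying map to the local model, $\mathrm{KS}^\circ$ is precisely this tangent-space isomorphism.

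The main technical point will be tracking the polarization constraint carefully: the Rosati involution (acting as complex conjugation on $\calO_E$) identifies $\omega^\circ_{{\calA'}^\vee,\tilde\tau}$ with $\omega^\circ_{\calA',\tilde\tau^c}$, and this identification is what makes the rank of $\omega^\circ_{\calA'}\otimes_{\scrO_{X'}\otimes\calO_{E_\fkq}}\omega^\circ_{{\calA'}^\vee}$ coincide with $\dim X'$. In the case $\fkp\in\rmS$, where $\lambda$ is not an isomorphism, one must instead work with the modified local model of \cite[\S3]{TX} that incorporates the condition on $\ker\lambda[\fkp^\infty]$; the tangent-space computation then proceeds analogously, but requires somewhat more care.
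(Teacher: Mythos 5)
Your argument is essentially correct, but it follows a genuinely different route from the paper: the paper's entire proof is a citation of \cite[Proposition 2.3.4.1]{Lan}, which is precisely the statement that for a PEL moduli problem the Kodaira--Spencer pairing kills the relations coming from the $\calO_{D_{\rmS}}$-action and the polarization and induces an isomorphism onto $\Omega^1$. You instead reprove this in the case at hand: the factorization from equivariance of the Gauss--Manin connection, and the isomorphism by identifying $\mathrm{KS}^\circ$ with the differential of the \'etale-local classifying map to the local model $\prod_{\tilde{\tau}}\Gr(s_{\tilde{\tau}},2)$. This is self-contained and makes the geometric content visible (it is in fact close to how Lan proves his proposition, via deformation theory of the Hodge filtration), at the cost of redoing bookkeeping that the reference packages once and for all.

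Three points to tighten. First, the equivariance relation must be stated with the Rosati-twisted dual action: the correct identity is $\mathrm{KS}(\iota(b)^{*}x\otimes y)=\mathrm{KS}(x\otimes \iota(b)^{\vee *}y)$; only after restricting to $\calO_E$, passing to the $\circ$-parts and unwinding how $\calO_E$ acts on $\omega_{\calA'^\vee}$ does one get balancedness with the \emph{same} embedding $\tilde{\tau}$ acting on both factors, which is what the target $\omega^\circ_{\calA'}\otimes_{\scrO_{X'}\otimes\calO_{E_{\fkq}}}\omega^\circ_{\calA'^\vee}$ (equivalently $\bigoplus_{\tilde{\tau}}\omega^\circ_{\calA',\tilde{\tau}}\otimes\omega^\circ_{\calA'^\vee,\tilde{\tau}}$) requires; your displayed identity with $\iota(a)$ on both sides presupposes this convention. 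Second, $\mathrm{Hom}(\omega^\circ_{\calA'^\vee,\tilde{\tau}},\mathrm{Lie}(\calA')^\circ_{\tilde{\tau}})$ is the \emph{tangent} space of the Grassmannian factor, and since $\mathrm{Lie}(\calA')^\circ_{\tilde{\tau}}\cong(\omega^\circ_{\calA',\tilde{\tau}})^\vee$ it is the dual of $\omega^\circ_{\calA',\tilde{\tau}}\otimes\omega^\circ_{\calA'^\vee,\tilde{\tau}}$; what $\mathrm{KS}^\circ$ identifies is the tensor product with the \emph{cotangent} sheaf $\Omega^1_{X'/k_0}$, so your displayed isomorphism conflates a sheaf with its dual (harmless for the conclusion, but it should be stated correctly). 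Third, the case $\fkp\in\rmS$ needs no genuinely new computation for this statement: for all $\tilde{\tau}$ above such $\fkp$ one has $s_{\tilde{\tau}}\in\{0,2\}$, so $\omega^\circ_{\calA',\tilde{\tau}}\otimes\omega^\circ_{\calA'^\vee,\tilde{\tau}}=0$ and these places contribute nothing to either side; one only needs the smoothness of $X'$ of the correct dimension (\cite[Theorem 3.14]{TX}) to know the deformation problem is rigid there.
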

\begin{proof}
This follows from  \cite[Proposition 2.3.4.1]{Lan} applied to the universal abelian variety over $X'$.
\end{proof}

\begin{cor}\label{cor: Kodaira_Spencer unitary}
The map $KS_{({\calA'},\iota,\lambda,\epsilon_{K'^p})}^\circ$ induces an isomorphism of line bundles $$\bigotimes_{\tau\in\Sigma_{\infty}-\rmS_\infty}(\omega^\circ_{{\calA'}^\vee,\tilde{\tau}})\otimes_{\scrO_{X'}} (\omega^\circ_{{\calA'}^\vee,\tilde{\tau}^c})\cong \Omega_{{X'}/k_{0}}^{|\Sigma_\infty-\rmS_{\infty}|}$$
where $\tilde{\tau}$ is a lift of $\tau$.
\end{cor}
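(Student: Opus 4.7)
The proof proceeds in three steps: decompose the Kodaira--Spencer isomorphism of the Proposition according to the $\calO_E$-action, take the top exterior power, and then use the polarization to rewrite $\omega^\circ_{\calA'}$-terms as $\omega^\circ_{\calA'^\vee}$-terms.

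First I would decompose the isomorphism $\omega^\circ_{\calA'}\otimes_{\scrO_{X'}\otimes\calO_{E_\fkq}}\omega^\circ_{\calA'^\vee}\cong\Omega^1_{X'/k_0}$ from the Proposition according to the $p$-adic embeddings $\tilde{\tau}$ of $E$, splitting both sides into a direct sum of pieces indexed by $\tilde{\tau}$. The exact sequence~(\ref{eq3}) gives $\dim\omega^\circ_{\calA'^\vee,\tilde{\tau}}=2-s_{\tilde{\tau}}$, and the corresponding sequence for $\calA'^\vee$ (combined with the Rosati condition on the polarization) gives $\dim\omega^\circ_{\calA',\tilde{\tau}}=s_{\tilde{\tau}}$. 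Thus only those $\tilde{\tau}$ with $s_{\tilde{\tau}}=1$, i.e.\ the embeddings lifting $\tau\in\Sigma_\infty-\rmS_\infty$, contribute nontrivial rank-one summands.

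Next I would take the top exterior power of the resulting isomorphism. The right hand side becomes the top form $\Omega^{|\Sigma_\infty-\rmS_\infty|}_{X'/k_0}$, a line bundle since $\dim X'=|\Sigma_\infty-\rmS_\infty|$, while the left hand side becomes the tensor product over the contributing $\tilde{\tau}$'s of the line bundles $\omega^\circ_{\calA',\tilde{\tau}}\otimes_{\scrO_{X'}}\omega^\circ_{\calA'^\vee,\tilde{\tau}}$.

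Finally I would use the polarization $\lambda$ to rewrite each $\omega^\circ_{\calA',\tilde{\tau}}$-factor as $\omega^\circ_{\calA'^\vee,\tilde{\tau}^c}$. For every $\tau\in\Sigma_\infty-\rmS_\infty$ the prime $\fkp$ of $F$ below $\tau$ lies outside $\rmS$ (by the standing condition that $\fkp\in\rmS$ only if $\rmS_{\infty/\fkp}=\Sigma_{\infty/\fkp}$), so $\lambda$ is an isomorphism on the $\fkp^\infty$-divisible group of $\calA'$. Since the Rosati involution restricts to complex conjugation on $\calO_E$, the pullback $\lambda^*$ sends the $\tilde{\tau}$-eigenspace of $\omega^\circ_{\calA'^\vee}$ isomorphically onto the $\tilde{\tau}^c$-eigenspace of $\omega^\circ_{\calA'}$. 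Substituting into the tensor from the previous step and reindexing by pairs $\{\tilde{\tau},\tilde{\tau}^c\}$ (one per $\tau\in\Sigma_\infty-\rmS_\infty$) yields the expression claimed in the Corollary.

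The main subtlety is the careful bookkeeping of the $\calO_E$-indices through the polarization swap $\tilde{\tau}\leftrightarrow\tilde{\tau}^c$: one must verify that after the substitution each $\tau\in\Sigma_\infty-\rmS_\infty$ contributes exactly one factor $\omega^\circ_{\calA'^\vee,\tilde{\tau}}\otimes\omega^\circ_{\calA'^\vee,\tilde{\tau}^c}$ to the final determinant line bundle, matching the top form $\Omega^{|\Sigma_\infty-\rmS_\infty|}_{X'/k_0}$.
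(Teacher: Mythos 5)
Your proposal is correct and follows essentially the same route as the paper: decompose the Kodaira--Spencer isomorphism of Proposition \ref{prop: KS} over the embeddings $\tilde{\tau}$, observe that only the $\tilde{\tau}$ with $s_{\tilde{\tau}}=1$ (i.e.\ lifting $\tau\in\Sigma_\infty-\rmS_\infty$) give nonzero rank-one summands, use the polarization to replace $\omega^\circ_{\calA',\tilde{\tau}}$ by $\omega^\circ_{\calA'^\vee,\tilde{\tau}^c}$, and take the top exterior power. The only difference is the order of the last two steps (the paper applies the polarization identification before taking determinants), which is immaterial.
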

\begin{proof} We have an isomorphism $$\omega_{\calA'}^\circ\otimes_{\scrO_{X'}\otimes\calO_{E_{\fkq}}}\omega^\circ_{{\calA'}^\vee}\cong \bigoplus_{\tau\in\Sigma_{\infty}}\omega^\circ_{{\calA'},\tilde{\tau}}\otimes\omega^\circ_{{\calA'}^\vee,\tilde{\tau}}.$$
	Now for $\tau\in\rmS_{\infty}$, either $\omega^\circ_{{\calA'},\tilde{\tau}}$ or $\omega^\circ_{{\calA'}^\vee,\tilde{\tau}}$ has rank $0$. 
	
	For $\tau\in \Sigma_{\infty}-\rmS_\infty$, the polarization induces an isomorphism $\omega_{{\calA'},\tilde{\tau}}\cong\omega_{{\calA'}^\vee,\tilde{\tau}^c}$ compatible with the action of $\calO_{D_\rmS}$ hence induces an isomorphism $\omega^\circ_{{\calA'},\tilde{\tau}}\cong\omega^\circ_{{\calA'}^\vee,\tilde{\tau}^c}$. Thus we may rewrite the isomorphism of Proposition \ref{prop: KS} as $$\bigoplus_{\tau\in\Sigma_\infty-\rmS_{\infty}}(\omega^\circ_{{\calA'}^\vee,\tilde{\tau}})\otimes_{\scrO_{X'}} (\omega^\circ_{{\calA'}^\vee,\tilde{\tau}^c})\cong\Omega_{{X'}/\calO_{k_{0}}}^1.$$
Taking the top exterior power gives the result.
\end{proof}

Now let $X$ denote the special fiber $\scrS_K(G_{\rmS,\rmT})_{k_0}$. Using \cite[Corollary 2.13]{TX}, we may transfer the vector bundles $\omega_{\calA'^\vee,\tilde{\tau}}^\circ$ from $X'$ to $X$. For each $\tau\in \Sigma_{\infty}-\rmS_{\infty}$, we write $\omega_{\tau}$ to be the line bundle on $X$ coming from $\omega_{\calA'^\vee,\tilde{\tau}}^\circ$ on $X'$ for some choice of lift $\tilde{\tau}$ of $\tau$ which we now fix. Then $\omega_{\tau}$ is independent of the choice of lift $\tilde{\tau}$ up to a torsion element in the Picard group of $X$ by \cite[Lemma 6.2]{TX}. For any line bundle $\scrL$ on $X$, we write $[\scrL]$ for the image of $\scrL$ in the rational Picard group. The next Corollary follows immediately from Corollary \ref{cor: Kodaira_Spencer unitary}.

\begin{cor}\label{cor:Kodaira Spencer quaternionic}
	There exists an isomorphism $$\left[\bigotimes_{\tau\in\Sigma_{\infty}-\rmS_\infty}\omega_\tau^2\right]\cong \left[\Omega_{{X}/k_0}^{|\Sigma_\infty-\rmS_{\infty}|}\right]$$ in the rational Picard group of $X$.
\end{cor}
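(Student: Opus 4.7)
The plan is to deduce this directly from the unitary-side identity of Corollary \ref{cor: Kodaira_Spencer unitary} via the transfer of line bundles from $X'=\scrS_{K'}(G'_{\tilde{\rmS}})_{k_0}$ to $X=\scrS_K(G_{\rmS,\rmT})_{k_0}$ supplied by \cite[Corollary 2.13]{TX}. First I would recall that, by definition, for each $\tau\in\Sigma_\infty-\rmS_\infty$ the line bundle $\omega_\tau$ on $X$ is obtained by transferring the line bundle $\omega^\circ_{\calA'^\vee,\tilde\tau}$ from $X'$ under a fixed choice of lift $\tilde\tau$ of $\tau$, and that the transfer mechanism is monoidal and sends natural sheaves to natural sheaves; in particular it carries $\Omega^1_{X'/k_0}$ to $\Omega^1_{X/k_0}$, and hence top exterior powers to top exterior powers.

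Next I would handle the mild ambiguity caused by the appearance of the conjugate lift $\tilde\tau^c$ in Corollary \ref{cor: Kodaira_Spencer unitary}. Transferring $\omega^\circ_{\calA'^\vee,\tilde\tau^c}$ from $X'$ produces a line bundle on $X$ which a priori depends on the choice of $\tilde\tau^c$, but by \cite[Lemma 6.2]{TX} its class in $\Pic(X)$ differs from $[\omega_\tau]$ only by a torsion element. Consequently, in the rational Picard group $\Pic(X)\otimes\bbQ$, both lifts give the same class, and the tensor factor $\omega^\circ_{\calA'^\vee,\tilde\tau}\otimes\omega^\circ_{\calA'^\vee,\tilde\tau^c}$ on $X'$ transfers to a line bundle of class $[\omega_\tau^2]$.

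Finally I would put these pieces together: taking the tensor product over $\tau\in\Sigma_\infty-\rmS_\infty$ of the transferred factors and applying the isomorphism on $X'$ provided by Corollary \ref{cor: Kodaira_Spencer unitary} yields, after passing to the rational Picard group,
$$\left[\bigotimes_{\tau\in\Sigma_{\infty}-\rmS_\infty}\omega_\tau^2\right]=\left[\Omega_{X/k_0}^{|\Sigma_\infty-\rmS_\infty|}\right],$$
as desired. The only real obstacle is bookkeeping the lift ambiguity described in the previous paragraph, and it is precisely because of this that the statement is formulated in the rational Picard group, where the torsion discrepancies disappear.
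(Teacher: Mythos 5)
Your argument is correct and is essentially the paper's own: the paper deduces the corollary immediately from Corollary \ref{cor: Kodaira_Spencer unitary} using the transfer of $\omega^\circ_{\calA'^\vee,\tilde\tau}$ from $X'$ to $X$ via \cite[Corollary 2.13]{TX} and the fact (from \cite[Lemma 6.2]{TX}) that changing the lift $\tilde\tau$ only alters the class by torsion, which disappears in the rational Picard group. Your spelled-out handling of the $\tilde\tau^c$ ambiguity and of the compatibility of the transfer with $\Omega^1$ is exactly the implicit content of the paper's one-line proof.
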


\subsection{Goren-Oort divisors and Goren-Oort cycles}\label{sec: Goren-Oort}
We now let $k_0$ be the smallest subfield of $\Fpbar$ containing the residue fields of the $p$-adic places of $E$. Then $k_0=\bbF_{p^h}$ where $h$ is the least common multiple of $\{1+g_{\fkp}-2\lfloor g_{\fkp}/2\rfloor|\fkp\in\Sigma_{\infty}\}$. In this section we  will recall the description of  Goren--Oort divisors and Goren--Oort cycles in $\sshG_{k_0}$ obtained in \cite{TX} and \cite{TX1}.  As before $\rmS\subset \Sigma_{p}\cup\Sigma_{\infty}$ is a set of even cardinality. For each $\tau\in \Sigma_{\infty}-S_\infty$, \cite{TX} defines a Goren-Oort divisor $\sshG_{k_0,\tau}\subset\sshG_{k_0}$ by transferring over a certain divisor on an auxiliary unitary Shimura variety.  We briefly recall its construction. 

Let $(A,\iota,\lambda,\epsilon_{K'^p})$ be an $S$-point of $\scrS_{K'}(G'_{\tilde{\rmS}})$ for $S$ a $k_{0}$-scheme. We define the {\it essential Verschiebung} $$V_{\mathrm{es},\tilde{\tau}}:\rmH_1^{\mathrm{dR}}(A/S)_{\tilde{\tau}}^\circ\rightarrow \rmH_1^{\mathrm{dR}}(A^{(p)}/S)^\circ_{\tilde{\tau}}\cong \rmH_1^{\mathrm{dR}}(A/S)^{\circ,(p)}_{\sigma^{-1}\tilde{\tau}}$$ to be the usual Verschiebung if $s_{\sigma^{-1}\tilde{\tau}}=0$ or $1$, and to be the inverse of the usual Frobenius if $\sigma^{-1}(s_{\tilde{\tau}})=2$. Note that $V_{\mathrm{es},\tilde{\tau}}$ preserves the exact sequence (\ref{eq3}). For every integer $n\geq 1$ we define $$V_{\mathrm{es}}^n:\rmH_1^{\mathrm{dR}}(A/S)^{\circ}_{\tilde{\tau}}\rightarrow\rmH_1^{\mathrm{dR}}(A^{(p^n)}/S)^\circ_{\tilde{\tau}}\cong \rmH_1^{\mathrm{dR}}(A/S)^{\circ,(p^n)}_{\sigma^{-n}(\tilde{\tau})}$$
to be the $n^{\mathrm{th}}$-iteration of the essential Verschiebung.

For $\tau\in\Sigma_{\infty}-\rmS_{\infty}$, we define the integer $n_\tau$ to be the smallest integer such that $\sigma^{-n_\tau}\tau\in\Sigma_\infty-\rmS_{\infty}$. Now for each $\tilde{\tau}\in\Sigma_{E,\infty}$ with $s_{\tilde{\tau}}=1$, the restriction of $V_{\text{es}}^{n_\tau}$ to $\omega^{\circ}_{A^\vee/S}$ defines a map
$$h_{\tilde{\tau}}(A):\omega^{\circ}_{A^\vee,\tilde{\tau}}\rightarrow\omega^{\circ,(p^{n_\tau})}_{A^\vee,\sigma^{-n_\tau}\tilde{\tau}}\cong(\omega^\circ_{A^\vee,\sigma^{-n_\tau}\tilde{\tau}})^{\otimes p^{n_\tau}}.$$ Applying this to  the universal abelian variety $\mathcal{A}'$ over $\mathscr{S}_{K'}(G'_{\tilde{S}})_{k_0}$, we obtain a global section $$h_{\tilde{\tau}}\in \Gamma(\scrS_{K'}(G'_{\tilde{S}},)_{k_0},(\omega^\circ_{\calA'^\vee,\sigma^{-n_\tau}\tilde{\tau}})^{\otimes p^{n_\tau}}\otimes (\omega^{\circ}_{\calA'^\vee,\tilde{\tau}})^{\otimes{-1}}).$$ We call this  the $\tau^{\mathrm{th}}$ partial Hasse-invariant. We let $\scrS_{K'}(G'_{\tilde{\rmS}})_{k_0,\tilde{\tau}}\subset \scrS_{K'}(G'_{\tilde{\rmS}})_{k_0}$ denote the vanishing locus of $h_{\tilde{\tau}}$. We let $\scrS_K(G_{\rmS,\rmT})_{k_0,\tilde{\tau}}$ be the corresponding divisor on $\scrS_K(G_{\rmS,
\rmT})_{k_0}$. By \cite[Lemma 4.5]{TX}, $\scrS_{K'}(G'_{\tilde{\rmS}})_{k_0,\tilde{\tau}}$, and hence $\scrS_K(G_{\rmS,\rmT})_{k_0,\tilde{\tau}}$, is independent of the lifting $\tilde{\tau}$ of $\tau$. We may thus write $\scrS_{K'}(G'_{\tilde{\rmS}})_{k_0,\tau}$ and $\scrS_K(G_{\rmS,\rmT})_{k_0,\tau}$ for these divisors respectively. The divisor $\scrS_K(G_{\rmS,\rmT})_{k_0,\tau}$ is known as the {\it Goren-Oort divisor corresponding to $\tau$}. It can also be described as the vanishing of a certain section of the line bundle of $\omega_{\sigma^{-n_\tau}\tau}^{\otimes (p^{n_\tau})}\otimes\omega_\tau^{-1}$ on $\scrS_K(G_{\rmS,\rmT})_{k_0}$.

For $J\subset \Sigma_{\infty}-S_\infty$, we set $$\sshG_{k_0,J}:=\bigcap_{\tau\in J}\sshG_{k_0,\tau}.$$

The closed subvarieties $\sshG_{k_0,J}$ as $J$ runs over subsets of $\Sigma_{\infty}-\rmS_{\infty}$ give the {\it Goren-Oort stratification} of $\sshG$. 

To state the main structure theorem of \cite{TX}, we introduce the following notation. For $\tau\in \Sigma_{\infty}-\rmS_{\infty}$, we write $\tau^-$ for $\sigma^{-n_\tau}(\tau)$.  We write $\rmT_\tau=\rmT\cup\{\tau\}$ and define $$\rmS_{\tau}=\begin{cases}\rmS_{\infty/\fkp}\cap\{\tau,\tau^-\}&\text{if $\rmS_{\infty/
		\fkp}\cup\{\tau\}\neq\Sigma_{\infty/\fkp}$}\\
	\rmS\cup\{\tau,\fkp\}&\text{if $\rmS_{\infty/\fkp}\cup\{\tau\}=\Sigma_{\infty/\fkp}$}
\end{cases}$$

\begin{thm}[{\cite[Theorem 5.2]{TX}, \cite[Proposition 5.9]{LT}}]\label{thm:Goren-Oort strata}There is a morphism  of $\Fpbar$-varieties $$\pi_{\tau}:\sshG_{\Fpbar,\tau}\rightarrow \scrS_K(G_{\rmS_{\tau},\rmT_{\tau}})_{\Fpbar}$$ equivariant for the prime-to-$p$ Hecke action, such that
	
(1) if $\rmS_{\infty/
		\fkp}\cup\{\tau\}\neq\Sigma_{\infty/\fkp}$, $\pi_{\tau}$  is a $\bbP^1$-fibration which descends to a morphism $\sshG_{k_0,\tau}\rightarrow \scrS_K(G_{\rmS_{\tau},\rmT_{\tau}})_{k_0}$ of $k_0$-varieties.
	
	(2) if $\rmS_{\infty/
		\fkp}\cup\{\tau\}=\Sigma_{\infty/\fkp}$, $\pi_\tau$ is an isomorphism.
	
\end{thm}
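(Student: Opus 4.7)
The plan is to construct $\pi_\tau$ on the unitary side $\scrS_{K'}(G'_{\tilde{\rmS}})_{\Fpbar,\tilde{\tau}}$, where we have a moduli interpretation in terms of abelian varieties with extra structure, and then transfer the construction to $\sshG_{\Fpbar,\tau}$ via \cite[Corollary 2.13]{TX}. Equivariance under the prime-to-$p$ Hecke action will be automatic because every step of the construction takes place at the $p$-adic place $\fkp$ corresponding to $\tau$ and leaves the prime-to-$p$ level structure untouched.

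First I fix a lift $\tilde{\tau}\in\Sigma_{E,\infty/\fkp}$ of $\tau$ and a refinement $\tilde{\rmS}_\tau$ of $\rmS_\tau$ which agrees with $\tilde{\rmS}$ outside the $\sigma$-orbit of $\tilde{\tau}$. The key geometric input is that, over the vanishing locus of the partial Hasse invariant $h_{\tilde{\tau}}$, the essential Verschiebung $V_{\mathrm{es}}^{n_\tau}$ sends $\omega^\circ_{\calA'^\vee,\tilde{\tau}}$ not merely into $\rmH_1^{\mathrm{dR}}(\calA'/S)^{\circ,(p^{n_\tau})}_{\tilde{\tau}^-}$ but into the sub-line $(\omega^\circ_{\calA'^\vee,\tilde{\tau}^-})^{(p^{n_\tau})}$. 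I use this containment to modify the Hodge filtration at $\tilde{\tau}$ and $\tilde{\tau}^-$ so as to produce a new filtration of type $\tilde{\rmS}_\tau$. In case (1) this modification is canonical in the forward direction; in case (2) one is further forced to shrink the polarization at $\fkp$, producing a polarization whose kernel is a rank-$p^{4g_\fkp}$ finite flat subgroup of $\calA'[\fkp]$, matching the moduli problem for $G'_{\tilde{\rmS}_\tau}$ in which $\fkp$ enters the ramification set.

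By Grothendieck--Messing and Serre--Tate, the modified filtration comes from a genuine $p$-divisible group, hence produces an abelian scheme $\calA''$ with induced $\calO_{D_\rmS}$-action, polarization and level structure satisfying the Kottwitz condition \eqref{eq:char poly 1} for $\tilde{\rmS}_\tau$. The assignment $\calA'\mapsto\calA''$ defines a morphism $\pi'_{\tilde{\tau}}:\scrS_{K'}(G'_{\tilde{\rmS}})_{\Fpbar,\tilde{\tau}}\to\scrS_{K'}(G'_{\tilde{\rmS}_\tau})_{\Fpbar}$, which I transfer to $\pi_\tau$ via \cite[Corollary 2.13]{TX}. To compute the fibers I run the construction in reverse: given a point of the target, recovering a preimage amounts to choosing a new rank-$1$ Hodge line inside $\rmH_1^{\mathrm{dR}}(\calA''/S)^\circ_{\tilde{\tau}}$, subject to a Verschiebung-compatibility imposed by the chain of embeddings in the $\sigma$-orbit of $\tilde{\tau}$. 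In case (1) the ambient space has rank $2$ and the compatibility condition is automatic, so the preimage is $\bbP^1$; in case (2) the ambient space collapses to rank $1$ (because all other embeddings over $\fkp$ now contribute signature $0$ or $2$), so the lift is unique and $\pi_\tau$ is bijective on points, hence an isomorphism between smooth $\Fpbar$-varieties of the same dimension.

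The main technical obstacle I foresee is the Dieudonn\'e-theoretic bookkeeping of the modification: checking the Kottwitz determinant condition for the new filtration, showing that the induced polarization on the modified $p$-divisible group has precisely the prescribed $\fkp$-adic kernel in case (2), and verifying that the parameter space of admissible Hodge lines is literally a $\bbP^1$-bundle rather than a nontrivial form thereof. Once this is done, descent to $k_0$ in case (1) follows because $\rmH_1^{\mathrm{dR}}(\calA'/S)^\circ$ and the essential Verschiebung are defined over $k_0$, and hence so is the projectivization of the relevant rank-$2$ sub-bundle that provides the fibration.
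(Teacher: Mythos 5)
The paper does not actually prove this statement: it is quoted verbatim from \cite[Theorem 5.2]{TX} and \cite[Proposition 5.9]{LT}, so there is no internal proof to compare against, and any assessment has to be made relative to the construction in those references (which, like your sketch, works on the auxiliary unitary Shimura variety and transfers back via \cite[Corollary 2.13]{TX}).

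Measured against that, your central mechanism has a genuine gap. You propose to ``modify the Hodge filtration at $\tilde{\tau}$ and $\tilde{\tau}^-$'' and then invoke Grothendieck--Messing and Serre--Tate to conclude that ``the modified filtration comes from a genuine $p$-divisible group, hence produces an abelian scheme $\calA''$.'' This is not what those theorems do. The Hodge filtration $\omega^\circ_{\calA'^\vee}\subset \rmH_1^{\mathrm{dR}}(\calA'/S)^\circ$ is intrinsic to $\calA'$; choosing a different filtration inside the same de Rham homology does not change the abelian scheme, and Grothendieck--Messing only converts lifts of the filtration to the crystal on a nilpotent thickening into deformations of a fixed $p$-divisible group --- there is no deformation problem here, since source and target live over the same characteristic-$p$ base. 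What is actually needed (and what \cite{TX} does) is a \emph{lattice} modification rather than a filtration modification: one uses the vanishing of $h_{\tilde{\tau}}$ to single out, via essential Frobenius/Verschiebung, a finite flat subgroup scheme of $\calA'[\fkp]$ (equivalently a new Dieudonn\'e lattice, $F$- and $V$-stable, inside the isocrystal) and defines $\calA''$ as the corresponding quotient, i.e. via an explicit $\fkp$-quasi-isogeny $\calA'\to\calA''$; the Kottwitz condition for the new signature, the polarization condition at $\fkp$ in case (2), and the compatibility with prime-to-$p$ level structure are then checked on this isogeny. Likewise, your fiber computation (``the preimage is the set of choices of a Hodge line in a rank-$2$ space, hence $\bbP^1$'') only identifies the fibers pointwise; to get the asserted $\bbP^1$-fibration one must exhibit the Goren--Oort divisor as the projectivization of an explicit rank-$2$ bundle on $\scrS_{K'}(G'_{\tilde{\rmS}_\tau})$ (or construct the inverse correspondence and verify it is an isomorphism by a tangent-space/crystalline deformation computation), which is precisely the content of the proof in \cite{TX} and is the part you defer as a ``technical obstacle.'' Finally, in case (2) note that the target is a discrete Shimura set whose level structure at $\fkp$ must be modified (as remarked after the theorem in the paper), which your shrinking-the-polarization step gestures at but does not pin down.
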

Note that in case (2) $\scrS_K(G_{\rmS_{\tau},\rmT_{\tau}})_{\Fpbar}$ is discrete, and the level structure at $\fkp$ needs to be modified.

We now recall the framework for the construction of Goren--Oort cycles. These cycles are parametrized by certain combinatorial objects called periodic semi-meanders whose definition we now recall. We refer to \cite[\S3.4]{LT} for more details.

For $\fkp$ a place of $F$ above $p$, let $d_\fkp(\rmS):=g_\fkp-|S_{\infty/\fkp}|$. We consider the cylinder $C:=\{x^2+y^2=1\}$ in 3-dimensional space. Let $\Sigma_{\infty/\fkp}=\{\tau_0,...,\tau_{g_\fkp-1}\}$ where $\sigma(\tau_j)=\tau_{j+1}$ for $j\in\mathbb{Z}/g_\fkp\Z$; we use the $\tau_j$ to label the points on the $xy$-plane by identifying $\tau_j$ with the point $(\text{cos}\frac{2\pi j}{g_\fkp},\text{sin}\frac{2\pi j}{g_\fkp})$. If $\tau_j\in \rmS_{\infty/\fkp}$ we put a $+$ at the point $\tau_j$, otherwise we put a node. We call the series of nodes and $+$'s the {\it band} associated to $S_{\infty/\fkp}$.

A periodic semi-meander for $S_{\infty/\fkp}$ is a series of arcs and semi-lines on $C$ (an arc connects two nodes and a semi-line connects a node and $+\infty$) satisfying the following properties:
 \begin{enumerate}
\item All arcs and semi-lines lie above the band.

\item Every node is the end-point of an arc of semi-line.

\item There are no intersection points among the arcs and semi-lines.
\end{enumerate}
We identify two periodic semi-meanders if one can be continuously deformed into the other. We write $r$ for the number of arcs in a periodic semi-meander and $d_{\fkp}(\rmS)-r$ is the defect. We write $\fkB(\rmS_{\infty/\fkp},r)$ for the set of periodic semi-meanders with $r$ arcs.

For any $\fka\in\fkB(\rmS_{\infty/\fkp},r)$ we define the sets $S_\fka$ and $T_\fka$ to be \begin{equation}\rmS_\fka:=\rmS\cup\{\tau\in\Sigma_\infty| \text{$\tau$ is an end-point of an arc in $\fka$}\}\end{equation} \begin{equation}\label{eq:def of T semi-meander}\rmT_\fka:=\rmT\cup\{\tau\in\Sigma_{\infty}| \text{$\tau$ is the right end-point of an arc in $\fka$}\}\end{equation}

To any periodic semi-meander $\fka\in \fkB(\rmS_{\infty/\fkp},r)$,  \cite[\S3.5]{LT} constructs the {\it Goren-Oort} cycle  $$Z_{\rmS,\rmT}(\fka)\hookrightarrow \sshG_{k_0}.$$
 corresponding to $\fka$ using the method of \cite[\S3.7]{TX1}. The construction is by induction on $r$ and the resulting cycle is an $r^{\mathrm{th}}$-iterated $\bbP^1$-bundle over the Shimura variety $\scrS_K(G_{\rmS_\fka,\rmT_{\fka}})$. When $r=0$,  $Z_{\rmS,\rmT}(\fka)$ is defined to be $\scrS_{K}(G_{{\rmS}})_{k_0}$. For $r\geq 1$, we say an arc $\delta$ in $\fka$ is basic if it does not lie below any other arc. Choose such a basic arc $\delta$ and write $\tau^+$ and $\tau^-$ for its right and left endpoints. Consider the Goren-Oort divisor $\scrS_K(G_{\rmS,\rmT})_{k_0,\tau}$ together with the fibration $\pi_{\tau}:\scrS_K(G_{\rmS,\rmT})_{k_0,\tau}\rightarrow \scrS_K(G_{\rmS_\tau,\rmT_\tau})_{k_0}$. Let $\fka_\delta\in\fkB(\rmS_{\tau,\infty/\fkp},r-1)$ denote the periodic semi-meander given by removing the arc $\delta$ from $\fka$ and replacing the endpoints with $+$ signs. By induction hypothesis, we have the cycle $Z_{\rmS_\tau,\rmT_\tau}(\fka_\delta)\subset\scrS_K(G_{\rmS_{\tau},\rmT_{\tau}})_{k_0}$ which is an $(r-1)^{\mathrm{th}}$-iterated $\bbP^1$-bundle over $\scrS_K(G_{\rmS_{\fka},\rmT_{\fka}})_{k_0}$. The Goren-Oort cycle $Z_{\rmS,\rmT}(\fka)$ is defined to be the preimage of $Z_{\rmS_\tau,\rmT_\tau}(\fka_\delta)$ in $\scrS_K(G_{\rmS,\rmT})_{k_0}$ under the projection $\pi_{\tau}$. The following Proposition is clear from the construction and Theorem \ref{thm:Goren-Oort strata}.
\begin{prop}\label{prop: Goren-Oort fibration}  $Z_{\rmS,\rmT}(\fka)$ is an $r^{\mathrm{th}}$-iterated $\mathbb{P}^1$-fibration over the Shimura variety $\sshGa_{k_0}$.
Moreover the inclusion $Z_{\rmS,\rmT}(\fka)\rightarrow\sshG_{k_0}$ is equivariant for the prime-to-$p$ Hecke correspondences. \end{prop}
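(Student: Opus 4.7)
The plan is to prove both assertions simultaneously by induction on $r$, following exactly the inductive construction of $Z_{\rmS,\rmT}(\fka)$ described just above the statement. The base case $r=0$ is immediate: by definition $Z_{\rmS,\rmT}(\fka) = \scrS_K(G_{\rmS})_{k_0}$, and since no arcs are present we have $\rmS_\fka = \rmS$ and $\rmT_\fka = \rmT$, so the map to $\sshGa_{k_0}$ is the identity, viewed as a $0$-iterated $\bbP^1$-bundle. Hecke equivariance is then trivial.

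For the inductive step, fix a basic arc $\delta$ in $\fka$ and let $\tau = \tau^+ \in \Sigma_{\infty/\fkp}$ be its right endpoint. The first step is to check we are in case (1) of Theorem \ref{thm:Goren-Oort strata}: the left endpoint $\tau^-$ of $\delta$ is a node in $\Sigma_{\infty/\fkp} \setminus \rmS_{\infty/\fkp}$ distinct from $\tau$, so $\rmS_{\infty/\fkp} \cup \{\tau\} \subsetneq \Sigma_{\infty/\fkp}$. Hence $\pi_\tau : \sshG_{k_0,\tau} \to \scrS_K(G_{\rmS_\tau,\rmT_\tau})_{k_0}$ is a genuine $\bbP^1$-fibration, equivariant for the prime-to-$p$ Hecke correspondences.

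Next, a direct bookkeeping check using the definitions of $\rmS_\tau, \rmT_\tau$ (from Theorem \ref{thm:Goren-Oort strata}) and $\rmS_\fka, \rmT_\fka$ (from the definitions preceding the proposition) shows $(\rmS_\tau)_{\fka_\delta} = \rmS_\fka$ and $(\rmT_\tau)_{\fka_\delta} = \rmT_\fka$: removing the arc $\delta$ and converting its endpoints to $+$'s exactly accounts for adding $\tau^\pm$ to $\rmS$ and adding the right endpoint $\tau^+ = \tau$ to $\rmT$. Applying the inductive hypothesis to $\fka_\delta \in \fkB(\rmS_{\tau,\infty/\fkp}, r-1)$, the cycle $Z_{\rmS_\tau,\rmT_\tau}(\fka_\delta) \subset \scrS_K(G_{\rmS_\tau,\rmT_\tau})_{k_0}$ is an $(r-1)$-iterated $\bbP^1$-fibration over $\sshGa_{k_0}$, and its embedding is Hecke-equivariant.

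Finally, since $Z_{\rmS,\rmT}(\fka) = \pi_\tau^{-1}\bigl(Z_{\rmS_\tau,\rmT_\tau}(\fka_\delta)\bigr)$ by construction, base-changing the $\bbP^1$-fibration $\pi_\tau$ along the Hecke-equivariant inclusion $Z_{\rmS_\tau,\rmT_\tau}(\fka_\delta) \hookrightarrow \scrS_K(G_{\rmS_\tau,\rmT_\tau})_{k_0}$ gives a $\bbP^1$-fibration $Z_{\rmS,\rmT}(\fka) \to Z_{\rmS_\tau,\rmT_\tau}(\fka_\delta)$. Composing with the inductive $(r-1)$-iterated tower produces the desired $r$-iterated $\bbP^1$-fibration over $\sshGa_{k_0}$. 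Hecke equivariance of $Z_{\rmS,\rmT}(\fka) \hookrightarrow \sshG_{k_0}$ then follows by factoring through $\sshG_{k_0,\tau}$, which is the vanishing locus of the Hecke-equivariant partial Hasse invariant $h_{\tilde\tau}$, and through $\pi_\tau^{-1}$ of a Hecke-stable subvariety. There is no real obstacle: the only nontrivial point is the combinatorial verification that the operations $(-)_\tau$ and $(-)_{\fka_\delta}$ commute with the formation of $(\rmS_\fka,\rmT_\fka)$, and the verification that a basic arc always places us in case (1) rather than case (2) of Theorem \ref{thm:Goren-Oort strata}.
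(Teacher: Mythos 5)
Your argument is correct and is exactly the argument the paper has in mind: the paper gives no separate proof, asserting only that the proposition is ``clear from the construction and Theorem \ref{thm:Goren-Oort strata}'', and your induction on $r$ simply makes that explicit. The two substantive points you verify—that the presence of a second node (the other endpoint of the basic arc) forces case (1) of Theorem \ref{thm:Goren-Oort strata}, and the bookkeeping $(\rmS_\tau)_{\fka_\delta}=\rmS_\fka$, $(\rmT_\tau)_{\fka_\delta}=\rmT_\fka$—are exactly what the inductive construction implicitly uses, and you carry them out correctly.
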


We write $\pi_{\fka}:Z_{\rmS,\rmT}(\fka)\rightarrow \sshGa_{k_0}$ for the projection map.

\subsection{Goren--Oort cycles and Shimura surfaces}\label{sec:Goren--Oort dim 2} We now give a more detailed description of the Goren--Oort cycles which will be used in the construction of the level raising subgroup in motivic cohomology. 

For this we will impose the extra assumptions that $[F:\bbQ]=g$ is even and that $p$ is inert in $F$; then $\fkp$ will denote the unique prime above $p$. In this case we may take $k_0=\bbF_p^g$. We consider the set $\fkB(\emptyset,g/2-1)$ of periodic semi-meanders which is easily seen to have ${g\choose g/2-1}$ elements.

Fix $\fka\in\fkB(\emptyset,g/2-1))$. Then we have constructed the Goren--Oort cycle $Z_\emptyset(\fka)\subset \scrS_K(G)_{\bbF_{p^g}}$ and the projection $$\pi_{\fka}: Z_\emptyset(\fka)\rightarrow \scrS_K(G_{\emptyset_{\fka},\rmT_{\fka}})_{\bbF_{p^g}}$$ which is ($g/2-1$)-iterated $\bbP^1$-fibration; here $\rmT_{\fka}$ is defined as in (\ref{eq:def of T semi-meander}) with $\rmT=\emptyset$. Since $\scrS_K(G_{\emptyset_{\fka},\rmT_{\fka}})_{\bbF_{p^g}}$ is the special fiber of a Shimura surface, the Goren--Oort cycle $Z_\emptyset(\fka)$ has dimension $g/2+1$. 

We write $\emptyset_\fka=\{\tau_i,\tau_j\}$. Then we have the Goren--Oort divisors $\scrS_K(G_{\emptyset_{\fka},\rmT_{\fka}})_{\bbF_{p^g},\tau_i}$ 
and $\scrS_K(G_{\emptyset_{\fka},\rmT_{\fka}})_{\bbF_{p^g},\tau_j}$ of the Shimura surface $\scrS_K(G_{\emptyset_{\fka},\rmT_{\fka}})_{\bbF_{p^g}}$ which are $\bbP^1$-bundles over the discrete Shimura sets  $\scrS_K(G_{\emptyset_{\fka,\tau_i},\rmT_{\fka,\tau_i}})_{\bbF_{p^g}}$ and $\scrS_K(G_{\emptyset_{\fka,\tau_j},\rmT_{\fka,\tau_j}})_{\bbF_{p^g}}$ respectively.
The $\Fpbar$-points of these Shimura sets may be identified (upon fixing a base-point) with $$G_{\Sigma_\infty}(\bbQ)\backslash G_{\Sigma_{\infty}}(\bbA_f)/K.$$

We let $K_0(\fkp)\subset G_{\Sigma_{\infty}}(\bbA_f)$ denote the compact open subgroup $K_{0,\fkp}K^p$ where $K_{0,\fkp}$ is the standard Iwahori subgroup of $GL_2(F_\fkp)$ consisting of matrices in $GL_2(\calO_{F_{\fkp}})$ which reduce to the upper triangular matrices mod $\fkp$. In \cite[2.17]{TX}, there is a construction of the integral model for the discrete Shimura set $\Sh_{K_0(\fkp)}(G_{\emptyset_{\fka,\tau_i},\rmT_{\fka,\tau_i}})$; we can  explicitly describe the $\Fpbar$-points of the special fiber $\scrS_{K_0(\fkp)}(G_{\emptyset_{\fka,\tau_i},\rmT_{\fka,\tau_i}})_{\bbF_{p^g}}$ and the action of $\Gal(\Fpbar/\bbF_{p^g})$ as follows. We equip the discrete set $G_{\Sigma_\infty}(\bbQ)\backslash G_{\Sigma_{\infty}}(\bbA_f)/K_0(\fkp)$ with the $\Gal(\Fpbar/\bbF_{p^g})$-action where the arithmetic $p^g$-Frobenius $\sigma_{p^g}$ acts via multiplication by the central element $\underline{p}^{-g/2}\in F\otimes_{\bbQ}\A_f^\times\subset G_{\Sigma_{\infty}}(\bbA_f)$. Here $\underline{p}^{-g/2}$ is the idele which is $p^{-g/2}$ at the $p$-adic place $\fkp$ and $1$ elsewhere. Then there is a bijection $$\scrS_{K_0(\fkp)}(G_{\emptyset_{\fka,\tau_i},\rmT_{\fka,\tau_i}})_{\bbF_{p^g}}(\Fpbar)\cong G_{\Sigma_\infty}(\bbQ)\backslash G_{\Sigma_{\infty}}(\bbA_f)/K_0(\fkp).$$
compatible with the Galois action. 

The action of $\sigma_{p^g}$ on $\scrS_{K}(G_{\emptyset_{\fka,\tau_i},\rmT_{\fka,\tau_i}})_{\bbF_{p^g}}(\Fpbar)$ is defined in the same way. We write $n_{K_\infty}$ for the order of $\sigma_{p^g}$ acting on $\scrS_{K}(G_{\emptyset_{\fka,\tau_i},\rmT_{\fka,\tau_i}})_{\bbF_{p^g}}(\Fpbar)$. It is easily seen to be independent of the choice of $\fka$ and $\tau_i$.

The following proposition is contained in \cite[Proposition 2.32]{TX1}.

\begin{prop}\label{prop: Goren--Oort 2 dim}
(1) There is an isomorphism $$\scrS_K(G_{\emptyset_{\fka},\rmT_{\fka}})_{\bbF_{p^g},\{\tau_i,\tau_j\}}\cong \scrS_{K_0(\fkp)}(G_{\emptyset_{\fka,\tau_i},\rmT_{\fka,\tau_i}})_{\bbF_{p^g}}$$ of $\bbF_{p^g}$-varieties.

(2) There is an isomorphism $$\eta_{\tau_j}:\scrS_K(G_{\emptyset_{\fka,\tau_j},\rmT_{\fka,\tau_j}})_{\bbF_{p^g}}\xrightarrow{\sim}\scrS_K(G_{\emptyset_{\fka,\tau_i},\rmT_{\fka,\tau_i}})_{\bbF_{p^g}}$$ of $\bbF_{p^g}$-varieties  such that the induced diagram
\[\xymatrix{& \scrS_{K_0(\fkp)}(G_{\emptyset_{\fka,\tau_i},\rmT_{\fka,\tau_i}})_{\bbF_{p^g}}\ar[ld]_{\pi_{\tau_i}}\ar[rd]^{\pi_{\tau_i}}& &\\
\scrS_K(G_{\emptyset_{\fka,\tau_i},\rmT_{\fka,\tau_i}})_{\bbF_{p^g}}& & \scrS_K(G_{\emptyset_{\fka,\tau_j},\rmT_{\fka,\tau_j}})_{\bbF_{p^g}} \ar[r]^{\eta_{\tau_j}}&\scrS_K(G_{\emptyset_{\fka,\tau_i},\rmT_{\fka,\tau_i}})_{\bbF_{p^g}}}\]
is identified with the base change to $\bbF_{p^g}$ of the Hecke correspondence for  $\scrS_K(G_{\emptyset_{\fka,\tau_i},\rmT_{\fka,\tau_i}})$.
\end{prop}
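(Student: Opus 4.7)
Both parts should follow from a case analysis on the Goren--Oort stratification worked out in \cite[\S2]{TX1}. The plan is to reduce, via \cite[Corollary 2.13]{TX}, to the corresponding assertion on the auxiliary unitary Shimura variety, where the moduli interpretation of \S\ref{sec:moduli interpretation} makes the intersection accessible. The key combinatorial input is that $|\emptyset_\fka|=g-2$, so $\{\tau_i,\tau_j\}$ are exactly the two free endpoints (nodes attached to semi-lines) of $\fka$; in particular, after adding $\tau_i$ the set $\emptyset_{\fka,\tau_i,\infty/\fkp}$ is missing only $\tau_j$, so further adding $\tau_j$ saturates to $\Sigma_{\infty/\fkp}$, placing us in case (2) of Theorem \ref{thm:Goren-Oort strata} on the appropriate sub-stratum.

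For (1), I would proceed in two steps. By Theorem \ref{thm:Goren-Oort strata}(1), the fibration $\pi_{\tau_i}$ exhibits $\scrS_K(G_{\emptyset_\fka,\rmT_\fka})_{\bbF_{p^g},\tau_i}$ as a $\bbP^1$-bundle over the discrete variety $\scrS_K(G_{\emptyset_{\fka,\tau_i},\rmT_{\fka,\tau_i}})_{\bbF_{p^g}}$. Cutting by the second Goren--Oort divisor $\scrS_K(G_{\emptyset_\fka,\rmT_\fka})_{\bbF_{p^g},\tau_j}$ corresponds, on the unitary side, to the simultaneous vanishing of the partial Hasse invariants $h_{\tilde\tau_i}$ and $h_{\tilde\tau_j}$. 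Unpacking the essential Verschiebung at $\fkp$, this simultaneous vanishing pins down a distinguished $\calO_{F_\fkp}$-stable sub-line in the $\fkp$-component of the Dieudonn\'e module of the universal abelian variety, which is precisely the datum of an Iwahori-at-$\fkp$ enhancement of the level structure encoded in the integral model $\underline{\mathrm{Sh}}_{K_0(\fkp)}(G_{\emptyset_{\fka,\tau_i},\rmT_{\fka,\tau_i}})$ constructed in \cite[\S2.17]{TX}. The identification descends to $\bbF_{p^g}$ because all the structures involved are defined over $\bbF_{p^g}$. The main obstacle in this step is the local Dieudonn\'e calculation: one must verify that the vanishing of the two partial Hasse invariants at the two ``missing'' embeddings is equivalent to the data of an Iwahori flag. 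I would import the book-keeping of \cite[\S3.7]{TX1}, which is the pattern used in the proof of Theorem \ref{thm:Goren-Oort strata}(2).

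For (2), by the same argument with the roles of $\tau_i$ and $\tau_j$ swapped, $\pi_{\tau_j}$ identifies the same intersection with $\scrS_{K_0(\fkp)}(G_{\emptyset_{\fka,\tau_j},\rmT_{\fka,\tau_j}})_{\bbF_{p^g}}$. Composing these two identifications gives an isomorphism between the discrete Iwahori-level sets attached to $G_{\emptyset_{\fka,\tau_j},\rmT_{\fka,\tau_j}}$ and $G_{\emptyset_{\fka,\tau_i},\rmT_{\fka,\tau_i}}$; composing with the natural forgetful map yields $\eta_{\tau_j}$. Under the identifications of (1), the maps $\pi_{\tau_i}$ and $\pi_{\tau_j}$ become the two Iwahori degeneracy maps --- ``forget the sub-line'' and ``quotient by the sub-line'' --- which by construction are the two legs of the Hecke correspondence at $\fkp$. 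What remains is to check that $\eta_{\tau_j}$ matches the canonical identification of discrete Shimura sets attached to the two choices of CM-type $\rmT_{\fka,\tau_i},\rmT_{\fka,\tau_j}$; using the explicit Galois-descent formulas of \S\ref{sec:Goren--Oort dim 2} this reduces to the observation that swapping which endpoint of the arc joining $\tau_i$ and $\tau_j$ in the semi-meander is designated as the ``right'' endpoint in the sense of (\ref{eq:def of T semi-meander}) twists the $\Gal(\Fpbar/\bbF_{p^g})$-action by the central element $\underline{p}^{-g/2}$, which is the effect of the Hecke correspondence on the descent data.
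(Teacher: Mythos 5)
Your overall shape (transfer to the auxiliary unitary variety, analyse the simultaneous vanishing of the two partial Hasse invariants, then identify the two projections with the degeneracy maps) is the right spirit, but note first that the paper does not prove this statement at all: it is quoted directly from \cite[Proposition 2.32]{TX1}, so any written-out argument has to reproduce the content of that result. Against that benchmark your sketch has two concrete problems. The first is combinatorial: the recipe $\rmS\mapsto\rmS_{\tau}$ adds \emph{both} $\tau$ and $\tau^{-}$, and on the surface one has $\tau_i^{-}=\tau_j$, so $\emptyset_{\fka,\tau_i}=\emptyset_{\fka}\cup\{\tau_i,\tau_j\}=\Sigma_\infty$ already; it is not ``missing only $\tau_j$''. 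Consequently your appeal to case (2) of Theorem \ref{thm:Goren-Oort strata} is vacuous here: that case requires a single archimedean place to be missing, which never happens for the surface (two are missing), and there is no Goren--Oort divisor indexed by $\tau_j$ on the discrete base to iterate with. The Iwahori structure at $\fkp$ cannot be produced by iterating Theorem \ref{thm:Goren-Oort strata}; it comes from the finer analysis of the stratum $\scrS_K(G_{\emptyset_\fka,\rmT_\fka})_{\bbF_{p^g},\{\tau_i,\tau_j\}}$ itself, which is exactly what \cite[Proposition 2.32]{TX1} supplies. (Relatedly, $\tau_i$ and $\tau_j$ are joined by semi-lines, not an arc, in $\fka$; the arc only appears after passing to the divisor, so your closing remark about ``the arc joining $\tau_i$ and $\tau_j$'' needs restating.)

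The second and more serious gap is that the key identification is asserted rather than proved. Observing that the vanishing of $h_{\tilde\tau_i}$ and $h_{\tilde\tau_j}$ pins down a line in the $\fkp$-part of the Dieudonn\'e module is only a pointwise linear-algebra statement; the integral model of the Iwahori-level \emph{discrete} Shimura set in \cite[2.17]{TX} is described group-theoretically, so one must still show that the superspecial intersection points are parametrized by $G_{\Sigma_\infty}(\bbQ)\backslash G_{\Sigma_\infty}(\bbA_f)/K_0(\fkp)$, compatibly with the prime-to-$p$ Hecke action and with $\sigma_{p^g}$ acting through $\underline{p}^{-g/2}$, and that under this parametrization $\pi_{\tau_i}$ and $\eta_{\tau_j}\circ\pi_{\tau_j}$ become the two degeneracy maps. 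That requires a uniformization of the superspecial locus by the totally definite inner form (the analogue of the Deuring--Serre description), and it is precisely the content of the cited result; deferring it to ``the book-keeping of \cite[\S3.7]{TX1}'' does not fill it, since that section concerns the inductive construction of Goren--Oort cycles, not this identification. Likewise, in part (2) composing the two Iwahori-level identifications and ``the natural forgetful map'' does not by itself yield a map $\eta_{\tau_j}$ between the hyperspecial-level sets: one must check the composite descends along the degeneracy maps and induces the correct twist of descent data, which is again the assertion being proved rather than an input.
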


\section{Hecke orbit conjecture}\label{sec: Hecke orbit conjecture}
In this section we prove a version of the Hecke orbit conjecture for the ordinary locus of quaternionic Shimura varieties. The desired result follows from the corresponding statement for the auxiliary unitary Shimura varieties of PEL-type constructed in the previous section. The result for these Shimura varieties can then be deduced using the method of \cite{Chai}. \ignore{We note that it seems that many of the statements we use can be proved in the more general setting of Hodge-type Shimura varieties; however we do not choose to pursue this in the current paper. }

\subsection{Statement of Hecke orbit conjecture}\label{sec: 3 statement}We keep the notation introduced in \S2.
We let $\underline{\Sh}_{K}(G_{\rmS,\rmT})$ be the integral model  for the  quaternionic Shimura variety ${\Sh}_{K}(G_{\rmS,\rmT}).$ We will assume in this section that $\rmS\subsetneq \Sigma_{\infty}$ so that the compact open subgroup $K=K_pK^p$ is hyperspecial at $p$. Since the case of Hilbert modular varieties has been proved in \cite{Chai}, we also  assume ${\Sh}_{K}(G_{\rmS,\rmT})$, and hence $\underline{\Sh}_{K}(G_{\rmS,\rmT})$, is compact, or in other words that the quaternion algebra $B_{\rmS}$ is not totally split. By our assumption on $\rmS$, $\underline{\Sh}_{K}(G_{\rmS,\rmT})$ is not discrete, in other words it does not arise from a weak Shimura datum. For ease of notation we will write $\calX:=\scrS_{K}(G_{\rmS,\rmT})_{\overline{\bbF}_p}$ for the geometric special fiber of $\underline{\Sh}_{K}(G_{\rmS,\rmT})$.

We also write ${\Sh}_{K'}(G'_{\tilde{\rmS}})$ for the  unitary Shimura variety of PEL-type associated to ${\Sh}_{K}(G_{\rmS,\rmT})$ by the choice of an imaginary quadratic field $E/F$ and a subset $\tilde{\rmS}_\infty\subset\Sigma_{E,\infty}$ satisfying the conditions in \S2.2. We similarly write $\calX'=\scrS_{K'}(G'_{\tilde{\rmS}})_{\Fpbar}$ for the geometric special fiber of the integral model  $\underline{\Sh}_{K'}(G'_{\tilde{\rmS}})$.

Our formulation of the Hecke orbit conjecture for $\calX'$ differs slightly from the statement in \cite{Chai}; we need a slightly stronger statement in order to transfer the result to the quaternionic side. More precisely we will consider the orbit under Hecke correspondences coming from the derived group. We let $G'_{\mathrm{der}}$ denote the derived group of $G'_{\tilde{\rmS}}$. Since $G'_{\tilde{\rmS}}$ and $G_{\rmS}$ have the same derived group, $G'_{\rmS}$ is the reductive group corresponding to $(B_{\rmS}^{\times})^{\nu_{\rmS}=1}$; in particular  $G'_{\mathrm{der}}$ is simply connected. We write $T'_{\mathrm{ab}}$ for the quotient of $G'_{\tilde{\rmS}}$ by its derived group; it is isomorphic to the subtorus $\mathrm{Nm}_{E/F}^{-1}(\bbG_m)\subset T_{E}$. We write $T'_{\mathrm{ab}}(\bbR)^\dagger=\mathrm{Im}(Z'(\bbR)\rightarrow T'_{\mathrm{ab}}(\bbR))$, where $Z'$ is the center of $G'_{\tilde{\rmS}}$. We let $T'_{\mathrm{ab}}(\bbQ)^\dagger=T'_{\mathrm{ab}}(\bbR)\cap T'_{\mathrm{ab}}(\bbQ)$.

 We write $\nu':G_{\tilde{\rmS}}'\rightarrow T'_{\mathrm{ab}}$ for the quotient map. This induces a bijection \begin{equation}
 \label{eq: conn comp unitary}
\pi_0(\rmSh_{K'}(G'_{\tilde{\rmS}}))\cong T_{\mathrm{ab}}'(\bbQ)^{\dagger}\backslash  T'_{\mathrm{ab}}(\bbA_f)/\nu'(K') \end{equation} where the left hand side is the set of geometric connected components. 
Since $\underline{\Sh}_{K'}(G'_{\tilde{\rmS}})$ is smooth, there is  a bijection $$\pi_0(\calX')\xrightarrow{\sim}T_{\mathrm{ab}}'(\bbQ)^{\dagger}\backslash  T'_{\mathrm{ab}}(\bbA_f)/\nu'(K')$$ compatible with specialization from (\ref{eq: conn comp unitary}).
For an element $\fkc'$ on the right hand side, we write $\calX'^{\fkc'}$ for the corresponding connected component of $\calX'$.

%\begin{defn}Let $x\in \calX'(\Fpbar)$ and $(\calA_x,\iota,\lambda,\epsilon_{K'^p})$ the  quadruple associated to $x$ by the moduli problem. The prime-to-$p$ Hecke orbit $Y^p(x)$ of $x$ is the set of points $x'\in \calX'(\Fpbar)$ such that that exists a quasi-isogeny $\mathcal{A}_x\rightarrow\mathcal{A}_{x'}$ of prime to $p$ order respecting the $\calO_{D_{\rmS}}$-action, polarization and level structure.
	
%	Let $l\neq p$ be another prime. The $l$-power Hecke orbit $Y_l(x)$ of $x$ is defined in the same way with the extra condition that the quasi-isogeny has $l$-power order.
%\end{defn}

 We consider the inverse limit scheme $$\underline{\Sh}_{K_p'}(G'_{\tilde{\rmS}}):=\lim_{\leftarrow K'^p}\underline{\Sh}_{K'_pK'^p}(G'_{\tilde{\rmS}})$$
which has an action by the group $G'_{\tilde{\rmS}}(\bbA_f^p)$. Let $x\in\underline{\Sh}_{K'}(G'_{\tilde{\rmS}})(\Fpbar)=\calX'(\Fpbar)$ and $\tilde{x}\in\underline{\Sh}_{K'_p}(G'_{\tilde{\rmS}})(\Fpbar)$   a lift of $x$. We write $\tilde{Y}'^p(\tilde{x})$ for the orbit of $\tilde{x}$ under the group $G'_{\mathrm{der}}(\bbA_f^p)$. Similarly, for $l$ a prime coprime to $p$, we write $\tilde{Y}'_l(\tilde{x})$ for the orbit of $\tilde{x}$ under the group $G_{\mathrm{der}}'(\bbQ_l)$.

\begin{defn}The reduced prime-to-$p$ Hecke orbit $Y'^p(x)$ of $x$ is defined to be the image of $\tilde{Y}'^p(\tilde{x})$ in $\calX'$. Similarly the reduced $l$-power Hecke orbit  $Y'_l(x)$ is defined to be the image of $\tilde{Y}'_l(\tilde{x})$ in $\calX'$. We write $Z'^p(x)$ and $Z'_l(x)$ for the closures of $Y'^p(x)$ and $Y'_l(x)$ respectively.
\end{defn}

It is clear from the definition that $Y'^p(x)$ and $Y'_l(x)$ are independent of the choice of $\tilde{x}$ and that $Y'_l(x)\subset Y'^p(x)$.  When we consider $Y'_l(x)$ we will also assume $K'$ factors as $K'_lK'^l$, where $K'_l\in G'_{\tilde{\rmS}}(\bbQ_l)$ and $K'^l\subset G'_{\tilde{\rmS}}(\bbA_f^l)$ are compact open subgroups.

By the main result of \cite{Ham2}, there is a stratification of $\calX'$ parametrized by the set $B(G'_{\tilde{\rmS},\bbQ_p},\{\mu\})$ where $\{\mu\}$ is the  Hodge cocharacter associated to the Shimura datum. This is a certain subset of the set of $\sigma$-conjugacy classes of $G'_{\tilde{\rmS}}(L)$, where $L$ is the completion of the maximal unramified extension of $\bbQ_p$; we refer to \S\ref{sec: Appendix B(G)} for the definition. The set $B(G'_{\tilde{\rmS},\bbQ_p},\{\mu\})$ is equipped with  a partial order. It  contains a unique maximal element $[b]^{\mathrm{ord}}$ and a unique minimal element $[b]^{\mathrm{ss}}$. We write $\calX'^{\mathrm{ord}}$ and $\calX'^{\mathrm{ss}}$  for the corresponding strata. 
\begin{rem}
In the literature, $\calX'^{\mathrm{ord}}$ is usually known as the $\mu$-ordinary locus. The stratum $\calX'^{\mathrm{ss}}$ coincides with the locus where the universal abelian variety is supersingular.  

\end{rem}
The main result of this section is the following. 

\begin{thm}\label{thm:Hecke orbit unitary}
Let $\fkc'\in T_{\mathrm{ab}}'(\bbQ)^{\dagger}\backslash  T'_{\mathrm{ab}}(\bbA_f)/\nu'(K')$. Then for any  $x\in \calX'^{\fkc'}\cap \calX'^{\mathrm{ord}}(\Fpbar)$ we have  $$Z'^p(x)=\calX'^{\fkc'}.$$
\end{thm}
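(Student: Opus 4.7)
The plan is to follow the strategy of Chai~\cite{Chai} for Hilbert modular varieties, adapted to the PEL-type unitary Shimura variety $\calX'$ using Moonen's~\cite{Mo} generalization of Serre--Tate theory for $\mu$-ordinary abelian varieties. Since $Z'^p(x)$ is closed in the smooth component $\calX'^{\fkc'}$, it suffices to show that the formal completion $\widehat{Z'^p(x)}_x$ equals the full formal completion $\widehat{\calX'^{\fkc'}}_x$. The reason for formulating the orbit using only the derived group $G'_{\mathrm{der}}(\bbA_f^p)$ is precisely so that the Hecke translates stay inside the fixed component $\calX'^{\fkc'}$; this lets us treat the problem one component at a time.

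First I would fix an auxiliary prime $l\neq p$ at which $K'$ factors as $K'_lK'^l$ with $K'_l$ hyperspecial, and replace $x$ by a point in $Y'_l(x)\subseteq Y'^p(x)$. The key local input is Moonen's theorem that $\widehat{\calX'}_x$ carries a canonical ``cascade'' structure built from formal tori, functorial in the underlying PEL-datum $(A,\iota,\lambda,\epsilon_{K'^p})$. The $l$-adic Hecke correspondences coming from $G'_{\mathrm{der}}(\bbQ_l)$ act on $\widehat{\calX'}_x$ through explicit symmetries of this cascade, and the closure $\widehat{Z'_l(x)}_x$ is preserved by the open subgroup $\mathrm{Stab}_x\cap K'_l$. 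Applying the Chai--Oort rigidity theorem for formal subschemes of a cascade which are stable under a $p$-adically open subgroup of the symmetry group, we conclude that $\widehat{Z'_l(x)}_x$ is itself a formal subcascade $\widehat{H}_x\subseteq\widehat{\calX'}_x$.

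The global step is to force $\widehat{H}_x$ to be the entire formal neighborhood. Here one exploits the prime-to-$p$, prime-to-$l$ Hecke action: moving $x$ by elements of $G'_{\mathrm{der}}(\bbA_f^{pl})$ produces many formal subcascades $\widehat{H}_{x'}\subseteq\widehat{Z'^p(x)}_{x'}$, and translating these back and comparing with the isogeny action on the Serre--Tate cascade constrains $\widehat{H}_x$. Combined with the big $l$-adic monodromy of the universal abelian variety on $\calX'^{\fkc'}$ (which follows from the simple-connectedness of $G'_{\mathrm{der}}$ together with the absolute irreducibility of its action on $W$), this shows $\widehat{H}_x=\widehat{\calX'}_x$. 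Then $\widehat{Z'^p(x)}_x=\widehat{\calX'^{\fkc'}}_x$, so $Z'^p(x)$ contains a Zariski-open neighborhood of $x$ in the irreducible variety $\calX'^{\fkc'}$, and being closed it must coincide with $\calX'^{\fkc'}$.

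The main obstacle is the local linearization step. Chai's original argument takes place in the clean setting where $\widehat{\calX'}_x$ is a bare formal torus, but in the general unitary PEL situation considered here the cascade has several layers indexed by the slopes of the $\mu$-ordinary Newton polygon, and one must match the $G'_{\mathrm{der}}(\bbQ_l)$-Hecke action with the symmetry group of each layer to apply the rigidity theorem correctly. A secondary subtlety is keeping track of the connected component $\calX'^{\fkc'}$ throughout: this is handled by the bijection between $\pi_0(\calX')$ and $T'_{\mathrm{ab}}(\bbQ)^\dagger\backslash T'_{\mathrm{ab}}(\bbA_f)/\nu'(K')$ together with the fact that $G'_{\mathrm{der}}$ lies in the kernel of $\nu'$.
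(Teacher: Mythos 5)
Your first half is essentially the paper's: linearize at an ordinary point via Moonen's Serre--Tate theory and invoke Chai's rigidity to see that the formal completion of the orbit closure is a formal subgroup of the deformation space. Two problems arise after that. First, the mechanism you give for the stabilizer action is off: the prime-to-$p$ (in particular $l$-adic) Hecke correspondences do not act on the formal completion $\widehat{\calX'}_x$ at all -- they move $x$ within its orbit. What acts on the deformation space is the group $I_x^1(\bbQ)$ of polarized $\calO_{D_{\rmS}}$-quasi-isogenies of $\calA_x$ (the stabilizer of a lift of $x$ in $G'_{\mathrm{der}}(\bbA_f^p)$), realized $p$-adically inside $J_x$, and the openness you need is supplied by weak approximation applied to a maximal torus $T^1\subset I_x^1$: this is how the paper obtains stability of $\widehat{Z}'^p_x$ under an open subgroup of $(\calO_F\otimes\bbZ_p)^\times$, whence $\widehat{Z}'^p_x=\prod_{i\in\omega}\mathrm{Def}_{\scrG_{x,i},\lambda_i}$ for some subset $\omega$ of the $p$-adic places of $F$. (Incidentally, in this setting the deformation space is already a formal $p$-divisible group, so the multi-layer cascade issue you flag does not arise.)

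The substantive gap is your ``global step.'' The ordinary-point analysis only shows that the tangent sheaf of each component of the smooth locus of $Z'^p(x)$ is a sub-sum $\bigoplus_{i\in\omega}T_{\calX'}(\fkp_i)$; nothing in your proposal forces $\omega$ to be the full set of places, and the appeal to prime-to-$pl$ translation plus ``big $l$-adic monodromy'' is an assertion, not an argument -- large monodromy of the universal family does not by itself control the formal completion of a Hecke-invariant closed subvariety. The paper supplies exactly this missing ingredient through supersingular points: quasi-affineness of the Ekedahl--Oort strata together with properness forces $Z'_l(x)$ to meet $\calX'^{\mathrm{ss}}$; at such a point $s$ one has $I_s^1\otimes\bbQ_p\cong J_s^1$ with $I_s^1$ simply connected, so weak approximation makes the whole compact group $\rmJ_s^1$ preserve $\widehat{Z}'^p_s(x)$; then perturbing a formal curve through $s$, generically in the smooth locus, by elements of $\rmJ^1_{s,j}$ for $j\notin\omega$ produces tangent directions in the $\fkp_j$-component, contradicting the computed tangent sheaf. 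Without this (or a genuine substitute), your argument proves only that $Z'^p(x)$ is ``linear'' at ordinary points, not that it is all of $\calX'^{\fkc'}$.
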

\begin{rem}\label{rem: Hecke orbit unitary}(1) The inclusion $Z'^p(x)\subset\calX'^{\fkc'}$ is clear since $G'_{\mathrm{der}}=\ker \nu'$.
	
	(2) Since $G'_{\tilde{\rmS}}(\bbA_f^p)$ acts transitively on the set of connected components of $\underline{\Sh}_{K'_p}(G'_{\tilde{\rmS}})$ (see for example \cite[Lemma 2.2.5]{Ki2}), this Theorem implies the  prime-to-$p$ Hecke orbit of any $x\in \calX'^{\mathrm{ord}}(\Fpbar)$ is Zariski dense in $\calX'$. Here the  prime-to-$p$ Hecke orbit is defined by replacing the $G'_{\mathrm{der}}(\bbA^p_f)$-orbit with the $G'_{\tilde{\rmS}}(\bbA_f^p)$-orbit in the definition of $\tilde{Y}'^p(\tilde{x})$.
\end{rem}

The rest of this section is devoted to the proof of this Theorem. We briefly outline the strategy following \cite{Chai}. The first step is to analyze the formal completion $\widehat{Z}'^p(x)$ of $Z'^p(x)$ at a smooth ordinary point. The stability of $Z'^p(x)$ under Hecke correspondences constrains the possibilities for this formal completion. Indeed, using Moonen's generalization of the Serre--Tate coordinates \cite{Mo} in this context, one can show that the formal completion of $\calX'$ at an ordinary point has a formal group structure; then $\widehat{Z}'^p(x)$ is actually a formal subgroup. The second step is to show that $Z'^p(x)$ contains a supersingular point; this uses the quasi-affineness of the Ekedahl--Oort stratification on $\calX'$ proved in \cite{VW}. Finally we  analyze the formal completion of $Z'^p(x)$ at a supersingular point $s\in \calX'^{\mathrm{ss}}\cap Z'^p(x)$; we use the fact that $s$ is the specialization of an ordinary point $x'$ and that $\calA_s$ has a large endomorphism group to show that the formal completion of $Z'^p(x)$ at $x'$ is the whole formal neighborhood in $\calX'$. This implies the result.

A key ingredient of the  proof of  the Hecke orbit conjecture for Hilbert modular varieties in \cite{Chai} was an explicit description of the structure of isogeny classes associated to an abelian variety with extra structure. This required an understanding of the endomorphism  algebra of these abelian varieties, see \cite[Lemma 6]{Chai}. We will instead use a group theoretic approach using results of \cite{Ki3} which avoids some of  the case-by-case analysis of loc. cit.

Let $k$ be an algebraically closed field of characteristic $p$. Let $x\in \calX'(k)$  and $(\calA_x,\iota,\lambda,\epsilon_{K'^p})$ the associated quadruple.  We write $I_x$ for the reductive group over $\bbQ$ such that $I_x(\bbQ):=\mathrm{Aut}^0_{(\calO_{D_{\rmS}},\lambda)}(\calA_x)$ the group of quasi-isogenies of $\calA_x$ respecting the $\calO_{D_{\rmS}}$-structure and preserving the polarization $\lambda$ up to a scalar in $\bbQ^\times$. We would like to apply the results of \cite[\S2]{Ki3} to our situation. Suppose that $x\in \calX'(\Fpbar)$. We write $I_x^{\mathrm{Kis}}$ for the group denoted $I$ in \cite[\S2.1]{Ki3}. Then it is easy to see that $I_x^{\mathrm{Kis}}\cong I_x$; indeed in \cite{Ki3} we may take the tensors $s_{\alpha}\in W_{\bbZ_{(p)}}^\otimes$ to be the classes corresponding to the endomorphisms and polarization. Then the condition defining $I_x^{\mathrm{Kis}}$ in \cite[\S2.1]{Ki3} precisely corresponds to the condition defining $I_x$. 

By \cite[Corollary 2.3.5]{Ki3}, there is a subgroup $I_0\subset G'_{\tilde{\rmS}}$ equipped with an inner twisting $I_0\otimes_{\bbQ}\overline{\bbQ}\cong I_x\otimes_{\bbQ}\overline{\bbQ}$. The map $\nu':G'_{\tilde{\rmS}}\rightarrow T'_{\mathrm{ab}}$ determines a map $\nu'_x:I_x\rightarrow T'_{\mathrm{ab}}$ and we let $I_x^1\subset I_x$ denote the kernel of $\nu'_x$. For all primes $l\neq p$, there is a natural map $I_x(\bbQ)\rightarrow G'_{\tilde{\rmS}}(\bbQ_l)$ given by the action on the $l$-adic Tate-module.  The following proposition can be deduced in a standard way as in \cite[p448]{Chai}.

\begin{prop}\label{prop: isog classes}
	Suppose $x\in \calX'(\Fpbar)$. There are bijections $$Y'^p(x)\cong I^1_x(\bbQ)\cap \rmK'_p\backslash  G'_{\mathrm{der}}(\bbA_f^p)K'^p/K'^p$$
	$$Y'_l(x)\cong (I^1_x(\bbQ)\cap K'^l)\backslash G'_{\mathrm{der}}(\bbQ_l)K'_l/K'_l.$$
\end{prop}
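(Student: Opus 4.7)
The plan is to invoke the adelic description of the isogeny class of $x$ inside $\underline{\mathrm{Sh}}_{K'_p}(G'_{\tilde{\rmS}})(\Fpbar)$, which for our PEL-type Shimura variety is the content of Kisin's general result for integral canonical models of Hodge-type Shimura varieties \cite[\S2]{Ki3}. Concretely, the isogeny class of $x$ is canonically parametrized by
\[
I_x(\bbQ)\backslash \bigl(X_p(b)\times G'_{\tilde{\rmS}}(\bbA_f^p)\bigr),
\]
where $X_p(b)$ parametrizes the Dieudonn\'e-module data at $p$ and $I_x(\bbQ)$ acts diagonally via its canonical embeddings into $G'_{\tilde{\rmS}}(\bbQ_p)$ and $G'_{\tilde{\rmS}}(\bbA_f^p)$. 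The transfer from Kisin's group-theoretic setup to our moduli data is exactly the identification $I_x\cong I_x^{\mathrm{Kis}}$ recorded in the text just above the proposition.

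Under this parametrization, the lift $\tilde{x}$ is represented by a pair $(\tilde{x}_p,\tilde{x}^p)$, and the prime-to-$p$ Hecke action by $G'_{\mathrm{der}}(\bbA_f^p)$ is right multiplication on the second factor. Two elements $g_1,g_2\in G'_{\mathrm{der}}(\bbA_f^p)$ give the same point of $\tilde{Y}'^p(\tilde{x})$ iff there exists $\gamma\in I_x(\bbQ)$ with $\gamma\cdot\tilde{x}_p=\tilde{x}_p$ and $\gamma\tilde{x}^p g_1=\tilde{x}^p g_2$. Since $\tilde{x}_p$ corresponds to a point of hyperspecial level, the first condition forces the $p$-component of $\gamma$ to lie in $K'_p$; applying the abelianization $\nu':G'_{\tilde{\rmS}}\to T'_{\mathrm{ab}}$ to the second equation forces $\nu'(\gamma)=1$, i.e.\ $\gamma\in I_x^1(\bbQ)$. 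After absorbing the $\tilde{x}^p$-conjugation into the embedding $I_x\hookrightarrow G'_{\tilde{\rmS}}(\bbA_f^p)$, this gives $\tilde{Y}'^p(\tilde{x})\cong (I_x^1(\bbQ)\cap K'_p)\backslash G'_{\mathrm{der}}(\bbA_f^p)$, and projecting to $\calX'$ via the right $K'^p$-quotient produces the first displayed bijection.

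The second bijection follows the same template, with $G'_{\mathrm{der}}(\bbA_f^p)$ replaced by the subgroup $G'_{\mathrm{der}}(\bbQ_l)$ acting only at $l$. The one new piece of bookkeeping is that in projecting $\tilde{Y}'_l(\tilde{x})$ to $\calX'$ one must track the $K'^p = K'_l K'^{lp}$-quotient carefully: it contributes a right $K'_l$-quotient at $l$ together with a condition $\gamma^{lp}\in K'^{lp}$ at primes outside $\{p,l\}$. Combined with $\gamma_p\in K'_p$ at $p$, this reads $\gamma\in I_x^1(\bbQ)\cap K'^l$, yielding the second formula.

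The computation is essentially a Kottwitz-style coset exercise once the adelic description of isogeny classes is available. The only delicate point is the matching of the group-theoretic $I_x^{\mathrm{Kis}}$ of \cite{Ki3} with the moduli-theoretic $I_x$ defined via $(\calO_{D_\rmS},\lambda)$-respecting quasi-isogenies, which has been dealt with in the remarks immediately preceding the proposition; from there on everything is formal.
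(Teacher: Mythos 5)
Your proposal is correct and takes essentially the same route as the paper: both arguments rest on the identification $I_x\cong I_x^{\mathrm{Kis}}$ made just before the proposition together with the Kisin--Chai description of the stabilizer of the lift $\tilde{x}$ (the paper quotes \cite[p448]{Chai} and \cite[Proposition 2.1.3]{Ki3} rather than writing out the full adelic parametrization $I_x(\bbQ)\backslash\bigl(X_p(b)\times G'_{\tilde{\rmS}}(\bbA_f^p)\bigr)$, but this is the same input), followed by the observation that $G'_{\mathrm{der}}(\bbA_f^p)\cap I_x(\bbQ)=I_x^1(\bbQ)$ and formal coset bookkeeping before passing to the $K'^p$-quotient. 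The $l$-power case is dispatched in the paper with ``proved in a similar way,'' so your explicit tracking of the conditions at $p$, at $l$, and away from both is, if anything, slightly more detailed than the original.
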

\begin{proof}
Let $\tilde{x}\in\underline{\mathrm{Sh}}_{K'_p}(G'_{\tilde{\rmS}})(\Fpbar)$ be a lift of $x$. Then by the argument of \cite[p448]{Chai}, the stabilizer of $\tilde{x}$ in $G'_{\tilde{\rmS}}(\bbA_f^p)$  is identified with $I_x(\bbQ)\cap \rmK'_p$ and hence the  $G'_{\tilde{\rmS}}(\bbA_f^p)$-orbit of $\tilde{x}$ can be identified with $I_x(\bbQ)\cap \rmK'_p\backslash  G'_{\tilde{\rmS}}(\bbA_f^p)$, cf. also \cite[Proposition 2.1.3]{Ki3}. Since $G'_{\der}(\bbA_f^p)\cap I_x(\bbQ)=I_x^1(\bbQ)$, the $G'_{\mathrm{der}}(\bbA_f^p)$-orbit of $\tilde{x}$ is identified with $I^1_x(\bbQ)\cap \rmK'_p\backslash  G'_{\mathrm{der}}(\bbA_f^p)$ and the description of $Y'^p(x)$ follows by taking the image in $\calX'$. The description of $Y'_l(x)$ is proved in a similar way.
\end{proof}

\subsection{Serre--Tate theory following \cite{Mo} and analysis at ordinary points}	In this subsection we study the formal neighbourhood of a point in $\calX'^{\mathrm{ord}}$. We begin with some preliminaries concerning $p$-divisible groups with an action of the ring of integers $\calO$ of a finite unramified extension of $\bbQ_p$. Recall we have the integral PEL-datum $(\calO_{D_{\rmS}}\subset D_{\rmS},*,\Lambda\subset W,\psi)$; we write $\calD$ for the base change of this datum to $\bbZ_p$.

\begin{defn}\label{def:D-structure}
	Let $S$ be a $\calO_{\mathbf{E}_{\tilde{\rmS},\tilde{v}}}$-scheme.	A $p$-divisible group with $\calD$-structure is a triple $({\mathscr{G}},\lambda,\iota)$ where:
	
	$\bullet$ ${\mathscr{G}}$ is a $p$-divisible group over $S$ of height $\dim W$.
	
	$\bullet$ $\iota:\calO_{D_{\rmS},p}\rightarrow \text{End}({\mathscr{G}})$ is a homomorphism satisfying \begin{equation}\label{eq:char poly}\det(T -\iota(a)| \mathrm{Lie}(\scrG/S)) = \prod_{\tilde{\tau}\in\Sigma_{E,\infty}}(T-\tilde{\tau}(a))^{2s_{\tilde{\tau}}}.\end{equation}
	
	$\bullet$ $\lambda:{\mathscr{G}}\rightarrow {\mathscr{G}}^\vee$ is a polarization  such that $$\iota(a)=\lambda^{-1}\circ\iota(a^*)^\vee\circ\lambda. $$
\end{defn}
It is easy to see that for any $x\in \underline{\Sh}_K'(G'_{\tilde{\rmS}})(S)$, the associated $p$-divisible group $\calA_x[p^\infty]$ is a $p$-divisible group with $\calD$-structure.

Suppose $p$ factors as $\fkp_1\dotsc\fkp_r$ in $F$; we have a decomposition  $F\otimes\bbQ_p\cong F_{\fkp_1}\times\cdots \times F_{\fkp_r}$. This induces a decomposition $$D_{\rmS,p}\cong D_{\rmS,\fkp_1}\times \cdots\times D_{\rmS,\fkp_r}.$$
The datum $\calD$ decomposes as a product of data $\calD_i:=(\calO_{D_{\rmS,\fkp_i}}\subset D_{\rmS,\fkp_i},*,\Lambda_i\subset W_i,\psi_i,)$, where $W_i$ is the subspace of $W\otimes \bbQ_p$ on which $D_{\rmS,p}$ acts via $D_{\rmS,\fkp_i}$ and $\psi_i$ and $*$  are the restrictions of $\psi\otimes\bbQ_p$ and $*$ to $W_i$. Similarly to Definition \ref{def:D-structure}, we may define the notion of $p$-divisible group $\calD_i$-structure.
By Corollary \cite[Corollary 4.5 (1)]{Ham2}, there is an equivalence of categories 
\begin{equation}\label{eq:pdiv red 1}\{\text{$p$-divisible groups with $\calD$-structure}\}\cong \prod_i\{\text{$p$-divisible groups with $\calD_i$-structure}\}\end{equation}
which takes isogenies to isogenies. Moreover this equivalence gives a decomposition $$\scrG_x\cong\prod_{i=1}^r\scrG_{x,i}$$ for $\scrG_x$ a $p$-divisible group with $\calD$-structure. The $\scrG_{x,i}$ are equipped with polarizations $\lambda_i$ and $\calO_{D_{\rmS},\fkp_i}$-actions and the isomorphism  identifies $\lambda$ with the product polarization and $\iota$ with the product of the actions of $\calO_{D_{\rmS},\fkp_i}$.

For any $x\in \calX'(\Fpbar)$, we write $J_x$ for the group of automorphism of $\scrG_x$ in the isogeny category preserving the $\calO_{D_{\rmS,p}}$-action and the polarization up to a $\bbQ_p^\times$ scalar. Then $J_x$ is a reductive group over $\bbQ_p$ which is an inner form of a Levi subgroup of $G'_{\tilde{\rmS},\bbQ_p}$ and there is a natural embedding $I_x\otimes\bbQ_p\hookrightarrow J_x$. Similarly to the definition of $I_x^1$, the map $G'_{\tilde{\rmS}}\rightarrow T_{\mathrm{ab}}$ induces a natural map $J_x\rightarrow T_{\mathrm{ab},\bbQ_p}$ and we write $J_x^1$ for its kernel. The $J_x^1$ breaks up into a product $J_{x,i}^1$ corresponding to the decomposition $\scrG_x\cong \prod_{i=1}^r\scrG_{x,i}$.

Now suppose $k$ is an algebraically closed field of characteristic $p$ and  $x\in \calX'^{\mathrm{ord}}(k)$. We write $\widehat{U}_x$ for the formal neighborhood of $\calX'$ at the point $x$. By the Serre--Tate theorem,
$\widehat{U}_x$ is identified with the characteristic $p$ deformation space of $\calA_x[p^\infty]$; see \cite[Proposition 2.9]{Ham2} for example. In other words, let $\mathrm{Def}_{\scrG_x}$ be the functor on local Artinian rings $R/k$ with residue field $k$ satisfying:
$$\mathrm{Def}_{\scrG_{x},\lambda}(R)=\left\{(\tilde{\scrG},\tilde{\lambda},\iota)/R\text{ a $p$-divisible group with $\calD$-structure,\  $\theta:\tilde{\scrG}\otimes_R{k}\xrightarrow\sim \scrG_x$}\right\}/\sim.$$ Here $\theta$ is an isomorphism of $p$-divisible groups with $\calD$ structure (i.e. preserves actions and polarizations) and $\sim$ is the equivalence relation identifying isomorphic $p$-divisible groups with $\calD$-structure. Then $\mathrm{Def}_{\scrG_x,\lambda}$ is representable by a formal scheme $\mathrm{Spf}\ R$, and $\mathrm{Spf}\ R\cong \widehat{U}_x$. The equivalence of categories (\ref{eq:pdiv red 1}) implies that there is an isomorphism $$\mathrm{Def}_{\scrG_x,\lambda}\cong \prod_{i=1}^r\mathrm{Def}_{\scrG_{x,i},\lambda_i}$$
where $\mathrm{Def}_{\scrG_{x,i},\lambda_i}$  is the deformation space of the $p$-divisible group $\scrG_{x,i}$ with $\calD_i$-structure
\begin{equation}\label{defn: def space i}
\mathrm{Def}_{\scrG_{x,i},\lambda_i}(R)=\left\{(\tilde{\scrG}_i,\tilde{\lambda}_i,\iota_i)/R \text{ a $p$-divisible group with $\calD_i$-structure,\  $\theta_i:\tilde{\scrG}_{i}\otimes_Rk\xrightarrow\sim \scrG_{x,i}$}\right\}/\sim.
\end{equation}
\begin{thm}[{\cite[Theorem p226, Example 3.3.2]{Mo}}]For $i=1,\dotsc,r$, $\mathrm{Def}_{\scrG_{x,i},\lambda_i}$ is a formal $p$-divisible group. Hence $\mathrm{Def}_{\scrG_x,\lambda}$ is a formal $p$-divisible group.

\end{thm}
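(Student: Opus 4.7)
The plan is to adapt Moonen's generalization of Serre--Tate theory to the $\mu$-ordinary PEL setting. The key input from the hypothesis is that $x\in \calX'^{\mathrm{ord}}$, so each $\scrG_{x,i}$ is $\mu$-ordinary in the sense of \cite{Mo}, which over the algebraically closed field $k$ means that $\scrG_{x,i}$ admits a canonical slope filtration
\[
0 = \scrG^{(0)} \subset \scrG^{(1)} \subset \cdots \subset \scrG^{(m)} = \scrG_{x,i},
\]
with isoclinic graded pieces $M_j := \scrG^{(j)}/\scrG^{(j-1)}$ of strictly increasing slopes $\lambda_1 < \cdots < \lambda_m$, and which moreover splits. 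The $\calO_{D_\rmS,\fkp_i}$-action preserves this filtration, and the polarization $\lambda_i$ pairs $M_j$ with $M_{m+1-j}$ (so $\lambda_j + \lambda_{m+1-j} = 1$).

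First I would prove a rigidity statement for isoclinic pieces: a $p$-divisible group over an Artinian local ring $R/k$ that is isoclinic of a fixed slope and equipped with a compatible $\calO_{D_\rmS,\fkp_i}$-action of prescribed isotypic signature is uniquely determined up to canonical isomorphism by its special fibre. This reduces the deformation problem for $(\scrG_{x,i},\lambda_i,\iota_i)$ to the problem of classifying lifts of the slope filtration, i.e.\ iterated extensions of the $M_j$ in the category of $p$-divisible groups with $\calO_{D_\rmS,\fkp_i}$-action, subject to the polarization duality.

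Next I would identify the extension classes with a formal $p$-divisible group. For $j < k$, the $\mathrm{Ext}^1$-group $\mathrm{Ext}^1_{\calO_{D_\rmS,\fkp_i}}(M_k, M_j)$, computed on Artinian $k$-algebras, is representable by a formal $p$-divisible group $\bfE_{jk}$: this is the generalization of the classical fact that extensions of \'etale by multiplicative type $p$-divisible groups form a formal torus, and it follows from the description of such extensions via homomorphisms of Dieudonn\'e crystals. Building up the filtration layer by layer and using the Baer sum to put a group structure on the set of simultaneous lifts, one realizes the unpolarized deformation space as an iterated extension of the $\bfE_{jk}$, hence a formal $p$-divisible group. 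Finally, the polarization compatibility cuts out the symmetric part of the deformations, which is the fixed locus of an involution on this formal $p$-divisible group exchanging $\bfE_{jk}$ and $\bfE_{m+1-k,m+1-j}$; one checks that this fixed locus is itself a formal $p$-divisible subgroup.

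The main obstacle, and the crux of \cite{Mo}, is verifying that the symmetric extensions from the polarization really form a formal $p$-divisible \emph{subgroup} (not merely a formally smooth formal subscheme), which amounts to a biextension-theoretic check that the Baer sum law restricts well to the $\lambda_i$-symmetric classes and that the resulting group is $p$-divisible. Once this is granted, the second assertion of the theorem is immediate: the decomposition $\mathrm{Def}_{\scrG_x,\lambda}\cong \prod_{i=1}^r \mathrm{Def}_{\scrG_{x,i},\lambda_i}$ furnished by (\ref{eq:pdiv red 1}) expresses $\mathrm{Def}_{\scrG_x,\lambda}$ as a finite product of formal $p$-divisible groups, which is again a formal $p$-divisible group.
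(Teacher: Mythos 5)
There is a genuine gap at the central step of your argument. You keep the number $m$ of slopes arbitrary and claim that ``building up the filtration layer by layer and using the Baer sum'' realizes the whole deformation space as an iterated extension of the groups $\bfE_{jk}$, hence as a formal $p$-divisible group. This is exactly the point where the general statement fails: when $m\geq 3$ the lifts of the slope filtration do not carry a natural group structure — lifting the $j$-th layer is a torsor under an Ext-group that itself depends on the already-chosen lift of the lower layers, and the resulting structure is what Moonen calls a (shifted) \emph{cascade}, built out of biextensions, not a formal group. The Baer sum only gives a group structure on each individual $\mathrm{Ext}^1(M_k,M_j)$, not on the set of simultaneous lifts. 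The theorem as stated in the paper is nevertheless true, but for a reason your sketch never invokes: by Morita equivalence the datum $(\scrG_{x,i},\lambda_i,\iota_i)$ is controlled by a polarized $p$-divisible group $(\scrG^\circ,\lambda^\circ)$ with $\calO'$-structure of $\calO'$-height $2$, so at most two slopes occur; the deformation space is then a single Ext/Hom-type group (cut down by the symmetry condition from $\lambda^\circ$ in the inert case), and this is precisely the content of the cited Example 3.3.2 of \cite{Mo}, which the paper recalls immediately after the theorem via the explicit identifications $\mathrm{Def}_{\scrG_{x,i},\lambda_i}\cong\scrG(1,\fkd)^{\mathrm{for}}$ resp.\ $(\tilde{\scrG}^{\lambda^\circ})^{\mathrm{for}}$. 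Note also that the paper does not reprove this result at all — it is quoted from \cite{Mo} — so any self-contained argument would have to reproduce Moonen's theory, and your version of it breaks precisely at the multi-slope step.

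Two secondary points. First, your ``rigidity of isoclinic pieces'' is not a general fact about isoclinic $p$-divisible groups with $\calO$-action (a supersingular formal group is isoclinic and far from rigid); it holds here only because the graded pieces of a $\mu$-ordinary object have, embedding by embedding, Lie algebra of full or zero rank, and this needs to be said and used. Second, even in the two-slope polarized case, the claim that the $\lambda$-symmetric classes form a $p$-divisible \emph{subgroup} is not a formal consequence of being the fixed locus of an involution; this is again part of what \cite{Mo} verifies. Patching your proposal therefore amounts to first performing the Morita reduction to $\calO$-height $2$ and then quoting (or redoing) Moonen's two-slope analysis, after which the final assertion does follow from the product decomposition $\mathrm{Def}_{\scrG_x,\lambda}\cong\prod_{i=1}^r\mathrm{Def}_{\scrG_{x,i},\lambda_i}$ as you say.
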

The results in \cite{Mo} in fact give an explicit description of the group structure of these deformation spaces which we now recall. To do this we introduce some notations. Let $\calO$ be an unramified extension of $\bbZ_p$ of degree $g$; see \S\ref{sec: Appendix O-structure}. Recall a $p$-divisible group with $\calO$-structure over a scheme $S$ is a $p$-divisible group $\scrG/S$ together with homomorphism of $\bbZ_p$-algebras $\iota:\calO\rightarrow \mathrm{End}_S(\scrG)$. The $\calO$-height $\mathrm{ht}_\calO(\scrG)$ of $\scrG$ satisfies $$g\mathrm{ht}_\calO(\scrG)=\mathrm{ht}(\scrG).$$ We identify the set of embeddings $\calO\rightarrow W(k)$ with $\scrI:=\{1,\dotsc,g\}$ so that $\sigma(m)=m+1$. Let $d$ be an integer and $\fkf:\scrI\rightarrow \{0,\dotsc,d\}$. For $m\in\scrI$ we let $M_m$ be a $W(k)$-module of rank $d$ with basis $(e_{m,n})_{n=1,\dotsc,d}$. We equip $M_m$ with a Frobenius $\varphi$ given by
\begin{equation}
\varphi(e_{m,n})=\begin{cases}
e_{m+1,n} & \text{if $n\leq d-\fkf(m)$}\\
pe_{m+1,n} &\text{ if $n> d-\fkf(m)$}
\end{cases}.
\end{equation}
This gives $M$ the structure of a Dieudonn\'e module and hence corresponds to a $p$-divisible group $\scrG(d,\fkf)$. The action of $\calO$ on $M$ given by letting $\calO$ act on $M_m$ via the embedding $m$ induces an action of $\calO$ on $\scrG(d,\fkf)$. Then $\scrG(d,\fkf)$ is a $p$-divisible group with $\calO$-structure of $\calO$-height $d$.

Let $i\in\{1,\dotsc,r\}$ and $\calO':=\calO_{E_{\fkp_i}}$; we write $g:=[F_{\fkp_i}:\bbQ_p]$. By Morita equivalence, there exists a polarized $p$-divisible group $(\scrG^\circ,\lambda^\circ)$ with $\calO'$-structure of $\calO'$-height 2 such that $(\scrG^{\circ})^2=\scrG_{x,i}$. The polarization $\lambda^\circ$ induces an involution $*$ on $\calO'$. Then $(\calO',*)$ is one of the following two forms, see \cite[3.1.2]{Mo}:

Case (AL): $\fkp_i$ is splits as $\fkq_i\overline{\fkq}_i$ in $E$ and $\calO'=W(\bbF_{p^g})\times W(\bbF_{p^g})$, where  the involution $*$ is given by $(a,b)\mapsto (b,a)$.

Case (AU): $\fkp$ is inert in $E$ and $\calO'=W(\bbF_{p^{2g}})$. The  involution $*$ is given by the $p^g$-Frobenius.

In the case (AL), we write $\calO=\calO_{E_{\fkq_i}}\cong W(\bbF_{p^g})$ and there is a $p$-divisible group $\scrG'$ with $\calO$-structure of $\calO$-height 2 such that $\scrG^\circ\cong \scrG'\times \scrG'^{\vee}$. We identify the set of embeddings $\calO\rightarrow W(k)$ with $\scrI:=\{1,\dotsc,g\}$ as above, and we also identify this with $\Sigma_{E,\infty/\fkq_i}$. Then $\scrG'\cong\scrG(2,\fks)$ where $\fks:\scrI\rightarrow \{0,1,2\}$ is defined by $\fks(m)=s_m$; here $s_m$ is the number defined in (\ref{eq:s tau}).

We define a function $\fkd:\scrI\rightarrow \{0,1\}$ by setting $$\fkd(m)=\begin{cases}
1 & \text{if $s_m=1$}\\
0 & \text{otherwise}
\end{cases}.$$

Then by \cite[Theorem p4]{Mo} there is an isomorphism $$\mathrm{Def}_{\scrG_{x,i},\lambda_i}\cong \mathrm{Def}_{\scrG'}\cong\scrG(1,\fkd)^{\mathrm{for}},$$ where we write $\scrG(1,\fkd)^{\mathrm{for}}$ for the formal group associated to $\scrG(1,\fkd)$ and the space $\mathrm{Def}_{\scrG'}$ is the deformation space of the $p$-divisible group with $\calO$-structure (without polarization). The first isomorphism here is via \cite[Corollary 4.5 (2) I]{Ham2}. Then $\mathrm{Def}_{\scrG_{x,i}}$ is a $p$-divisible group with $\calO$-structure of $\calO$-height 1.

In the case (AU), we write $\calO:=\calO_{F_{\fkp_i}}\cong W(\bbF_{p^{g}})$; recall $\calO':=\calO_{E_{\fkp_i}}\cong W(\bbF_{p^{2g}})$. Then $\scrG^\circ\cong  \scrG(2,\fks)$ where $\fks:\{1,\dotsc,2g\}\rightarrow \{0,1,2\}$ is defined by $\fks(m)=s_m$ as above. Here we have identified $\{1,\dotsc,2g\}$ with $\Sigma_{E,\infty/\fkp_i}$. There exists a $p$-divisible group with $\calO'$-structure $\scrG'$ of $\calO'$-height 1 and an isomorphism $\scrG^\circ\cong \scrG'\times\scrG'^{\vee}$. The polarization $\lambda^\circ$ on $\scrG^\circ$ switches the two factors.

We define a $p$-divisible group $\tilde{\scrG}:=\scrG(1,\fkd)$ with $\calO'$-structure where $\fkd$ is defined by  $$\fkd(m)=\begin{cases}
1 & \text{if $s_m=1$}\\
0 & \text{otherwise}
\end{cases}$$
as above.  In this case, the polarization $\lambda^\circ$ induces an endomorphism of  $\tilde{\scrG}$; then there is an isomorphism $$\mathrm{Def}_{\scrG_{x,i},\lambda_i}\cong\mathrm{Def}_{\scrG^\circ,\lambda^\circ}\cong(\tilde{\scrG}^{\lambda^\circ})^{\mathrm{for}}.$$
In this case the deformation space $\mathrm{Def}_{\scrG_{x,i},\lambda_i}$ is no longer stable under the action of $\calO'$. However it is stable under $\calO$ and indeed it is a $p$-divisible group with $\calO$-structure of $\calO$-height 1. 

Using the definition of slopes for a $p$-divisible group with $\calO$-structure as in \cite[\S1.2.5]{Mo}, it is easily checked  that $\scrG'$ in case (AL) and $\scrG^\circ$ in case (AU) has one slope  if  $\Sigma_{\infty/\fkp_i}-\rmS_{\infty/\fkp_i}=\emptyset$ and two slopes otherwise. More we have that  $ \mathrm{Def}_{\scrG_{x,i},\lambda_i}$ has dimension $|\Sigma_{\infty/\fkp_i}-\rmS_{\infty/\fkp_i}|$ in both cases (AL) and (AU); in particular $\mathrm{Def}_{\scrG_{x,i},\lambda_i}$ is trivial if $\Sigma_{\infty/\fkp_i}-\rmS_{\infty/\fkp_i}=\emptyset$.

We also need a description of the endomorphism algebra of $\scrG_{x,i}$ and its action on the deformation space.  For $i\in\{1,\dotsc,r\}$, we write $\rmJ_{x,i}:=\mathrm{Aut}_{(\calO',\lambda^\circ_i)}(\scrG^\circ_{x,i})$ for the group of automorphisms of $\scrG_{x,i}^\circ$ respecting the $\calO'$ action and polarization. The group $\rmJ_{x,i}$ acts on $\mathrm{Def}_{\scrG_{x,i}}$ via modification of the isomorphism $\theta_i$ in (\ref{defn: def space i}).

In case (AL), $\rmJ_{x,i}$ can be identified with the group of automorphisms of $\scrG'$ respecting the $\calO_{F_{\fkp_i}}$-structure.  If $\scrG'$ has one slope, in other words if $s_m\neq 1$ for any $m\in\Sigma_{E,\infty/\fkq_i}$, then $\rmJ_{x,i}\cong M_2(\calO_{\fkp_i})$.  In this case the deformation space is trivial and $\rmJ_{x,i}$ acts trivially on $\mathrm{Def}_{\scrG_{x,i},\lambda_i}$.
If $\scrG'$ has more than one slope, which occurs if $\rmS_{\infty/\fkp_i}\neq\Sigma_{\infty/\fkp_i}$, then  $\rmJ_{x,i}\cong (\calO_{F_{\fkp_i}}^\times)^2$. In this case $\rmJ_{x,i}$ acts on  $\mathrm{Def}_{\scrG_{x,i},\lambda_i}$ via $(a,b)\mapsto \iota(ab^{-1})$; here $\iota$ gives the $\calO_{F_{\fkp_i}}$-structure on $\mathrm{Def}_{\scrG_{x,i}}$. 

In case (AU), if $\scrG^\circ$ has one slope, then $\rmJ_{x,i}$ is identified with the  subgroup in $GL_2(\calO_{E_{\fkp_i}})$ preserving the Hermitian form  given by the matrix $\left(\begin{matrix}0 & 1 \\ -1 & 0\end{matrix}\right)$. In this case the deformation space is trivial and $\rmJ_{x,i}$ acts trivially on $\mathrm{Def}_{\scrG_{x,i}}$. If $\scrG^\circ$ has two slopes, $\rmJ_{x,i}$ is identified with $\calO_{E_{\fkp_i}}^\times$. Then $\rmJ_{x,i}$ acts on  $\mathrm{Def}_{\scrG_{x,i}}$ via $a\mapsto \iota(\mathrm{Nm}_{E_{\fkp_i}/F_{\fkp_i}}(a))$.

It follows from these descriptions that $\mathrm{Def}_{\scrG_x,\lambda}$ is equipped with an action of $(\calO_{F}\otimes\bbZ_p)^\times$.

Now assume $x'\in \calX'^{\mathrm{ord}}(\Fpbar)$. We let $x\in Z'^p(x')(\Fpbar)$ be an ordinary point which lies in the smooth locus of $Z'^p(x')$ and we write $\widehat{Z}_x'^p(x')\subset \mathrm{Def}_{\scrG_x}$ for the formal completion of $Z'^p(x')$ at $x$. The analysis of the actions of $\rmJ_{x,i}$ on $\mathrm{Def}_{\scrG_{x,i}}$  leads to the following proposition.
\begin{prop}\label{prop:formal nbd stable}
 $\widehat{Z}_x'^p(x')\subset \mathrm{Def}_{\scrG_x}$ is stable under the action of an open subgroup of $(\calO_F\otimes\bbZ_p)^\times$.
\end{prop}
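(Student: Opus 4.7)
The plan is to produce a large supply of Hecke correspondences that fix the point $x$ and whose induced action on the formal neighborhood covers an open subgroup of $(\calO_F\otimes\bbZ_p)^\times$. The starting point is Proposition \ref{prop: isog classes}, which identifies the $G'_{\mathrm{der}}(\bbA_f^p)$-stabilizer of a chosen lift $\tilde{x}\in\underline{\mathrm{Sh}}_{K'_p}(G'_{\tilde{\rmS}})(\Fpbar)$ with $I^1_x(\bbQ)\cap K'_p$. Any element of this stabilizer is represented by some $\alpha\in I^1_x(\bbQ)$ and gives a Hecke correspondence which preserves the orbit $Y'^p(x')$, hence its closure $Z'^p(x')$. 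Because $x$ is in the smooth locus of $Z'^p(x')$, the automorphism of the formal neighborhood $\widehat{U}_x=\mathrm{Def}_{\scrG_x,\lambda}$ induced by $\alpha$ stabilizes $\widehat{Z}'^p_x(x')$.

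Next I would analyze this induced action. An element $\alpha\in I^1_x(\bbQ)$ acts on $\widehat{U}_x$ through its image in $J^1_x(\bbQ_p)$ and, under the decomposition $\mathrm{Def}_{\scrG_x,\lambda}\cong\prod_{i}\mathrm{Def}_{\scrG_{x,i},\lambda_i}$, factor by factor. For indices $i$ with $\Sigma_{\infty/\fkp_i}=\rmS_{\infty/\fkp_i}$ the local deformation space is a point, and there is nothing to prove. For the remaining $i$, the explicit descriptions of $\rmJ_{x,i}$ in the (AL) and (AU) cases given above show that the centre of $J^1_{x,i}(\bbQ_p)$ surjects, up to a compact kernel, onto an open subgroup of $\calO_{F_{\fkp_i}}^\times$ acting on $\mathrm{Def}_{\scrG_{x,i},\lambda_i}$ through the $\calO_F\otimes\bbZ_p$-structure $\iota$. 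Taking the product over $i$, the image of the centre of $J^1_x(\bbQ_p)$ in $\mathrm{Aut}(\mathrm{Def}_{\scrG_x,\lambda})$ contains an open subgroup of $(\calO_F\otimes\bbZ_p)^\times$.

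To convert this local statement into a statement about rational elements, I would introduce a $\bbQ$-torus $T\subset I^1_x$ whose image in $J^1_x$ lands in the centre. Such a $T$ can be produced using the inner twisting $I_x\otimes\overline{\bbQ}\cong I_0\otimes\overline{\bbQ}$ coming from \cite[Corollary 2.3.5]{Ki3}, which also controls its local structure. I then appeal to weak approximation for the torus $T$ at the finite set of places consisting of $p$ together with the primes where $K'^p$ fails to be maximal: this provides elements $\alpha\in T(\bbQ)$ whose prime-to-$p$ adelic component lies in $K'^p$ and whose $p$-adic component fills out an open neighbourhood of any prescribed target in $T(\bbQ_p)$. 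Feeding these $\alpha$ into the construction of the first paragraph produces the desired open subgroup of $(\calO_F\otimes\bbZ_p)^\times$ stabilizing $\widehat{Z}'^p_x(x')$.

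The main obstacle is the weak-approximation step: one must verify that $T$ can be chosen so that (i) the image of $T(\bbQ_p)$ in $\mathrm{Aut}(\mathrm{Def}_{\scrG_x,\lambda})$ already contains an open subgroup of $(\calO_F\otimes\bbZ_p)^\times$, and (ii) the Hasse obstruction to weak approximation for $T$ is concentrated at a finite set of primes which can be absorbed into $K'^p$ (noting that $K'^p$ is allowed to shrink since the conclusion is preserved under shrinking level). The first point is a local case-by-case analysis of the types (AL) and (AU) and the slope structure of $\scrG_{x,i}$, while the second is standard for tori. Once both are in place, the openness of the resulting subgroup of $(\calO_F\otimes\bbZ_p)^\times$ is automatic from openness of the $p$-adic component constructed via weak approximation.
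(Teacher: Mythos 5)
Your overall skeleton matches the paper's: act on $\widehat{Z}'^p_x(x')$ by global elements of $I^1_x(\bbQ)$ that are integral at $p$, identify the induced action on $\mathrm{Def}_{\scrG_x,\lambda}$ factor by factor via the explicit (AL)/(AU) descriptions, and then approximate to produce an open subgroup of $(\calO_F\otimes\bbZ_p)^\times$. But the approximation step as you formulate it is a genuine gap. You ask for elements $\alpha\in T(\bbQ)$ whose \emph{entire} prime-to-$p$ adelic component lies in $K'^p$ while the $p$-component fills an open set, and you invoke weak approximation at the finite set $\{p\}\cup\{\text{bad places for }K'^p\}$. Weak approximation only controls the components at the chosen finite set of places; the condition $\alpha^{(p)}\in K'^p$ is an integrality condition at \emph{all} places away from $p$, and the set of rational points of a torus satisfying it is an arithmetic unit group, hence finitely generated. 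Its closure in $T(\bbQ_p)$ is in general not open (for the norm-one tori occurring here this is a Leopoldt-type question), so the "openness is automatic" claim at the end fails. Shrinking $K'^p$ does not help, since the condition still involves infinitely many places.

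The fix is to drop the requirement $\alpha^{(p)}\in K'^p$ altogether, which is what the paper does: since $Y'^p(x')$ is by definition the image of a full $G'_{\mathrm{der}}(\bbA_f^p)$-orbit, its closure $Z'^p(x')$ is stable under \emph{all} prime-to-$p$ Hecke correspondences, so any $\alpha\in I^1_x(\bbQ)$ whose $p$-adic realization lies in $\rmJ_x$ already induces an automorphism of $\widehat{U}_x$ preserving $\widehat{Z}'^p_x(x')$ — no congruence condition away from $p$ is needed. One then only needs density of $T^1(\bbQ)\cap\rmJ_x$ in $T^1(\bbQ_p)\cap\rmJ_x$, i.e.\ weak approximation at the single place $p$ (the paper cites Sansuc), after which the explicit action $a\mapsto\iota(a^2)$ (resp.\ the norm in case (AU)) has open image. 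A second, smaller issue: you require a $\bbQ$-torus $T\subset I^1_x$ whose image lands in the centre of $J^1_x$, and its existence is asserted rather than proved; the paper avoids this by taking an arbitrary maximal torus of $I_x$ over $\bbQ$ and using the rank equality of \cite[Corollary 2.1.7]{Ki3} to see that $T^1_{\bbQ_p}$ is maximal in $J^1_x$ — at the factors with two slopes $J^1_{x,i}$ is already abelian, so no centrality condition is needed, and the one-slope factors act trivially on the deformation space anyway.
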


\begin{proof} 
	
Recall we have the groups $J_x$ and $J_x^1$ defined above.	Let $T\subset I_x$ denote a maximal torus and $T^1$ its intersection with $I_x^1$. By \cite[Corollary 2.1.7]{Ki3}, $I_x$ and  $J_x$ have the same rank, and hence $I_x^1$ and $J_x^1$ have the same rank. It follows that $T^1_{\bbQ_p}$ is a maximal torus of $J^1_x$.
	
	Now $T^1(\bbQ_p)$ breaks up into a product $\prod_{i=1}^r T_i(\bbQ_p)$ where $T_i(\bbQ_p)\subset J^1_{x,i}$. Let $i\in\{1,\dotsc,r\}$ be an element such that $\scrG_{x,i}'$ in case (AL) (resp. $\scrG_{x,i}^\circ$ in case (AU)) has more than one slope.  It can be checked in the two cases there is the following description of $T_i^1(\bbQ_p)$:
	
	Case (AL): $T_i^1(\bbQ_p)\subset \mathrm{Aut}_{\calO}^0(\scrG_{x,i}')\cong (F_{\fkp_i}^\times)^2$ is the  subgroup of elements of the form $(a,b)$ with $ab=1$. Here  $\scrG_{x,i}'$ is as in the discussion above and $\mathrm{Aut}_{\calO}^0(\scrG_{x,i}')$ is the group of automorphisms in the isogeny category preserving the $\calO$-action.
	
	Case (AU): $T_i^1(\bbQ_p)\subset \mathrm{Aut}_{\calO',\lambda^\circ}^0(\scrG_{x,i}^\circ)\cong E_{\fkp_i}^\times$ is the subgroup consisting of elements $a\in E_{\fkp_i}^\times$ with $a\overline{a}^{-1}=1$. Here $\mathrm{Aut}_{\calO',\lambda^\circ}^0(\scrG_{x,i}^\circ)$ denotes the group of automorphisms in the isogeny category preserving the $\calO'$-actions and the polarization.
	
By weak approximation for $T^1$, see \cite[Corollary 3.5]{San}, $T^1(\bbQ)\cap \rmJ_x$ is dense in $T^1(\bbQ_p)\cap \rmJ_x$. Since the action of $T^1(\bbQ)\cap \rmJ_x$ on $\mathrm{Def}_{\scrG_x,\lambda}$ preserves $\widehat{Z}'^p(x')$ so does the action of $T^1(\bbQ_p)\cap \rmJ_x$. By the description of $T^1(\bbQ_p)$ in the previous paragraph, the action of $T^1(\bbQ_p)\cap\rmJ_x^1$ on the deformation space can be described in the following way. We assume $i\in\{1,\dotsc,r\}$ is such that $\scrG_{x,i}^\circ$ has two slopes. Then in either case (AU) or (AL),
	$T_i^1(\bbQ_p)\cap \rmJ_{x,i}\cong\calO_{F_{\fkp_i}}^\times$ acts on $\mathrm{Def}_{\scrG_{x,i},\lambda_i}$ via $a\mapsto \iota(a^2)$. Since the image of $a\mapsto a^2$ in $\calO_{F_{\fkp_i}}^\times$ is open, this proves the proposition.

\end{proof}

We may relabel the primes $\fkp_i$ so that $\scrG_{x,i}'$ in case (AL) (resp. $\scrG_{x,i}^\circ$ in case (AU)) has two slopes for $i=1,\dotsc, a$ and one slope for $i=a+1,\dotsc,r$. Then $\mathrm{Def}_{\scrG_{x,i},\lambda_i}$ is non-trivial for $i=1,\dotsc,a$ and trivial for $i=a+1,\dotsc,r$, and we have an isomorphism:
$$\mathrm{Def}_{\scrG_x,\lambda}\cong\prod_{i=1}^a\mathrm{Def}_{\scrG_{x,i},\lambda_i}.$$
\begin{cor}\label{cor: rigidity}
	There exists a subset $\omega\subset \{1,\dotsc,a\}$ such that $$\widehat{Z}_x'^p(x')=\prod_{i\in\omega}\mathrm{Def}_{\scrG_{x,i},\lambda_i}.$$
\end{cor}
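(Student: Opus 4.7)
The plan is to apply a rigidity theorem for formal $p$-divisible groups due to Chai. By Proposition \ref{prop:formal nbd stable}, $\widehat{Z}_x'^p(x')$ is a closed formal subscheme of the formal $p$-divisible group
\[\mathrm{Def}_{\scrG_x,\lambda}\cong\prod_{i=1}^a\mathrm{Def}_{\scrG_{x,i},\lambda_i},\]
it contains the origin $x$, and it is stable under an open subgroup $U$ of $(\calO_F\otimes\bbZ_p)^\times$ acting via the $\calO_F\otimes\bbZ_p$-structure. Since $x$ was chosen in the smooth locus of $Z'^p(x')$, the formal completion $\widehat{Z}_x'^p(x')$ is integral; and since $U$ contains an open subgroup of $\bbZ_p^\times$ acting by homotheties, Chai's rigidity theorem for formal $p$-divisible groups gives that $\widehat{Z}_x'^p(x')$ is a formal sub-$p$-divisible group of $\mathrm{Def}_{\scrG_x,\lambda}$.

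The next step is to upgrade this to a sub-$p$-divisible group with $\calO_F\otimes\bbZ_p$-structure. After possibly shrinking $U$, the differences $\{u-1:u\in U\}$ topologically span an ideal of the form $p^n(\calO_F\otimes\bbZ_p)$, so $\widehat{Z}_x'^p(x')$ is stable under the action of this ideal; combining with the $p$-divisibility of $\widehat{Z}_x'^p(x')$ (which allows us to divide by $p^n$ inside the subgroup) yields stability under the full $\calO_F\otimes\bbZ_p$-action. The decomposition $\calO_F\otimes\bbZ_p\cong\prod_{i=1}^r\calO_{F_{\fkp_i}}$ provides orthogonal idempotents $e_i$ which act on $\mathrm{Def}_{\scrG_x,\lambda}$ by projection onto the factor $\mathrm{Def}_{\scrG_{x,i},\lambda_i}$ for $i\leq a$ and trivially for $i>a$. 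Their $e_i$-stability forces a product decomposition
\[\widehat{Z}_x'^p(x')\cong\prod_{i=1}^a Z_i,\qquad Z_i:=\widehat{Z}_x'^p(x')\cap \mathrm{Def}_{\scrG_{x,i},\lambda_i},\]
in which each $Z_i$ is an $\calO_{F_{\fkp_i}}$-stable formal sub-$p$-divisible group.

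Finally, I invoke the classification of $\calO_{F_{\fkp_i}}$-stable formal sub-$p$-divisible groups of $\mathrm{Def}_{\scrG_{x,i},\lambda_i}$. As recalled just before the statement, this object is a formal $p$-divisible group with $\calO_{F_{\fkp_i}}$-structure of $\calO_{F_{\fkp_i}}$-height one; its only $\calO_{F_{\fkp_i}}$-stable formal sub-$p$-divisible groups are therefore the trivial one and the full group. Setting $\omega:=\{i\in\{1,\dotsc,a\}:Z_i=\mathrm{Def}_{\scrG_{x,i},\lambda_i}\}$ then yields the desired decomposition. The main obstacle will be pinning down the correct form of Chai's rigidity theorem for formal $p$-divisible groups with nontrivial slope structure (rather than merely formal tori), but this is exactly the kind of rigidity statement that underlies the Chai--Oort approach to the Hecke orbit problem.
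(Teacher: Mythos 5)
Your proposal is correct and takes essentially the same route as the paper: the rigidity statement you were unsure how to pin down is exactly the input the paper invokes as \cite[Proposition 4.2]{Chai2}, whose hypotheses are checked using the explicit description of the action of $T^1(\bbQ_p)\cap\rmJ_x$ furnished by Proposition \ref{prop:formal nbd stable}, and which yields that $\widehat{Z}_x'^p(x')$ is a formal $p$-divisible subgroup of $\mathrm{Def}_{\scrG_x,\lambda}$. Your remaining steps --- upgrading stability from an open subgroup of $(\calO_F\otimes\bbZ_p)^\times$ to the full $\calO_F\otimes\bbZ_p$-action via $1+p^n(\calO_F\otimes\bbZ_p)$ and $p$-divisibility, splitting by idempotents, and using that each $\mathrm{Def}_{\scrG_{x,i},\lambda_i}$ has $\calO_{F_{\fkp_i}}$-height one --- are precisely the unwinding of the paper's closing assertion that the only such subgroups are the ones described.
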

\begin{proof}
By \cite [Proposition 4.2]{Chai2}, $\widehat{Z}_x'^p(x')$ is a formal $p$-divisible subgroup of $\mathrm{Def}_{\scrG_{x,i},\lambda_i}$. Indeed it can be checked  from the explicit description of the action of $T^1(\bbQ_p)\cap\rmJ_x$ on $\mathrm{Def}_{\scrG_x}$ that the conditions stated in  \cite[Proposition 4.2]{Chai2} are satisfied. By Proposition \ref{prop:formal nbd stable}, $\widehat{Z}_x'^p(x')$ is a formal $p$-divisible subgroup stable under the action of an open subgroup of $(\calO_F\otimes\bbZ_p)^\times$. The only such subgroups are the ones described.
\end{proof}

This Proposition can be globalized to the following. Recall the tangent sheaf $T_{\calX'}$ is equipped with an action of $\calO_F\otimes \bbZ_p$  by  Proposition \ref{prop: KS}. We write $T_{\calX'}(\fkp_i)$ for the subspace of  $T_{\calX'}$ on which $\calO_F\otimes \bbZ_p$ acts via the projection to $\calO_{F_{\fkp_i}}$. The next corollary follows immediately from \ref{cor: rigidity} and faithfully flat descent.
\begin{cor}\label{cor:rigidity globalized}Let $W$ be an irreducible component of the smooth locus $Z'^p(x')^{\mathrm{sm}}$ of $Z'^p(x')$. Then there exists a subset $\omega\subset\{1,\dotsc,a\}$ such that the tangent sheaf  $T_W$ of $W$ is given by $$\bigoplus_{i\in\omega} T_{\calX'}(\fkp_i)\otimes_{\scrO_{\calX'}} \scrO_W\subset T_{\calX'}\otimes_{\scrO_{\calX'}}\scrO_W.$$
\end{cor}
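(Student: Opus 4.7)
The plan is to choose a smooth ordinary base-point $x\in W$, apply the formal-neighbourhood result of Corollary~\ref{cor: rigidity} there, and then spread the resulting identification from a formal to a Zariski neighbourhood by faithfully flat descent. Finally, connectedness of $W$ promotes the subset $\omega$ from a pointwise to a global invariant.

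First, I would like to pick $x\in W$ so that both (i) $W$ is smooth at $x$, and (ii) $x\in\calX'^{\mathrm{ord}}$. The locus $W^{\mathrm{sm}}$ is open and dense in $W$ by construction, and the prime-to-$p$ Hecke orbit $Y'^p(x')$ consists of ordinary points (since $\calX'^{\mathrm{ord}}$ is stable under the prime-to-$p$ Hecke action) and is dense in $Z'^p(x')$; thus $Z'^p(x')\cap\calX'^{\mathrm{ord}}$ is an open dense subset of $Z'^p(x')$, and meets the irreducible component $W$ in a dense open subset. So points satisfying both (i) and (ii) are Zariski dense in $W$.

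Fix such an $x$ and apply Corollary~\ref{cor: rigidity}: there exists $\omega\subset\{1,\dotsc,a\}$ such that $\widehat{Z}_x'^p(x')=\prod_{i\in\omega}\mathrm{Def}_{\scrG_{x,i},\lambda_i}$ inside $\widehat{U}_x=\mathrm{Def}_{\scrG_x,\lambda}$. Under Moonen's identification, the decomposition $\mathrm{Def}_{\scrG_x,\lambda}\cong\prod_{i=1}^{a}\mathrm{Def}_{\scrG_{x,i},\lambda_i}$ induces, on tangent spaces at the origin, the decomposition $T_{\calX',x}=\bigoplus_{i=1}^{a}T_{\calX'}(\fkp_i)|_x$ (the summands for $i>a$ being zero); this is compatible with the $\calO_F\otimes\bbZ_p$-action coming from Proposition~\ref{prop: KS}. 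Since $W$ is smooth at $x$, the formal completion of $W$ at $x$ coincides with $\widehat{Z}_x'^p(x')$, so we obtain
\[
T_{W,x}\;=\;\bigoplus_{i\in\omega}T_{\calX'}(\fkp_i)\big|_x
\]
as subspaces of $T_{\calX'}|_x$.

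To globalise, consider the two coherent subsheaves $T_W$ and $\scrF:=\bigoplus_{i\in\omega}T_{\calX'}(\fkp_i)\otimes_{\scrO_{\calX'}}\scrO_W$ of the locally free sheaf $T_{\calX'}\otimes_{\scrO_{\calX'}}\scrO_W$ on $W^{\mathrm{sm}}$; both are locally free $\scrO_W$-modules of the same rank $|\omega|$. After passing to the completion $\widehat{\scrO}_{W,x}$ they agree, by the previous paragraph; since $\scrO_{W,x}\to\widehat{\scrO}_{W,x}$ is faithfully flat, the inclusion $\scrF\hookrightarrow T_W$ (or equivalently the equality of the two saturated subsheaves) holds in a Zariski open neighbourhood $U_x$ of $x$ in $W$. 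The subset $\omega\subset\{1,\dotsc,a\}$ is characterised by the pointwise formula $\omega=\{i:T_{\calX'}(\fkp_i)|_x\subset T_W|_x\}$, and this set is locally constant on the smooth locus by the Zariski-local statement just obtained. The main point to check — and the only slightly delicate step — is that $\omega$ does not vary as $x$ moves through $W$; but this follows because $W$ is irreducible, hence connected, and the locus where $\omega$ takes any fixed value is open (as shown above) and therefore either empty or all of $W^{\mathrm{sm}}$. Hence a single $\omega$ works, giving the equality of sheaves claimed.
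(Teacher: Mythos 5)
Your argument is correct and is exactly the route the paper takes: the paper dismisses this corollary with the single remark that it ``follows immediately from Corollary \ref{cor: rigidity} and faithfully flat descent,'' and your write-up simply fills in the implicit steps (density of ordinary smooth points in each component, matching the Serre--Tate/Moonen decomposition with the $\calO_F\otimes\bbZ_p$-action on $T_{\calX'}$, spreading the formal equality via faithful flatness of completion, and constancy of $\omega$ by irreducibility). No substantive difference from the paper's intended proof.
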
 

\subsection{Proof of the main Theorem}
Recall for $x\in \calX'(\Fpbar)$, $Y^p(x)$ and $Y_l(x)$ are the reduced prime-to-$p$ and $l$-power Hecke orbits respectively, and $Z'^p(x)$ and $Z'_l(x)$ are their respective closures.
\begin{prop}\label{prop:orbit contains supersingular}
	$Z'_l(x)\cap \calX'^{\mathrm{ss}}\neq\emptyset$.
\end{prop}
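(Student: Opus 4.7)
The plan is to argue by contradiction: assume $Z'_l(x)\cap\calX'^{\mathrm{ss}}=\emptyset$. The key input is the theorem of Viehmann--Wedhorn that every Ekedahl--Oort stratum $S_\mu$ on $\calX'$ is quasi-affine. This stratification admits a unique minimum stratum $S_{\mu_{\min}}$ which is contained in the supersingular locus $\calX'^{\mathrm{ss}}$, so it suffices to show that any positive-dimensional irreducible component $W$ of $Z'_l(x)$ must meet $S_{\mu_{\min}}$.

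The first step is to establish that $Z'_l(x)$ has positive dimension, equivalently that $Y'_l(x)$ is infinite. By Proposition \ref{prop: isog classes},
\[
Y'_l(x)\cong (I_x^1(\bbQ)\cap K'^l)\backslash G'_{\mathrm{der}}(\bbQ_l)K'_l/K'_l .
\]
For $x$ ordinary, the inner form $I_0\subset G'_{\tilde{\rmS}}$ of $I_x$ supplied by \cite{Ki3} is essentially toric (reflecting the fact that the prime-to-$p$ endomorphism structure of an ordinary abelian variety is commutative), so has strictly smaller rank than $G'_{\mathrm{der}}$. The image of $I_x^1(\bbQ)\cap K'^l$ inside $G'_{\mathrm{der}}(\bbQ_l)$ is therefore a discrete subgroup of smaller rank, and so its action on the infinite coset space $G'_{\mathrm{der}}(\bbQ_l)/(G'_{\mathrm{der}}(\bbQ_l)\cap K'_l)$ has infinite quotient.

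Now let $W$ be a positive-dimensional irreducible component of $Z'_l(x)$; it is proper as a closed subscheme of the proper variety $\calX'$. Let $\mu_0$ be the generic EO type of $W$, so that $W\cap S_{\mu_0}$ is Zariski open and dense in $W$. If $W$ were contained in $S_{\mu_0}$, then $W$ would be a positive-dimensional proper closed subscheme of the quasi-affine variety $S_{\mu_0}$, which is impossible over an algebraically closed field. Hence $W$ meets the boundary $\overline{S_{\mu_0}}\setminus S_{\mu_0}=\bigcup_{\mu<\mu_0}S_\mu$, and Noetherian induction along the (finite) EO closure partial order shows that $W$ meets $S_{\mu_{\min}}\subset\calX'^{\mathrm{ss}}$, contradicting the assumption.

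The principal obstacle is the first step: a clean verification that $Y'_l(x)$ is infinite. This requires a careful analysis of $I_x$ for ordinary $x$ in the PEL setting (using \cite{Ki3}) together with a local comparison identifying $I_x^1\otimes\bbQ_l$ as a subgroup of $G'_{\mathrm{der},\bbQ_l}$ of strictly smaller rank; one also needs to know that $G'_{\mathrm{der}}$ is non-compact at $l$ in our situation. Once this is in hand, the descent through EO strata is formal, relying only on the quasi-affineness input of \cite{VW} and the standard fact that $S_{\mu_{\min}}\subset\calX'^{\mathrm{ss}}$.
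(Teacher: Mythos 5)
Your descent through the Ekedahl--Oort strata has a genuine gap. Quasi-affineness of the strata, combined with properness of an irreducible component $W$ of $Z'_l(x)$, only tells you that $W$ cannot be \emph{contained} in its generic stratum $S_{\mu_0}$, hence meets some strictly lower stratum. It does not let you iterate down to the absolute minimal stratum $S_{\mu_{\min}}$: at the next step the intersection $W\cap S_{\mu}$ with a lower stratum may well be $0$-dimensional, and then properness imposes no further constraint, so the ``Noetherian induction'' stalls. What this line of reasoning actually yields is only that $Z'_l(x)$ meets some stratum (minimal among those it meets) in a non-empty \emph{finite} closed set. To convert that finite set into supersingular points you need two ingredients you never invoke: (i) the strata of \cite{VW} are stable under prime-to-$p$ Hecke correspondences, so this finite set is stable under reduced $l$-power Hecke correspondences; and (ii) the group-theoretic rigidity statement of Lemma \ref{lem:supersingular 0-dim}, namely that a $0$-dimensional subscheme stable under reduced $l$-power Hecke correspondences lies in $\calX'^{\mathrm{ss}}$ (finiteness of the orbit forces $I^1_x\otimes\bbQ_l\cong G'_{\mathrm{der}}\otimes\bbQ_l$, whence the Newton cocharacter is central). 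This is exactly how the paper argues, and it also makes your entire first step superfluous: there is no need to show $Y'_l(x)$ is infinite, since in the finite case the same lemma already places $Z'_l(x)$ inside $\calX'^{\mathrm{ss}}$.

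Two further points. Your justification that $Y'_l(x)$ is infinite rests on the claim that $I_x$ is ``essentially toric'' for ordinary $x$; here ordinary means $\mu$-ordinary, and for these unitary Shimura varieties the quasi-endomorphism group of a $\mu$-ordinary point need not be essentially toric (at primes $\fkp_i$ with $\rmS_{\infty/\fkp_i}=\Sigma_{\infty/\fkp_i}$ the local factor $J_{x,i}$ is a form of $\GL_2$ rather than a torus), so as written this step is both unproved and, as you yourself flag, delicate. Finally, the inclusion $S_{\mu_{\min}}\subset\calX'^{\mathrm{ss}}$ is asserted without argument; the paper's route via Lemma \ref{lem:supersingular 0-dim} avoids ever needing to identify the minimal stratum with (part of) the supersingular locus.
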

\begin{proof} Since $\calX'$ is proper over $\Fpbar$, $Z'_l(x)$ is proper. By the main Theorem of \cite{VW}, there is a stratification on $\calX'$ whose strata are quasi-affine and stable under prime-to-$p$ Hecke correspondences. The strata  $\calX'_w$ are parametrized by a certain subset ${^JW}$ of the Weyl group $W$ of $G'_{\tilde{\rmS}}$. The closure relations are given by a partial ordering on ${^JW}$.
	
	We let $w\in W^J$ be a minimal element for this partial order such that $\calX'_w\cap Z'_l(x)\neq\emptyset.$ Then  $\calX'_w\cap Z'_l(x)$ is closed in $\calX'$ by the minimality of $w$ and the properness of $Z'_l(x)$, hence $\calX'_w\cap Z'_l(x)$ is proper. Since $\calX'_w\cap Z'_l(x)$ is also quasi-affine, it follows that $\calX'_w\cap Z'_l(x)$ is 0-dimensional.
	By Lemma \ref{lem:supersingular 0-dim} below, $\calX'_w\cap Z'_l(x)\subset \calX'^{\mathrm{ss}}\neq \emptyset$.
	
\end{proof}

\begin{lem}\label{lem:supersingular 0-dim}Let $Y\subset \calX'$ be a $0$-dimensional subscheme of $\calX'$ which is stable under reduced $l$-power Hecke correspondences. Then $Y\subset \calX'^{\mathrm{ss}}$.
	
	Conversely if $x\in \calX'^{\mathrm{ss}}(\Fpbar)$, then $Y^p(x)$ is finite and hence $Z'^p(x)$ is $0$-dimensional.
\end{lem}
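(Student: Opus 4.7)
The plan is to apply Proposition~\ref{prop: isog classes} to convert both statements into finiteness questions for explicit double cosets, and then analyze the algebraic $\bbQ$-group $I^1_y$ in the two regimes (supersingular versus non-supersingular).

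The key structural input, following \cite[\S2]{Ki3} (as recalled above), is that $I_y$ is an inner form of a subgroup $I_0\subset G'_{\tilde{\rmS}}$, and $I_0$ equals $G'_{\tilde{\rmS}}$ if and only if the Newton point of $y$ is basic, i.e.\ $y\in\calX'^{\mathrm{ss}}$. In particular, for $l\neq p$, the embedding $I^1_y\otimes\bbQ_l\hookrightarrow G'_{\mathrm{der}}\otimes\bbQ_l$ induced by the action on the $l$-adic Tate module is an isomorphism exactly when $y$ is supersingular, and otherwise its image is a proper closed subgroup of strictly smaller dimension.

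For the converse direction, suppose $x\in\calX'^{\mathrm{ss}}(\Fpbar)$. Then $I^1_x$ is a full inner form of $G'_{\mathrm{der}}$, and $I^1_x\otimes\bbQ_p$ is the anisotropic-mod-center inner form coming from the isoclinic structure of $\scrG_x$ via Dieudonn\'e theory. The positivity of the Rosati involution on $\mathrm{End}^0(\calA_x)\otimes\bbR$ ensures $I^1_x(\bbR)$ is compact. By the Hasse principle for simply connected semisimple groups, $I^1_x$ is $\bbQ$-anisotropic. The Borel--Harish-Chandra theorem on finiteness of class numbers for anisotropic reductive groups then gives finiteness of
\[
(I^1_x(\bbQ)\cap K'_p)\backslash G'_{\mathrm{der}}(\bbA_f^p)/K'^p,
\]
and combining with Proposition~\ref{prop: isog classes} shows $Y^p(x)$ is finite, so $Z'^p(x)$ is $0$-dimensional.

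For the main direction, let $Y$ be a $0$-dimensional (hence finite) subscheme stable under $l$-power Hecke correspondences, and suppose for contradiction that some $y\in Y$ satisfies $y\notin\calX'^{\mathrm{ss}}$. Then $I^1_y\otimes\bbQ_l$ is a proper closed $\bbQ_l$-subgroup of $G'_{\mathrm{der}}\otimes\bbQ_l$ of strictly smaller dimension. A dimension count via the Bruhat--Tits building of $G'_{\mathrm{der}}$ at $l$ (for instance, exhibiting an $I^1_y(\bbQ_l)$-invariant unbounded function such as distance to a flat fixed by a maximal split torus of $I^1_y$) shows that $I^1_y(\bbQ_l)\backslash G'_{\mathrm{der}}(\bbQ_l)/K'_l$ is infinite. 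Since the image of $I^1_y(\bbQ)\cap K'^l$ in $G'_{\mathrm{der}}(\bbQ_l)$ lies inside $I^1_y(\bbQ_l)$, the surjection
\[
Y_l(y)\cong (I^1_y(\bbQ)\cap K'^l)\backslash G'_{\mathrm{der}}(\bbQ_l)/K'_l\twoheadrightarrow I^1_y(\bbQ_l)\backslash G'_{\mathrm{der}}(\bbQ_l)/K'_l
\]
forces $Y_l(y)$ to be infinite, contradicting $Y_l(y)\subset Y$. Hence $Y\subset\calX'^{\mathrm{ss}}$.

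The main obstacle is extracting the dimension comparison $\dim I_y<\dim G'_{\tilde{\rmS}}$ in the non-basic case cleanly from the general formalism of \cite{Ki3}; once this is granted, both halves reduce to standard finiteness-of-class-numbers and building arguments.
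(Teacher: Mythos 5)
Your overall skeleton is the paper's: both halves are reduced to double-coset statements via Proposition \ref{prop: isog classes}, and the structure of $I_y$ is exploited through the two facts that $I^1_x\otimes\bbQ_l\cong G'_{\mathrm{der}}\otimes\bbQ_l$ for all $l\neq p$ when $x$ is supersingular, and that $I_y\otimes\bbQ_p$ sits inside $J_y$, an inner form of a proper Levi, otherwise. Your converse direction is essentially the paper's argument (it only needs the local isomorphisms away from $p$ plus finiteness of class sets, which holds for arbitrary reductive groups; the detour through compactness of $I^1_x(\bbR)$, the Hasse principle and $\bbQ$-anisotropy is harmless but unnecessary).

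The main direction, however, has a genuine gap at its load-bearing step: the claim that $\dim I^1_y<\dim G'_{\mathrm{der}}$ forces $I^1_y(\bbQ_l)\backslash G'_{\mathrm{der}}(\bbQ_l)/K'_l$ to be infinite. This is not a consequence of a dimension count. As a general principle it is false: writing $G'_{\mathrm{der}}\otimes\bbQ_l\cong\prod_{\fkl\mid l}(B_{\rmS,\fkl}^{\times})^{\nu=1}$, the factors at primes where $B_{\rmS}$ ramifies are anisotropic, and a proper reductive subgroup containing the isotropic factors together with a proper subgroup of a compact factor is cocompact while having strictly smaller dimension; so dimension alone, which is all your argument extracts from $I^1_y$, cannot rule out a finite double coset. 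Your proposed certificate of non-compactness also does not work as stated: the distance to a flat attached to a maximal split torus of $I^1_y$ is invariant only under the stabilizer of that flat, not under all of $I^1_y(\bbQ_l)$, and such a torus may even be trivial. Even in the isotropic case the correct mechanism is different (e.g.\ a unipotent one-parameter subgroup of $G'_{\mathrm{der}}(\bbQ_l)$ not contained in $I^1_y$ has a closed, unbounded orbit on the affine quotient by Kostant--Rosenlicht, since $I^1_y$ is reductive), and one must still use more about $I^1_y$ than its dimension to handle anisotropic factors. The paper sidesteps exactly this: from finiteness of $Y_l(x)$ it deduces that $I^1_x(\bbQ_l)\backslash G'_{\mathrm{der}}(\bbQ_l)$ is compact, then invokes the argument of \cite[Corollary 2.1.7]{Ki3} (a rank/centralizer argument using the structure of $I_x$, not a dimension count) to get $I^1_x\otimes\bbQ_l\cong G'_{\mathrm{der}}\otimes\bbQ_l$, and only then passes to $p$, where $I_x\otimes\bbQ_p\subset J_x$ together with equality of dimensions forces the Levi to be all of $G'_{\tilde{\rmS}}$, i.e.\ the Newton cocharacter is central and $x$ is supersingular. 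To repair your write-up you would need to substantiate the local non-cocompactness statement along these lines, using the actual structure of $I^1_y$ (or the argument of \cite{Ki3}) rather than only the inequality $\dim I^1_y<\dim G'_{\mathrm{der}}$.
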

\begin{proof}
First assume $x\in \calX'^{\mathrm{ss}}(\Fpbar)$. Then $I_x\otimes\bbQ_l\cong G'_{\tilde{\rmS}}\otimes\bbQ_l$ for all $l\neq p$ by \cite[7.2.14]{XZ} and hence $I_x^1\otimes\bbQ_l\cong G'_{\mathrm{der}}\otimes\bbQ_l$. Therefore $Y^p(x)\cong I^1_x(\bbQ)\cap \rmK'_p\backslash  G'_{\mathrm{der}}(\bbA_f^p)K'^p/K'^p$ is finite by the finiteness of class groups.

Now we assume $Y$ is finite and $x\in Y$; then $Y_l(x)$ is finite. It follows that $I_x^1(\bbQ_l)\backslash G'_{\mathrm{der}}(\bbQ_l)$ is compact. The same argument as in \cite[Corollary 2.1.7]{Ki3} shows that $$I_x^1\otimes \bbQ_l\cong G'_{\mathrm{der}}\otimes\bbQ_l$$ and hence $$I_x\otimes \bbQ_l\cong G'_{\tilde{\rmS}}\otimes\bbQ_l$$ since $I_x$ and $G'_{\tilde{\rmS}}$ have the same ranks by \cite[Corollary 2.1.7]{Ki3}. But $I_x\otimes \bbQ_p$ is a subgroup of $J_x$ which is an inner form of a Levi subgroup of $G'_{\tilde{\rmS}}\otimes\bbQ_p$.  Since $I_x$ and $G'_{\tilde{\rmS}}$ have the same dimension, $J_x$ is an inner form of $G'_{\tilde{\rmS}}$. Therefore the Newton cocharacter defined as in \S\ref{sec: Appendix B(G)} is central and hence $x$ is a supersingular point.
\end{proof}

We will need the following two results on the structure of $I^1_x$ for supersingular points. 
%We write $B'$ for the totally indefinite quaternion algebra over $F$ with $B'\otimes\bbA_f^p\cong B\otimes\bbA_f^p$ and such that $B'$ ramifies at all places $\fkp_i$ of $F$ with $[F_{\fkp_i}:\bbQ_p]$ odd.
%\begin{prop}
%Let $x\in \calX'^{\mathrm{ss}}(\Fpbar)$. Then there is an isomorphism $$I_x(\bbQ)\cong\{g=(b,t)\in B'\times_{F^\times} E^\times|\nu(b)\mathrm{Nm}_{E/F}(t)\in \bbQ^\times\}.$$
	%\end{prop}
	%\begin{proof} This follows from the same proof as in \cite[Lemma 6]{Chai}. Since $A_s$ is supersingular, it is isogenous to a product of supersingular elliptic curves. To understand the local components of $I_x$, we may apply Tate's theorem with endomorphisms and reduce to a local computation on the level Dieudonn'e modules and Tate modules.
	%\end{proof}

\begin{prop}\label{prop:Tate supersingular} Suppose $x\in\calX'^{\mathrm{ss}}(\Fpbar)$.
	
	(1) The natural map $I_x^1\otimes_{\bbQ}\bbQ_p\rightarrow J_x^1$ is an isomorphism. 
	
	(2) $I_x^1$ is an inner form of $G'_{\mathrm{der}}$. In particular $I_x^1$ is simply connected.
\end{prop}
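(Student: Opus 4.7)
The plan is to deduce both assertions from a single fact: at a supersingular point, the Newton cocharacter of $\scrG_x$ is central in $G'_{\tilde{\rmS}, \bbQ_p}$. Since $J_x$ is always an inner form of the centralizer of this Newton cocharacter, centrality forces $J_x$ to be an inner form of all of $G'_{\tilde{\rmS}, \bbQ_p}$; in particular $\dim J_x = \dim G'_{\tilde{\rmS}}$ and $J_x^1$ is an inner form of $G'_{\mathrm{der}, \bbQ_p}$. I would confirm this via the standard theory of isocrystals with $G$-structure.

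Next I would pin down $\dim I_x$ by invoking \cite[Theorem 7.2.14]{XZ} (as already used in Lemma~\ref{lem:supersingular 0-dim}): at any supersingular point, $I_x \otimes \bbQ_l \cong G'_{\tilde{\rmS}, \bbQ_l}$ for every prime $l \neq p$, so $\dim I_x = \dim G'_{\tilde{\rmS}} = \dim J_x$. Part (1) then follows formally: the functor sending $\calA_x$ to its $p$-divisible group $\scrG_x$ gives a closed immersion $I_x \otimes \bbQ_p \hookrightarrow J_x$ of connected reductive groups, where connectedness of $I_x$ holds because it is an inner twist of the connected group $I_0 \subset G'_{\tilde{\rmS}}$ from \cite[Corollary 2.3.5]{Ki3}. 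The dimension equality forces this inclusion to be an isomorphism. To pass from $I_x \otimes \bbQ_p \xrightarrow{\sim} J_x$ to the analogous statement for $I_x^1$ and $J_x^1$, I would use that $\nu'$ factors through the abelianization of $G'_{\tilde{\rmS}}$ and is therefore canonically defined on every inner form, making the isomorphism compatible with the maps to $T'_{\mathrm{ab}, \bbQ_p}$; taking kernels completes (1).

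For (2) I would assemble the local data obtained above: $I_x \otimes \bbQ_l \cong G'_{\tilde{\rmS}, \bbQ_l}$ for $l \neq p$, and $I_x \otimes \bbQ_p \cong J_x$ an inner form of $G'_{\tilde{\rmS}, \bbQ_p}$. Since \cite[Corollary 2.3.5]{Ki3} supplies an inner twisting $I_0 \subset G'_{\tilde{\rmS}}$ of $I_x$ over $\bbQ$, and $\dim I_0 = \dim I_x = \dim G'_{\tilde{\rmS}}$, the subgroup $I_0$ must coincide with all of $G'_{\tilde{\rmS}}$. Hence $I_x$ is globally an inner form of $G'_{\tilde{\rmS}}$, and the same abelianization-preservation remark identifies $I_x^1$ as an inner form of $G'_{\mathrm{der}} = \ker \nu'$. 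The ``in particular'' clause then follows immediately, since $G'_{\mathrm{der}}$ was already noted at the start of \S\ref{sec: 3 statement} to be simply connected and this property is preserved under inner twisting.

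The step I expect to require the most care is the first one: identifying $J_x$ as an inner form of the entire group $G'_{\tilde{\rmS}, \bbQ_p}$ rather than of a proper Levi in the supersingular case, and verifying the compatibility of the natural inclusion $I_x \otimes \bbQ_p \hookrightarrow J_x$ with the abelianization maps. Once those are in place, everything else is a dimension count combined with the local-global comparison coming from Kisin's inner-form presentation of $I_x$.
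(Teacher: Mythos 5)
Your argument is correct, but it reaches the two key facts by a different route than the paper. The paper's proof is a one-line citation: \cite[Lemma 7.2.14]{XZ} is quoted in full strength to assert both that $I_x$ is an inner form of $G'_{\tilde{\rmS}}$ over $\bbQ$ and that $I_x\otimes\bbQ_p\rightarrow J_x$ is an isomorphism, after which one passes to the kernels of the maps to $T'_{\mathrm{ab}}$ exactly as you do. You instead use only the prime-to-$p$ part of that result ($I_x\otimes\bbQ_l\cong G'_{\tilde{\rmS},\bbQ_l}$ for $l\neq p$, which is how the paper already invokes it in Lemma \ref{lem:supersingular 0-dim}), and you recover the rest: supersingular means basic, so the Newton cocharacter is central and $J_x$ is an inner form of all of $G'_{\tilde{\rmS},\bbQ_p}$ rather than a proper Levi; Kisin's inner twisting $I_0\subset G'_{\tilde{\rmS}}$ from \cite[Corollary 2.3.5]{Ki3} plus the dimension count $\dim I_x=\dim G'_{\tilde{\rmS}}=\dim J_x$ then forces $I_0=G'_{\tilde{\rmS}}$ and upgrades the natural closed immersion $I_x\otimes\bbQ_p\hookrightarrow J_x$ (already recorded in \S\ref{sec: 3 statement}) to an isomorphism, using that both groups are connected. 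This buys a more self-contained argument that isolates exactly where supersingularity enters (basicness of the stratum) and needs only the $\ell$-adic comparison, essentially Tate's theorem with extra structure, as external input; the cost is length, plus the small points you must (and do) attend to: connectedness of $I_x$ and $J_x$ for the dimension count, and compatibility of the isomorphism with the maps to $T'_{\mathrm{ab},\bbQ_p}$ before taking kernels, which holds because $\nu'$ is invariant under inner twisting. Your final reduction of (1) and (2) to the statements for $I_x$ and $J_x$, and the observation that simple connectedness of $G'_{\mathrm{der}}$ passes to its inner form $I_x^1$, coincide with the paper's.
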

\begin{proof}
 By \cite[Lemma 7.2.14]{XZ} $I_x$ is an inner form of $G$ and $I_x\otimes \bbQ_p\rightarrow J_x$ is an isomorphism. Therefore $I_x^1$ is an inner form of $G'_{\mathrm{der}}$ and  $I^1_x\otimes \bbQ_p\rightarrow J^1_x$ is an isomorphism.

\end{proof}

We can now complete the proof of the main Theorem.

\begin{proof}[Proof of Theorem \ref{thm:Hecke orbit unitary}]We proceed as in \cite[Proposition 7]{Chai}; we will  assume $Z'^p(x)$ is not equal to $\calX'^{\fkc'}$ and deduce a contradication. By Proposition \ref{prop:orbit contains supersingular}, $Z'^p(x)$ contains a supersingular point $s\in\calX'^{\mathrm{ss}}(\Fpbar)$.  We write $\widehat{Z}_s'^p(x)$ for the completion of $Z'^p(x)$ at the point $s$.
	
	Let $\calA_s$ be the abelian variety with $\calO_{D_{\rmS}}$-multiplication associated to $s$. The corresponding $p$-divisible group $(\scrG_s,\lambda,\iota)$  with $\calD$-structure breaks up into a product $\scrG_{s,1}\times\cdots\times\scrG_{s,r}$ of $p$-divisible groups with $\calD_i$-structure
 and there is a decomposition of the deformation space $$\mathrm{Def}_{\scrG_{s},\lambda}\cong \mathrm{Def}_{\scrG_{s,1},\lambda_1}\times\cdots\times \mathrm{Def}_{\scrG_{s,r},\lambda_r}.$$
We write $\rmJ_{s}$ for the group  of  automorphisms of $\scrG_{s}$ respecting the $\calO_{D_{\rmS}}$ action and polarization, and $\rmJ_s^1$ its intersection with $J_x^1$. Then $\rmJ_s^1$ is a compact open subgroup of $J_s^1$ and there is a product decomposition
$$\rmJ^1_s\cong \prod_{i=1}^r\rmJ^1_{s,i}.$$

Since $I_s^1$ is simply connected and $I_s^1\otimes_\bbQ\bbQ_p\cong J_s^1$ by Proposition \ref{prop:Tate supersingular}, the weak approximation theorem implies that $I_s^1(\bbQ)\cap \rmJ_s^1$ is dense in $\rmJ_s^1$. Since $I_s^1(\bbQ)\cap \rmJ_s^1$ preserves the subspace $\widehat{Z}'^p_s(x)\subset \mathrm{Def}_{\scrG_x,\lambda}$, it is preserved by the group $\rmJ_s^1$.

There exists an irreducible component $W$ of the smooth locus of $Z'^p(x)$ such that $s$ lies in the closure of $W$. By Proposition \ref{cor:rigidity globalized}, the tangent sheaf $T_W$ is equal to a product $$\bigoplus_{i\in\omega} T_{\calX'}(\fkp_i)\otimes_{\scrO_{\calX'}}\scrO_W$$ where $\omega\subset \{1,\dotsc,a\}$ is a proper subset since we assumed $Z'^p(x)\neq \calX'^{\fkc'}$. We fix a  choice of  $j\in\{1,\dotsc,a\}- \omega$.

Let $\xi:\Spec k[[t]]\rightarrow \mathrm{Def}_{\scrG_s,\lambda}$  be a formal curve which generically lies in $W$ and with special fiber $s$, and we write $\xi_i:\Spec k[[t]]\rightarrow\mathrm{Def}_{\scrG_{s,i},\lambda_i}$ for the $\fkp_i$-component of the map $\xi$. We write $\eta$ for the generic fiber of $\xi$; then $\eta$ is an ordinary point. It follows that $J^1_{\eta,j}$ is a torus by the description in \ref{prop:formal nbd stable}. Therefore, there exist elements $u_j^{(n)}\in \rmJ_{s,j}$ such that $u_j^{(n)}\xi\neq \xi$ but $(u_j^{(n)}\xi)$ is congruent to $\xi$ modulo arbitrarily high powers of $t$ as $n\rightarrow \infty.$ For $i\neq j$, the $\fkp_i$-component of $u_{j}^{(n)}\xi$ is equal to $\xi_i$. This implies that the tangent space to $W$ at the point $\eta:\Spec k((t))\rightarrow W$ contains non-zero $\fkp_j$-component, which is a contradiction.
\end{proof}

We may use this to deduce the following corollary

\begin{cor}\label{cor: l power Hecke orbit}
	Let $x\in \calX'^{\fkc'}\cap\calX'^{\mathrm{ord}}(\Fpbar)$. Then $Z'_l(x)=\calX'^{\fkc'}$.
\end{cor}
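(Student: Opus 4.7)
The inclusion $Z'_l(x) \subseteq Z'^p(x) = \calX'^{\fkc'}$ is immediate from $Y'_l(x) \subseteq Y'^p(x)$ together with Theorem~\ref{thm:Hecke orbit unitary}, so the content is the reverse inclusion. The plan is to rerun the proof of Theorem~\ref{thm:Hecke orbit unitary} verbatim, replacing the prime-to-$p$ Hecke orbit by the $l$-power Hecke orbit throughout, and to check at each step that the dense subgroup of ``symmetries'' invoked really does preserve $Z'_l(-)$. In each instance this amounts to imposing a mild additional local condition at the primes away from $p$ and $l$, absorbed by strengthening the approximation theorems already used.

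First, Proposition~\ref{prop:orbit contains supersingular} is already formulated for $Z'_l(x)$ and produces a supersingular point $s \in Z'_l(x) \cap \calX'^{\mathrm{ss}}$. Next, I would establish the analogs of Proposition~\ref{prop:formal nbd stable} and Corollaries~\ref{cor: rigidity}, \ref{cor:rigidity globalized} for $\widehat{Z}'_{l,x}(x')$ at a smooth ordinary point $x$ of $Z'_l(x')$. The proof of Proposition~\ref{prop:formal nbd stable} used weak approximation for the torus $T^1$ to find $\gamma \in T^1(\bbQ) \cap \rmJ_x$ with $\gamma_p$ close to any prescribed element of $T^1(\bbQ_p) \cap \rmJ_x$; in my variant I additionally demand that $\gamma^{pl} \in K'^{pl}$, which is still permitted by the same weak approximation result \cite[Corollary~3.5]{San}. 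The remaining freedom in the $l$-component $\gamma_l \in T^1(\bbQ_l) \subseteq G'_{\der}(\bbQ_l)$ is harmless because it acts by an $l$-power Hecke correspondence and therefore preserves $Y'_l(x')$. Consequently $\widehat{Z}'_{l,x}(x')$ is stable under an open subgroup of $(\calO_F \otimes \bbZ_p)^\times$, and the formal-group argument goes through to give $\widehat{Z}'_{l,x}(x') = \prod_{i \in \omega} \mathrm{Def}_{\scrG_{x,i},\lambda_i}$ for some $\omega \subseteq \{1,\dots,a\}$.

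Suppose for contradiction $\omega \neq \{1,\dots,a\}$. The final step requires $\widehat{Z}'_{l,s}(x)$ to be preserved by the full compact group $\rmJ_s^1$, and this is the most delicate point. By Proposition~\ref{prop:Tate supersingular}, $I_s^1$ is simply connected, and $I_s^1(\bbQ_p) \cong J_s^1$ is noncompact for supersingular $s$, so strong approximation for $I_s^1$ away from $p$ furnishes elements $\alpha \in I_s^1(\bbQ)$ with $\alpha^{pl} \in K'^{pl}$ whose image in $I_s^1(\bbQ_p) \cong J_s^1$ is dense. As before the free $\alpha_l$-component is absorbed by the $l$-power Hecke action, so these $\alpha$ preserve $Y'_l(x)$. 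By density the action of $\rmJ_s^1$ preserves $\widehat{Z}'_{l,s}(x)$, and the formal-curve argument using some $u_j^{(n)} \in \rmJ_{s,j}^1$ with $j \in \{1,\dots,a\} \setminus \omega$, exactly as in the proof of Theorem~\ref{thm:Hecke orbit unitary}, produces the required contradiction. The main obstacle is precisely this approximation bookkeeping at the supersingular point; beyond it, no new analytic ingredient is needed.
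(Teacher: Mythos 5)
There is a genuine gap, and it sits exactly where you wave it away: the ordinary-point rigidity step. In the paper's Proposition \ref{prop:formal nbd stable}, weak approximation for $T^1$ is legitimate because the only constraint imposed on $\gamma\in T^1(\bbQ)$ is at the single place $p$ (namely $\gamma_p\in\rmJ_x$); the prime-to-$p$ components may be arbitrary, since $Z'^p(x')$ is stable under \emph{all} prime-to-$p$ correspondences. For $Z'_l(x')$ you must additionally require $\gamma_q\in K'_q$ at \emph{every} finite $q\neq p,l$ — an integrality condition at infinitely many places — and this is not something weak approximation (\cite[Corollary 3.5]{San}, which controls only finitely many places) can deliver together with a dense supply of $p$-components. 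The set of such $\gamma$ is essentially a group of $\{p,l\}$-units of the torus $T^1$, hence finitely generated, and whether its closure in $T^1(\bbQ_p)\cap\rmJ_x$ is open is a Leopoldt-type question that is not known and certainly does not follow from the cited result. Without it you do not get stability of $\widehat{Z}'_{l,x}(x')$ under an open subgroup of $(\calO_F\otimes\bbZ_p)^\times$, so the analogues of Corollaries \ref{cor: rigidity} and \ref{cor:rigidity globalized} for $Z'_l$ are unavailable and the rerun of the proof of Theorem \ref{thm:Hecke orbit unitary} collapses. This scarcity of global elements integral away from $p$ and $l$ is precisely the reason Chai, and the paper following him, do \emph{not} prove the $l$-power statement by redoing the ordinary-point local-stabilizer argument.

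The paper's actual route avoids this entirely: it suffices to show that $Z'_l(x)$ is stable under reduced prime-to-$p$ Hecke correspondences and then quote Theorem \ref{thm:Hecke orbit unitary}. This stability is proved only at a supersingular point $s\in Z'_l(x)$ (Proposition \ref{prop:orbit contains supersingular}), where the relevant global group is $I^1_s$, an inner form of the full derived group rather than a torus, so strong approximation applies: taken relative to $l$ (using noncompactness of $I^1_s(\bbQ_l)$ — note your appeal to approximation ``away from $p$'' is also off, since $J^1_s$ is typically compact at basic points), it shows both that $\widehat{Z}'_{l,s}(x)$ is $\rmJ^1_s$-stable and that $Y'^p(s)\subset Y'_l(s)$; comparing formal completions of $Z'_l(x)$ and $\gamma(Z'_l(x))$ at supersingular points and invoking Proposition \ref{prop:orbit contains supersingular} again then yields the stability. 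Your supersingular-point step is, after correcting the place at which approximation is applied, essentially this argument; but the ordinary-point step cannot be repaired along the lines you propose, so the proposal as written does not prove the corollary.
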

\begin{proof}
	By Theorem \ref{thm:Hecke orbit unitary}, it suffices to show that $Z_l'(x)$ is stable under reduced prime-to-$p$ Hecke correspondences; we use the same argument as in \cite[Theorem 1]{Chai}.
	
	By Proposition \ref{prop:orbit contains supersingular}, $Z'_l(x)$ contains a supersingular point $s\in\calX'^{\mathrm{ss}}(\Fpbar)$. Since $I_s^1$ is simply connected, the strong approximation theorem implies $I_s^1(\bbQ)I_s^1(\bbQ_l)$ is dense in $I^1_s(\bbA_f^p)\cong G'_{\mathrm{der}}(\bbA_f^p)$; in particular $I^1_s(\bbQ)\cap K^{l'}$ is dense in $K'_p$. Since $I_s^1(\bbQ)\cap K'^{l}$ is contained in the stabilizer of  $Z'_l(x)$ at $s$, the completion $\widehat{Z}'_{l,s}(x)$ at $s$ is 	stable under the action of  $K_p$. 
	
	Now strong approximation also implies that $Y'^p(s)\subset Y'_l(s)$ by the description of the isogeny classes in Proposition \ref{prop: isog classes}. For any reduced prime-to-$p$ Hecke correspondence $\gamma$ we write $\gamma(Z'_l(x))$ for the image of $Z'_l(x)$ under the Hecke correspondence $\gamma$. Then the formal completion of $Z'_l(x)\cup\gamma(Z'_l(x))$ at any supersingular point is equal to the formal completion of $Z'_l(x)$. It follows that $Z'_l(x)\cup\gamma(Z'_l(x))$ is equal to $Z'
	_l(x)\cup W$ where $W$ is a closed subscheme of $\calX'$ stable under reduced $l$-power Hecke correspondences and which does not contain supersingular points. This is a contradiction to Proposition \ref{prop:orbit contains supersingular}.
	
\end{proof}

\subsection{The result in the quaternionic case}\label{sec: Hecke orbit quaternionic}We now explain how Theorem \ref{thm:Hecke orbit unitary} may be used to deduce the Hecke-orbit conjecture for quaternionic Shimura varieties. The open subscheme $\calX'^{\mathrm{ord}}$ transfers to an open subscheme $\calX^{\mathrm{ord}}$ of $\calX$, see for example \cite{ShenZhang}\footnote{It is not a priori clear that the definitions of ordinary locus is independent of the choice of auxiliary PEL data. For the applications, it is not necessary to know this.}. We write $G_{\mathrm{der}}$ for the derived group of $G'_{\rmS}$. Then $G_{\mathrm{der}}\cong G'_{\mathrm{der}}$.

For $x\in \calX(\Fpbar)$, we may define the reduced prime-to-$p$ Hecke orbit $Y^p(x)$ and reduced $l$-power Hecke orbit $Y_l(x)$ in the same way as for $\calX'$. Namely, we let $\tilde{x}$ denote a lift of $x$ to $\underline{\Sh}_{K_p}(G_{\rmS,\rmT})(\Fpbar)$ and we write $\tilde{Y}^p(x)$ (resp. $\tilde{Y}_l(x)$) for the $G_{\mathrm{der}}(\bbA_f^p)$-orbit (resp $G_{\mathrm{der}}(\bbQ_l)$-orbit) of $\tilde{x}$. Then $Y^p(x)$ (resp. $Y_l(x)$) is defined to be the image of $\tilde{Y}^p(x)$ (resp. $\tilde{Y}_l(x)$) in $\calX$. We write $Z^p(x)$ and $Z_l(x)$ for the closures of $Y^p(x)$ and $Y_l(x)$ respectively.

Recall $\nu_{\rmS}:G_{\mathrm{S}}\rightarrow T_F$ denotes the reduced norm. Then $\nu_{\rmS}$ induces an isomorphism  $$\pi_0(\calX)\cong T_F(\bbQ)^{\dagger}\backslash  T_F(\bbA_f)/\nu_{\rmS}(K)$$
where $T_F(\bbQ)^{\dagger}$ is defined as in \ref{sec: 3 statement}. For $\fkc$ in the right hand side we write $\calX^\fkc$ for the corresonding connected component

Let $l$ be a prime coprime to $p$. The following result follows immediately from Theorem \ref{thm:Hecke orbit unitary} by using the fact that the unitary Shimura variety and quaternionic Shimura variety have isomorphic geometric connected components (upon taking the inverse limit over the prime-to-$p$ level) and the fact that $G'_{\mathrm{der}}\cong G_{\mathrm{der}}$.

\begin{thm}\label{thm:Hecke-Orbit quat}Let $\fkc\in T_F(\bbQ)^{\dagger}\backslash  T_F(\bbA_f)/\nu_{\rmS}(K)$ and $x\in  \calX^\fkc\cap \calX^{\mathrm{ord}}(\Fpbar)$. Then
	
	(1) $Z^p(x)=\calX^\fkc$.
	
	(2)  $Z_l(x)=\calX^\fkc$.

\end{thm}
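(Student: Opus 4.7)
The plan is to transfer the statements directly from the corresponding results for the unitary Shimura variety, namely Theorem \ref{thm:Hecke orbit unitary} for part (1) and Corollary \ref{cor: l power Hecke orbit} for part (2). The key observation is that all the relevant objects, the geometric connected components, the ordinary locus, and the reduced Hecke orbits, are compatible under the comparison between $\underline{\Sh}_{K_p}(G_{\rmS,\rmT})$ and $\underline{\Sh}_{K'_p}(G'_{\tilde{\rmS}})$ recalled in \S\ref{sec:unitary}.

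First, I would fix a lift $\tilde{x}\in\underline{\Sh}_{K_p}(G_{\rmS,\rmT})(\Fpbar)$ of $x$. Using the chain of isomorphisms of neutral geometric connected components and the Hecke/Galois recipe described in \cite[\S2.11]{TX}, one obtains a point $\tilde{x}'\in \underline{\Sh}_{K'_p}(G'_{\tilde{\rmS}})(\Fpbar)$ lying over a point $x'\in\calX'$, together with a compatibility under Hecke translations by the derived groups, which are literally identified via $G_{\mathrm{der}}\cong G'_{\mathrm{der}}$. Since the whole Shimura variety is obtained from its neutral component by translating by the (prime-to-$p$) Hecke action and by Galois, and since these actions match across the two sides on connected components, the image $x'$ lies in $\calX'^{\mathrm{ord}}\cap \calX'^{\fkc'}$ for the element $\fkc'\in T'_{\mathrm{ab}}(\bbQ)^\dagger\backslash T'_{\mathrm{ab}}(\bbA_f)/\nu'(K')$ determined by $\fkc$ and the choice of lift, using that the ordinary locus transfers as noted after Remark (the reference to \cite{ShenZhang}).

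Next, I would match the reduced Hecke orbits. Because $G_{\der}(\bbA_f^p)=G'_{\der}(\bbA_f^p)$ and $G_{\der}(\bbQ_l)=G'_{\der}(\bbQ_l)$ act on $\tilde{x}$ and $\tilde{x}'$ compatibly under the isomorphism of connected components at infinite level, the images of $\tilde{Y}^p(\tilde{x})$ and $\tilde{Y}_l(\tilde{x})$ in $\calX$ correspond, via the identification of connected components $\calX^\fkc$ and $\calX'^{\fkc'}$, to the images of $\tilde{Y}'^p(\tilde{x}')$ and $\tilde{Y}'_l(\tilde{x}')$ in $\calX'$. Taking Zariski closures, this gives an identification $Z^p(x)\leftrightarrow Z'^p(x')$ and $Z_l(x)\leftrightarrow Z'_l(x')$ inside the corresponding connected components. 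Theorem \ref{thm:Hecke orbit unitary} (respectively Corollary \ref{cor: l power Hecke orbit}) then yields $Z'^p(x')=\calX'^{\fkc'}$ (respectively $Z'_l(x')=\calX'^{\fkc'}$), and transferring back gives $Z^p(x)=\calX^\fkc$ and $Z_l(x)=\calX^\fkc$.

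The main obstacle I anticipate is entirely bookkeeping: one must verify that the identification of connected components carries the Hecke action of the (common) derived group on one side to the action on the other, and that both the ordinary stratum and the connected component labelling are preserved under this matching. Once those compatibilities are written out, the result is a direct consequence of the unitary statements proved in the previous subsections.
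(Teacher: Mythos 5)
Your proposal is correct and is essentially the paper's own argument: the paper deduces the quaternionic statement by transferring Theorem \ref{thm:Hecke orbit unitary} (and, for the $l$-power case, Corollary \ref{cor: l power Hecke orbit}) across the identification of geometric connected components of $\underline{\Sh}_{K_p}(G_{\rmS,\rmT})$ and $\underline{\Sh}_{K'_p}(G'_{\tilde{\rmS}})$ at infinite prime-to-$p$ level, using $G_{\mathrm{der}}\cong G'_{\mathrm{der}}$, exactly as you describe. The bookkeeping of connected components, the ordinary locus, and the derived-group Hecke orbits that you spell out is precisely what the paper leaves implicit.
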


\begin{rem} As in Remark \ref{rem: Hecke orbit unitary}, one can use this to deduce that the prime-to-$p$ Hecke orbit of an ordinary point is dense in $\calX$.
	\end{rem}

\section{Ihara's Lemma for Shimura surfaces}
In this section we prove a version of Ihara's lemma for certain quaternionic Shimura surfaces. The argument combines the arguments of \cite{Dim} and \cite{DT}.

\subsection{Statement of Ihara's Lemma}\label{sec:Ihara}Let $\Pi$ be an irreducible cuspidal automorphic representation of $GL_2(\A_F )$ of parallel weight 2 defined over a number field $\mathbf{E}$. We write $\calO_{\mathbf{E}}$ for the ring of integers of ${\mathbf{E}}$ and $k_\lambda$ for the residue field of $\calO_{\mathbf{E}}$ at a prime $\lambda$. Let $\rmR$ be a finite set of places of $F$ away from which $\Pi$ is unramified and $K$ is hyperspecial; in particular $\rmR$ contains the ramification set of $B$ . Let $\mathbf{T}_\rmR$ denote the (abstract) Hecke algebra away from $\rmR$; i.e. the polynomial ring over $\Z$ generated by $T_{\fkq},S_{\fkq}$ where $\fkq$ is a prime away from $\rmR$. The representation $\Pi$ determines a homomorphism $$\phi_{\rmR}^\Pi:\mathbf{T}_{\rmR}\rightarrow \Ok_{\mathbf{E}}$$ via the  Hecke eigenvalue of $\Pi$. For every prime $\lambda$ of ${\mathbf{E}}$ there is an attached  Galois representation
$$\rho_{\Pi,\lambda}:\text{Gal}(\overline{F}/F)\rightarrow GL_2(\Ok_{{\mathbf{E}}_\lambda})$$
unramified outside of $\rmR\cup\rmR_\lambda$ where $\rmR_\lambda$ is the set of primes of $F$ having the same residue characteristic as $\lambda$.  It is characterized by the property that for $\fkq\notin \rmR\cup\rmR_\lambda$, the characteristic polynomial of $\text{Frob}_\fkq$ is given by
$$X^2-\phi^\Pi_\rmR(T_\fkq)X+\text{Nm}_{F/\Q}(\fkq)\phi_{\rmR}^\Pi (S_\fkq).$$
We write $\overline{\rho}_{\Pi,\lambda}$ for the reduction  of $\rho_{\Pi,\lambda} \mod \lambda$.

We now assume $p$ is a prime which is inert in $F$ and we let $\fkp$ denote the unique prime of $F$ above $p$. Let $\fkm:=\fkm_{\Pi,\lambda}\subset \bfT_{\rmR\cup\{\fkp\}}$ denote the preimage of the ideal $(\lambda)\subset\Ok_{\mathbf{E}}$ under the map $\phi_{\rmR\cup\{\fkp\}}^\Pi$; it is a maximal ideal of $\mathbf{T}_{\rmR\cup\{\fkp\}}$. For any $\mathbf{T}_{\rmR\cup\fkp}$-module $M$, we write $M_{\fkm}$ for the localization of $M$ at the ideal $\fkm$. We  introduce the following assumptions:
\begin{ass}\label{ass: property of l} Let $l$ be the underlying prime of $\lambda$.
	
	(1) $l$ is coprime to $\rmR$, $\text{disc}(F)$, and the cardinality of $F^\times\backslash \bbA^\times_{F,f}/\bbA^\times_{F,f}\cap K$, where we consider  $\bbA^\times_{F,f}:=F\otimes_{\bbQ}\bbA_f$ as  the center of $B\otimes_{\bbQ}\bbA_f$.
	
	(2) $l\geq g+2$.
	
	(3) The representation  $\overline{\rho}_{\Pi,\lambda}$ satisfies the condition $\mathbf{LI}_{\text{Ind}\overline{\rho}_{\Pi,\lambda}}$ in \cite[Proposition 0.1]{Dim05}.
	
		(4) $B$ splits at all places above $l$ and the compact open subgroup $K$ factors as $K=K_lK^l$ where $K_l\subset GL_2(\bbQ_l)$ is a hyperspecial subgroup.
\end{ass}

\begin{prop}\label{prop: middle cohomology non-zero}Suppose Assumption \ref{ass: property of l} is satisfied. Then
	
	(1) $\rmH^j(\mathrm{Sh}_K(G_{\rmS,\rmT})_{\overline{\bbQ}},\OKE)_\fkm=0$ unless $j=g_{\rmS}:=|\Sigma_{\infty}-\rmS|$.
	
	(2) $\rmH^{g_{\rmS}}(\mathrm{Sh}_K(G_{\rmS,\rmT})_{\overline{\bbQ}},\calO_{{\mathbf{E}}_{\lambda}})_\fkm$ is a free $\OKE$-module.
\end{prop}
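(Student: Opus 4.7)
The plan is to reduce both parts to a single mod-$\lambda$ vanishing statement and then invoke (an analogue of) Dimitrov's theorem \cite{Dim05}.

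\textbf{Step 1 (reduction to mod $\lambda$).} I claim both (1) and (2) follow from the single assertion
\[
\rmH^j(\mathrm{Sh}_K(G_{\rmS,\rmT})_{\overline{\bbQ}},k_\lambda)_\fkm = 0 \quad\text{for all } j\neq g_{\rmS}.
\]
Let $\varpi$ be a uniformizer of $\OKE$. The Bockstein long exact sequence attached to $0\to \OKE \xrightarrow{\varpi}\OKE \to k_\lambda\to 0$, combined with the fact that $\rmH^j(\ldots,\OKE)_\fkm$ is a finitely generated $\OKE$-module, shows (by Nakayama) that $\rmH^j(\ldots,\OKE)_\fkm=0$ for $j\neq g_{\rmS}$. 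The same sequence forces $\rmH^{g_{\rmS}}(\ldots,\OKE)_\fkm$ to be $\varpi$-torsion-free, hence free over the DVR $\OKE$, giving (2).

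\textbf{Step 2 (rational statement).} After base change to $\bbC$, Matsushima's formula decomposes $\rmH^j(\mathrm{Sh}_K(G_{\rmS,\rmT})(\bbC),\bbC)$ (plus its Eisenstein/residual counterpart in the non-compact case) as a sum indexed by automorphic representations $\pi$ of $G_\rmS(\bbA)$, each summand being $\pi_f^K \otimes \rmH^j(\mathfrak{g}_\rmS,K_\infty;\pi_\infty)$. For parallel weight $2$, the Vogan--Zuckerman classification of cohomological irreducible unitary $(\mathfrak{g}_\rmS,K_\infty)$-modules shows that $\rmH^j(\mathfrak{g}_\rmS,K_\infty;\pi_\infty)=0$ unless $\pi_\infty$ is discrete series at each archimedean place in $\Sigma_\infty-\rmS_\infty$ (in which case the cohomology is concentrated in degree $g_{\rmS}$), or $\pi$ is Eisenstein/residual. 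The large image assumption $\mathbf{LI}_{\mathrm{Ind}\,\overline{\rho}_{\Pi,\lambda}}$ forces $\fkm$ to be non-Eisenstein, so localization at $\fkm$ kills the latter contribution. This yields the analogue of the vanishing above with $\OKE\otimes\bbQ$-coefficients.

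\textbf{Step 3 (integral mod $\lambda$ refinement).} The key remaining task is to rule out mod-$\lambda$ torsion classes at $\fkm$ in degrees $j<g_{\rmS}$; Poincar\'e duality then handles $j>g_{\rmS}$. I would follow the strategy of Dimitrov \cite{Dim05}: any such torsion class produces, via the Galois-equivariance of the localization and the Eichler--Shimura relations at primes $\fkq\notin \rmR\cup\{\fkp\}$, a mod-$\lambda$ Galois subrepresentation of $\rmH^j_{\mathrm{\acute{e}t}}(\ldots,k_\lambda)_\fkm$ whose Frobenius eigenvalues match those of a power/twist of $\overline{\rho}_{\Pi,\lambda}$; the constraints on the Hodge--Tate weights of such a subrepresentation in degree $j<g_{\rmS}$ then contradict the large image condition $\mathbf{LI}$ as in \cite[Proposition 0.1]{Dim05} and the condition $l\geq g+2$ from Assumption~\ref{ass: property of l}(2).

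\textbf{Main obstacle.} The main difficulty lies in Step 3, namely transporting Dimitrov's integral mod-$\lambda$ argument from Hilbert modular varieties to the general quaternionic setting. When $B_\rmS$ is not totally split (so $\mathrm{Sh}_K(G_{\rmS,\rmT})$ is compact), the argument simplifies considerably as there is no boundary cohomology to analyze, and one can work directly with the Galois module structure on étale cohomology. When $\rmS=\emptyset$ and $B$ is split (the Hilbert case), the result is \cite{Dim05}. For intermediate cases one transfers via the auxiliary unitary PEL Shimura varieties of §\ref{sec:unitary} and the Jacquet--Langlands correspondence, using that the rational Hecke eigenvalue systems at $\fkm$ match across the tower.
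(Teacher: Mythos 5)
Your proposal is essentially the paper's argument: after the Nakayama/Poincar\'e-duality (Bockstein) reduction to mod-$\lambda$ vanishing in degrees $j<g_{\rmS}$, the paper runs exactly the Dimitrov-style step you sketch, invoking \cite[Theorem 3.21]{Liu1} to show (using $\mathbf{LI}$ and $l\geq g+2$) that $g_{\rmS}$ must occur as a Fontaine--Laffaille weight of any irreducible Galois subquotient of $\rmH^j(\mathrm{Sh}_K(G_{\rmS,\rmT})_{\overline{\bbQ}},\kl)/\fkm$, while Faltings' comparison theorem applied to the proper smooth model at $l$ furnished by Assumption \ref{ass: property of l}(4) bounds such weights by $j<g_{\rmS}$, and citing \cite{Dim}, \cite{Dim05} for the Hilbert case and for part (2). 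The only differences are cosmetic: your archimedean Step 2 plays no role in the integral statement, and the ``constraint on Hodge--Tate weights in degree $j$'' should be attributed to this Faltings/Fontaine--Laffaille input (good reduction at $l$, Assumption \ref{ass: property of l}(4)) rather than to $\mathbf{LI}$ itself, whose role is to force the weight $g_{\rmS}$ to appear.
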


\begin{proof}The case of Hilbert modular varieties is proved in \cite[Theorem 6.6]{Dim}. We thus assume $\mathrm{Sh}_K(G_{\rmS,\rmT})$ is compact.
	
	For (1), it suffices to show that $\rmH^j(\text{Sh}_K(G_{\rmS,\rmT})_{\overline{\bbQ}},\kl)/\fkm=0$ for $j=0,\dotsc,g_{\rmS}-1$ by Nakayama's Lemma and Poincar\'e duality. We let $\tilde{F}$ denote the Galois closure of $F$ and $\calG_{\tilde{F}}$ its absolute Galois group. For any irreducible representation $\overline{\rho}'$ of $\calG_{\tilde{F}}$ which appears as a subquotient of  $\rmH^j(\text{Sh}_K(G_{\rmS,\rmT})_{\overline{\bbQ}},\kl)/\fkm$, the same argument as in \cite[Theorem 3.21]{Liu1} shows that  $g_{\rmS}$ appears as a Fontaine--Laffaille weight of $\overline{\rho}'$; note here we need to use Assumption \ref{ass: property of l} (2) in order to apply this theory. By Assumption \ref{ass: property of l} (4), $\mathrm{Sh}_K(G_{\rmS,\rmT})_{\bbF_{l^h}}$ admits a proper smooth  model over $W(\bbF_{l^h})$, where $h$ is the residue degree of a prime in $\tilde{F}$ above $l$. Therefore, by Faltings' Comparison Theorem \cite{Fa}, we know that $g_{\rmS}$ cannot be a Fontaine--Laffaille weight for $\rmH^j(\text{Sh}_K(G_{\rmS,\rmT})_{\overline{\bbQ}},\kl)/\fkm$, $j<g_{\rmS}$.  This proves (1).
	
	For (2) the proof is the same as \cite[Theorem 6.6]{Dim05}.
\end{proof}

For the rest of this section we make the following assumption.
\begin{ass}\label{ass: surface} 
	$|\Sigma_{\infty}-\rmS|=2$ and $B_{\rmS}\neq GL_2(F)$. \end{ass}
In particular this condition implies that the associated Shimura varieties are compact. We write $\Sigma_{\infty}-\rmS=\{\tau_1,\tau_2\}$ (note we do not use the convention in \S\ref{sec: Goren-Oort} so that $\sigma(\tau_1)$ is not necessarily equal to $\tau_2$).
The main theorem of this section is the following.

\begin{thm}\label{thm: Ihara's Lemma} Under Assumptions \ref{ass: property of l} and \ref{ass: surface}, the map $$\pi_1^*+\pi_2^*:\rmH^2(\rmSh_K(G_{\rmS,\rmT})_{\overline{\bbQ}},k_\lambda)^2_\fkm\rightarrow \rmH^2(\rmSh_{K_0(\fkp)}(G_{\rmS,\rmT})_{\overline{\bbQ}},k_\lambda)_\fkm$$
	is injective.
	\end{thm}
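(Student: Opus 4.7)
The plan is to follow the strategy of Diamond--Taylor \cite{DT}, originally developed for Shimura curves, with the Hecke orbit conjecture (Theorem \ref{thm:Hecke-Orbit quat}) serving as the crucial new input that replaces the elementary orbit density used in dimension one. By Assumption \ref{ass: property of l}(4) the Shimura surface $\rmSh_K(G_{\rmS,\rmT})$ has good reduction at $l$, and by Proposition \ref{prop: middle cohomology non-zero} the relevant $\fkm$-localized cohomology is a free $\calO_{\bfE_\lambda}$-module; it therefore suffices to prove the corresponding injectivity after reduction modulo $\lambda$. First I would invoke Fontaine--Laffaille theory on the smooth proper integral model, together with the Kodaira--Spencer isomorphism of Corollary \ref{cor:Kodaira Spencer quaternionic}, to identify $\rmH^2(\rmSh_K(G_{\rmS,\rmT})_{\overline{\bbQ}},k_\lambda)_\fkm$ with the $\fkm$-localization of the global sections of the automorphic line bundle $\omega_{\tau_1}^2\otimes\omega_{\tau_2}^2$ on the mod-$l$ special fiber, and likewise at Iwahori level. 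This reduces the theorem to an injectivity statement for a degeneracy map between spaces of mod-$l$ quaternionic modular forms, the content of the forthcoming Proposition \ref{prop: Ihara coherent}.

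Next, I would assume for contradiction that a non-zero pair $(s_1,s_2)$ lies in the kernel. The kernel relation combined with the action of the Iwahori--spherical Hecke algebra at $\fkp$ forces $s_1$ to be a Hecke eigenform whose system of eigenvalues at $\fkp$ is of ``old'' type, and hence forces the effective divisor $D:=\mathrm{div}(s_1)$ on the mod-$l$ special fiber to be stable under all $p$-power Hecke correspondences. Making this precise requires the analysis of the bad reduction of the Iwahori-level surface $\rmSh_{K_0(\fkp)}(G_{\rmS,\rmT})$ carried out in the appendix (Corollary \ref{cor:Iwahori level structure}); analogously to Stamm's treatment \cite{Stamm} of the Hilbert modular case, this lets me identify $\pi_{1\ast}\circ\pi_2^\ast$ with the Hecke operator $T_\fkp$ on global sections of the relevant automorphic line bundle.

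With $p$-power Hecke stability of $D$ in hand, I apply Theorem \ref{thm:Hecke-Orbit quat} together with the strong-approximation argument of \S\ref{sec: Hecke orbit quaternionic} to conclude that the $p$-power Hecke orbit of any ordinary point $x$ on the mod-$l$ special fiber is Zariski dense in its geometric connected component. Since $D$ is a proper divisor stable under $p$-power Hecke correspondences, the complement of its support must either be empty on a component or contain the dense ordinary locus of that component; the former is ruled out for dimension reasons, so $\mathrm{Supp}(D)$ is disjoint from the ordinary locus. Hence $D$ is supported on the union of the Goren--Oort divisors $\scrS_K(G_{\rmS,\rmT})_{\overline{\bbF}_l,\tau}$ of \S\ref{sec: Goren-Oort}, each of which is a $\bbP^1$-bundle over a curve by Theorem \ref{thm:Goren-Oort strata} and has its class expressible in terms of the $\omega_\tau$ via the partial Hasse invariants. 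Combining this with the Kodaira--Spencer identification $\Omega^2\cong\omega_{\tau_1}^2\otimes\omega_{\tau_2}^2$, I would compute the intersection number of $D$ with a generic fiber of a Goren--Oort $\bbP^1$-bundle; the resulting numerical identity is incompatible with $D$ being simultaneously effective, non-trivial, and supported entirely on Goren--Oort divisors, which is the desired contradiction.

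The main obstacle will be the rigorous passage from the characteristic-zero kernel relation to $p$-power Hecke stability of the effective divisor $D$ in characteristic $l$. In the Shimura curve setting of \cite{DT} the moduli interpretation of the curve makes this step transparent, but our quaternionic Shimura surface admits no moduli interpretation of its own, and moreover the Iwahori-level integral model has bad reduction at $p$. Handling this transition therefore requires transferring the relevant correspondences from the auxiliary PEL unitary Shimura variety via \cite[\S2]{TX} and combining it with the appendix's explicit description of the Iwahori-level bad reduction; together these form the most delicate step of the argument, and the place where the new analysis genuinely goes beyond \cite{DT}.
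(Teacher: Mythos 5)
Your high-level strategy is the same as the paper's: reduce, via Fontaine--Laffaille theory and Hodge--de Rham degeneration on the smooth mod-$l$ models, to an injectivity statement for $\pi_1^*+\pi_2^*$ on $\rmH^0(\calX_l,\Omega^2_{\calX_l/\overline{\bbF}_l})$ (Proposition \ref{prop: Ihara coherent}), then use the kernel relation plus Theorem \ref{thm:Hecke-Orbit quat} to force the divisor of a kernel element off the ordinary locus, and finally derive a contradiction by intersection theory. However, there are two genuine problems. First, you have mislocated the difficulty: the appendix on the bad reduction of $\rmSh_{K_0(\fkp)}(G_{\rmS,\rmT})$ at $p$, and any identification of $\pi_{1*}\circ\pi_2^*$ with $T_\fkp$ on coherent sections, play no role here. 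Since $l\neq p$ and the level change is at $\fkp$ only, both $\calX_l$ and $\calX_0(\fkp)_l$ are smooth over $\overline{\bbF}_l$ and $\pi_1,\pi_2$ are finite \'etale on the mod-$l$ fibers, so the relation $\pi_1^*(f_1)=-\pi_2^*(f_2)$ immediately propagates vanishing of $f_1$ along the $p$-power Hecke correspondence, exactly as in Diamond--Taylor; the appendix is needed only later, to relate the Abel--Jacobi map to Ihara's Lemma. (Relatedly, your ``identification'' of $\rmH^2_{\text{\'et}}$ with $\rmH^0(\omega_{\tau_1}^2\otimes\omega_{\tau_2}^2)$ is too strong: what one actually does is check injectivity on the top graded piece of the Fontaine--Laffaille filtration, which requires knowing that all irreducible subquotients of the $\fkm$-localized cohomology are isomorphic Galois representations, via Dimitrov's lemma and the Eichler--Shimura relation; this input is missing from your sketch.)

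Second, your endgame is not correct as stated. The non-ordinary locus of $\calX_l$ is the union of the two Goren--Oort divisors, but these are \emph{not} in general $\bbP^1$-bundles over curves: when $\tau_1,\tau_2$ lie over a single prime $\fkl\mid l$ they are $\bbP^1$-bundles over finite Shimura sets, and when they lie over distinct primes $\fkl_1,\fkl_2$ they are (special fibers of) compact Shimura \emph{curves}, so ``intersecting $D$ with a generic fiber of a Goren--Oort $\bbP^1$-bundle'' does not even make sense in the second case. The paper instead (i) uses strong approximation and transitivity of the $p$-power Hecke action on the components of the Goren--Oort divisors (Proposition \ref{prop:ss-orbit}) to write $D$, component by component, as $aD^{\fkc}_{\tau_1}+bD^{\fkc}_{\tau_2}$; (ii) combines the partial Hasse invariants with Corollary \ref{cor:Kodaira Spencer quaternionic} to conclude that a specific tensor power of $\omega^{\fkc}_{\tau_1},\omega^{\fkc}_{\tau_2}$ is torsion in the Picard group; and (iii) contradicts this using the per-component intersection matrices (Propositions \ref{prop:int matrix case 1}, \ref{prop:int matrix case 2}), whose vanishing diagonal entries in the second case rest on the Diamond--Taylor Euler-characteristic identity (Proposition \ref{prop:genus}) and adjunction. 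Your proposal asserts that ``the resulting numerical identity is incompatible'' with the existence of $D$ without carrying out any of this, and in particular gives no argument in the two-prime case; as written, the contradiction has not been established.
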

\begin{rem}In the case of Hilbert modular varieties, this theorem was proved by Dimitrov \cite{Dim}. In the case of Shimura curves over $\bbQ$, it  is due to Diamond and Taylor.
\end{rem}

We outline the strategy for the proof of this Theorem. Following the idea of Diamond--Taylor, we use Faltings' comparison theorem \cite{Fa} to reduce to proving the injectivity of a certain map between global sections of line bundles over the$\mod l$ reduction of the Shimura surface  (Proposition
\ref{prop: Ihara coherent}). The property that such a global section lies in the kernel of the map (\ref{eq: degeneracy maps coherent}) implies that the divisor corresponding to this section is stable under certain Hecke correspondences. The Hecke orbit conjecture proved in the previous section shows that this divisor (if non-trivial) must be supported on the complement of the ordinary locus. We then compute the intersection pairing of this divisor with certain Goren--Oort divisors to deduce a contradiction.

\ignore{Recall we have assume $l$ is a prime which is unramified in $F$ and such that $K$ is hyperspecial at $l$. In this section we need to consider the integral model $X_l$ of $\text{Sh}_K(G_{\rmS,\rmT})$ over $\calO_l$. We write $\calX_l$ for the special fiber over $\bbF_{l^g}$. Fixing an isomorphism $\overline{\bbQ}_l$ with $\bbC$ we identify $\Sigma_\infty$ with $\Sigma_l$.  We consider the two following cases separately:

We first consider case (1). Then we have $$\calX_l^{ss}=\calX_{l,\tau_1}\cup\calX_{l,\tau_c}$$ where $\calX_{l,\tau_1}$ and $\calX_{l,\tau_c}$ are the Goren-Oort divisors corresponding to $\tau_1,\tau_2\in\Sigma_{\infty}-\rmS_{\infty}.$
We write $s_l(K)$ for the number of irreducible components of $\calX_{l,\tau_1}$ and $s_l(K_0(\fkl))$ for the number of superspecial points. Note that by the description of Goren-Oort strata in Section 2, we have $s_l(K)$ is the number of elements of the discrete Shimura set $$G_{\rmS\cup \{\tau_1,\tau_c\}}(\bbQ)\backslash G_{\rmS\cup \{\tau_1,\tau_c\}}(\bbA_f)/K$$ and $s_l(K_0(\fkl))$  is the number of points in $$G_{\rmS\cup \{\tau_1,\tau_c\}}(\bbQ)\backslash G_{\rmS\cup \{\tau_1,\tau_c\}}(\bbA_f)/K_0(\fkl)$$ where $K_0(\fkl)$ denotes the compact open with Iwahori level structure at $l$. In particular, $s_l(K)$ and $s_l(K_0(\fkl))$ we have the equality $$s_l(K_0(\fkl))=(l^g+1)s_l(K).$$

\begin{prop}
	We have  the equality $$\chi_{\rmSh_K(G_{\rmS,\rmT})}(0)= s_l(K)$$
\end{prop}
\begin{proof}
Recall we have a finite etale map $$\rmSh_{K_0(\fkl)}(G_{\rmS,\rmT})\rightarrow \rmSh_{K}(G_{\rmS,\rmT})$$ of degree $l^g+1$. Therefore $\chi_{\rmSh_{K_0(\fkl)}(G_{\rmS,\rmT})}(0)=(l^g+1)\chi_{\rmSh_K(G_{\rmS,\rmT})}(0)$.

Let $\calX_{0,l}$ denote the special fiber of the integral model of $\rmSh_{K_0(\fkl)}(G_{\rmS,\rmT})$ at a prime above $\fkl$. Then by flatness, we have $\chi_{\rmSh_K(G)}(0)=\chi_{\calX_{0,l}}(0)$.
	
We will calculate $\calX_{0,l}$ using Theorem \ref{thm:Iwahori-structure}.  We have a decomposition $$\calX_{0,l}=\calN_1\cup\calN_2 \cup \calN_3$$

where $\calN_1=\calF(\calX_l)$, $\calN_2=\calV(\calX_l)$ and $\calN_3$ is the supersingular locus. We have the following exact sequence of coherent sheaves on $$0\rightarrow \calO_{\calX_{0,l}}\rightarrow \calO_{\calN_1}\oplus \calO_{\calN_2}\oplus  \calO_{\calN_3}\rightarrow  \calO_{\calN_1\cap\calN_2} \oplus\calO_{\calN_1\cap\calN_3}\oplus \calO_{\calN_2\cap\calN_2}\oplus  \calO_{\calN_1\cap\calN_2\cap\calN_3}\rightarrow 0$$
where for a closed subscheme $Y\hookrightarrow \calX_{0,l}$ we consider $\calO_Y$ as a sheaf on $\calX_{0,l}$ via pushforward. We obtain the following:
\begin{equation}\label{Euler}\chi_{\calX_{0,l}}(0)=\chi_{\calN_1}(0)+\chi_{\calN_2}(0)+\chi_{\calN_3}(0)-\chi_{\calN_1\cap\calN_2}(0)-\chi_{\calN_1\cap\calN_3}-\chi_{\calN_2\cap\calN_3}(0)+\chi_{\calN_1\cap\calN_2\cap\calN_3}(0)\end{equation}
We now compute the terms on the right hand.

(1) $\chi_{\calN_1}(0)=\chi_{\calN_2}(0)=\chi_{\calX_l}(0)$ since $\calN_1$ and $\calN_2$ are both isomorphic to $\calX_l$.

(2) $\chi_{\calN_3}(0)=(1-l^g)s_l(K)$. Note that by Theorem \ref{thm:Iwahori-structure} (2), $\calN_3$ decomposes as $\calM_1\cup\calM_c$ where $M_i$ is a $\bbP^1$-bundle over $\calX_{l,\tau_i}$,  hence a $( \bbP^1)^2$-bundle over $$G_{\rmS\cup \{\tau_1,\tau_c\}}(\bbQ)\backslash G_{\rmS\cup \{\tau_1,\tau_c\}}(\bbA_f)/K$$ and $\calM_1\cap\calM_2$  is identified with $$\calX_{l,\tau_1}^{sp}\cong G_{\rmS\cup \{\tau_1,\tau_c\}}(\bbQ)\backslash G_{\rmS\cup \{\tau_1,\tau_c\}}(\bbA_f)/K_0(\fkl)$$
Therefore taking Euler characteristics of the exact sequence of sheaves on $\calN_3$ given by $$0\rightarrow \calO_{\calN_3}\rightarrow \calO_{\calM_1}\oplus\calO_{\calM_c}\rightarrow \calO_{\calM_1\cap\calM_c}\rightarrow 0$$
we obtain $$\chi_{\calN_3}(0)=2s_l(K)-s_l(K_0(\fkl))=(1-l^g)s_l(K)$$

(3) $\chi_{\calN_1\cap\calN_3}(0)=\chi_{\calN_2\cap\calN_3}(0)=(1-l^g)s_l(K)$. This follows from the fact that $\calN_1\cap\calN_3\cong \calN_2\cap\calN_3\cong X_l^{ss}$. Then applying the same argument as in (2) to $\calX_l^{ss}$ gives the result.

(4) $\chi_{\calN_1\cap\calN_2}(0)=(1+l^g)s_l(K)$ since $\calN_1\cap\calN_2$ is isomorphic to $$\calX^{sp}_l\cong G_{\rmS\cup \{\tau_1,\tau_c\}}(\bbQ)\backslash G_{\rmS\cup \{\tau_1,\tau_c\}}(\bbA_f)/K_0(\fkl)$$

(5) $\chi_{\calN_1\cap\calN_2\cap\calN_3}(0)=(1+l^g)s_l(K)$. This follows from (4) and the fact that $$\calN_1\cap\calN_2=\calN_1\cap\calN_2\cap\calN_3.$$

Substituting the above into (\ref{Euler}), we obtain:
\begin{align*}
(l^g+1)\chi_{\calX_l}(0)&=2\chi_{\calX_l}(0)+(1-l^g)s_l(K)-2(1-l^g)s_l(K)-(1+l^g)s_l(K)+(1+l^g)s_l(K)\\
&= 2\chi_{\calX_l}(0)-(1-l^g)s_l(K)\end{align*}
The proposition follows.
\end{proof}}
\subsection{Intersection numbers}
In this section we compute the intersection numbers of certain cycles on the special fiber of quaternionic Shimura surfaces. Recall we have assumed $l$ is a prime which is unramified in $F$ and such that $K$ is hyperspecial at $l$; in other words that $K=K_lK^l$ where $K_l\subset G_{\rmS}(\bbQ_l)\cong GL_2(F\otimes_\bbQ\bbQ_l)$ is hyperspecial. In this subsection we need to consider the$\mod l$ reductions of the quaternionic Shimura varieties constructed in section \ref{sec: Section1}. Fix an isomorphism $\iota_l:\overline{\bbQ}_l\xrightarrow\sim\bbC$ we identify $\Sigma_\infty$ with $\Sigma_l$.  We let $X_l$ denote the integral model of $\text{Sh}_K(G_{\rmS,\rmT})$ over $\calO_{\mathbf{E}_{\tilde{\rmS},\tilde{l}}}$ where $\tilde{l}$ is the prime of $\mathbf{E}_{\tilde{\rmS}}$ induced by $\iota_l$; we may use the construction in \S\ref{sec: Section1} upon replacing $p$ by $l$.  In this section we will only need to consider geometric special fibers, so we do not need to keep track of the fields of definitions as in \S\ref{sec: Section1}. In particular, the subset $\rmT$ will not play a role. We write $\calX_l$ for the special fiber of $X_l$ over $\overline{\bbF}_l$.

For a choice of auxiliary PEL data, the Newton stratification on the corresponding unitary Shimura variety defines a stratification on $\calX_l$, see \S\ref{sec: 3 statement}. We write $\calX_l^{\mathrm{ord}}$ for the ordinary locus of $\calX_l$ and we write $\calX_l^{\mathrm{n-ord}}$ for the complement of $\calX_l^{\mathrm{ord}}$ in $\calX_l$. We would like to understand the intersection numbers of certain cycles supported on $\calX_l^{\mathrm{n-ord}}$. This is possible since in the case of surfaces, we may give an explicit description of the non-ordinary locus in terms of Goren--Oort cycles. The relationship with Goren--Oort strata follows  easily from an examination of the Dieudonn\'e modules of the universal $p$-divisible group over the unitary Shimura variety. Since the calculations are completely analogous to the case of Hilbert modular varieties (see for example \cite{Stamm} and \cite{LT} for some cases of totally indefinite quaternion algebras), we omit the computations and just give the statements. We consider the two following cases separately:

Case (1): There exists a prime $\mathfrak{l}$ above $l$ such that $\tau_1,\tau_c\in\Sigma_{\fkl/\infty}$.

Case (2): There exists distinct primes $\fkl_1$, $\fkl_2$ above $l$ such that $\tau_{1}\in\Sigma_{\fkl_1/\infty}$ and $\tau_2\in\Sigma_{\fkl_2/\infty}$.

We first consider Case (1). Then we have $$\calX_l^{\mathrm{n-ord}}\cong\calX_l^{\mathrm{ss}}=\calX_{l,\tau_1}\cup\calX_{l,\tau_2}$$ where $\calX_{l,\tau_1}$ and $\calX_{l,\tau_2}$ are the Goren--Oort divisors over $\overline{\bbF}_l$ corresponding to $\tau_1,\tau_2\in\Sigma_{\infty}-\rmS_{\infty}.$ Here $\calX_l^{\mathrm{ss}}$ is defined as in \S\ref{sec: 3 statement}.

 By Proposition  \ref{thm:Goren-Oort strata}, $\calX_{l,\tau_1}$ and $\calX_{l,\tau_2}$ are $\bbP^1$-bundles over the discrete Shimura set  $$G_{\Sigma_\infty}(\bbQ)\backslash G_{\Sigma_{\infty}}(\bbA_f)/K$$ and by \cite[Proposition 2.32 (3)]{TX1} there is an isomorphism $$\calX_{l,\tau_1}\cap\calX_{l,\tau_2}\cong G_{\Sigma_\infty}(\bbQ)\backslash G_{\Sigma_\infty}(\bbA_f)/K_0(\fkl)$$ where $K_0(\fkl)\subset K$ denotes the compact open which agrees with $K$ away from $\fkl$ and with Iwahori level structure at $\fkl$. Here we consider these finite sets as discrete schemes over ${\overline{\bbF}}_l$. We write $s_l(K)$ for the cardinality of $G_{\Sigma_\infty}(\bbQ)\backslash G_{\Sigma_{\infty}}(\bbA_f)/K$ and $s_l(K_0(\fkl))$ for the cardinality of $G_{\Sigma_\infty}(\bbQ)\backslash G_{\Sigma_{\infty}}(\bbA_f)/K_0(\fkl)$.  Then $s_l(K)$ and $s_l(K_0(\fkl))$ are related by the equality $$s_l(K_0(\fkl))=(l^{g_{\fkl}}+1)s_l(K)$$ where $g_{\fkl}=[F_\fkl:\bbQ_p]$. 

 By \cite[Theorem 4.3]{TX1}\footnote{In \cite{TX1}, there is a running assumption that the prime $p$ ($l$ in our setting) is inert in $F$. This is required for the cohomological results to hold. However it is easily checked that the computation of the intersection pairing, which is purely geometric, holds if the prime is only assumed unramified.} the intersection matrix of $\calX_{l,\tau_1}$ and $\calX_{l,\tau_2}$ considered as divisors on $\calX_l$ is given by $$\left(\begin{matrix} -2l^{n_{\tau_1}}s_l(K) &s_l(K_0(\fkl))\\
 s_l(K_0(\fkl)) & -2l^{n_{\tau_2}}s_l(K)
 \end{matrix}\right)=s_l(K)\left(\begin{matrix} -2l^{n_{\tau_1}} & l^{g_\fkl}+1\\
 l^{g_\fkl}+1 & -2l^{n_{\tau_2}}
 \end{matrix}\right)$$
 We write $\omega_{\tau_1}$ (resp. $\omega_{\tau_2}$) for the line bundle on $\calX_l$ defined in \S\ref{sec:moduli interpretation} corresponding to a choice of lift $\tilde{\tau_1}$ (resp, $\tilde{\tau}_c$) in $\tilde{\rmS}_{\infty}$. Since $\calX_{l,\tau_1}$ (resp. $\calX_{l,\tau_2}$) is the vanishing locus of a section of $(\omega_{\tau_2})^{l^{n_{\tau_1}}}\otimes(\omega_{\tau_1})^{-1}$ (resp. $(\omega_{\tau_1})^{l^{n_{\tau_2}}}\otimes (\omega_{\tau_2})^{-1}$), applying a change of basis matrix we compute the intersection matrix of $\omega_{\tau_1}$ and $\omega_{\tau_2}$ to be:

 $$\left(\begin{matrix}0 &s_l(K)\\
 s_l(K) &0
 \end{matrix}\right)$$

 Now we consider Case (2). In this case we have the two Goren--Oort divisors $\calX_{l,\tau_1}$ and $\calX_{l,\tau_2}$ and  $$\calX_l^{\text{n-ord}}=\calX_{l,\tau_1}\cup\calX_{l,\tau_2}.$$
 
 By Theorem \ref{thm:Goren-Oort strata}, $\calX_{l,\tau_1}$ and $\calX_{l,\tau_2}$ are isomorphic to  $\mathscr{S}_{K}(G_{\rmS_{\tau_1},\rmT_{\tau_1}})_{\overline{\bbF}_l}$ and $\mathscr{S}_{K}(G_{\rmS_{\tau_2},\rmT_{\tau_2}})_{\overline{\bbF}_l}$ respectively; these are the special fibers of certain Shimura curves. Moreover, by \cite[Theorem 5.2]{TX}, $\calX_{l,\tau_1}$ and $\calX_{l,\tau_2}$ intersect transversally and there is an identification of $\calX_{l,\tau_1}\cap\calX_{l,\tau_2}$ with the discrete Shimura set $$G_{\rmS\cup\{\tau_1,\tau_2,\fkl_1,\fkl_2\}}(\bbQ)\backslash G_{\rmS\cup\{\tau_1,\tau_2,\fkl_1,\fkl_2\}}(\bbA_f)/K_{\fkl_1,\fkl_2}=G_{\Sigma_\infty\cup\{\fkl_1,\fkl_2\}}(\bbQ)\backslash G_{\Sigma\cup\{\fkl_1,\fkl_2\}}(\bbA_f)/K_{\fkl_1,\fkl_2}$$
 where $K_{\fkl_1,\fkl_2}$ is the compact open which agrees with $K$ away from $\fkl_1$ and $\fkl_2$ and is the unique maximal compact at the places $\fkl_1$ and $\fkl_2$. We write $s_l(K_{\fkl_1,\fkl_2})$ for the cardinality of this finite set. It follows that the intersection number $\calX_{l,\tau_1}.\calX_{l,\tau_2}$ is equal to $s_l(K_{\fkl_1,\fkl_2})$.

For a projective scheme $S$ of dimension $d$ over a field $k$ and $\calF$ a coherent sheaf on $S$, we write $$\chi_S(\calF):=\sum_{i=0}^d(-1)^i\dim_k\rmH^i(S,\calF)$$ for the Euler characteristic of $\calF$. We write $\chi_S(0)$ for the Euler characteristic of the structure sheaf. \begin{prop}\label{prop:Euler char, case (2)}
 \begin{equation}\label{genus}(l^{n_{\tau_1}}-1)\chi_{\calX_{l,\tau_1}}(0)=(l^{n_{\tau_2}}-1)\chi_{\calX_{l,\tau_2}}(0)=s_l(K_{\fkl_1,\fkl_2})\end{equation}
\end{prop}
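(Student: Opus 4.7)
The plan is to carry out an intersection-theoretic computation on the surface $\calX_l$, combining Corollary \ref{cor:Kodaira Spencer quaternionic} (Kodaira--Spencer) with adjunction and the transversal intersection number $\calX_{l,\tau_1}\cdot\calX_{l,\tau_2}=s_l(K_{\fkl_1,\fkl_2})$ established just above. In Case (2), each $\tau_i$ is the unique element of $\Sigma_{\fkl_i/\infty}-\rmS_\infty$ in its $\sigma$-orbit, so $\sigma^{-n_{\tau_i}}\tau_i=\tau_i$; hence the partial Hasse invariant $h_{\tau_i}$ is a section of $\omega_{\tau_i}^{l^{n_{\tau_i}}-1}$ and
$$[\calX_{l,\tau_i}]=(l^{n_{\tau_i}}-1)[\omega_{\tau_i}]\quad\text{in } \mathrm{Pic}(\calX_l)\otimes\bbQ.$$

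The step I expect to be the main obstacle is showing that the self-intersections $\omega_{\tau_i}^2$ vanish. I would compute $\omega_{\calX_{l,\tau_1}}$ in two ways. On the surface side, adjunction together with Corollary \ref{cor:Kodaira Spencer quaternionic} applied to $\calX_l$ (giving $[\omega_{\calX_l}]=2[\omega_{\tau_1}]+2[\omega_{\tau_2}]$) yields
$$[\omega_{\calX_{l,\tau_1}}]=\bigl((l^{n_{\tau_1}}+1)\omega_{\tau_1}+2\omega_{\tau_2}\bigr)\big|_{\calX_{l,\tau_1}}.$$
On the curve side, applying Corollary \ref{cor:Kodaira Spencer quaternionic} directly to the Shimura curve $\calX_{l,\tau_1}\cong\scrS_K(G_{\rmS_{\tau_1},\rmT_{\tau_1}})_{\overline{\bbF}_l}$ (for which $\Sigma_\infty-(\rmS_{\tau_1})_\infty=\{\tau_2\}$) gives $[\omega_{\calX_{l,\tau_1}}]=2[\omega_{\tau_2}|_{\calX_{l,\tau_1}}]$. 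Comparing these forces $(l^{n_{\tau_1}}+1)\omega_{\tau_1}|_{\calX_{l,\tau_1}}=0$ in $\mathrm{Pic}(\calX_{l,\tau_1})\otimes\bbQ$; taking degrees and using the first step yields $(l^{n_{\tau_1}}+1)(l^{n_{\tau_1}}-1)\omega_{\tau_1}^2=0$, hence $\omega_{\tau_1}^2=0$, and $\omega_{\tau_2}^2=0$ follows by symmetry. This compatibility between the surface-level and curve-level Kodaira--Spencer isomorphisms is the heart of the argument, and uses crucially that in Case (2) each $\tau_i$ lies over its own prime.

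Once the self-intersections vanish, the Euler characteristic is immediate. The intersection $\calX_{l,\tau_1}\cdot\calX_{l,\tau_2}=s_l(K_{\fkl_1,\fkl_2})$ combined with the first step gives
$$\omega_{\tau_1}\cdot\omega_{\tau_2}=\frac{s_l(K_{\fkl_1,\fkl_2})}{(l^{n_{\tau_1}}-1)(l^{n_{\tau_2}}-1)},$$
and then Riemann--Roch applied to the smooth (possibly disconnected) curve $\calX_{l,\tau_1}$, using the curve-level Kodaira--Spencer from the previous paragraph, yields
$$-2\chi(\calO_{\calX_{l,\tau_1}})=\deg\omega_{\calX_{l,\tau_1}}=2\,\omega_{\tau_2}\cdot\calX_{l,\tau_1}=2(l^{n_{\tau_1}}-1)\,\omega_{\tau_1}\cdot\omega_{\tau_2}=\frac{2\,s_l(K_{\fkl_1,\fkl_2})}{l^{n_{\tau_2}}-1}.$$
This gives the claimed relation (up to the sign convention for $\chi$), and the remaining equality in (\ref{genus}) follows by interchanging the roles of $\tau_1$ and $\tau_2$.
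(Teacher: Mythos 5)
Your proposal is correct in outline but takes a genuinely different route from the paper. The paper's proof is literally ``the same as \cite[Lemma 6]{DT}'': under the isomorphism of Theorem \ref{thm:Goren-Oort strata}(2), $\calX_{l,\tau_1}$ is the special fiber of the Shimura curve $\scrS_K(G_{\rmS_{\tau_1},\rmT_{\tau_1}})$, its non-ordinary locus is the simple vanishing locus of the partial Hasse invariant $h_{\tau_2}$, a section of $\omega_{\tau_2}^{l^{n_{\tau_2}}-1}$, and the curve-level Kodaira--Spencer isomorphism $\omega_{\tau_2}^{2}\cong\Omega^1$ turns the point count (already identified, just before the proposition, with the set of cardinality $s_l(K_{\fkl_1,\fkl_2})$) into an Euler-characteristic statement. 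In particular the paper proves the proposition first and only afterwards deduces $\omega_{\tau_i}\cdot\omega_{\tau_i}=0$ from it via adjunction; you reverse the order, getting $\omega_{\tau_i}^2=0$ first by comparing the adjunction/surface Kodaira--Spencer expression for $\omega_{\calX_{l,\tau_1}}$ with the curve-level one, and then recovering $\chi$ from $\omega_{\tau_1}\cdot\omega_{\tau_2}$. That order is not circular, but note what it costs: both your comparison step and your final Riemann--Roch step require the identity, in $\mathrm{Pic}(\calX_{l,\tau_1})\otimes\bbQ$, between the \emph{restriction} of the surface bundle $\omega_{\tau_2}$ and the \emph{intrinsic} $\omega_{\tau_2}$ of the curve $\scrS_K(G_{\rmS_{\tau_1},\rmT_{\tau_1}})_{\overline{\bbF}_l}$. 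You correctly flag this as the heart of your argument, but you must actually prove it, not just name it: it holds because the case-(2) isomorphism of \cite[Theorem 5.2]{TX} modifies the abelian variety only at the prime $\fkl_1$, so the $\tilde{\tau}_2$-components of $\omega^{\circ}$ on the two sides agree (up to torsion), but this unwinding of the Goren--Oort construction is an extra input the paper's route does not need --- the Diamond--Taylor argument uses only the point-set identification of $\calX_{l,\tau_1}\cap\calX_{l,\tau_2}$, which is already established in the text.

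Be more careful with the bookkeeping at the end. What your computation actually yields is $-2\chi_{\calX_{l,\tau_1}}(0)=2s_l(K_{\fkl_1,\fkl_2})/(l^{n_{\tau_2}}-1)$, i.e.\ $s_l(K_{\fkl_1,\fkl_2})=-(l^{n_{\tau_2}}-1)\chi_{\calX_{l,\tau_1}}(0)$, and the direct Diamond--Taylor argument gives the same thing. This differs from the displayed equation (\ref{genus}) not only in the sign convention for $\chi$ (which you acknowledge) but also in the pairing of the exponents: the factor multiplying $\chi_{\calX_{l,\tau_1}}(0)$ is $l^{n_{\tau_2}}-1$, not $l^{n_{\tau_1}}-1$, and these differ whenever $g_{\fkl_1}\neq g_{\fkl_2}$. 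Your ``crossed'' version is in fact the one consistent with the computation immediately following the proposition (it is exactly what makes $(l^{2n_{\tau_1}}-1)\,\omega_{\tau_1}\cdot\omega_{\tau_1}=0$ fall out of adjunction), so you should state precisely which relation you have proved rather than dismissing the discrepancy as a matter of convention.
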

\begin{proof}
	The proof is the same as \cite[Lemma 6]{DT}.
\end{proof}
Let $K_{\mathrm{can}}$ denote the canonical bundle on $\calX_{l}$. Since $\calX_{l,\tau_1}$  (resp. $\calX_{l,\tau_2}$) is the vanishing locus of a section of  $(\omega_{\tau_1})^{l^{n_{\tau_1}}-1}$ (resp. $(\omega_{\tau_2})^{l^{n_{\tau_2}}-1}$), it follows by the adjunction formula that
 \begin{align*}2\chi_{\calX_{l,\tau_1}}(0)&=\calX_{l,\tau_1}.(\calX_{l,\tau_1}+K_{\mathrm{can}})\\
&=(\omega_{\tau_1})^{l^{n_{\tau_1}}-1}.\left((\omega_{\tau_1})^{l^{n_{\tau_1}}+1}+(\omega_{\tau_2})^2\right)\\&=(\omega_{\tau_1})^{l^{n_{\tau_1}}-1}.\left((\omega_{\tau_1})^{l^{n_{\tau_1}}+1}\right)+\frac{2}{l^{n_{\tau_2}}-1}s_l(K_{\fkl_1,\fkl_2})
\end{align*}where for the second equality we have used Corollary \ref{cor:Kodaira Spencer quaternionic}.
Thus it follows by (\ref{genus}) and the fact that $l>4$ that $\omega_{\tau_1}.\omega_{\tau_1}=0$, and hence $\calX_{l,\tau_1}.\calX_{l,\tau_1}=0$. Similarly we have $\calX_{l,\tau_2}.\calX_{l,\tau_2}=0$. 

In summary, the intersection matrix of $\calX_{l,\tau_1},\calX_{l,\tau_2}$ is given by $$\left(\begin{matrix}
0 &s_l(K_{\fkl_1,\fkl_2})\\
s_l(K_{\fkl_1,\fkl_2})& 0
\end{matrix}\right)$$ and the intersection matrix of $\omega_1$, $\omega_{\tau_2}$ is $$\frac{1}{(l^{n_{\tau_1}}-1)(l^{n_{\tau_2}}-1)}\left(\begin{matrix}
0 &s_l(K_{\fkl_1,\fkl_2})\\
s_l(K_{\fkl_1,\fkl_2})& 0
\end{matrix}\right).$$

\subsection{Connected components}\label{sec:conn comp}We will need a variant of the above computations for the connected components of $\calX_l$; this is due to the fact that we will need to apply the strong approximation theorem in the proof of Proposition \ref{prop:ss-orbit} and this only holds for the derived group of $G_{\rmS}$. Recall $\nu_{\rmS}:G_\rmS\rightarrow T_F$  denotes the reduced norm and this induces an isomorphism  \begin{equation}
\label{eq: conn comp 1}
\pi_0(\calX_l)\cong T_F(\bbQ)^{\dagger}\backslash  T_F(\bbA_f)/\nu_{\rmS}(K).\end{equation}We write $\mathrm{Cl}_F(K)$ for the right hand side.
For an element $\fkc\in\mathrm{Cl}_F(K)$, we write $\calX_l^{\fkc}$ for the connected component of $\calX_l^{\fkc}$ corresponding to $\fkc$. We also write $\omega_{\tau_1}^\fkc, \omega_{\tau_2}^{\fkc}$ for the restriction of the lines bundles $\omega_{\tau_1}, \omega_{\tau_2}$ to $\calX_l^\fkc$. We would like to obtain the intersection matrix of the line bundles  $\omega_{\tau_1}^\fkc, \omega_{\tau_2}^{\fkc}$ on $\calX_l^\fkc$.

There are maps between the Shimura sets \begin{equation}\label{eq:conn comp 2}G_{\Sigma_\infty}(\bbQ)\backslash G_{\Sigma_\infty}(\bbA_f)/K\rightarrow T_F(\bbQ)^{\dagger}\backslash  T_F(\bbA_f)/\nu_{\Sigma_\infty}(K),\quad \text{in Case (1).}\end{equation}
\begin{equation}\label{eq: conn comp 3}G_{\Sigma_\infty\cup\{\fkl_1,\fkl_2\}}(\bbQ)\backslash G_{\Sigma_\infty\cup\{\fkl_1,\fkl_2\}}(\bbA_f)/K_{\fkl_1,\fkl_2}\rightarrow T_F(\bbQ)^{\dagger}\backslash  T_F(\bbA_f)/\nu_{\Sigma_\infty\cup\{\fkl_1,\fkl_2\}}(K_{\fkl_1,\fkl_2}),\quad \text{in Case (2).}\end{equation}

In each of the equations (\ref{eq: conn comp 1}), (\ref{eq:conn comp 2}) and (\ref{eq: conn comp 3}), $T_F(\bbQ)^\dagger$ is identified with the set of totally positive elements in $F$ and the images of $K$ and $K_{\fkl_1.\fkl_2}$ under the various reduced norms are all identified, therefore we may identify the right hand sides  of (\ref{eq:conn comp 2}) and (\ref{eq: conn comp 3}) with $\mathrm{Cl}_F(K)$. We thus write  $(G_{\Sigma_\infty}(\bbQ)\backslash G_{\Sigma_\infty}(\bbA_f)/K)^{\fkc}$
(resp. $(G_{\Sigma_\infty\cup\{\fkl_1,\fkl_2\}}(\bbQ)\backslash G_{\Sigma_\infty\cup\{\fkl_1,\fkl_2\}}(\bbA_f)/K_{\fkl_1,\fkl_2})^{\fkc}$) for the preimage of $\fkc$ under (\ref{eq:conn comp 2}) (resp. (\ref{eq: conn comp 3})), and we write $s_l(K)^{\fkc}$  (resp $s_l(K_{\fkl_1,\fkl_2})^{\fkc}$) for its cardinality. 

Suppose we are in Case (1). For $i=1,2$, we have an identification of the irreducible components  of $\calX_{l,\tau_i}$ with $G_{\Sigma_{\infty}}(\bbQ)\backslash G_{\Sigma_{\infty}}(\bbA_f)/K$. We may choose this identification compatibly with (\ref{eq: conn comp 1}). In other words, if we denote by $\calX_{l,\tau_i}^{\fkc}$  the union of irreducible components of $\calX_{l,\tau_i}$ corresponding to $(G_{\Sigma_{\infty}}(\bbQ)\backslash G_{\Sigma_{\infty}}(\bbA_f)/K)^{\fkc}$, then $\calX_{l,\tau_i}^\fkc=\calX_{l,\tau_i}\cap\calX_l^\fkc$ for $i=1,2$.

\begin{prop}\label{prop:int matrix case 1}In Case (1), the intersection matrix of
	 $\omega_{\tau_1}^\fkc,\omega_{\tau_1}^\fkc$ is given by $$\left(\begin{matrix}0 &s_l(K)^\fkc\\
	s_l(K)^\fkc &0
	\end{matrix}\right).$$

\end{prop}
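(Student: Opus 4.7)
The plan is to reduce the computation to the one already carried out in the previous subsection by observing that the intersection pairing is additive across connected components, and that the Goren--Oort divisors decompose compatibly with the connected component decomposition of $\calX_l$.

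First I would argue that the constructions of the previous subsection all localize to a single connected component. Since $\calX_l = \bigsqcup_\fkc \calX_l^\fkc$ is a disjoint union, any divisor $D$ on $\calX_l$ decomposes as $\sum_\fkc D|_{\calX_l^\fkc}$, and the intersection number of two divisors is the sum of the intersection numbers on each component. By Theorem~\ref{thm:Goren-Oort strata}, each $\calX_{l,\tau_i}$ is a $\bbP^1$-bundle over the discrete Shimura set $G_{\Sigma_\infty}(\bbQ)\backslash G_{\Sigma_\infty}(\bbA_f)/K$, and by construction (using the normalization fixed in \S\ref{sec:conn comp}) this bundle projection is compatible with the map to $\mathrm{Cl}_F(K)$. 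Therefore $\calX_{l,\tau_i}^\fkc := \calX_{l,\tau_i}\cap\calX_l^\fkc$ is a $\bbP^1$-bundle over $(G_{\Sigma_\infty}(\bbQ)\backslash G_{\Sigma_\infty}(\bbA_f)/K)^\fkc$, and similarly $\calX_{l,\tau_1}^\fkc\cap\calX_{l,\tau_2}^\fkc \cong (G_{\Sigma_\infty}(\bbQ)\backslash G_{\Sigma_\infty}(\bbA_f)/K_0(\fkl))^\fkc$ of cardinality $s_l(K_0(\fkl))^\fkc=(l^{g_\fkl}+1)s_l(K)^\fkc$ (the degeneracy map between Shimura sets being compatible with the connected component decomposition since $K$ and $K_0(\fkl)$ have the same image under the reduced norm at $\fkl$).

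Next I would verify that the intersection matrix computation of \cite[Theorem 4.3]{TX1} restricts component-by-component, giving
$$\bigl(\calX_{l,\tau_i}^\fkc\cdot\calX_{l,\tau_j}^\fkc\bigr)_{i,j} \;=\; s_l(K)^\fkc\begin{pmatrix} -2l^{n_{\tau_1}} & l^{g_\fkl}+1\\ l^{g_\fkl}+1 & -2l^{n_{\tau_2}}\end{pmatrix}.$$
The off-diagonal entry is immediate from the transversal intersection count above. For the self-intersections one uses that $\calX_{l,\tau_1}^\fkc$ (resp.\ $\calX_{l,\tau_2}^\fkc$) is the vanishing locus of a section of $(\omega_{\tau_2}^\fkc)^{l^{n_{\tau_1}}}\otimes(\omega_{\tau_1}^\fkc)^{-1}$ (resp.\ $(\omega_{\tau_1}^\fkc)^{l^{n_{\tau_2}}}\otimes(\omega_{\tau_2}^\fkc)^{-1}$); these are purely local statements obtained by restricting the corresponding sections from $\calX_l$ to $\calX_l^\fkc$. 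The self-intersection is then determined by the restriction of $(\omega_{\tau_2}^\fkc)^{l^{n_{\tau_1}}}\otimes(\omega_{\tau_1}^\fkc)^{-1}$ to $\calX_{l,\tau_1}^\fkc$, and the same fibration argument used in \cite{TX1} (computing degrees along the $\bbP^1$-fibres over $(G_{\Sigma_\infty}(\bbQ)\backslash G_{\Sigma_\infty}(\bbA_f)/K)^\fkc$) yields the stated value.

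Finally, the change-of-basis step is formally identical to the global case. Writing the relations in $\Pic(\calX_l^\fkc)\otimes\bbQ$ as
$$\begin{pmatrix}[\calX_{l,\tau_1}^\fkc]\\[2pt] [\calX_{l,\tau_2}^\fkc]\end{pmatrix} \;=\; \begin{pmatrix}-1 & l^{n_{\tau_1}}\\ l^{n_{\tau_2}} & -1\end{pmatrix}\begin{pmatrix}[\omega_{\tau_1}^\fkc]\\[2pt] [\omega_{\tau_2}^\fkc]\end{pmatrix},$$
inverting and conjugating the matrix above produces exactly $\bigl(\begin{smallmatrix}0 & s_l(K)^\fkc\\ s_l(K)^\fkc & 0\end{smallmatrix}\bigr)$, as in the global calculation. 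No step is substantially harder than the global case; the only subtlety to check carefully is that the identification of irreducible components of $\calX_{l,\tau_i}$ with $G_{\Sigma_\infty}(\bbQ)\backslash G_{\Sigma_\infty}(\bbA_f)/K$ can indeed be made compatible with the connected component map in (\ref{eq: conn comp 1}), which is the content of the normalization fixed in \S\ref{sec:conn comp}.
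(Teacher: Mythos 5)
Your proposal is correct and follows essentially the same route as the paper: one uses the compatibility $\calX_{l,\tau_i}^\fkc=\calX_{l,\tau_i}\cap\calX_l^\fkc$ fixed in \S\ref{sec:conn comp} to restrict the intersection matrix of \cite[Theorem 4.3]{TX1} to each connected component, and then applies the same change of basis via the relations $[\calX_{l,\tau_1}^\fkc]=l^{n_{\tau_1}}[\omega_{\tau_2}^\fkc]-[\omega_{\tau_1}^\fkc]$, $[\calX_{l,\tau_2}^\fkc]=l^{n_{\tau_2}}[\omega_{\tau_1}^\fkc]-[\omega_{\tau_2}^\fkc]$ in the rational Picard group. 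The extra details you spell out (additivity over components, invertibility of the change-of-basis matrix since $l^{g_\fkl}\neq 1$) are fine and consistent with the paper's argument.
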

\begin{proof} Using the fact that $\calX_{l,\tau_i}\cap \calX_l^{\fkc}\cong \calX_{l,\tau_i}^{\fkc}$ and \cite[Theorem 4.3]{TX1}, we find that in Case (1) the intersection matrix of $\calX_{l,\tau_1}$, $\calX_{l,\tau_2}$ is given by
	$$s_l(K)^\fkc\left(\begin{matrix} -2l^{n_{\tau_1}} & l^{g_\fkl}+1\\
	l^{g_\fkl}+1 & -2l^{n_{\tau_2}}
	\end{matrix}\right).$$
Applying a change of basis matrix gives the desired result.
\end{proof}
Now suppose we are in Case (2). For $i=1,2$ we may identify $$\pi_0(\calX_{l,\tau_i})\cong \mathrm{Cl}_F(K)$$ compatibly with the identification $ \pi_0(\calX_{l})\cong \mathrm{Cl}_F(K)$. In other words if we write $\calX_{l,\tau_i}^\fkc$ for the component of $\calX_{l\tau_i}$ corresponding to $\fkc$, we have $\calX_{l,\tau_i}^\fkc=\calX_{l}^\fkc\cap\calX_{l,\tau_i}$ for $i=1,2$.

Applying the argument of \cite[Lemma 6]{DT} to each connected component we obtain the following.
\begin{prop}\label{prop:genus} In Case (2), there is an equality
	$$(l^{n_{\tau_1}}-1)\chi_{\calX^\fkc_{l,\tau_1}}(0)=(l^{n_{\tau_2}}-1)\chi_{\calX^\fkc_{l,\tau_2}}(0)=s_l(K_{\fkl_1,\fkl_2})^\fkc$$
\end{prop}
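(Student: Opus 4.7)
My plan is to adapt the proof of Proposition \ref{prop:Euler char, case (2)} --- which itself follows \cite[Lemma 6]{DT} --- to each connected component $\calX^\fkc_l$ of $\calX_l$. All the relevant geometric data (Goren--Oort divisors, line bundles $\omega_{\tau_i}$, Kodaira--Spencer isomorphism) respects the decomposition into connected components, so the argument transfers with essentially no new input beyond careful bookkeeping.

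First, I will invoke Theorem \ref{thm:Goren-Oort strata}(2): in Case (2), each $\calX_{l,\tau_i}$ is isomorphic via $\pi_{\tau_i}$ to the Shimura curve $\scrS_K(G_{\rmS_{\tau_i},\rmT_{\tau_i}})_{\overline{\bbF}_l}$, which is smooth and proper. Its component $\calX^\fkc_{l,\tau_i} = \calX_{l,\tau_i} \cap \calX^\fkc_l$ is therefore a disjoint union of smooth projective curves. Using the compatibility of the connected-component labellings set up at the start of \S\ref{sec:conn comp}, the supersingular locus $\calX^\fkc_{l,\tau_1} \cap \calX^\fkc_{l,\tau_2}$ is then identified with $(G_{\Sigma_\infty\cup\{\fkl_1,\fkl_2\}}(\bbQ)\backslash G_{\Sigma_\infty\cup\{\fkl_1,\fkl_2\}}(\bbA_f)/K_{\fkl_1,\fkl_2})^\fkc$, of cardinality $s_l(K_{\fkl_1,\fkl_2})^\fkc$.

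The main calculation is then the Kodaira--Spencer computation restricted to $\calX^\fkc_{l,\tau_i}$. Applying Corollary \ref{cor:Kodaira Spencer quaternionic} to the Shimura datum $(G_{\rmS_{\tau_i},\rmT_{\tau_i}})$, for which $\Sigma_\infty - \rmS_{\tau_i,\infty} = \{\tau_{3-i}\}$, the restriction of $\omega_{\tau_{3-i}}$ to $\calX^\fkc_{l,\tau_i}$ provides a square root of the canonical bundle in the rational Picard group. Combining this with the description of the supersingular locus on $\calX^\fkc_{l,\tau_i}$ as the vanishing of a section of $(\omega^\fkc_{\tau_{3-i}}|_{\calX^\fkc_{l,\tau_i}})^{l^{n_{\tau_{3-i}}}-1}$, and using the relation between Euler characteristic and the degree of the canonical divisor for disjoint unions of smooth projective curves, yields the claimed equality on each component.

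The only mildly delicate step is ensuring that all identifications (of Shimura curves, of supersingular sets, of line bundle restrictions) are compatible with the $\mathrm{Cl}_F(K)$-labelling of connected components. This is exactly the content set up in the preamble to this subsection, after which the calculation is the same componentwise version of \cite[Lemma 6]{DT} as in the proof of Proposition \ref{prop:Euler char, case (2)}.
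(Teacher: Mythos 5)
Your proposal is correct and is essentially the paper's own proof: the paper simply applies the argument of \cite[Lemma 6]{DT} (i.e.\ the proof of Proposition \ref{prop:Euler char, case (2)}) to each connected component, which is exactly your componentwise adaptation using the identifications of \S\ref{sec:conn comp}, the Kodaira--Spencer isomorphism on the Shimura curves $\calX^\fkc_{l,\tau_i}$, and the description of their supersingular points as the transversal intersection $\calX^\fkc_{l,\tau_1}\cap\calX^\fkc_{l,\tau_2}$. No further comparison is needed.
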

We can use this to compute the intersection matrix of $\omega_{\tau_1}^\fkc, \omega_{\tau_2}^\fkc$ as in the previous subsection.

\begin{prop}\label{prop:int matrix case 2}In Case (2), the intersection matrix of $\omega_{\tau_1}^\fkc, \omega_{\tau_2}^\fkc$ is given by $$\frac{1}{(l^{n_{\tau_1}}-1)(l^{n_{\tau_2}}-1)}\left(\begin{matrix}
	0 &s_l(K_{\fkl_1,\fkl_2})^\fkc\\
	s_l(K_{\fkl_1,\fkl_2})^\fkc& 0
	\end{matrix}\right).$$ 
\end{prop}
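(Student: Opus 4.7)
The plan is to adapt the computation of the intersection matrix of $\omega_{\tau_1},\omega_{\tau_2}$ on all of $\calX_l$ (carried out in the paragraph just before the proposition) to the individual connected component $\calX_l^\fkc$, using Proposition \ref{prop:genus} in place of Proposition \ref{prop:Euler char, case (2)}. The required ingredients are the same: the description of $\calX_{l,\tau_i}^\fkc$ as the zero locus of a section of a suitable power of $\omega_{\tau_i}^\fkc$, the transversal intersection of the two Goren--Oort divisors counted by the relevant Shimura set, the Kodaira--Spencer isomorphism of Corollary \ref{cor:Kodaira Spencer quaternionic}, and the adjunction formula.

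First I would compute the off-diagonal entry $\omega_{\tau_1}^\fkc . \omega_{\tau_2}^\fkc$. Restricting to $\calX_l^\fkc$, the divisor $\calX_{l,\tau_i}^\fkc=\calX_{l,\tau_i}\cap\calX_l^\fkc$ is the vanishing locus of a section of $(\omega_{\tau_i}^\fkc)^{l^{n_{\tau_i}}-1}$, so
\[
(l^{n_{\tau_1}}-1)(l^{n_{\tau_2}}-1)\,\omega_{\tau_1}^\fkc . \omega_{\tau_2}^\fkc \;=\; \calX_{l,\tau_1}^\fkc . \calX_{l,\tau_2}^\fkc.
\]
By \cite[Theorem 5.2]{TX}, $\calX_{l,\tau_1}$ and $\calX_{l,\tau_2}$ meet transversally in the discrete Shimura set $G_{\Sigma_\infty\cup\{\fkl_1,\fkl_2\}}(\bbQ)\backslash G_{\Sigma_\infty\cup\{\fkl_1,\fkl_2\}}(\bbA_f)/K_{\fkl_1,\fkl_2}$; intersecting with the chosen connected component picks out the preimage $(G_{\Sigma_\infty\cup\{\fkl_1,\fkl_2\}}(\bbQ)\backslash G_{\Sigma_\infty\cup\{\fkl_1,\fkl_2\}}(\bbA_f)/K_{\fkl_1,\fkl_2})^\fkc$ of $\fkc$ (using the compatibility of the identifications $\pi_0(\calX_{l,\tau_i})\cong \mathrm{Cl}_F(K)$ discussed in \S\ref{sec:conn comp}), which has cardinality $s_l(K_{\fkl_1,\fkl_2})^\fkc$. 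This gives the off-diagonal entry.

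Next I would show the diagonal entries vanish by adjunction on each curve $\calX_{l,\tau_i}^\fkc$. Namely,
\[
2\chi_{\calX_{l,\tau_i}^\fkc}(0)\;=\;\calX_{l,\tau_i}^\fkc .\bigl(\calX_{l,\tau_i}^\fkc + K_{\mathrm{can}}|_{\calX_l^\fkc}\bigr),
\]
where by Corollary \ref{cor:Kodaira Spencer quaternionic} the canonical bundle is $(\omega_{\tau_1}^\fkc)^2\otimes(\omega_{\tau_2}^\fkc)^2$ in the rational Picard group of $\calX_l^\fkc$. Plugging in the relation $\calX_{l,\tau_i}^\fkc = (l^{n_{\tau_i}}-1)\omega_{\tau_i}^\fkc$, substituting the already-computed off-diagonal entry, and using Proposition \ref{prop:genus} to evaluate $\chi_{\calX_{l,\tau_i}^\fkc}(0)$, the equation becomes a linear relation for $(\omega_{\tau_i}^\fkc)^2$ whose only solution, given $l\geq g+2$ so that $l^{n_{\tau_i}}+1\neq 2$, is $(\omega_{\tau_i}^\fkc)^2=0$.

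There is no substantial obstacle here: this is a direct componentwise refinement of the computation already carried out on the full $\calX_l$, and every input (the global Kodaira--Spencer isomorphism, the transversality, and the Euler characteristic formula) has a counterpart for $\calX_l^\fkc$ that is available. The only mild point of care is the bookkeeping that ensures the identifications of connected components of $\calX_l$, $\calX_{l,\tau_1}$, $\calX_{l,\tau_2}$, and of their mutual intersection, are all compatible, but this is precisely what was set up in \S\ref{sec:conn comp}.
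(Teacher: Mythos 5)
Your proposal is correct and is essentially the paper's own proof: the off-diagonal entry comes from the transversal intersection of $\calX_{l,\tau_1}^\fkc$ and $\calX_{l,\tau_2}^\fkc$ in $s_l(K_{\fkl_1,\fkl_2})^\fkc$ points together with the relation $\calX_{l,\tau_i}^\fkc=(l^{n_{\tau_i}}-1)\omega_{\tau_i}^\fkc$, and the vanishing of the diagonal entries follows from the adjunction formula applied component by component (via Corollary \ref{cor:Kodaira Spencer quaternionic}) combined with Proposition \ref{prop:genus}, exactly as in the computation on all of $\calX_l$ that the paper's proof invokes.
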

\begin{proof}
	Using the fact that $\calX_l^\fkc$ and $\calX_l^\fkc$ intersect transversally and that there are $s_l(K_{\fkl_1,\fkl_2})^\fkc$ intersection points, we see that $\calX_{l,\tau_1}.\calX_{l,\tau_2}=s_l(K_{\fkl_1,\fkl_2})^\fkc$. Using the adjunction formula applied to each component as in the previous section and Proposition \ref{prop:genus} we find that $\calX^\fkc_{l,\tau_i}.\calX^\fkc_{l,\tau_i}=0$, and hence $\omega^\fkc_{\tau_i}.\omega^\fkc_{\tau_i}$. for $i=1,2$.
\end{proof}
%Now let $\nu_{\Sigma_{\infty}}:G_{\Sigma_{\infty}}\rightarrow \text{Res}_{F/\bbQ}$ denote the reduced norm map for $G_{\Sigma_{\infty}}$. Note that we have $\nu_{\Sigma_{\infty}}(G_{\Sigma_{\infty}}(\bbQ))=F^\times_+$. Therefore, this induces a map 
%$$G_{\Sigma_{\infty}}(\bbQ)\backslash G_{\Sigma_{\infty}}(\bbA_f)/K\rightarrow \bbA_{F,f}^{l,\times}/\widehat{\calO}_{F}^{l,\times} (F_+^\times\cap\calO_{F_\fkl}^\times)\cong \text{CL}^+(F)$$
%We write $(G_{\Sigma_{\infty}}(\bbQ)\backslash G_{\Sigma_{\infty}}(\bbA_f)/K)^{\fkc}$ for the preimage of $\fkc$ under the above map.

We write $\rmSh_{K_0(\fkp)}(G_{\rmS,\rmT})$ for the Shimura variety  with standard Iwahori level at $p$. We let $\calX_{0}(\fkp)_l$ denote the special fiber of the integral model for $\rmSh_{K_0(\fkp)}(G_{\rmS,\rmT})$ over $\overline{\bbF}_l$. We note that, since we have only changed level away from $l$, $\calX_{0}(\fkp)_l$ is still smooth.

Recall there are  two  degeneracy maps $$\pi_1,\pi_2:\text{Sh}_{K_0(\fkp)}(G_{\rmS,\rmT})\rightarrow \text{Sh}_K(G_{\rmS,\rmT}).$$
 We also write $\pi_1,\pi_2:\calX_0(\fkp)_l\rightarrow\calX_l$ for the corresponding maps on the special fiber; both of these maps are \'etale.

\begin{defn}\label{def: Hecke orbit}
	Let $x,x' \in \calX_l(\overline{\bbF}_l)$. We write $x\sim x'$ if there exists $\tilde{x},\tilde{x}'\in \calX_{0}(\fkp)_l(\overline{\bbF}_l)$ such that $$\pi_1(\tilde{x})=x,  \pi_1(\tilde{x}')=x', \text{ and }\pi_2(\tilde{x})=\pi_2(\tilde{x}')$$ This defines an equivalence relation on $\calX_l(\overline{\bbF}_l)$.
\end{defn}
It is easy to see that for $x\in\calX_l(\overline{\bbF}_l)$, the $\sim$ equivalence class of $x$ may be identified with $Y_p(x)$ in the notation of \S\ref{sec: Hecke orbit quaternionic}. Note that the roles of $p$ and $l$ have been switched.

Recall we have the discrete Shimura sets $G_{\Sigma_{\infty}}(\bbQ)\backslash G_{\Sigma_\infty}(\bbA_f)/K$, $G_{\Sigma_{\infty}}(\bbQ)\backslash G_{\Sigma_\infty}(\bbA_f)/K_0(\fkp)$ and natural degeneracy maps $$\pi_1,\pi_2:G_{\Sigma_{\infty}}(\bbQ)\backslash G_{\Sigma_\infty}(\bbA_f)/K_0(\fkp)\rightarrow G_{\Sigma_{\infty}}(\bbQ)\backslash G_{\Sigma_\infty}(\bbA_f)/K.$$ Similarly to the above we may define an equivalence relation on this set by specifying $x\sim x'$ for $x,x'\in G_{\Sigma_{\infty}}(\bbQ)\backslash G_{\Sigma_\infty}(\bbA_f)/K$ if there exists $\tilde{x},\tilde{x}'\in G_{\Sigma_\infty}(\bbQ)\backslash G_{\Sigma_{\infty}}(\bbA_f)/K_0(\fkp)$ such that $$\pi_1(\tilde{x})=x,  \pi_1(\tilde{x}')=x', \text{ and }\pi_2(\tilde{x})=\pi_2(\tilde{x}').$$ We also write $Y_p(x)$ for the $\sim$ equivalence class of $x$.

\begin{prop}\label{prop:ss-orbit}For any $x\in (G_{\Sigma_{\infty}}(\bbQ)\backslash G_{\Sigma_\infty}(\bbA_f)/K)^{\fkc}$, we have $Y_p(x)=(G_{\Sigma_{\infty}}(\bbQ)\backslash G_{\Sigma_\infty}(\bbA_f)/K)^{\fkc}$.
	
\end{prop}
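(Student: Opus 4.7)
The strategy is to combine strong approximation for the derived group $G_{\Sigma_\infty,\mathrm{der}}=\mathrm{SL}_1(B_{\Sigma_\infty})$ at $\fkp$ with the connectedness of the Bruhat--Tits tree for $\mathrm{SL}_2(F_\fkp)$.

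The inclusion $Y_p(x)\subset (G_{\Sigma_\infty}(\bbQ)\backslash G_{\Sigma_\infty}(\bbA_f)/K)^{\fkc}$ is immediate: both degeneracy maps $\pi_1,\pi_2$ are compatible with the reduced norm, so the Atkin--Lehner twist that distinguishes them shifts $\pi_2(\tilde x)$ and $\pi_2(\tilde x')$ by the same class in $T_F(\bbQ)^\dagger\backslash T_F(\bbA_f)/\nu(K)$, forcing $\pi_1(\tilde x)$ and $\pi_1(\tilde x')$ to lie in a common component. For the reverse inclusion, take $x'=[g']$ in the same component as $x=[g]$. Since $B_{\Sigma_\infty}$ is totally definite, Eichler's theorem gives that $\nu\colon G_{\Sigma_\infty}(\bbQ)\to F^\times$ surjects onto $F^\dagger$, so after modifying $g'$ on the left by $G_{\Sigma_\infty}(\bbQ)$ and on the right by $K$, I may arrange $\nu(g')=\nu(g)$ in $T_F(\bbA_f)$, giving $g^{-1}g'\in G_{\Sigma_\infty,\mathrm{der}}(\bbA_f)$.

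Since $B$ is unramified at $\fkp$, the quaternion algebra $B_{\Sigma_\infty}$ is split at $\fkp$ and $G_{\Sigma_\infty,\mathrm{der}}(F_\fkp)\cong \mathrm{SL}_2(F_\fkp)$ is non-compact; strong approximation then yields that $G_{\Sigma_\infty,\mathrm{der}}(\bbQ)$ is dense in $G_{\Sigma_\infty,\mathrm{der}}(\bbA_f^\fkp)$. Because $G_{\Sigma_\infty,\mathrm{der}}$ is normal in $G_{\Sigma_\infty}$, the conjugate subgroup $g^{-1}G_{\Sigma_\infty,\mathrm{der}}(\bbQ)g$ remains dense in $G_{\Sigma_\infty,\mathrm{der}}(\bbA_f^\fkp)$. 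Pick $\gamma\in G_{\Sigma_\infty,\mathrm{der}}(\bbQ)$ with $g^{-1}\gamma^{-1}g'\in G_{\Sigma_\infty,\mathrm{der}}(F_\fkp)\cdot (K^\fkp\cap G_{\Sigma_\infty,\mathrm{der}})$; absorbing $\gamma\in G_{\Sigma_\infty}(\bbQ)$ into the left double-coset action and the $K^\fkp$-factor into the right $K$-action yields $[g']_K=[g\cdot t_\fkp]_K$ for some $t_\fkp\in G_{\Sigma_\infty,\mathrm{der}}(F_\fkp)$ supported only at $\fkp$.

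It thus suffices to show $[g]\sim [g\cdot t_\fkp]$ for every $t_\fkp\in G_{\Sigma_\infty,\mathrm{der}}(F_\fkp)$. This is a purely local statement at $\fkp$: a single $\sim$-step multiplies $g$ on the right, at the $\fkp$-component, by an element of $K_\fkp\cdot wK_\fkp w^{-1}$, where $w$ is the Atkin--Lehner uniformizer; iterating these steps together with their inverses generates the subgroup $\{\alpha\in \mathrm{GL}_2(F_\fkp):v_\fkp(\det\alpha)=0\}$, which contains $\mathrm{SL}_2(F_\fkp)=G_{\Sigma_\infty,\mathrm{der}}(F_\fkp)$. Equivalently, the $\sim$-relation pulls back the edge-adjacency relation on the Bruhat--Tits tree of $\mathrm{SL}_2(F_\fkp)$, and the tree is connected. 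The main obstacle is verifying this last combinatorial claim cleanly, keeping track of the $K/K_0(\fkp)$-indexing of the degeneracy maps against the right $K$-action; once that is done the proposition follows by combining this local transitivity with the global reduction above.
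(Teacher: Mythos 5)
Your proposal is correct and is essentially the paper's own argument: the paper likewise identifies the connected component with the $G^1_{\Sigma_\infty}(\bbA_f)$-orbit of a representative $g$ (using the norm theorem implicitly, as you do via Eichler), identifies $Y_p(x)$ with the image of the local orbit $G^1_{\Sigma_{\infty}}(\bbQ)\cap K^p\backslash G^1_{\Sigma_\infty}(\bbQ_p)gKg^{-1}/gKg^{-1}$, and concludes by strong approximation for the simply connected kernel $G^1_{\Sigma_\infty}$ of the reduced norm, which is noncompact at $\fkp$ since $B_{\Sigma_\infty}$ splits there. The only real difference is that the identification of the $\sim$-equivalence class with the $\mathrm{SL}_2(F_\fkp)$-orbit, which you justify by showing the Hecke steps generate the determinant-valuation-zero subgroup (connectedness of the Bruhat--Tits tree), is asserted without comment in the paper, so your extra bookkeeping is a legitimate (and standard) filling-in rather than a divergence.
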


\begin{proof}Let $G^1_{\Sigma_\infty}$ the kernel of $\nu_{\Sigma_\infty}:G_{\Sigma_\infty}\rightarrow T_F$. Then $G^1_{\Sigma_\infty}$ is the derived group of $G_{\Sigma_\infty}$ and is simply connected. Let $g\in G_{\Sigma_{\infty}}(\bbA_f)$ be a  representative of $(G_{\Sigma_{\infty}}(\bbQ)\backslash G_{\Sigma_\infty}(\bbA_f)/K)^{\fkc}$. It is easy to see that  \begin{align*}(G_{\Sigma_{\infty}}(\bbQ)\backslash G_{\Sigma_\infty}(\bbA_f)/K)^{\fkc}&=G_{\Sigma_{\infty}}^1(\bbQ)\backslash G_{\Sigma_\infty}^1(\bbA_f)gK/K\\ &\cong G_{\Sigma_{\infty}}^1(\bbQ)\backslash G_{\Sigma_\infty}^1(\bbA_f)gKg^{-1}/gKg^{-1}; \end{align*}
	the isomorphism is given by right multiplication by $g^{-1}$.

Then $Y_p(x)$ is identified with $$G^1_{\Sigma_{\infty}}(\bbQ)\cap K^p\backslash G^1_{\Sigma_\infty}(\bbQ_p)gKg^{-1}/gKg^{-1}\subset G_{\Sigma_\infty}^1(\bbQ)\backslash G_{\Sigma_\infty}^1(\bbA_f)gKg^{-1}/gKg^{-1}.$$
	By the strong approximation theorem, $ G^1_{\Sigma_\infty}(\bbQ)G^1_{\Sigma_\infty}(\bbQ_p)$ is dense in $G^1_{\Sigma_\infty}(\bbA_f)$ and hence $$G^1_{\Sigma_{\infty}}(\bbQ)\cap K^p\backslash G^1_{\Sigma_\infty}(\bbQ_p)gKg^{-1}/gKg^{-1}= G_{\Sigma_\infty}^1(\bbQ)\backslash G_{\Sigma_\infty}^1(\bbA_f)gKg^{-1}/gKg^{-1}.$$
	
%	By the weak approximation theorem we have a bijection $$G_{\Sigma_{\infty}}(\bbQ)\backslash G_{\Sigma_\infty}(\bbA_f^l)/K^l\cong G_{\Sigma_{\infty}}(\bbQ)\backslash G_{\Sigma_\infty}(\bbA_f)/K$$
	%	It is easy to see that the $p$-Hecke orbit is identified with the image of $$\iota:G_{\Sigma_{\infty}}(\bbQ_p)/K_p\rightarrow G_{\Sigma_{\infty}}(\bbQ)\backslash G_{\Sigma_\infty}(\bbA_f^l)/K^l$$
 %By the strong approximation applied to $G^1_{\Sigma_{\infty}}$, we have 
%$$ G^1_{\Sigma_{\infty}}(\bbQ)\backslash G^1_{\Sigma_\infty}(\bbQ_p)/(K\cap G^1_{\Sigma_{\infty}}(\bbQ_p))=G^1_{\Sigma_{\infty}}(\bbQ)\backslash G^1_{\Sigma_\infty}(\bbA_f)/(K\cap G^1_{\Sigma_{\infty}}(\bbA^f))$$
 
 %the image of $\iota$ contains $G_{\Sigma_{\infty}}^1(\bbQ)\backslash G^1_{\Sigma_\infty}(\bbA_f^l)/(K^l\cap G^1_{\Sigma}(\bbA_f^l))$, which by weak approximation is identified with $$G_{\Sigma_{\infty}}^1(\bbQ)\backslash G^1_{\Sigma_\infty}(\bbA_f)/(K\cap G^1_{\Sigma_{\infty}}(\bbA_f)).$$ This is identified with $$(G_{\Sigma_{\infty}}(\bbQ)\backslash G_{\Sigma_\infty}(\bbA_f)/K)^{\fkc}$$
%where $\fkc$ corresponds to the identity element in $\text{Cl}_+(F)$.
\end{proof}

\subsection{Ihara's Lemma}We now prove the main Theorem.

\begin{prop}\label{prop: Ihara coherent}
	The map \begin{equation}
	\label{eq: degeneracy maps coherent}
\pi_1^*+\pi_2^*:\rmH^0(\calX_l,\Omega^2_{\calX_l/\overline{\bbF}_l})^2\rightarrow \rmH^0(\calX_0(\fkp)_l,\Omega^2_{\calX_0(\fkp)_l/\overline{\bbF}_l})\end{equation}
	is injective.
\end{prop}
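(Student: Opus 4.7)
The plan is to derive a contradiction from the existence of a non-zero kernel element $(\omega_1,\omega_2)$. First I would observe that $\pi_1,\pi_2$ are both finite étale and preserve connected components (since $\nu_{\rmS}(K)=\nu_{\rmS}(K_0(\fkp))$), so $\pi_i^*$ is injective on global sections, and we may fix a connected component $\calX_l^{\fkc}$ on which both $\omega_i|_{\calX_l^{\fkc}}$ are non-zero. Set $D_i:=\mathrm{div}(\omega_i|_{\calX_l^{\fkc}})$. The relation $\pi_1^*\omega_1=-\pi_2^*\omega_2$ gives $\pi_1^{-1}D_1=\pi_2^{-1}D_2$ as Weil divisors on $\calX_0(\fkp)_l^{\fkc}$, so the support $|D_1|\cup|D_2|$ is stable under the $p$-power Hecke correspondence. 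Invoking Theorem~\ref{thm:Hecke-Orbit quat}, the $p$-power Hecke orbit of any ordinary point is Zariski dense in $\calX_l^{\fkc}$, so this support must avoid the ordinary locus and is contained in the non-ordinary locus $\calX_{l,\tau_1}^{\fkc}\cup\calX_{l,\tau_2}^{\fkc}$ by the Goren--Oort description of \S\ref{sec:Goren--Oort dim 2}.

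The next step is to show $D_1=a\calX_{l,\tau_1}^{\fkc}+b\calX_{l,\tau_2}^{\fkc}$ for non-negative integers $a,b$. In Case~(2) each $\calX_{l,\tau_i}^{\fkc}$ is an irreducible Shimura curve and this is immediate. In Case~(1) each is a $\bbP^1$-bundle over a discrete Shimura set of $s_l(K)^{\fkc}$ components, so one must argue the coefficients of $D_1$ on these components are constant. Pushing the divisor identity forward via $\pi_{2*}$ yields $T_pD_1=(p^g+1)D_2$ and symmetrically $T_pD_2=(p^g+1)D_1$, where $T_p:=\pi_{2*}\pi_1^*$ preserves the subspace $V_i$ of divisors supported on $\calX_{l,\tau_i}^{\fkc}$ (the Hecke correspondence preserves Goren--Oort strata). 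On $V_i$, $T_p$ is a non-negative matrix with constant row sums $p^g+1$, irreducible by the transitivity of Proposition~\ref{prop:ss-orbit}, so its Perron eigenvalue $p^g+1$ has one-dimensional eigenspace spanned by the full Goren--Oort divisor. The remaining eigenvalues correspond via Jacquet--Langlands to Hecke eigenvalues $a_{\fkp}$ of cuspidal Hilbert modular forms of parallel weight $2$, for which Ramanujan gives $|a_{\fkp}|\leq 2p^{g/2}<p^g+1$; hence $-(p^g+1)$ is not in the spectrum, and since $T_p^2D_1^{(i)}=(p^g+1)^2D_1^{(i)}$, the component $D_1^{(i)}$ lies in the Perron line and is a constant multiple of $\calX_{l,\tau_i}^{\fkc}$.

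Finally I would combine the Kodaira--Spencer formula $[D_1]=2[\omega_{\tau_1}]+2[\omega_{\tau_2}]$ (Corollary~\ref{cor:Kodaira Spencer quaternionic}) with the expressions $[\calX_{l,\tau_i}^{\fkc}]=l^{n_{\tau_i}}[\omega_{\tau_{3-i}}]-[\omega_{\tau_i}]$ in Case~(1) and $[\calX_{l,\tau_i}^{\fkc}]=(l^{n_{\tau_i}}-1)[\omega_{\tau_i}]$ in Case~(2). Non-degeneracy of the intersection matrices from Propositions~\ref{prop:int matrix case 1} and~\ref{prop:int matrix case 2} shows $[\omega_{\tau_1}],[\omega_{\tau_2}]$ are linearly independent in the numerical Picard group, so matching coefficients yields the Diophantine system $al^{n_{\tau_1}}-b=bl^{n_{\tau_2}}-a=2$ in Case~(1) and $a(l^{n_{\tau_1}}-1)=b(l^{n_{\tau_2}}-1)=2$ in Case~(2). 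Assumption~\ref{ass: property of l}(2) gives $l\geq g+2\geq 4$ and hence $l\geq 5$ (as $l$ is prime), so $l^{n_{\tau_i}}\geq 5$, and a direct inspection shows neither system has solutions in non-negative integers. The hard part will be the Perron--Frobenius/Ramanujan reduction that rigidifies $D_1$ into the form $a\calX_{l,\tau_1}^{\fkc}+b\calX_{l,\tau_2}^{\fkc}$; once this rigidity is in hand, the intersection-theoretic contradiction is routine bookkeeping with the formulas already assembled in \S4.
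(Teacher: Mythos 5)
Your outer steps match the paper: the reduction via the Hecke-orbit theorem to a divisor supported on the non-ordinary locus, and the final intersection-theoretic contradiction using Corollary \ref{cor:Kodaira Spencer quaternionic} together with Propositions \ref{prop:int matrix case 1} and \ref{prop:int matrix case 2}, are exactly the paper's argument (the paper phrases the endgame as ``a torsion class pairs to zero with every divisor'' rather than solving your Diophantine systems, but this is the same computation). The gap is in your Case (1) rigidification, which is also where you diverge from the paper. First, $T_p=\pi_{2*}\pi_1^*$ does \emph{not} preserve the space $V_i$ of divisors supported on $\calX_{l,\tau_i}^{\fkc}$: the Hecke correspondence at $\fkp$ multiplies the norm class by $\varpi_\fkp$, so it shifts the connected component $\fkc$ (this is precisely why the paper introduces \S\ref{sec:conn comp} and states Proposition \ref{prop:ss-orbit} for the \emph{composite} relation of Definition \ref{def: Hecke orbit}, proved via strong approximation for the norm-one group). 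If you instead let $V_i$ run over all components to restore stability, irreducibility of the matrix can fail, since $\varpi_\fkp$ need not generate $\mathrm{Cl}_F(K)$. Second, your exclusion of the eigenvalue $-(p^g+1)$ is unjustified: the functions on the definite quaternionic Shimura set are not exhausted by constants and cuspidal Jacquet--Langlands transfers; the one-dimensional constituents $\chi\circ\nu$ contribute $T_\fkp$-eigenvalues $\chi_\fkp(\varpi_\fkp)(p^g+1)$, of absolute value $p^g+1$ and possibly negative, so Ramanujan alone does not rule out $-(p^g+1)$, and then $T_p^2D_1^{(i)}=(p^g+1)^2D_1^{(i)}$ with $D_1^{(i)}\geq 0$ does not by itself force $D_1^{(i)}$ onto the Perron line.

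Both defects are repairable, and the repair shows the spectral route can work with less input than you invoke: from $\pi_1^*D_1=\pi_2^*D_2$ one gets $(\pi_{1*}\pi_2^*)(\pi_{2*}\pi_1^*)D_1=(p^g+1)^2D_1$, and the composite operator $N=\pi_{1*}\pi_2^*\pi_{2*}\pi_1^*$ \emph{does} preserve divisors supported on $\calX_{l,\tau_i}^{\fkc}$, is symmetric and non-negative on the component basis with constant row sums $(p^g+1)^2$ (apply it to $\mathrm{div}(h_{\tau_i})$, which is Hecke-invariant), and is irreducible exactly by Proposition \ref{prop:ss-orbit}; Perron--Frobenius then pins the effective divisor $D_1^{(i)}$ to a multiple of the full Goren--Oort divisor, with no Ramanujan bound needed. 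The paper's own route is more elementary still and avoids spectral theory entirely: if $f_1$ vanishes on one irreducible component of $\calX_{l,\tau_i}^{\fkc}$, Proposition \ref{prop:ss-orbit} forces it to vanish on all of them; one then divides by the partial Hasse invariant $h_{\tau_i}$, which satisfies $\pi_1^*h_{\tau_i}=\pi_2^*h_{\tau_i}$ so that the quotient pair is again in the kernel, and iterates to extract the uniform multiplicities $a,b$ directly. As written, though, your step 2 would fail, so the proof is incomplete without one of these fixes.
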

\begin{proof}Let $(f_1,f_2)$ be an element of the kernel. We show $f_1=0$; in fact we will show $f_1|_{\calX_l^{\fkc}}=0$ for all $\fkc$.  Assume for contradiction that $f_1|_{\calX_l^{\fkc}}\neq0. $ %The morphism $\pi_1^*+\pi_2^*$ is $U_\fkp$-equivariant where $U_\fkp$ acts on the  domain by $$\left(\begin{matrix}
%T_\fkp & -S_\fkp\mathrm{Nm}_{F/\bbQ}(\fkp)\\
%1 & 0
%	\end{matrix}\right).$$
%	We may assume $(f_1,f_2)$ is an eigenvector for the action of $U_\fkq$.
% Similarly we assume $f_1$ is an eigenvector for $S_\fkp$; this implies $f_1$ is a multiple of $f_2$. Hence $\pi_2^*(f_2)=\pi_1^*(f_1)$ is a multiple of $\pi_1^*(f_2)$. 
Suppose $f_1$ has  a zero  at a point $x\in \calX_l^{\fkc}(\Fpbar)$. Let $y,z\in \calX_l(\Fpbar)$ be such that $$\pi^{-1}_1(x)\cap \pi^{-1}_2(y)\neq\emptyset,\quad \pi^{-1}_1(z)\cap \pi^{-1}_2(y)\neq\emptyset.$$ Then since $\pi_1^*(f_1)=-\pi_2^*(f_2)$, $f_1$ has a  zero at $z$. This implies $f_1$ vanishes at every point of  $Y_p(x)$. If $f_1$ vanishes at an ordinary point of $\calX_{l}^\fkc$, then by Theorem \ref{thm:Hecke-Orbit quat}, $f_1|_{\calX_l^{\fkc}}=0$.

 Therefore $\mathrm{div}(f_1|_{\calX_{l}^{\fkc}})$ is supported on the complement of the ordinary locus.
	
	We first consider Case (1). Then $\text{div}(f_1)$ is supported on $\calX_{l}^{\mathrm{ss}}$.
	
	 Let us consider $D^{\fkc}:=\mathrm{div}(f_1)|_{\calX_l^{\fkc}}$ for some $\fkc\in\mathrm{Cl}_F(K)$. Then $D^\fkc$ is supported on $\calX_l^{\fkc}\cap\calX_l^{\mathrm{ss}}$. Recall we may identify $\calX_l^{\fkc}\cap\calX_l^{\mathrm{ss}}$  with two copies of$$(G_{\Sigma_{\infty}}(\bbQ)\backslash G_{\Sigma_\infty}(\bbA_f)/K)^{\fkc}$$  corresponding to $\tau_1$ and $\tau_2$. We let $D_{\tau_1}^{\fkc}$ (resp. $D_{\tau_2}^{\fkc}$) be the divisor corresponding to the sum of irreducible components in $\calX_{l,\tau_1}^\fkc=\calX_{l,\tau_1}\cap\calX_{l}^{\fkc}$ (resp. $\calX_{l,\tau_2}^\fkc=\calX_{l,\tau_2}\cap\calX_{l}^{\fkc}$). We claim there are non-negative integers $a,b$ such that
	 $$D^{\fkc}=aD_{\tau_1}^{\fkc}+bD_{\tau_2}^{\fkc}.$$
	 Indeed if $f_1$ vanishes on some irreducible component of $\calX_{l,\tau_1}$ then by Proposition \ref{prop:ss-orbit}, $f_1$ vanishes on every irreducible component of $\calX_{l,\tau_1}$. We let $h_{\tau_1}\in \rmH^0(\calX_l,(\omega_{\tau_1})^{l^{n_{\tau_1}}}\otimes \omega_{\tau_2}^{-1})$ be a section with  divisor $D_{\tau_1}^\fkc$. The section $h_{\tau_1}$ can be chosen to be compatible with prime-to-$l$ level structure and so $\pi_1^*(h_{\tau_1})=\pi_2^*(h_{\tau_1})$. Then $(f_1h_{\tau_1}^{-1},f_2h_{\tau_1}^{-1})\in\ker(\pi_1^*+\pi_2^*)$. Repeating the argument we see that there exists $a$ such that $f_1h_{\tau_1}^{-a}|_{\calX_{l}^{\fkc}}$ is non-vanishing on $\calX_{l,\tau_1}^{\fkc}$. We may use the same argument for $\calX_{l,\tau_2}^{\fkc}$ to obtain $b$ such that $f_1h_{\tau_2}^{-b}|_{\calX_{l}^{\fkc}}$ is non-vanishing on  $\calX_{l,\tau_1}^{\fkc}$. It follows that the zero-locus of $f_1h_{\tau_1}^{-a}h_{\tau_2}^{-b}|_{\calX_{l}^{\fkc}}$ has codimension 2 and hence is empty; this proves the claim.
	 
Now $f_1$ corresponds to a section of the line bundle $\Omega^2_{\calX_l/\overline{\bbF}_l}$, which is isomorphic to  $(\omega_{\tau_1})^2\otimes(\omega_{\tau_2})^2$ up to a torsion element of the Picard group by Proposition \ref{cor:Kodaira Spencer quaternionic}. Furthermore $D_{\tau_1}^{\fkc}$ (resp. $D_{\tau_2}^{\fkc}$) is the divisor corresponding to  a global section of $(\omega^\fkc_{\tau_2})^{l^{n_{\tau_1}}}\otimes(\omega^\fkc_{\tau_1})^{-1}$ (resp.  $(\omega^\fkc_{\tau_1})^{l^{n_{\tau_2}}}\otimes(\omega^\fkc_{\tau_2})^{-1}$). Therefore the line bundle
$$((\omega^\fkc_{\tau_2})^{l^{n_{\tau_1}}}\otimes(\omega^\fkc_{\tau_1})^{-1})^a\otimes((\omega^{\fkc}_{\tau_1})^{l^{n_{\tau_2}}}\otimes(\omega^{\fkc}_{\tau_2})^{-1})^b\otimes\Omega^2_{\calX_l/\overline{\bbF}_l}$$ has a non-vanishing section; hence it must be the trivial bundle. It follows that 
 $$((\omega^{\fkc}_{\tau_2})^{l^{n_{\tau_1}}}\otimes(\omega^{\fkc}_{\tau_1})^{-1})^a\otimes((\omega^{\fkc}_{\tau_1})^{l^{n_{\tau_2}}}\otimes(\omega^{\fkc}_{\tau_2})^{-1})^b\otimes (\omega^{\fkc}_{\tau_1})^{-2}\otimes(\omega^{\fkc}_{\tau_2})^{-2}\cong (\omega^{\fkc}_{\tau_2})^{l^{an_{\tau_1}}-b-2}\otimes (\omega^{\fkc}_{\tau_1})^{l^{bn_{\tau_2}}-a-2}$$
is torsion in the Picard group of $\calX_l$, hence pairs with any divisor to be $0$. Since $n_{\tau_1},n_{\tau_2}>0$, at least one of $l^{an_{\tau_1}}-b-2$, $l^{bn_{\tau_1}}-a-2$ is non-zero. 

Recall the intersection matrix for the line bundles $\omega^{\fkc}_{\tau_1}$ and $\omega^{\fkc}_{\tau_2}$ on $\calX_l^{\fkc}$ is given by $$\left(\begin{matrix} 0& {s_l(K)}^\fkc \\{s_l(K)}^\fkc & 0\end{matrix}\right).$$
Thus $(\omega^{\fkc}_{\tau_2})^{l^{an_{\tau_1}}-b-2}\otimes (\omega^{\fkc}_{\tau_1})^{l^{an_{\tau_1}}-b-2}$ pairs with either $\omega^{\fkc}_{\tau_1}$ or $\omega^{\fkc}_{\tau_2}$ to give a non-zero number. This is a contradiction

We now consider Case (2). Then $\text{div}(f_1)$ is supported on $\calX_{l,\tau_1}\cup\calX_{l,\tau_2}$. Let $D^{\fkc}:=\text{div}(f_1)|_{\calX_{l}^{\fkc}}$ for some $\fkc\in \text{Cl}_F(K)$, and we write $D_{\tau_1}^{\fkc_1}$ (resp. $D_{\tau_2}^{\fkc}$) denote the divisor $\calX_{l,\tau_1}\cap\calX_{l}^{\fkc}$.  Then  $$D^{\fkc}=aD_{\tau_1}^{\fkc}+bD_{\tau_2}^{\fkc}$$ for some integers $a,b$ since $D_{\tau_1}^\fkc$ and $D_{\tau_2}^\fkc$ are primitive divisors. Since $D_{\tau_1}^{\fkc}$ (resp. $D_{\tau_2}^{\fkc}$) is the vanishing locus of a section of $(\omega_{\tau_1})^{al^{n_{\tau_1}}-1}$ (resp. $(\omega_{\tau_2})^{bl^{n_{\tau_2}}-1}$), the same argument as in case (1)  shows that  $$(\omega_{\tau_1})^{al^{n_{\tau_1}}-3}\otimes(\omega_{\tau_2})^{bl^{n_{\tau_2}}-3}$$  is a torsion element in the Picard group of $\calX_l$. Since $l> 4$, if $a\neq 0$,  intersecting with $D_{\tau_2}$ gives a non-zero number; hence we obtain a contradiction. Similarly if $b\neq 0$.
\end{proof}

\begin{proof}[Proof of Theorem \ref{thm: Ihara's Lemma}]Let $$W:=\rmH^2(\rmSh_K(G_{\rmS,\rmT})_{\overline{\bbQ}},k_\lambda)^2_\fkm,\ \ W_0(\fkp):=\rmH^2(\rmSh_K(G_{\rmS,\rmT})_{\overline{\bbQ}},k_\lambda)^2_\fkm$$ 
	The image $\bfT_{\rmR\cup\{\fkp\}}^W$ of $\bfT_{\rmR\cup\{\fkp\}}$ in $\text{End}_{k_\lambda}(W)$ is a local Artinian ring and $\fkm^iW$ is a finite decreasing filtration of $W$. By the freeness result in Proposition \ref{prop: middle cohomology non-zero} (2), each graded piece $\fkm^iW/\fkm^{i+1}W$ is a quotient of two $\bfT_{\rmR\cup\{\fkp\}}[\calG_{\tilde{F}}]$-lattices in $\rmH^2(\rmSh_K(G_{\rmS,\rmT})_{\overline{\bbQ}},\calO_{{\mathbf{E}_\lambda}})_\fkm$. By Lemma \cite[Lemma 6.5]{Dim05} and the Eichler--Shimura congruence relation proved by Nekovar \cite[A.6]{Nek}, the irreducible subquotients of $W$  are all isomorphic as Galois representations of $\calG_{\tilde{F}}$. The same statement holds for the irreducible subquotients of $W_0(\fkp)$. Using Faltings' comparison theorem \cite{Fa}, we may therefore check the injectivity on the last graded pieces of the Fontaine--Laffaille filtration. By the degeneracy of the Hodge--de Rham spectral sequence proved in \cite{DI}, this follows from  the assertion of \ref{prop: Ihara coherent}.

	\end{proof}

\section{Abel--Jacobi map and geometric level-raising}\label{sec:lr}
In this section we will use the results from the previous sections to construct classes in the motivic cohomology of quaternionic Shimura varieties. We begin by recalling the definition of higher Chow groups and the associated cycle class maps.
\subsection{Higher Chow groups and $l$-adic cycle class maps.}\label{sec: Higher Chow groups} Let $X$ be a smooth variety over a field $k$ and let $\Delta^n$ denote the standard $n$-simplex $\Spec k[x_0,...,x_n]/(\sum_{i=0}^n x_i -1)$. For integers $n,r$, we define $z^r(X,n)$ to be the free abelian group generated by the integral closed subvarieties $Z$ of $X\times\Delta^n$  such that for any face $F\subset \Delta^n$ we have  $$\text{codim}_{X\times F}(Z\cap(X\times F))\geq r.$$ The groups $z^r(X,n)$ fit into a complex  \begin{equation}\label{eq: higher Chow complex}
...\rightarrow z^r(X,n)\rightarrow z^r(X,n-1)\rightarrow...\rightarrow z^r(X,0)\rightarrow 0\end{equation}
where the differential is given by taking the alternating sum of the induced face maps.
 The {\it higher Chow group} $\text{Ch}^r(X,n)$  is the defined to the be $n^{\mathrm{th}}$ homology of the above complex. It is easy to see that  $\text{Ch}^r(X,0)$ is the standard Chow group of codimension $r$ cycles on $X$. By \cite{Voe2}, we have an isomorphism $$\text{Ch}^j(X,2j-i)\cong \text{H}_{\calM}^i(X,\mathbb{Z}(j))$$
where $\text{H}_{\calM}^i(X,\mathbb{Z}(j))$ is the motivic cohomology group of \cite{SuVo}.

We may also consider a variant of this construction by introducing coefficients. Let $R$ be any ring and we let $\mathrm{Ch}^r(X,n,R)$ denote the $n^{\mathrm{th}}$ homology of the sequence (\ref{eq: higher Chow complex}) tensored with $R$. As before there is an isomorphism $\text{Ch}^j(X,2j-i,R)\cong \text{H}_{\calM}^i(X,R(j))$; see \cite[Corollary 2]{Voe2}. From now on, we will work with higher Chow groups.

Let $l$ be a prime which is invertible in $k$, then there is an $l$-adic cycle class map
$$\text{Ch}^j(X,2j-i)\rightarrow \text{H}_{\text{cont}}^i(X,\mathbb{Z}_l(j))$$
where $\text{H}_{\text{cont}}^i(X,\mathbb{Z}_l(j))$ is the continuous \'etale cohomology defined by \cite{Jann}. This coincides with the usual cycle class map when $i=2j$. 

Similarly if $k_\lambda$ is a finite extension of $\bbF_l$, there is a cycle class map $$\text{Ch}^j(X,2j-i,k_\lambda)\rightarrow \text{H}^i(X,k_\lambda(j)).$$

Let $Y\rightarrow X$ denote a fibration with connected smooth fibers of dimension $s$ and $R$ any coefficient ring. Then taking preimages of cycles under the projection $Y\times\Delta^n\rightarrow X\times\Delta^n$ induces a map \begin{equation}\label{eq: mot coh fibration}\text{Ch}^r(X,n,R)\rightarrow \text{Ch}^{r}(Y,n,R).\end{equation}

Finally let $Z\hookrightarrow X$ be a closed immersion where $Z$ is smooth and of codimension $t$ in $X$. Then pushforward of cycles along $Z\times\Delta^n\rightarrow X\times\Delta^n$ induces a map \begin{equation}\label{MotCoh-imm}
\text{Ch}^r(Z,n,R)\rightarrow \text{Ch}^{r+t}(X,n,R).\end{equation}

\subsection{Motivic cohomology of surfaces and dual graphs}\label{sec: dual-graph}We are particularly interested in the case of surfaces. In this case, the motivic cohomology $\rmH_{\calM}^3(X,\Z(2))\cong \mathrm{Ch}^2(X,1)$ is given by the homology of the following sequence (see  \cite{Scholl} for example):
\begin{equation}\label{MotCoh-surface}K_2(k(X))\xrightarrow{\partial} \oplus_{S\subset X} k(S)^\times \xrightarrow{\mathrm{div}} \oplus_{x\in X} \bbZ
\end{equation}
Here the middle sum runs over the set of irreducible curves $S\subset X$ and $k(X)$ (resp. $k(S)$) denotes the field of rational functions on  $X$ (resp. $S$). The term $K_2(k(X))$  denotes the second Milnor $K$-group of the rational function field $k(X)$ and the $S$-component of the  map $\partial$ is the tame symbol associated to the valuation $\text{ord}_S$. The map $\mathrm{div}$ sends a rational function $f$ on $S$ to its divisor $\text{div}(f)$.

There is a special case where we can understand a part of $\text{Ch}^2(X,1)$ in a purely combinatorial way. If $k'/k$ is a finite extension, we write $\bbP^1_{k'/k}$ for the projective line $\bbP^1_{k'}$ over $k'$ considered as a $k$-scheme. Therefore $\bbP^1_{k'/k}\otimes_k\overline{k}$ can be identified with $[k':k]$ copies of $\bbP^1_{\overline{k}}$ corresponding to the embeddings $k'\rightarrow\overline{k}$. Let $Y\subset X$ be a codimension 1 subvariety satisfying the following conditions:

(1) Each irreducible  component $S$ of $Y$ is isomorphic to $\bbP_{k_S/k}^1$ where $k_S$ is a finite extension of $k$.

(2) Any two irreducible components of $Y_{\overline{k}}$ intersect transversally and no three components have a common intersection point. 

The non-smooth points of $Y_{\overline{k}}$ are the intersection points of the components in $Y_{\overline{k}}$ and it is naturally a closed subscheme of $X$ defined over $k$. We note that by (1), the irreducible components of $Y_{\overline{k}}$ are isomorphic to $\bbP^1_{\overline{k}}$.

\begin{defn}\label{def:subgroup of higher Chow P1}
We define $\text{Ch}^2_Y(X,1)$ to be the subgroup of $\text{Ch}^2(X,1)$ supported on $Y$. In other words, it is generated by elements of the form $\bigoplus_{S\subset X}f_S$ where $f_S\in k(S)^\times$ is trivial unless $S\subset Y$. 
\end{defn}
 We now describe how ${\text{Ch}}^2_Y(X,1)$ can be interpreted in terms of the combinatorics of the configuration of $\bbP_{\overline{k}}^1$'s on  $Y_{\overline{k}}$. 
 
 \begin{defn} The dual graph $\fkG$ associated to $Y_{\overline{k}}$ is the unoriented graph defined by the following:

(i) The set of vertices $\scrV$ is identified with the set of irreducible components of $Y_{\overline{k}}$. For $i\in\scrV$, we let $S_i$ denote the corresponding irreducible component.

(ii) The set of edges $\mathscr{E}$ is identified with the set of intersection points of two divisors in $Y_{\overline{k}}$, where an edge $e$ connects $i,j\in\scrV$  if $e\in S_i\cap S_j$.
 \end{defn}
The Galois group $\Gal(\overline{k}/k)$ naturally acts on $\fkG$ and hence on 	the homology $\rmH_1(\fkG,\Z)$ of the graph $\fkG$. We will define a  map $$\Theta:\rmH_1(\fkG,\Z)^{\Gal(\overline{k}/k)}\rightarrow {\text{Ch}}^2_Y(X,1).$$

	To do this, we first calculate $\rmH_1(\fkG,\Z)$. Consider the bouquet of circles $\tilde{\fkG}$ given by contracting all the elements of $\scrV$. For each $e\in\scrE$ we choose an orientation of $e$, i.e. an ordering $(v^1(e),v^2(e))$ of the two vertices adjacent to $e$. This choice determines an isomorphism
	$$\rmH_1(\tilde{\fkG},\Z)\cong \Z^{\scrE}.$$ 
	We may then identify $\rmH_1(\fkG,\Z)$ as the subgroup of $\Z^\scrE$ corresponding to  the kernel of the map
	$$d:\Z^\scrE\rightarrow \Z^\scrV$$ given by $$e\mapsto v_1(e)-v_2(e).$$
		
	The map $\Theta$ can then be defined as follows. Fix a basis of the free $\Z$-module $\rmH_1(\fkG,\Z)^{\Gal(\overline{k}/k)}$ and let $m:=(m_e)_{e\in\scrE}\in \rmH_1(\fkG,\Z)^{\Gal(\overline{k}/k)}$ be an element of this basis. 
	Let $i\in\scrV$. We let $Z(i),P(i)\subset\scrV$  be the two subsets of $\scrV$ defined by
	$$Z(i)=\{e\in \scrV| v_1(e)=i\}$$
	$$P(i)=\{e\in\scrV|v_2(e)=i\}.$$
	By definition, $Z(i)$ and $P(i)$ are identified with subsets of points on $S_i$. 
\begin{lem}There exists an element $(f^m_S)_{S\subset Y}\in\oplus_{S\subset Y}k(S)^\times$ such that its image  $(f^m_i)_{i\in\scrV}\in\oplus_{i\in\scrV}\overline{k}(S_i)^\times$ satisfies  $$\mathrm{div}(f^m_i)=\sum_{e\in Z(i)}m_e(e)-\sum_{e\in P(i)}m_e(e).$$
	\end{lem}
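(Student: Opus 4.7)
The strategy is straightforward: first produce a rational function $f_i^m$ on each geometric component $S_i\cong \bbP^1_{\overline k}$ with the prescribed divisor, and then normalize the choice so that the collection $(f_i^m)_{i\in\scrV}$ is Galois-equivariant and hence descends to a tuple $(f_S^m)_{S\subset Y}\in\oplus_{S\subset Y} k(S)^\times$.

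For the first step, set $D_i := \sum_{e\in Z(i)} m_e(e) - \sum_{e\in P(i)} m_e(e)$, viewed as a divisor on $S_i$. The condition $m\in \rmH_1(\fkG,\Z)=\ker d$ gives $\sum_{e\in Z(i)} m_e - \sum_{e\in P(i)} m_e=0$, so $\deg(D_i)=0$. Since $S_i\cong \bbP^1_{\overline k}$, every degree-zero divisor is principal; thus a function $f_i\in \overline k(S_i)^\times$ with $\mathrm{div}(f_i)=D_i$ exists, and is unique up to a scalar in $\overline k^\times$.

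For the second step, the Galois invariance of $m$ translates into $\sigma_* D_i = D_{\sigma(i)}$ for every $\sigma\in \Gal(\overline k/k)$, since $\Gal(\overline k/k)$ preserves both the vertex set $\scrV$ and the edge set $\scrE$ of $\fkG$ and fixes the coefficients $m_e$. Fix a Galois orbit $\omega\subset \scrV$; by hypothesis it corresponds to an irreducible component $S\subset Y$ with $S\cong\bbP^1_{k_S/k}$, and the stabilizer $H_\omega\subset \Gal(\overline k/k)$ of a chosen representative $i_\omega\in\omega$ has fixed field $k_S$. The set $\{f\in\overline k(S_{i_\omega})^\times : \mathrm{div}(f)=D_{i_\omega}\}$ is a torsor under $\overline k^\times$, on which $H_\omega$ acts by a $1$-cocycle. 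By Hilbert 90, $\rmH^1(H_\omega,\overline k^\times)=0$, so we can rescale our chosen $f_{i_\omega}$ by an element of $\overline k^\times$ to make it $H_\omega$-invariant. The rescaled function lies in $k_S(S_{i_\omega})^\times$, which under $S\cong \bbP^1_{k_S/k}$ is identified with $k(S)^\times$; this defines $f_S^m$. For any other $i=\sigma(i_\omega)\in\omega$, set $f_i^m:=\sigma(f_{i_\omega}^m)$; this is well-defined by the $H_\omega$-invariance and is easily checked to be the base change of $f_S^m$ along the embedding of $S_i$ into $S\otimes_k\overline k$.

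Finally, the divisor condition $\mathrm{div}(f_i^m)=D_i$ holds for $i=i_\omega$ by construction, and for general $i\in\omega$ by transport of structure under $\sigma$, since $\sigma_*D_{i_\omega}=D_{\sigma(i_\omega)}$. Running this argument orbit by orbit produces the desired tuple $(f_S^m)_{S\subset Y}$. The only non-formal input is Hilbert 90, and there is no real obstacle beyond bookkeeping.
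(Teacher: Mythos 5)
Your proof is correct, and its first half (degree zero of each $D_i$ from $m\in\ker d$, plus principality of degree-zero divisors on $\bbP^1_{\overline k}$) is exactly the paper's argument. Where you diverge is the descent step: you first fix a function over $\overline k$ on one component per Galois orbit and then kill the resulting $\overline k^\times$-valued cocycle by Hilbert 90 to make it $H_\omega$-equivariant (this is fine — the cocycle is continuous because your chosen $f_{i_\omega}$ lives over a finite extension, and the Galois-equivariance $\sigma_*D_i=D_{\sigma(i)}$ holds intrinsically, independently of the chosen edge orientations, since reversing an orientation flips both the sign convention and the coefficient $m_e$). The paper instead descends the \emph{divisor} rather than the function: the Galois-invariant degree-zero divisor $\sum_{i\in\scrV_S}D_i$ on $S_{\overline k}$ comes from a divisor of degree zero on $S\cong\bbP^1_{k_S/k}$, which is already principal with a function in $k(S)^\times\cong k_S(x)^\times$. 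So the paper's route avoids Galois cohomology altogether, while yours trades that directness for a standard torsor/Hilbert~90 argument; both are complete, and yours costs only the extra (routine) verifications of continuity of the cocycle and of the identification $\overline k(S_{i_\omega})^{H_\omega}\cong k(S)$, which you correctly flag.
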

\begin{proof}Let $S\cong \bbP_{k_S/k}^1$ be an irreducible component of $Y$. Let $\scrV_S\subset\scrV$ denote the set of irreducible components of $S_{\overline{k}}$. We consider the divisor $$\sum_{i\in \scrV_S}\sum_{e\in Z(i)}m_e(e)-\sum_{i\in \scrV_S}\sum_{e\in P(i)}m_e(e)$$ on $S_{\overline{k}}$. Since  $m\in\ker(d)$, this divisor has degree 0 on every component $S_i$. Moreover, it is Galois invariant, hence there exists a function $f_S^m\in k(S)^\times$ with this divisor (we note that $k(S)$ is isomorphic to the field of rational functions in one variable $k_S(x)$ over $k_S$). Repeating the argument for all components $S\subset Y$ proves the lemma.
\end{proof}

Let $(f^m_S)_{S\subset Y}$ be an element as in the lemma above. Then we have
\begin{align*}\sum_{S\subset Y}\mathrm{div}(f_S^m)&=\sum_{i\in\scrV}\text{div}(f_i^m)\\
&=\sum_{i\in\scrV}\sum_{e\in Z(i)}m_e(e)-\sum_{i\in\scrV}\sum_{e\in P(i)}m_e(e)\\
&=\sum_{e\in \scrE}m_e(e)-\sum_{e\in \scrE}m_e(e)\\
&=0.
\end{align*}
Thus by (\ref{MotCoh-surface}), the tuple $(f_S^m)_{S\subset Y}$ gives an element of $\text{Ch}_Y^2(X,1)$ and we define $\Theta(m)$ to be this element. Repeating for the other basis elements and extending linearly gives the definition of $\Theta$.

In fact the map $\Theta$ is ``essentially'' surjective in the following sense. Note that the choice of $f_S^m$ is well-defined up to multiplication by an element of $k(S)^\times$ whose divisor is 0, i.e. an element of $k_S^\times$. It follows that $\Theta$ induces a map
\begin{equation}\label{Theta}
\tilde{\Theta}:\left(\bigoplus_{S\subset Y}k_S^\times\right)\oplus \rmH_1(\fkG,\Z)^{\Gal(\overline{k}/k)}\rightarrow \text{Ch}^2_Y(X,1).\end{equation}

\begin{prop}\label{prop: level raising group surjects onto Chow group}
	The map $\tilde{\Theta}$ is surjective.
\end{prop}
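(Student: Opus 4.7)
The plan is to take an arbitrary element of $\text{Ch}^2_Y(X,1)$, extract from it the combinatorial data on the dual graph encoding its intersection behaviour, reduce mod $\Theta$ of that data, and observe that the remainder is a tuple of constants.

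Let $\xi \in \text{Ch}^2_Y(X,1)$ be represented by a collection $(f_S)_{S \subset Y} \in \bigoplus_{S \subset Y} k(S)^\times$ satisfying $\sum_{S} \mathrm{div}(f_S) = 0$. Base changing to $\overline{k}$, write $(f_i)_{i \in \scrV}$ for the induced tuple of rational functions on the irreducible components $S_i \cong \bbP^1_{\overline{k}}$ of $Y_{\overline{k}}$. Since $X_{\overline{k}}$ is smooth and two components meet transversally at the points corresponding to edges of $\fkG$ (with no triple intersections), the support of $\mathrm{div}(f_i)$ is contained in the set of edges adjacent to $i$; for each edge $e$ with endpoints $v_1(e), v_2(e)$, the vanishing of the total divisor forces $\mathrm{ord}_e(f_{v_1(e)}) + \mathrm{ord}_e(f_{v_2(e)}) = 0$.

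Set $m_e := \mathrm{ord}_e(f_{v_1(e)}) \in \bbZ$. The equality above says $\mathrm{ord}_e(f_{v_2(e)}) = -m_e$, so for each vertex $i$,
\[
\mathrm{div}(f_i) = \sum_{e \in Z(i)} m_e (e) - \sum_{e \in P(i)} m_e (e).
\]
Since $f_i$ is a rational function on $\bbP^1_{\overline{k}}$, its divisor has degree zero, i.e. $d(m) = 0$ at $i$; thus $m \in \ker(d) = \rmH_1(\fkG,\bbZ)$. Moreover, the tuple $(f_S)$ being defined over $k$ makes $(f_i)$ Galois-equivariant, so $m$ is fixed by $\Gal(\overline{k}/k)$.

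Applying the lemma to this $m$ produces $(f_S^m)_{S \subset Y}$ with $\Theta(m) = [(f_S^m)]$ and $\mathrm{div}(f_i^m) = \mathrm{div}(f_i)$ for every $i \in \scrV$. Consequently, the tuple $(f_S / f_S^m)_{S \subset Y}$ has the property that each component $f_i / f_i^m$ is a rational function on $\bbP^1_{\overline{k}}$ with trivial divisor, hence a constant. Since $S \cong \bbP^1_{k_S/k}$ has global sections $k_S$, this constant lies in $k_S^\times$, and the whole tuple $(f_S/f_S^m)$ lives in $\bigoplus_{S \subset Y} k_S^\times$. Therefore
\[
\xi = \tilde{\Theta}\bigl((f_S/f_S^m)_{S \subset Y},\, m\bigr),
\]
which establishes surjectivity. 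The only step requiring any real input beyond bookkeeping is the construction of $(f_S^m)$ in the lemma, which has already been carried out. There is no substantive obstacle.
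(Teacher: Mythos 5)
Your proof is correct and follows essentially the same route as the paper: extract the integers $m_e$ from the orders of the $f_i$ at the intersection points, observe that the degree-zero condition on each $\bbP^1$ puts $m$ in $\ker(d)=\rmH_1(\fkG,\Z)^{\Gal(\overline{k}/k)}$, and then note that the ratio $(f_S/f_S^m)$ is a tuple of constants in $\bigoplus_S k_S^\times$. The only difference is that you spell out the cancellation $\mathrm{ord}_e(f_{v_1(e)})+\mathrm{ord}_e(f_{v_2(e)})=0$ explicitly, which the paper leaves implicit.
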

\begin{proof}
	Let $(f_S)_{S\subset Y}\in \oplus_{S\subset Y}k(S)^\times$ be a representative of an element of $\text{Ch}^2_Y(X,1)$. We write $(f_i)_{i\in\scrV}$ for its image in $\oplus_{i\in\scrV}\overline{k}(S_i)$. Since $\sum_{S\subset Y}\text{div}(f_S)=\sum_{i\in\scrV}\text{div}(f_i)=0$, the divisor $\text{div}(f_i)$ can only be supported on the intersection of $S_i$ with some other $S_j$, i.e. on points corresponding to elements of $\scrE$. 
	
	For each $e\in\scrE$, let $S_i$ denote the divisor corresponding to $v_1(e)$ and define $m_e\in\Z$ to be the order of the zero of $f_1$ at $e$. Reversing the above argument, we see that $m=(m_e)_{e\in \scrE}$ defines an element of $ \rmH_1(\fkG,\Z)$ which is Galois invariant since $\mathrm{div}(f_S)$ is. 
	
	By definition the $S^{\mathrm{th}}$-component of $\Theta(m)$ has the same divisor as $f_S$, thus we may modify $\Theta(m)$ by an element of $\bigoplus_{S\subset Y}k_S^\times$ to get the element $(f_S)_{S\subset Y}$.
\end{proof}

%\begin{rem}The group $\text{Ch}_I^2(X,\Z(1))$ above could well be trivial, in which case the Proposition is vacuous. In the next section we show that for quaternionic Shimura surfaces, $\text{Ch}_I^2(X,\Z(1))$ is non-zero by studying its image under the $l$-adic cycle class map.
%	\end{rem}
We will consider an analogue of this construction with torsion coefficients. Indeed upon tensoring with $k_\lambda$ we obtain a map \begin{equation}\label{eq: level raising group surjects onto Chow group}\tilde{\Theta}_{k_\lambda}:\left(\bigoplus_{S\subset Y }k_S^\times\otimes_{\bbZ}k_\lambda\right)\oplus\rmH_1(\fkG,k_\lambda)^{\Gal(\overline{k}/k)}\rightarrow {\text{Ch}}^2_Y(X,1,k_\lambda)\end{equation}
and same proof as Proposition \ref{prop: level raising group surjects onto Chow group} shows that $\tilde{\Theta}_{k_\lambda}$ is surjective. Here we consider the abelian group $\bigoplus_{S\subset Y}k_S^\times$ as a $\bbZ$-module in taking the tensor product, and in defining $\mathrm{Ch}(X,1,k_\lambda)$ we may take the homology of the sequence (\ref{MotCoh-surface}) tensored with $k_\lambda$; the subgroup $\mathrm{Ch}_Y(X,1,k_\lambda)$ is then defined in the same way as Definition \ref{def:subgroup of higher Chow P1}.

\subsection{Motivic cohomology and level-raising}\label{sec: Motivic Cohomology and Level-raising}
We keep the notations of \S\ref{sec:basics}, but now $F$  will be a totally real field with $[F:\Q]=g$ {\it even} and $p$ be a prime which is \textit{inert} in $F$; we let $\fkp$ denote the unique prime of $F$ above $p$. We let $B$ be a quaternion algebra over $F$ whose ramification set does not intersect with $\Sigma_p\cup\Sigma_{\infty}$. We will apply the construction of the previous subsections for the case $X=\scrS_K(G)_{\bbF_{p^g}}$. Specifically we are interested in the cycle class map:
\begin{equation}\label{cycle-class}\text{Ch}^{g/2+1}(\sshg_{\bbF_{p^g}},1,k_\lambda)\rightarrow \text{H}^{g+1}(\sshg_{\bbF_{p^g}},k_\lambda(g/2+1)).\end{equation}

Recall we have the prime-to-$\rmR$ Hecke algebra $\bfT_{\rmR}$  and we let $\Pi$ be an irreducible cuspidal automorphic representation of $GL_2(F)$ defined over $\bfE$ as in \S\ref{sec:Ihara}. We assume $\fkp\notin \rmR$ and we write  $\fkm_{\rmR}\subset \bfT_{\rmR}$ (resp.   $\fkm\subset\bfT_{\rmR\cup\{\fkp\}}$) for the maximal ideal given by $(\phi_{\rmR}^{\Pi})^{-1}(\lambda)$ (resp.$(\phi_{\rmR\cup\{\fkp\}}^{\Pi})^{-1}(\lambda)$). Then  $\bfT_{\rmR\cup \{\fkp\}}$ acts on $\text{Ch}^{g/2+1}(\sshg_{\bbF_{p^g}},1,k_\lambda)$ and the map \ref{cycle-class} is equivariant for this action. Localizing at $\fkm$, we obtain $$\text{Ch}^{g/2+1}(\sshg_{\bbF_{p^g}},1,k_\lambda)_\fkm\rightarrow \text{H}^{g+1}(\sshg_{\bbF_{p^g}},k_\lambda(g/2+1))_\fkm$$ By the Hochschild-Serre spectral sequence  and Proposition \ref{prop: middle cohomology non-zero}, there is an isomorphism
$$\text{H}^{1}(\mathbb{F}_{p^{g}},\text{H}^g(\sshg_{\Fpbar},k_\lambda(g/2+1))_\fkm)\cong\text{H}^{g+1}(\sshg_{\bbF_{p^g}},k_\lambda(g/2+1))_\fkm,$$
where $\rmH^1(\bbF_{p^g},-)$ denotes the Galois cohomology. We let $$\mathrm{AJ}_\fkm:\text{Ch}^{g/2+1}(\sshg_{\bbF_{p^g}},1,k_\lambda)_\fkm\rightarrow \text{H}^{1}(\mathbb{F}_{p^{g}},\text{H}^g(\sshg_{\Fpbar},k_\lambda(g/2+1))_\fkm)$$ denote the induced map which we call the \textit{Abel--Jacobi map.}

We would like to show that $\mathrm{AJ}_\fkm$ is surjective modulo $\fkm$. In fact we will identify a certain subgroup   $$\text{Ch}_{\mathrm{lr}}^{g/2+1}(\sshg_{\bbF_{p^g}},1,k_\lambda)\subset \text{Ch}^{g/2+1}(\sshg_{\bbF_{p^g}},1,k_\lambda)$$ such that $\mathrm{AJ}_{\fkm}$ restricted to this subgroup  already surjects. Moreover as suggested by the notation, this subgroup is related to level-raising. The group $\text{Ch}_{\mathrm{lr}}^{g/2+1}(\sshg_{\bbF_{p^g}},1,k_\lambda)$ is defined using the geometry of Goren--Oort cycles as follows. We note that we may take $k_0=\bbF_p^{g}$ in this section.

Let $\fkB:=\fkB(\emptyset,g/2-1)$ as in \S\ref{sec:Goren--Oort dim 2}; thus $|\fkB|= {g\choose g/2-1}$. Let $\fka\in\fkB$, then by the discussion in \S\ref{sec:Goren--Oort dim 2}, the Goren--Oort cycle $Z_{\emptyset}(\fka)$ is a $(g/2-1)$-iterated $\bbP^1$-bundle over the Shimura surface $\scrS_K(G_{\emptyset_\fka,\rmT_{\fka}})_{\bbF_{p^{g}}}$ where $\rmT=\emptyset$. We have $|\Sigma_{\infty}-\emptyset_{\fka}|=2$ and we write $\Sigma_{\infty}-\emptyset_{\fka}=\{\tau_i,\tau_j\}$.

The Goren--Oort divisors $\scrS_K(G_{\emptyset_\fka,\rmT_{\fka}})_{\bbF_{p^{g}},\tau_i}$, $\scrS_K(G_{\emptyset_\fka,\rmT_{\fka}})_{\bbF_{p^{g}},\tau_j}$ are  $\bbP^1$-bundles over the discrete Shimura sets $\scrS_K(G_{\emptyset_{\fka,\tau_i},\rmT_{\fka,\tau_i}})_{\bbF_{p^g}}$ and $\scrS_K(G_{\emptyset_{\fka,\tau_j},\rmT_{\fka,\tau_j}})_{\bbF_{p^g}}$ via maps $\pi_{\tau_i}$, $\pi_{\tau_j}$  respectively. These Shimura sets are isomorphic over $\bbF_{p^g}$ and their $\Fpbar$-points are isomorphic to $$ G_{\Sigma_{\infty}}(\bbQ)\backslash G_{\Sigma_{\infty}}(\bbA_f)/K.$$ 

By Proposition \ref{prop: Goren--Oort 2 dim}, the intersection of these two divisors $\scrS_K(G_{\emptyset_\fka,\rmT_{\fka}})_{\bbF_{p^{g}},\{\tau_i,\tau_j\}}$ is identified with the discrete Shimura set $\scrS_{K_0(\fkp)}(G_{\emptyset_{\fka,\tau_i},\rmT_{\fka,\tau_i}})_{\bbF_{p^g}}$ and the induced diagram
$$\scrS_K(G_{\emptyset_{\fka,\tau_i},\rmT_{\fka,\tau_i}})_{\bbF_{p^g}}\xleftarrow{\pi_{\tau_i}}\scrS_{K_0(\fkp)}(G_{\emptyset_{\fka,\tau_i},\rmT_{\fka,\tau_i}})_{\bbF_{p^g}}\xrightarrow{\eta_{\tau_j}\circ\pi_{\tau_j}}\scrS_K(G_{\emptyset_{\fka,\tau_i},\rmT_{\fka,\tau_i}})_{\bbF_{p^g}}$$ is identified with the Hecke correspondence for  $\scrS_K(G_{\emptyset_{\fka,\tau_i},\rmT_{\fka,\tau_i}})_{\bbF_{p^g}}$. 

We may apply the construction of \S\ref{sec: dual-graph} in this setting to obtain a map $$\rmH_1(\fkG,k_\lambda)^{\Gal(\Fpbar/\bbF_{p^g})}\rightarrow\mathrm{Ch}^2(\scrS_K(G_{\emptyset_{\fka}})_{\bbF_{p^g}},1,k_\lambda),$$ where $\fkG$ is the dual graph associated to the configuration of $\bbP^1$'s in $\scrS_K(G_{\emptyset_\fka,\rmT_{\fka}})_{\Fpbar,\tau_i},$  $ \scrS_K(G_{\emptyset_\fka,\rmT_{\fka}})_{\Fpbar,\tau_j}$. We may describe $\rmH_1(\fkG,k_\lambda)$ more explicitly as follows. For any finite set $S$, we write $\Gamma(S,k_\lambda)$ for the abelian group of $k_\lambda$-valued functions on $X$. The maps $\pi_{\tau_i}$ and $\eta_{\tau_j}\circ\pi_{\tau_j}$ induce maps $$\pi_{\tau_i*},(\eta_{\tau_j}\circ\pi_{\tau_j})_*:\Gamma(\scrS_{K_0(\fkp)}(G_{\emptyset_{\fka,\tau_i},\rmT_{\fka,\tau_i}})_{\bbF_{p^g}}(\Fpbar),k_\lambda)\rightarrow \Gamma(\scrS_K(G_{\emptyset_{\fka,\tau_i},\rmT_{\fka,\tau_i}})_{\bbF_{p^g}}(\Fpbar),k_\lambda)$$Then $\rmH_1(\fkG,k_\lambda)$ is identified with  \begin{equation}
\label{eq: dual graph level raising subgroup}\rmK(\fka):=\ker\left((\pi_{\tau_i*},(\eta_{\tau_j}\circ\pi_{\tau_j})_*):\Gamma(\scrS_{K_0(\fkp)}(G_{\emptyset_{\fka,\tau_i},\rmT_{\fka,\tau_i}})_{\bbF_{p^g}}(\Fpbar),k_\lambda)\rightarrow \Gamma(\scrS_K(G_{\emptyset_{\fka,\tau_i},\rmT_{\fka,\tau_i}})_{\bbF_{p^g}}(\Fpbar),k_\lambda)^2\right).\end{equation}
We write $\Theta(\fka)$ for the map
\begin{equation}\label{eq: cycle class for Goren-Oort}\Theta({\fka}):\rmK(\fka)^{\Gal(\Fpbar/\bbF_{p^g})}\rightarrow \mathrm{Ch}^2(\scrS_K(G_{\emptyset_{\fka},\rmT_{\fka}})_{{\bbF_{p^g}}},1,k_\lambda)\end{equation}
which is equivariant for the action of $\bfT_{\rmR\cup\{\fkp\}}$.
%Using the sequence (\ref{MotCoh-surface}), we define the subgroup $\text{Ch}_{\mathrm{lr}}^2(\scrS_K(G_{\emptyset_\fka}),k_\lambda(1))$ to be the subgroup of $\text{Ch}^2(\scrS_K(G_{\emptyset_\fka}),k_\lambda(1))$ generated by elements of the form $$\bigoplus_{S\subset \scrS_K(G_{\emptyset_{\fka}})^{\mathrm{ss}}}f_S$$
\ignore{\begin{defn}
	We define the level raising subgroup $$\text{Ch}_{\mathrm{lr}}^2(\scrS_K(G_{\emptyset_\fka})_{\bbF_p^{g}},1,k_\lambda)\subset \text{Ch}^2(\scrS_K(G_{\emptyset_\fka})_{\bbF_p^{g}},1,k_\lambda)$$ to be the image of $\Theta_\fka$.
\end{defn}}Using (\ref{eq: mot coh fibration}) and (\ref{MotCoh-imm}) we obtain a map \begin{equation}\label{eq: Map between Chow group}\text{Ch}^2(\scrS_K(G_{\emptyset_\fka,\rmT_{\fka}})_{\bbF_p^{g}},1,k_\lambda)\xrightarrow{(\ref{eq: mot coh fibration})} \text{Ch}^2(Z_{\emptyset}(\fka),1,k_\lambda)\xrightarrow{(\ref{MotCoh-imm})}\text{Ch}^{g/2+1}(\scrS_K(G)_{\bbF_p^{g}},1,k_\lambda)\end{equation}
Composing (\ref{eq: Map between Chow group}) and (\ref{eq: cycle class for Goren-Oort}) and taking the direct sum over  $\fka\in\fkB$ we obtain a map:
\begin{equation}\label{eq: level raising Chow group}
\bigoplus_{\fka\in\fkB}\rmK(\fka)^{\Gal(\Fpbar/\bbF_{p^g})}\rightarrow\text{Ch}^{g/2+1}(\scrS_K(G)_{\bbF_{p^g}},1,k_\lambda)\end{equation}
\begin{defn}

	We define the level raising subgroup $$\text{Ch}_{\mathrm{lr}}^{g/2+1}(\scrS_K(G)_{\bbF_{p^g}},1,k_\lambda)\subset \text{Ch}^{g/2+1}(\scrS_K(G)_{\bbF_{p^g}},1,k_\lambda)$$ to be the image of (\ref{eq: level raising Chow group}).
\end{defn}

 Localizing (\ref{eq: level raising Chow group}) at $\fkm$ and composing with the map $\mathrm{AJ}_\fkm$ we obtain a map:

$$\mathrm{\Psi}_{\fkm}:\bigoplus_{\fka\in\fkB}\rmK(\fka)^{\Gal(\Fpbar/\bbF_{p^g})}_{\fkm}\rightarrow \text{H}^{1}(\mathbb{F}_{p^{g}},\text{H}^g(\sshg_{\Fpbar},k_\lambda(g/2+1))_\fkm)$$

\begin{defn}\label{def: level raising prime}A prime $p$ is a $\lambda$-level raising prime (with respect to $F,B,\Pi,K$) if the following four conditions are satisfied:
	
	(1) $p$ is inert in $F$ and coprime to $\rmR\cup\rmR_\lambda$.
	
	(2) $l\nmid \prod_{i=1}^g(p^{2gi}-1)$
	
	(3) $\phi_{\rmR}^\Pi(T_\fkp)^2\equiv (p^g+1)^2\mod\lambda$ and $\phi_{\rmR}^{\Pi}(S_\fkp)\equiv 1\mod\lambda$, where $\fkp$ is the unique prime of $F$ above $p$.
	
	(4) $l\nmid (p^{gn_{K_\infty}}-1)$ where $n_{K_\infty}$ is the order of $\sigma_p^g$ acting on $\scrS_K(G_{\emptyset_{\fka,\tau_i},\rmT_{\fka,\tau_i}})_{\bbF_{p^g}}(\Fpbar)$.
\end{defn}
\begin{rem}
	As in \cite[Remark 4.6]{LT}, it can be shown that there are infinitely many $\lambda$-level raising primes as long as there are rational primes inert in $F$ and $\lambda$ satisfies assumption \ref{ass: property of l}.
\end{rem}

\begin{lem}
Assume $p$ is a $\lambda$-level raising prime. Then	$\Gal(\Fpbar/\bbF_{p^g})$ acts trivially on $K(\fka)_\fkm$.
\end{lem}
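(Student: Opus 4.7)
The plan is to identify the Galois action of $\sigma_{p^g}$ on $K(\fka)$ with a central Hecke operator and then show that this operator acts as the identity after localization at $\fkm$. By the description of the Galois action given in \S\ref{sec:Goren--Oort dim 2}, $\sigma_{p^g}$ acts on the bijection $\scrS_{K_0(\fkp)}(G_{\emptyset_{\fka,\tau_i},\rmT_{\fka,\tau_i}})_{\bbF_{p^g}}(\Fpbar)\cong G_{\Sigma_\infty}(\bbQ)\backslash G_{\Sigma_\infty}(\bbA_f)/K_0(\fkp)$ and on its $K$-level analogue by translation by the central idele $\underline{p}^{-g/2}$. Since this element is central, the induced action on $k_\lambda$-valued functions commutes with the degeneracy maps $\pi_{\tau_i}$ and $\eta_{\tau_j}\circ\pi_{\tau_j}$, preserves $K(\fka)$, and agrees with $S_\fkp^{-g/2}$, where $S_\fkp$ denotes the central Hecke operator given by the double coset of the scalar idele $\underline{p}$ at $\fkp$. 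It therefore suffices to show that $S_\fkp$ acts as the identity on $K(\fka)_\fkm$.

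I would first show that every $S_\fkp$-eigenvalue occurring on $K(\fka)_\fkm$ equals $1\in\bar{k}_\lambda$. The space $K(\fka)$ carries a natural action of the larger Hecke algebra $\bfT_\rmR\supset\bfT_{\rmR\cup\{\fkp\}}$, via the Hecke correspondences on the discrete Shimura sets at primes away from $\rmR$ (which commute with the degeneracy maps at $\fkp$). For any system of eigenvalues $\phi'\colon\bfT_\rmR\to\bar{k}_\lambda$ occurring in $K(\fka)_\fkm$, the restriction $\phi'|_{\bfT_{\rmR\cup\{\fkp\}}}$ is congruent to $\phi^\Pi_{\rmR\cup\{\fkp\}}$ modulo $\lambda$. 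By the Jacquet--Langlands correspondence, $\phi'$ arises from an automorphic representation $\Pi'$ of $GL_2(\bbA_F)$ of parallel weight $2$ with an attached mod-$\lambda$ Galois representation $\bar\rho_{\Pi',\lambda}$. Applying Brauer--Nesbitt to the traces of $\bar\rho_{\Pi',\lambda}$ and $\bar\rho_{\Pi,\lambda}$ at $\mathrm{Frob}_\fkq$ for $\fkq\notin\rmR\cup\{\fkp\}\cup\rmR_\lambda$, together with the absolute irreducibility of $\bar\rho_{\Pi,\lambda}$ supplied by Assumption \ref{ass: property of l}(3), yields $\bar\rho_{\Pi',\lambda}\cong\bar\rho_{\Pi,\lambda}$. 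Comparing determinants at $\mathrm{Frob}_\fkp$ then gives $\phi'(S_\fkp)\equiv\phi^\Pi_\rmR(S_\fkp)\equiv 1\pmod{\lambda}$ by the level-raising hypothesis.

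Next, I would verify that $S_\fkp$ is semisimple as an endomorphism of $K(\fka)$. The operator is induced by the permutation of the finite set $G_{\Sigma_\infty}(\bbQ)\backslash G_{\Sigma_\infty}(\bbA_f)/K_0(\fkp)$ by $\underline{p}$, and since $T_F(\bbA_f)\cap K=T_F(\bbA_f)\cap K_0(\fkp)$ (the intersections with the center agree at $\fkp$), the order of this permutation divides the order of the finite abelian group $F^\times\backslash\bbA_{F,f}^\times/(\bbA_{F,f}^\times\cap K)$, which is coprime to $l$ by Assumption \ref{ass: property of l}(1). Hence the minimal polynomial of $S_\fkp$ divides $X^d-1$ for some $d$ coprime to $l$ and is therefore separable over $\bar{k}_\lambda$. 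Combined with the eigenvalue computation we conclude $S_\fkp=1$ on $K(\fka)_\fkm$, and therefore $\sigma_{p^g}=S_\fkp^{-g/2}=1$.

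The main obstacle is the Chebotarev/Brauer--Nesbitt identification of Galois representations, which relies on the existence of mod-$\lambda$ Galois representations attached to Hilbert modular forms (applied to the Jacquet--Langlands transfer of $\Pi'$) and on the absolute irreducibility from Assumption \ref{ass: property of l}(3) to upgrade the equality of semisimplifications to an actual isomorphism of representations. The remaining steps are algebraic bookkeeping with the conditions on $\lambda$.
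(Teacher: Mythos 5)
Your proposal is correct and is essentially the paper's argument written out in full: the paper's one-line proof cites exactly the two inputs you use, namely the description in \S\ref{sec:Goren--Oort dim 2} of $\sigma_{p^g}$ as multiplication by the central element $\underline{p}^{-g/2}$ (i.e.\ $S_\fkp^{-g/2}$) and Definition \ref{def: level raising prime} (3), leaving implicit the standard verification that $S_\fkp$ acts as the identity on the localization at $\fkm$ even though $S_\fkp\notin\bfT_{\rmR\cup\{\fkp\}}$. The details you supply (Jacquet--Langlands plus comparison of residual Galois determinants, and semisimplicity of $S_\fkp$ from the coprimality in Assumption \ref{ass: property of l} (1)) are the right ones; two minor remarks are that the determinant comparison already follows from Brauer--Nesbitt on semisimplifications, so the absolute irreducibility of $\overline{\rho}_{\Pi,\lambda}$ is not needed there, and that one should note the one-dimensional (Eisenstein) constituents of the space of functions, which are killed by localization at $\fkm$ precisely because $\overline{\rho}_{\Pi,\lambda}$ is irreducible.
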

\begin{proof}
	This follows from the definition of the Galois action in \S\ref{sec:Goren--Oort dim 2} and Definition \ref{def: level raising prime} (3).
\end{proof}
We will need to make the following additional assumption:
\begin{ass}
	\label{ass: Dim Jacquet Langlands}

	 $\rmH^g(\text{Sh}_K(G)_{\overline{\bbQ}},k_\lambda)/\fkm$ has dimension $2^g\text{dim}(\Pi_{B})^K$ over $k_\lambda$, where $\Pi_B$ denotes the Jacquet-Langlands transfer of $\Pi$ to $B\otimes_{\bbQ}\bbA$.\end{ass}\begin{thm}\label{thm:level-raising}Let $p$ be a $\lambda$-level raising prime and suppose that  Assumptions \ref{ass: property of l}  and \ref{ass: Dim Jacquet Langlands} are satisfied. Then the induced  map  \begin{equation}\label{eq: level raising map}
\bigoplus_{\fka\in\fkB}\rmK(\fka)/\fkm\rightarrow \rmH^{1}(\mathbb{F}_{p^{g}},\rmH^g(\sshg_{\Fpbar},k_\lambda(g/2+1))/\fkm)
	\end{equation}
is surjective.
\end{thm}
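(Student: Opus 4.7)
The plan is to reduce the surjectivity for the $g$-dimensional Shimura variety $\scrS_K(G)$ to the corresponding surjectivity statement for the quaternionic Shimura surfaces $\scrS_K(G_{\emptyset_\fka,\rmT_\fka})$, and then to deduce the surface-level statement from Ihara's Lemma (Theorem \ref{thm: Ihara's Lemma}) by a duality argument.

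First I would use the Goren--Oort description to reduce to surfaces. For each $\fka\in\fkB$, Proposition \ref{prop: Goren-Oort fibration} realizes $Z_\emptyset(\fka)$ as a $(g/2-1)$-iterated $\bbP^1$-fibration $\pi_\fka:Z_\emptyset(\fka)\to\scrS_K(G_{\emptyset_\fka,\rmT_\fka})_{\bbF_{p^g}}$. Pulling back along $\pi_\fka$ and pushing forward along the closed immersion $Z_\emptyset(\fka)\hookrightarrow\scrS_K(G)_{\bbF_{p^g}}$ identifies the $\fka$-summand of (\ref{eq: level raising map}) with the Abel--Jacobi map on the Shimura surface $\scrS_K(G_{\emptyset_\fka,\rmT_\fka})_{\bbF_{p^g}}$, composed with a Gysin map into $\rmH^g$ of $\scrS_K(G)$. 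The results of \cite{TX1} on the intersection pairing of Goren--Oort strata (together with Assumption \ref{ass: Dim Jacquet Langlands}) should give a direct sum decomposition of $\rmH^g(\scrS_K(G)_{\Fpbar},k_\lambda(g/2+1))/\fkm$ indexed by $\fka\in\fkB$, with each summand isomorphic to $\rmH^2(\scrS_K(G_{\emptyset_\fka,\rmT_\fka})_{\Fpbar},k_\lambda(2))/\fkm$ up to a Tate twist; this would reduce the surjectivity to the surface-level Abel--Jacobi surjectivity, namely
\[
\rmK(\fka)/\fkm\longrightarrow \rmH^{1}\bigl(\bbF_{p^{g}},\rmH^2(\scrS_K(G_{\emptyset_\fka,\rmT_\fka})_{\Fpbar},k_\lambda(2))/\fkm\bigr)
\]
for each $\fka$, which is precisely the statement of Theorem \ref{thm:intro surface} in the setting where $\scrS_K(G_{\emptyset_\fka,\rmT_\fka})$ is a (compact) quaternionic Shimura surface.

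Next I would prove this surface-level surjectivity by Poincar\'e duality against Ihara's Lemma. The source $\rmK(\fka)$ is defined (in (\ref{eq: dual graph level raising subgroup})) as the kernel of the two degeneracy maps out of the discrete set $\scrS_{K_0(\fkp)}(G_{\emptyset_{\fka,\tau_i},\rmT_{\fka,\tau_i}})$. Using the description of the bad reduction of $\scrS_{K_0(\fkp)}(G_{\emptyset_\fka,\rmT_\fka})$ at $\fkp$ given by Corollary \ref{cor:Iwahori level structure} in the appendix, I would identify the Abel--Jacobi target with a piece of $\rmH^2_\fkm$ of the Iwahori-level surface via a weight-spectral-sequence / vanishing-cycles computation. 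The resulting map becomes Poincar\'e dual (after localizing at $\fkm$, where the cohomology is free and concentrated in the middle degree by Proposition \ref{prop: middle cohomology non-zero}) to the map
\[
\pi_1^*+\pi_2^*:\rmH^2(\rmSh_K(G_{\emptyset_\fka,\rmT_\fka})_{\overline{\bbQ}},k_\lambda)^2_\fkm\longrightarrow \rmH^2(\rmSh_{K_0(\fkp)}(G_{\emptyset_\fka,\rmT_\fka})_{\overline{\bbQ}},k_\lambda)_\fkm,
\]
so that Theorem \ref{thm: Ihara's Lemma} yields the desired surjectivity.

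The main obstacle will be the careful setup of the duality between $\rmK(\fka)/\fkm$ and the cokernel of the Ihara map, which is where the appendix describing the Iwahori-level bad reduction and its supersingular components enters. Two subtleties are: (i) showing that the $\fkp$-old/$\fkp$-new dichotomy on $\rmSh_{K_0(\fkp)}(G_{\emptyset_\fka,\rmT_\fka})$ exactly realizes the congruence between the Ihara kernel and the Abel--Jacobi target after localizing at $\fkm$, using the $\lambda$-level raising hypothesis $\phi_{\rmR}^\Pi(T_\fkp)^2\equiv(p^g+1)^2$ and $\phi_{\rmR}^\Pi(S_\fkp)\equiv 1$ to pass between the two Hecke actions; and (ii) checking that the pull-push along $\pi_\fka$ is an isomorphism on the $\fkm$-localization of the relevant cohomology, which should follow from the $\bbP^1$-bundle structure together with the numerical coprimality conditions $l\nmid\prod_{i=1}^g(p^{2gi}-1)$ and $l\nmid(p^{gn_{K_\infty}}-1)$ from Definition \ref{def: level raising prime}. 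Once these pieces are in place, Theorem \ref{thm: Ihara's Lemma} delivers the surjectivity of (\ref{eq: level raising map}).
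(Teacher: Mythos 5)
Your proposal follows essentially the same route as the paper: first reduce to the surface case via the Goren--Oort cycles and Gysin maps (the paper proves surjectivity of $\sum_{\fka}\mathrm{Gys}(\fka)\bmod\fkm$ by an injectivity-plus-dimension-count argument using Assumption \ref{ass: Dim Jacquet Langlands}, which amounts to your decomposition), and then establish the surface-level Abel--Jacobi surjectivity by identifying its dual with the Ihara map $\pi_1^*+\pi_2^*$ through the decomposition of the Iwahori-level special fiber in Corollary \ref{cor:Iwahori level structure}. The only cosmetic difference is that the paper factors the map through $\rmH^3_{Z_1\cup Z_2}$ and uses explicit Mayer--Vietoris/excision sequences on the special fiber rather than a weight-spectral-sequence formulation, but the strategy and the inputs (including the numerical conditions from Definition \ref{def: level raising prime}) are the same.
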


\begin{rem}In the case when $[F:\bbQ]=2$ and $B=GL_2(F)$, the surjectivity of $\Psi_{\fkm}$ implies level-raising for Hilbert modular forms. Indeed in this case there is a unique $\fka\in\fkB$ and  the description of $\rmK(\fka)$ in \ref{eq: dual graph level raising subgroup} shows that it is identified under the Jacquet--Langlands correspondence with the space of$\mod l$ Hilbert modular forms  of parallel weight 2 and level $K_0(\fkp)$ which are new at $\fkp$. In particular $\bfT_{\rmR\cup\{\fkp\}}$ acts on $K(\fka)$ via the $\fkp$-new quotient as in \cite{Ribet}, whereas it is well known $\bfT_{\rmR\cup\{\fkp\}}$ acts on $\text{H}^{1}(\mathbb{F}_{p^{g}},\text{H}^g(\sshg_{\Fpbar},k_\lambda(g/2+1)))$ via the $\fkp$-old quotient.	\end{rem}

Before embarking on the proof, we state an immediate corollary of Theorem \ref{thm:level-raising}, which is Theorem \ref{thm:intro main} of the introduction.

\begin{cor}
The map $\mathrm{AJ}_{\fkm}\mod\fkm$ restricted to $\mathrm{Ch}_{\mathrm{lr}}^{g/2+1}(\scrS_K(G)_{\bbF_{p^g}},1,k_\lambda)/\fkm$ is surjective.

\end{cor}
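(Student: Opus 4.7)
The plan is to observe that the corollary is a direct unwinding of the definitions once Theorem \ref{thm:level-raising} is in hand. First I would note that by the very definition of $\mathrm{Ch}_{\mathrm{lr}}^{g/2+1}(\scrS_K(G)_{\bbF_{p^g}},1,k_\lambda)$ as the image of the map (\ref{eq: level raising Chow group}), localization at $\fkm$ gives a surjection
\[
\iota_\fkm:\bigoplus_{\fka\in\fkB}\rmK(\fka)^{\Gal(\Fpbar/\bbF_{p^g})}_{\fkm}\twoheadrightarrow \mathrm{Ch}_{\mathrm{lr}}^{g/2+1}(\scrS_K(G)_{\bbF_{p^g}},1,k_\lambda)_\fkm,
\]
and that by construction $\Psi_\fkm=\mathrm{AJ}_\fkm\circ\iota_\fkm$ (viewing the image of $\iota_\fkm$ as a subgroup of $\mathrm{Ch}^{g/2+1}(\scrS_K(G)_{\bbF_{p^g}},1,k_\lambda)_\fkm$).

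Next I would invoke the lemma immediately preceding Theorem \ref{thm:level-raising} to replace the $\Gal(\Fpbar/\bbF_{p^g})$-invariants by the whole localized group, so that the source of $\iota_\fkm$ is simply $\bigoplus_\fka\rmK(\fka)_\fkm$. Reducing $\mod\fkm$ yields the commutative diagram
\[
\xymatrix{
\bigoplus_{\fka\in\fkB}\rmK(\fka)/\fkm \ar@{->>}[r]^-{\bar\iota_\fkm}\ar[rd]_{\Psi_\fkm\,\mathrm{mod}\,\fkm} & \mathrm{Ch}_{\mathrm{lr}}^{g/2+1}(\scrS_K(G)_{\bbF_{p^g}},1,k_\lambda)/\fkm \ar[d]^{\mathrm{AJ}_\fkm\,\mathrm{mod}\,\fkm}\\
& \rmH^{1}(\mathbb{F}_{p^{g}},\rmH^g(\sshg_{\Fpbar},k_\lambda(g/2+1))/\fkm).
}
\]

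Finally, Theorem \ref{thm:level-raising} asserts that the diagonal arrow $\Psi_\fkm\mod\fkm$ is surjective, and since it factors through $\mathrm{AJ}_\fkm\mod\fkm$ restricted to $\mathrm{Ch}_{\mathrm{lr}}^{g/2+1}/\fkm$, the restriction of the Abel--Jacobi map is itself surjective. There is no obstacle here beyond Theorem \ref{thm:level-raising} itself: the corollary is a formal consequence of the factorization and the definition of the level-raising subgroup as the image of (\ref{eq: level raising Chow group}).
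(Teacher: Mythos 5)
Your argument is correct and is exactly how the paper treats this statement: the corollary is recorded as an immediate consequence of Theorem \ref{thm:level-raising}, since by construction $\Psi_\fkm$ is the composite of the (surjective, by definition of the level-raising subgroup as the image of (\ref{eq: level raising Chow group})) map onto $\mathrm{Ch}_{\mathrm{lr}}^{g/2+1}(\scrS_K(G)_{\bbF_{p^g}},1,k_\lambda)_\fkm$ with $\mathrm{AJ}_\fkm$, so surjectivity of $\Psi_\fkm\bmod\fkm$ forces surjectivity of the restricted Abel--Jacobi map mod $\fkm$. Your use of the preceding lemma (trivial Galois action on $\rmK(\fka)_\fkm$) and right-exactness of reduction mod $\fkm$ fills in the same routine details the paper leaves implicit.
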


\subsection{Proof of Theorem \ref{thm:level-raising}} The rest of this section will be devoted to the proof of Theorem \ref{thm:level-raising}. For notational convenience, we will write $\scrS_K(G_{\emptyset_\fka})_{\bbF_{p^{g}}}$ for what was denoted  $\scrS_K(G_{\emptyset_\fka,\rmT_{\fka}})_{\bbF_{p^{g}}}$ in the previous subsection.

\ignore{Let $\fka\in \fkB(\emptyset,g/2+1)$ and we let $I_{\fka}:=(G_{\Sigma_{\infty}}(\bbQ)\backslash G_{\Sigma_{\infty}}(\bbA_f)/K)^2$ which we identify with the components of the two Goren--Oort divisors $\scrS_K(G_{\emptyset_\fka,\rmT_{\fka}})_{\Fpbar,\tau_i}$, $\scrS_K(G_{\emptyset_\fka,\rmT_{\fka}})_{\Fpbar,\tau_j}$. We let $S_i, i\in I$ be the corresponding component in  $\scrS_K(G_{\emptyset_\fka})_{\Fpbar}$ which are isomorphic to $\bbP^1$.
\begin{prop}
	There exists an isomorphism $$\mathrm{Ch}_{\mathrm{lr}}^{2}(\scrS_K(G_{\emptyset_{\fka}}),1,k_\lambda)_\fkm\cong \mathrm{Ch}_{\mathrm{I_\fka}}^{2}(\scrS_K(G_{\emptyset_{\fka}}),1,k_\lambda)_\fkm$$
\end{prop}
\begin{proof}
	By definition $\mathrm{Ch}_{\mathrm{lr}}^{2}(\scrS_K(G_{\emptyset_{\fka}}),1,k_\lambda)$ is the image of $\rmH_1(\fkG,k_\lambda)$ and $\mathrm{Ch}_{I_{\fka}}^{2}(\scrS_K(G_{\emptyset_{\fka}}),1,k_\lambda)$ is the image of $\left(\bigoplus_{i\in I_{\fka}}\bbF_{p^{g}}^\times\otimes_{\bbZ}k_\lambda\right)\oplus\rmH_1(\fkG,k_\lambda)$ by the surjectivity of (\ref{eq: level raising group surjects onto Chow group}). It easily seen that the constant functions $\bbF_p^{g}$ are Eisenstein, hence $\left(\bigoplus_{i\in I_{\fka}}\bbF_{p^{g}}^\times\otimes_{\bbZ}k_\lambda\right)_{\fkm}=0$. The result follows.
\end{proof}

It therefore suffices to show the surjectivity of the map $$\left(\bigoplus_{\fka\in\fkB}\mathrm{Ch}_{\mathrm{I_\fka}}^{2}(\scrS_K(G_{\emptyset_{\fka}})_{\bbF_{p^{g}}},1,k_\lambda)\right)/\fkm\rightarrow\text{H}^{1}(\mathbb{F}_{p^{g}},\text{H}^g(\sshg_{\Fpbar},k_\lambda(g/2+1))/\fkm)$$
}

Recall for $\fka\in\fkB(\emptyset,g/2+1)$ we have the correspondence:
$$\scrS_K(G_{\emptyset_\fka})_{\bbF_{p^{g}}}\xleftarrow{\pi_{\fka}} Z_{\emptyset}(\fka)\hookrightarrow \scrS_K(G)_{\bbF_{p^{g}}}$$
where $\pi_{\fka}$ is a $(g/2-1)$-iterated $\bbP^1$-bundle. This induces a Gysin map:
$$\mathrm{Gys}(\fka):\rmH^*(\scrS_K(G_{\emptyset_\fka})_{\Fpbar},k_\lambda(2))\rightarrow \rmH^{*+g-2}(\scrS_K(G)_{\Fpbar},k_\lambda(g/2+1)).$$
Then the map  $\bigoplus_{\fka\in\fkB}\rmK(\fka)_\fkm\rightarrow \rmH^1(\bbF_{p^{g}},\mathrm{H}^{g}(\scrS_K(G)_{\Fpbar},k_\lambda(g/2+1))_\fkm)$ factors as

\[\xymatrix{\mathrm{Ch}^{g/2+1}_{\mathrm{lr}}(\scrS_K(G_{\emptyset_\fka})_{\bbF_{p^{g}}},1,k_\lambda)_\fkm \ar[r]& \rmH^1(\bbF_{p^{g}},\mathrm{H}^{g}(\scrS_K(G)_{\Fpbar},k_\lambda(g/2+1))_\fkm)\\
\bigoplus_{\fka\in\fkB}\rmK(\fka)_\fkm \ar[r] \ar[u]& \bigoplus_{\fka\in\fkB}\rmH^1(\bbF_{p^{g}},\mathrm{H}^{2}(\scrS_K(G_{\emptyset_{\fka}})_{\Fpbar},k_\lambda(2))_\fkm)\ar[u]_{\sum_{\fka\in\fkB}\mathrm{Gys}(\fka)_\fkm}}\]
where the bottom map is the composition of the direct sums  of the maps $\Theta(\fka)_\fkm$ and the Abel--Jacobi maps  $$\mathrm{AJ}(\fka)_\fkm:\mathrm{Ch}^{2}(\scrS_K(G_{\emptyset_\fka})_{\Fpbar},1,k_\lambda)_\fkm\rightarrow \rmH^1(\bbF_{p^{g}},\mathrm{H}^{2}(\scrS_K(G_{\emptyset_{\fka}})_{\Fpbar},k_\lambda(2))_\fkm).$$ 
We write $\Psi(\fka)_\fkm:K(\fka)_\fkm\rightarrow\rmH^1(\bbF_{p^{g}},\mathrm{H}^{2}(\scrS_K(G_{\emptyset_{\fka}}),k_\lambda(2))_\fkm)$ for this composition.
\begin{prop}\label{prop:Gysin map surjective}
The map $\sum_{\fka\in\fkB}\mathrm{Gys}(\fka)\mod\fkm$ is surjective.
\end{prop}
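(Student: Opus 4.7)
The plan is to prove surjectivity by dualising via Poincar\'e duality and checking injectivity of the resulting adjoint. Since $\scrS_K(G)_{\bbF_{p^g}}$ is smooth and proper of pure dimension $g$, Poincar\'e duality on $\rmH^g(\scrS_K(G)_\Fpbar,k_\lambda)_\fkm/\fkm$ identifies the surjectivity of $\sum_{\fka}\mathrm{Gys}(\fka)\!\!\mod \fkm$ with the injectivity of the pull-back-then-push-forward map
$$
\rho\colon \rmH^g(\scrS_K(G)_\Fpbar,k_\lambda)_\fkm/\fkm \longrightarrow \bigoplus_{\fka\in\fkB}\rmH^2(\scrS_K(G_{\emptyset_\fka})_\Fpbar,k_\lambda)_\fkm/\fkm,
$$
whose $\fka$-component is $\pi_{\fka,!}\circ i_\fka^*$, where $i_\fka\colon Z_\emptyset(\fka)\hookrightarrow \scrS_K(G)$ is the closed immersion and $\pi_\fka$ is the iterated $\bbP^1$-fibration of Proposition~\ref{prop: Goren-Oort fibration}. (All twists are bookkeeping.) I will prove $\rho/\fkm$ is injective.

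Next, both source and target admit compatible combinatorial descriptions via Jacquet--Langlands. By the structure theorem of Tian--Xiao \cite{TX1}, $\rmH^g(\scrS_K(G)_\Fpbar,k_\lambda)_\fkm$ decomposes Hecke-equivariantly as a direct sum indexed by \emph{full} periodic semi-meanders on $\Sigma_\infty$ (those with $g/2$ arcs and no semi-lines), each summand being a piece determined by the Jacquet--Langlands transfer $\Pi_B$. Likewise, for each $\fka\in\fkB$, the surface cohomology $\rmH^2(\scrS_K(G_{\emptyset_\fka})_\Fpbar,k_\lambda)_\fkm$ decomposes into pieces indexed by semi-meanders on the two remaining nodes $\emptyset_\fka=\{\tau_i,\tau_j\}$. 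Every full semi-meander on $\Sigma_\infty$ is obtained from some $\fka\in\fkB(\emptyset,g/2-1)$ by adjoining one additional arc connecting its two semi-lines, so the $\fka$-component of $\rho$ is identified with a projection from $\rmH^g(\scrS_K(G))_\fkm$ onto the summands parametrised by such full refinements of $\fka$. Summing over $\fka\in\fkB$ therefore sees every summand of the target at least once.

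It remains to show that each of these projections is non-zero modulo $\fkm$, i.e.\ that the scalar (a ``refinement coefficient'') on each summand is a unit in $k_\lambda$. By an iterative application of the $\bbP^1$-fibration projection formula, these scalars are Gysin-pairing numbers of Goren--Oort cycles of the type computed in \cite[\S4]{TX1} (generalising the intersection matrices of Propositions~\ref{prop:int matrix case 1} and \ref{prop:int matrix case 2}); they are polynomials in $T_\fkp$, $S_\fkp$, powers of $p^g$, and $p^{gn_{K_\infty}}$. The level-raising hypotheses (Definition~\ref{def: level raising prime}) are precisely what force each such scalar to be non-zero mod $\lambda$: condition~(3) computes $T_\fkp$, $S_\fkp$ modulo $\fkm$, while conditions~(2) and~(4) ensure the denominators and auxiliary factors of the form $p^{gi}\pm 1$, $p^{gn_{K_\infty}}-1$ survive mod $\lambda$. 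Granting this, $\rho/\fkm$ is injective on each summand and hence injective. The main obstacle is verifying the second step cleanly -- that is, matching the TX1 decomposition of $\rmH^g$ with the direct sum over $\fka\in\fkB$ of the surface decompositions in a Hecke-equivariant way, which is a bookkeeping exercise in periodic semi-meanders but has to be done compatibly with the iterated Gysin computation of the pairing.
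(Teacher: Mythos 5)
There is a genuine gap, and it appears already in your first step. The map in Proposition \ref{prop:Gysin map surjective} is the map induced on Galois cohomology,
$\bigoplus_{\fka\in\fkB}\rmH^1(\bbF_{p^g},\rmH^2(\scrS_K(G_{\emptyset_\fka})_{\Fpbar},k_\lambda(2))_\fkm)\rightarrow \rmH^1(\bbF_{p^g},\rmH^g(\scrS_K(G)_{\Fpbar},k_\lambda(g/2+1))_\fkm)$,
i.e.\ a map onto Frobenius \emph{coinvariants}; its dual (via Poincar\'e duality together with duality for $\Gal(\Fpbar/\bbF_{p^g})$-cohomology) is a statement about the Frobenius-\emph{invariant} part of the dual module, not about all of $\rmH^g(\scrS_K(G)_{\Fpbar},k_\lambda)_\fkm/\fkm$. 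Your reduction to injectivity of $\rho$ on the full geometric cohomology mod $\fkm$ is therefore not equivalent to the Proposition, and the statement you propose to prove is in fact false in general: granting Assumption \ref{ass: Dim Jacquet Langlands}, $\dim_{k_\lambda}\rmH^g(\scrS_K(G)_{\overline{\bbQ}},k_\lambda)/\fkm=2^g\dim(\Pi_B)^K$, while the target of $\rho$ has dimension $4\binom{g}{g/2-1}\dim(\Pi_B)^K$ (the quaternion algebras $B_{\emptyset_\fka}$ differ from $B$ only at infinite places, so the spaces of $K$-invariants agree), and $4\binom{g}{g/2-1}<2^g$ as soon as $g\ge 6$; so $\rho$ cannot be injective there. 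Note also that dualizing mod $\fkm$ at all requires the freeness of Proposition \ref{prop: middle cohomology non-zero}(2) and the self-duality of the eigensystem at $\fkm$; this is more than ``bookkeeping'', and the paper's remark flags exactly these inputs as essential.

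Your second step has a related defect: $\rmH^g(\scrS_K(G)_{\Fpbar},k_\lambda)_\fkm$ does not carry a Hecke-stable direct sum decomposition indexed by full semi-meanders for which each $\pi_{\fka,!}\circ i_\fka^*$ is a projection onto the ``refinements of $\fka$''. What \cite{TX1} actually computes is the full matrix of pairings between the Gysin images of the various $Z_{\emptyset}(\fka)$, and this matrix is not diagonal in $\fka$: distinct cycles pair nontrivially, so no summand-by-summand argument with unit ``refinement coefficients'' is available. The real content is the invertibility mod $\fkm$ of this intersection matrix, which (using Definition \ref{def: level raising prime}(2), Assumption \ref{ass: property of l}(4) and Proposition \ref{prop: middle cohomology non-zero}) gives injectivity of $\sum_{\fka\in\fkB}\mathrm{Gys}(\fka)\bmod\fkm$; the paper then deduces surjectivity by a dimension count against $\rmH^1(\bbF_{p^g},\rmH^g(\scrS_K(G)_{\Fpbar},k_\lambda(g/2+1))/\fkm)$ via Assumption \ref{ass: Dim Jacquet Langlands}, exactly as in \cite[Theorem 4.7]{LT}. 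If you prefer the dual route, the correct statement to aim for is injectivity of the restriction map on the Frobenius-invariant (Tate) part of the dual of $\rmH^g$ mod $\fkm$, and that again comes down to the same nondegenerate intersection matrix rather than to any decomposition into summands.
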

\begin{proof}
	The proof is the same as in \cite[Proof of Theorem 4.7]{LT}. The idea is to show that $\sum_{\fka\in\fkB}\mathrm{Gys}(\fka)\mod \fkm$ is injective; the surjectivity then follows by a dimension count.
	% we recall it for the reader's convenience.   The idea is to show that $\sum_{\fka\in\fkB}\mathrm{Gys}_\fka$ is injective; the surjectivity then follows by a dimension count.
		%We let $R_\fkm$ denote the set of automorphic representations	
%By \cite[Theorem 4.4.2 (2)]{TX}, the map  $$\sum\mathrm{Gys}_\fka:\rmH^*(\scrS_K(G_{\emptyset_\fka})_{\Fpbar},k_\lambda(2))\rightarrow \rmH^{*+g-1}(\scrS_K(G_\emptyset)_{\Fpbar},k_\lambda(g/2+1))$$
\end{proof}
\begin{rem}The proof of  Proposition \ref{prop:Gysin map surjective} uses  Definition \ref{def: level raising prime} (2), Assumption \ref{ass: property of l} (4) and the freeness result in Proposition \ref{prop: middle cohomology non-zero} in an essential way.
	\end{rem}

Thus it suffices to prove the following Proposition.

\begin{prop}\label{prop: AJ surjective for surface}
	For each $\fka\in\fkB$, the map $$\Psi(\fka)_\fkm:\rmK(\fka)_\fkm\rightarrow\rmH^1(\bbF_{p^{g}},\mathrm{H}^{2}(\scrS_K(G_{\emptyset_{\fka}}),k_\lambda(2))_\fkm) $$ is surjective.
\end{prop}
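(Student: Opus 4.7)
The plan is to reduce the proposition to Theorem \ref{thm: Ihara's Lemma}, applied to the quaternionic Shimura surface $\scrS_K(G_{\emptyset_\fka})_{\bbF_{p^g}}$, via Poincar\'e duality. As in the construction of $\mathrm{AJ}_\fkm$, the analogue of Proposition \ref{prop: middle cohomology non-zero} for this Shimura surface identifies the target of $\Psi(\fka)_\fkm$ with $\rmH^3(\scrS_K(G_{\emptyset_\fka})_{\bbF_{p^g}}, k_\lambda(2))_\fkm$. Poincar\'e duality on the smooth proper surface over $\bbF_{p^g}$ then pairs this perfectly with $\rmH^2(\scrS_K(G_{\emptyset_\fka})_{\bbF_{p^g}}, k_\lambda(1))_\fkm$ (the Hecke actions being compatible after localization, under the large-image part of Assumption \ref{ass: property of l}), so the surjectivity of $\Psi(\fka)_\fkm$ is equivalent to the injectivity of the Poincar\'e-dual map.

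Next I would compute this dual map explicitly. Write $Y = D_i \cup D_j$ for the union of the two Goren--Oort divisors $D_i = \scrS_K(G_{\emptyset_\fka})_{\bbF_{p^g}, \tau_i}$, $D_j = \scrS_K(G_{\emptyset_\fka})_{\bbF_{p^g}, \tau_j}$, and let $D_{ij} = D_i \cap D_j$. By Proposition \ref{prop: Goren--Oort 2 dim}, $D_i$ and $D_j$ are $\bbP^1$-bundles over discrete Shimura sets meeting transversally in $D_{ij} \cong \scrS_{K_0(\fkp)}(G_{\emptyset_{\fka,\tau_i}, \rmT_{\fka,\tau_i}})_{\bbF_{p^g}}$. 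Cycles in the image of $\Theta(\fka)$ lie in $\mathrm{Ch}^2_Y(\scrS_K(G_{\emptyset_\fka})_{\bbF_{p^g}}, 1, k_\lambda)$, and the Abel--Jacobi map on such cycles factors through the Gysin sequence as
\[ \mathrm{Ch}^2_Y(\scrS_K(G_{\emptyset_\fka}), 1, k_\lambda) \longrightarrow \rmH^3_Y(\scrS_K(G_{\emptyset_\fka}), k_\lambda(2)) \longrightarrow \rmH^3(\scrS_K(G_{\emptyset_\fka}), k_\lambda(2)). \]
A Mayer--Vietoris/residue analysis along the normal-crossing divisor $Y$ should identify the image of an element of $K(\fka) \cong \rmH_1(\fkG, k_\lambda)^{\Gal(\Fpbar/\bbF_{p^g})}$ with a class realized, via the tame symbol at the intersection points in $D_{ij}$, from the rational functions on the components. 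Dualizing, the Poincar\'e dual of $\Psi(\fka)_\fkm$ becomes a map from $\rmH^2(\scrS_K(G_{\emptyset_\fka})_{\bbF_{p^g}}, k_\lambda(1))_\fkm$ to a quotient of $\Gamma(D_{ij}, k_\lambda)_\fkm$, given by pulling back to $D_i$ and $D_j$ and then restricting to their intersection.

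The third step is to match this with $\pi_1^* + \pi_2^*$ in Theorem \ref{thm: Ihara's Lemma}. Under the identification of Proposition \ref{prop: Goren--Oort 2 dim}(2), the two compositions $D_{ij} \hookrightarrow D_i \xrightarrow{\pi_{\tau_i}} \scrS_K(G_{\emptyset_{\fka,\tau_i}, \rmT_{\fka,\tau_i}})$ and $D_{ij} \hookrightarrow D_j \xrightarrow{\pi_{\tau_j}} \scrS_K(G_{\emptyset_{\fka,\tau_j}, \rmT_{\fka,\tau_j}}) \xrightarrow{\eta_{\tau_j}} \scrS_K(G_{\emptyset_{\fka,\tau_i}, \rmT_{\fka,\tau_i}})$ realize the two degeneracy maps from the Iwahori-level Shimura set to the hyperspecial-level one. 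Transporting through the proper/smooth base change isomorphism $\rmH^2(\scrS_K(G_{\emptyset_\fka})_{\bbF_{p^g}}, k_\lambda(1)) \cong \rmH^2(\rmSh_K(G_{\emptyset_\fka})_{\overline{\bbQ}}, k_\lambda(1))$, the Poincar\'e dual of $\Psi(\fka)_\fkm$ becomes precisely $(\pi_1^* + \pi_2^*)_\fkm$ from Theorem \ref{thm: Ihara's Lemma} applied to $G_{\emptyset_\fka}$. Assumption \ref{ass: surface} holds for $G_{\emptyset_\fka}$ when $B_{\emptyset_\fka} \neq GL_2(F)$; the remaining case ($g = 2$ with $B$ totally split) is covered by the Hilbert modular version due to Dimitrov cited in the statement of the theorem. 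Injectivity then follows and completes the argument.

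The main obstacle is the Mayer--Vietoris/residue computation of the Abel--Jacobi map in step two, together with the verification of Hecke-equivariance so that localization at $\fkm$ on the two sides of Poincar\'e duality picks out dual pieces. The cycles in $K(\fka)$ were constructed precisely to mirror the normal-crossing combinatorics of $Y$, so one expects the identification with the Ihara's Lemma map to be essentially forced; the bookkeeping, however, is nontrivial and is where most of the technical work will reside.
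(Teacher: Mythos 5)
Your overall skeleton --- dualize the Abel--Jacobi-type map and reduce surjectivity to an injectivity statement of Ihara type --- is the same as the paper's, and your first two steps (factoring through cohomology with supports along $Z_1\cup Z_2$ and dualizing via Poincar\'e duality and duality of Galois cohomology over $\bbF_{p^g}$) track the paper's argument. But there is a genuine gap at your third step. The Poincar\'e dual of $\Psi(\fka)_\fkm$, as you yourself compute it, is a map from (a subspace of) $\rmH^0(\bbF_{p^g},\rmH^2(\scrS_K(G_{\emptyset_\fka})_{\Fpbar},k_\lambda)_\fkm)$ into a quotient of $\Gamma(D_{ij},k_\lambda)_\fkm$, i.e.\ into $\rmH^1$ of the supersingular configuration of $\bbP^1$'s (functions on the \emph{discrete} Iwahori-level Shimura set). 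The map of Theorem \ref{thm: Ihara's Lemma} is $\pi_1^*+\pi_2^*:\rmH^2(\rmSh_K)^2_\fkm\rightarrow\rmH^2(\rmSh_{K_0(\fkp)})_\fkm$, with target the $\rmH^2$ of the Iwahori-level Shimura \emph{surface}. These two maps do not even have the same shape, so the dual of $\Psi(\fka)_\fkm$ cannot be ``precisely'' the Ihara map; relating them is exactly the hard content of the proof. In the paper this bridge is built from the global structure of the mod $p$ special fiber of $\rmSh_{K_0(\fkp)}(G_{\emptyset_\fka})$ (Corollary \ref{cor:Iwahori level structure}: the decomposition into $\calF(X)$, $\calV(X)$ and the supersingular part, the latter being a union of $\bbP^1$-bundles over $Z_1,Z_2$), which produces the map $\Psi_\fkm:\bfS\rightarrow\bfR$ out of $\pi_1^*+\pi_2^*$, injective by Theorem \ref{thm: Ihara's Lemma}; and then a separate computation identifies $\Psi_\fkm$ with the dual map $\Phi_\fkm^*$. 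Your appeal to ``proper/smooth base change'' cannot substitute for this: the Iwahori-level surface has bad reduction at $p$, and in your sketch its cohomology never even appears.

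A symptom of the gap is that your argument never uses the level-raising congruences of Definition \ref{def: level raising prime}. In the paper, condition (3) ($T_\fkp\equiv p^g+1$, $S_\fkp\equiv 1$, together with Proposition \ref{prop: ess Frob squared} on the essential Frobenius) is what identifies the domain $\bfS$ --- the kernel of the matrix $\left(\begin{smallmatrix}1 & \mathrm{Fr}'\\ \mathrm{Fr}'S_\fkp^{-1} & 1\end{smallmatrix}\right)$ --- with $\rmH^0(\bbF_{p^g},\rmH^2(X_{\Fpbar},k_\lambda))_\fkm$, makes the Galois action on $\rmH^1(Z_{1\Fpbar}\cup Z_{2\Fpbar},k_\lambda)_\fkm$ trivial so that $\bfR$ matches the codomain of $\Phi_\fkm^*$, and drives the final coboundary computation; condition (4) is needed so that $\rmK(\fka)_\fkm$ surjects onto $\mathrm{Ch}^2_{Z_1\cup Z_2}(X,1,k_\lambda)_\fkm$ (killing the $k_S^\times\otimes k_\lambda$ contributions in (\ref{Theta})). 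Since the proposition is only true under these hypotheses, a purely formal matching of the dual map with Ihara's Lemma, as in your step three, cannot close the argument; you would need to introduce $\rmSh_{K_0(\fkp)}(G_{\emptyset_\fka})$ and its special fiber at $p$ and carry out the identification the paper does in its final proposition.
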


 Fix $\fka\in\fkB$. We let $\Sigma_\infty-\rmS_{\infty}=\{\tau_i,\tau_j\}$. To ease notation, we write $X=\scrS_K(G_{\emptyset_{\fka}})_{\bbF_{p^{g}}}$ and we write $Z_1=\scrS_K(G_{\emptyset_{\fka}})_{\bbF_{p^{g}},\tau_i}$, $Z_2=\scrS_K(G_{\emptyset_\fka})_{\bbF_{p^{g}},\tau_j}$ for the Goren--Oort divisors. We let $Z_{12}$ denote the intersection $Z_1\cap Z_2$ and we write $X^{\mathrm{ord}}$ for the complement of $Z_1\cup Z_2$ in $X$.

The first observation is that the  map $\Psi(\fka)_\fkm$ factors through a certain cohomology group with supports.

\begin{prop}
	The map $\Psi(\fka)_\fkm$ factors through $\rmH^3_{Z_1\cup Z_2}(X,k_\lambda(2))_\fkm$. Moreover the induced map $$\rmK(\fka)_\fkm\rightarrow\rmH^3_{Z_1\cup Z_2}(X,k_\lambda(2))_\fkm$$ is surjective.
\end{prop}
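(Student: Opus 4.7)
The factorization statement follows immediately from the construction. By the definition of $\Theta(\fka)$ via the dual graph (see (\ref{Theta}) and \S\ref{sec: dual-graph}), the image of $\rmK(\fka)$ in $\mathrm{Ch}^2(X, 1, k_\lambda)$ lies in the subgroup $\mathrm{Ch}^2_{Z_1 \cup Z_2}(X,1,k_\lambda)$ of classes supported on the union of the two Goren--Oort divisors. The cycle class map is natural with respect to supports, producing a map $\mathrm{Ch}^2_{Z_1 \cup Z_2}(X,1,k_\lambda) \to \rmH^3_{Z_1 \cup Z_2}(X, k_\lambda(2))$, whose composition with the forget-supports map $\rmH^3_{Z_1 \cup Z_2}(X, k_\lambda(2)) \to \rmH^3(X, k_\lambda(2))$ and the Hochschild--Serre identification (valid at $\fkm$ by Proposition \ref{prop: middle cohomology non-zero}) recovers $\Psi(\fka)_\fkm$.

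For surjectivity, the plan is to identify $\rmH^3_{Z_1 \cup Z_2}(X, k_\lambda(2))_\fkm$ with $\rmK(\fka)_\fkm$ via Mayer--Vietoris for cohomology with closed supports combined with purity, and then verify the cycle class map realizes this identification. The Mayer--Vietoris long exact sequence for the decomposition $Z_1 \cup Z_2 \supset Z_{12}$ reads
$$\cdots \to \rmH^i_{Z_{12}}(X, k_\lambda(2)) \to \rmH^i_{Z_1}(X, k_\lambda(2)) \oplus \rmH^i_{Z_2}(X, k_\lambda(2)) \to \rmH^i_{Z_1 \cup Z_2}(X, k_\lambda(2)) \to \rmH^{i+1}_{Z_{12}}(X, k_\lambda(2)) \to \cdots$$
and over $\Fpbar$, absolute purity identifies the supported cohomology of $Z_a$ with $\rmH^{*-2}(Z_{a,\Fpbar}, k_\lambda(1))$ and that of $Z_{12}$ with $\rmH^{*-4}(Z_{12,\Fpbar}, k_\lambda)$. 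Writing $S_a := \scrS_K(G_{\emptyset_{\fka,\tau_a},\rmT_{\fka,\tau_a}})_{\bbF_{p^g}}$, the $\bbP^1$-bundle structure $Z_a \to S_a$ over the discrete set $S_a$ gives $\rmH^1(Z_{a,\Fpbar}, k_\lambda(1)) = 0$ and $\rmH^2(Z_{a,\Fpbar}, k_\lambda(1)) = k_\lambda^{S_a(\Fpbar)}$, while $\rmH^0(Z_{12,\Fpbar}, k_\lambda) = k_\lambda^{Z_{12}(\Fpbar)}$. The relevant portion of the sequence therefore becomes
$$0 \to \rmH^3_{Z_1 \cup Z_2, \Fpbar}(X_\Fpbar, k_\lambda(2)) \to k_\lambda^{Z_{12}(\Fpbar)} \to k_\lambda^{S_1(\Fpbar)} \oplus k_\lambda^{S_2(\Fpbar)},$$
the right-hand map being the Gysin pushforwards $(\pi_{\tau_i,*}, \pi_{\tau_j,*})$ (by compatibility of purity with the composition $Z_{12} \hookrightarrow Z_a \to S_a$). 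Applying the isomorphism $\eta_{\tau_j}: S_2 \xrightarrow{\sim} S_1$ from Proposition \ref{prop: Goren--Oort 2 dim} exhibits $\rmH^3_{Z_1 \cup Z_2, \Fpbar}(X_\Fpbar, k_\lambda(2))$ precisely as $\rmK(\fka)$.

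To descend to $\bbF_{p^g}$, I invoke Hochschild--Serre, which yields
$$0 \to \rmH^1(\bbF_{p^g}, \rmH^2_{Z_1\cup Z_2, \Fpbar}(X_\Fpbar, k_\lambda(2))) \to \rmH^3_{Z_1 \cup Z_2}(X, k_\lambda(2)) \to \rmH^3_{Z_1 \cup Z_2, \Fpbar}(X_\Fpbar, k_\lambda(2))^{\Gal(\Fpbar/\bbF_{p^g})} \to 0.$$
The analogous Mayer--Vietoris computation in degree $2$ shows $\rmH^2_{Z_1\cup Z_2,\Fpbar}(X_\Fpbar, k_\lambda(2)) \cong k_\lambda(1)^{S_1(\Fpbar)} \oplus k_\lambda(1)^{S_2(\Fpbar)}$, on which the Hecke algebra acts through its action on the finite (totally definite) Shimura sets $S_a$ via Eisenstein characters; this vanishes after localizing at the non-Eisenstein maximal ideal $\fkm$. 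By the lemma immediately preceding the present Proposition, $\sigma_{p^g}$ acts trivially on $\rmK(\fka)_\fkm$, so the $\Gal$-invariants are redundant after localization. Combining these observations gives $\rmH^3_{Z_1 \cup Z_2}(X, k_\lambda(2))_\fkm \cong \rmK(\fka)_\fkm$.

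The main obstacle will be the naturality check: one must verify that the cycle class map realizes the above isomorphism, i.e. that for $m = (m_e)_{e \in \scrE} \in \rmK(\fka)$ the Mayer--Vietoris connecting homomorphism carries the cohomology class of $\Theta(\fka)(m)$ to the element $m \in k_\lambda^{Z_{12}(\Fpbar)} = \rmH^0(Z_{12,\Fpbar}, k_\lambda)$. This should reduce to a local tame-symbol calculation: by the very construction of \S\ref{sec: dual-graph}, each rational function $f_S^m$ defining $\Theta(\fka)(m)$ has order $\pm m_e$ at the intersection point $e \in Z_{12}$, and the Mayer--Vietoris boundary in this normal-crossings setting is precisely this order-of-vanishing at the intersection points. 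Granting this local compatibility, the map in the Proposition is an isomorphism after localizing at $\fkm$, which a fortiori proves the claimed surjectivity.
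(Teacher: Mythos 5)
Your reduction of the target group is essentially sound (the Mayer--Vietoris/purity computation of $\rmH^3_{Z_1\cup Z_2,\Fpbar}(X_{\Fpbar},k_\lambda(2))$ as the kernel of $(\pi_{\tau_i*},(\eta_{\tau_j}\circ\pi_{\tau_j})_*)$, i.e.\ as $\rmK(\fka)$, is correct), but the proof has a genuine gap exactly where the content of the Proposition lies: you never prove that the supported cycle class of $\Theta(\fka)(m)$ hits $m$ under your identification. The surjectivity you are asked to prove is a statement about the specific map $\rmK(\fka)_\fkm\rightarrow\rmH^3_{Z_1\cup Z_2}(X,k_\lambda(2))_\fkm$, and "granting this local compatibility" is granting the theorem. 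The paper's argument is arranged precisely so that no such boundary-map compatibility needs to be checked: it uses the exact sequence of the triple $(X,Z_{12},Z_1\cup Z_2)$ together with purity to identify $\rmH^3_{Z_1\cup Z_2}(X,k_\lambda(2))$ with the kernel $\bfL$ of $\rmH^1(Z_1\cup Z_2-Z_{12},k_\lambda(1))\rightarrow\rmH^0(Z_{12},k_\lambda)$, notes that on classes supported on $Z_1\cup Z_2$ the cycle class map is \emph{by construction} the Kummer map on units $\bfK\rightarrow\bfL$, and then gets surjectivity of $\bfK\rightarrow\bfL$ from triviality of the Picard groups of the open pieces of the $\bbP^1$'s (Kummer sequence) plus a snake-lemma chase, combined with the surjectivity of $\rmK(\fka)_\fkm\rightarrow\mathrm{Ch}^2_{Z_1\cup Z_2}(X,1,k_\lambda)_\fkm$ coming from the combinatorial description of \S5.2 and Definition 5.3.8(4). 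If you want to keep your route, you must actually carry out the tame-symbol/boundary computation, including the sign bookkeeping at each node (the function on $S_{v_1(e)}$ has order $+m_e$ and the one on $S_{v_2(e)}$ has order $-m_e$, and both components contribute to the MV boundary at $e$), and verify it is compatible with the purity isomorphisms you invoked.

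A second, more clear-cut error: your claim that $\rmH^2_{Z_1\cup Z_2,\Fpbar}(X_{\Fpbar},k_\lambda(2))\cong k_\lambda(1)^{S_1(\Fpbar)}\oplus k_\lambda(1)^{S_2(\Fpbar)}$ dies at $\fkm$ "because the Hecke algebra acts via Eisenstein characters" is false. $\Gamma(S_a(\Fpbar),k_\lambda)$ is the space of algebraic modular forms on the totally definite quaternion algebra $B_{\Sigma_\infty}$ of level $K$; since $\fkm$ comes from $\Pi$, which transfers to $B_{\Sigma_\infty}$ by Jacquet--Langlands, this localization is typically nonzero --- indeed this is the very space in which the level-raised forms are detected. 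The vanishing of $\rmH^1(\bbF_{p^g},\rmH^2_{Z_1\cup Z_2,\Fpbar}(X_{\Fpbar},k_\lambda(2)))$ is nonetheless true, but for a different reason: Frobenius acts on $\Gamma(S_a(\Fpbar),k_\lambda)$ through an operator of finite order dividing $n_{K_\infty}$ and on the Tate twist by $p^{\pm g}$, so its generalized eigenvalues are of the form $\zeta p^{\pm g}$ with $\zeta^{n_{K_\infty}}=1$, and $1$ is excluded precisely by condition (4) of Definition \ref{def: level raising prime} ($l\nmid p^{gn_{K_\infty}}-1$); hence the coinvariants vanish even before localizing. You need to replace the Eisenstein argument by this one (it is the same hypothesis the paper uses at this point, there to kill $k_S^\times\otimes_{\bbZ}k_\lambda$).
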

\begin{proof}
	The factoring property follows from the definition of the cycle class map. Indeed we have the following diagram with exact columns and rows
	\[\xymatrix{&  &0 & 0 \\0\ar[r] & \bfL \ar[r]& \rmH^1(Z_1\cap Z_2-Z_{12},k_\lambda(1)) \ar[r]^{\ \ \ \ \ \ \ c} \ar[u] & \rmH^0(Z_{12},k_\lambda) \ar[u]\\
		0\ar[r] & \bfK\ar[r] \ar[u]_a &\rmH^0(Z_1\cap Z_2-Z_{12},\mathbb{G}_m)\otimes_{\bbZ} k_\lambda \ar[r]^{\ \ \ \ \ \ \ \ \ \text{div}\otimes_\Z k_\lambda} \ar[u]_b &  \rmH^0(Z_{12},k_\lambda) \ar@{=}[u]
	.}\]

Here $\rmH^0(Z_1\cap Z_2-Z_{12},\mathbb{G}_m)$ is identified with the group of rational functions on $Z_1\cap Z_2$ with zeros and poles only at the points in $Z_{12}$, and the map $\text{div}\otimes_\Z k_\lambda$ is induced from the map taking such a function to its divisor. The groups $\bfL$ and $\bfK$ are by definition the kernels of the maps on the right. The map $b$ comes from the Kummer sequence and its surjectivity follows from the fact that $Z_1\cup Z_2-Z_{12}$ is a union of open subsets in $\bbA^1_{\bbF_{p^h}/\bbF_{p^g}}$ for $g|h$ hence all have trivial Picard group. Here we write $\bbA^1_{\bbF_{p^h}/\bbF_{p^g}}$ for the affine line $\bbA^1$ over $\bbF_{p^h}$ considered as a $\bbF_{p^g}$ scheme. The map $\mathrm{div}\otimes_\Z k_\lambda$ factors through $b$, hence we obtain the map $c$.

By purity, we may identify the top row of the exact sequence with the exact sequence of the triple $(X,Z_{12},Z_1\cup Z_2)$:
\begin{align*} \rmH^3_{Z_{12}}(X,k_\lambda(2))=0&\rightarrow \rmH^3_{Z_1\cup Z_2}(X,k_\lambda(2))\\&\rightarrow \rmH^3_{Z_1\cup Z_2-Z_{12}}(X-Z_{12},k_\lambda(2))\rightarrow \rmH^4_{Z_{12}}(X,k_\lambda(2)).
\end{align*}
It is easy to see from the definition that the group $\text{Ch}^2_{Z_1\cup Z_2}(X,1,k_\lambda)$ is a quotient of the group $\bfK$. Then the cycle class map is induced by $$\bfK\rightarrow \bfL\cong \rmH^3_{Z_1\cup Z_2}(X,k_\lambda(2))\rightarrow \rmH^3(X,k_\lambda(2)).$$
To deduce the moreover part, we note that the map $$\rmK(\fka)_\fkm\rightarrow \text{Ch}^2_{Z_1\cup Z_2}(X,1,k_\lambda)_\fkm$$ is surjective. Indeed  by (\ref{eq: level raising group surjects onto Chow group}) the map $$\left(\bigoplus_{S\subset Z_1\cup Z_2 }k_S^\times\otimes_{\bbZ}k_\lambda\right)\oplus \rmK(\fka)^{\Gal(\overline{\bbF}_p/\bbF_{p^g})}\rightarrow \text{Ch}^2_{Z_1\cup Z_2}(X,1,k_\lambda)$$ is surjective. But  $k_S^\times\otimes_{\bbZ}k_\lambda=0$ for all $S$ by Definition \ref{def: level raising prime} (4). It therefore suffices to show that the map $a$ is surjective. We write $$\bfA:=\text{Im}\left(\rmH^0(Z_1\cap Z_2-Z_{12},\mathbb{G}_m)\rightarrow \rmH^0(Z_{12},k_\lambda)\right)$$ $$\bfB:=\text{Im}\left(\rmH^1(Z_1\cap Z_2-Z_{12},k_\lambda(1))\rightarrow \rmH^0(Z_{12},k_\lambda)\right).$$
The map $\bfA\rightarrow \bfB$ is injective, hence by the snake Lemma, $a$ is surjective. 
\end{proof}

Thus in order to prove Proposition \ref{prop: AJ surjective for surface}, and hence Theorem \ref{thm:level-raising}, it suffices to show that the map $$\Phi'_\fkm:\rmH^3_{Z_1\cup Z_2}(X,k_\lambda(2))_\fkm\rightarrow \rmH^3(X,k_\lambda(2))_\fkm$$ is surjective.

By the Hochschild-Serre spectral sequence, this map fits into the following diagram with exact rows:
\[\xymatrix{0 \ar[r] & \rmH^1({\bbF_{p^{g}}},\rmH^2_{Z_1\cup Z_2}(X_{\Fpbar},k_\lambda(2))_{\fkm}) \ar[r]\ar[d] & \rmH_{Z_1\cup Z_2}^3(X,k_\lambda(2))_{\fkm}\ar[r]\ar[d]_{\Phi'_\fkm}& \rmH^0(\bbF_{p^{g}},\rmH^3_{Z_1\cup  Z_2}(X_{\Fpbar},k_\lambda(2))_{\fkm}) \ar[r]\ar[d] & 0\\0 \ar[r]& \rmH^1({\bbF_{p^{g}}},\rmH^2(X_{\Fpbar},k_\lambda(2))_{\fkm}) \ar[r] & \rmH^3(X,k_\lambda(2))_{\fkm}\ar[r]& \rmH^0(\bbF_{p^{g}},\rmH^3(X_{\Fpbar},k_\lambda(2))_{\fkm}) \ar[r] & 0
}\]

Since $\rmH^3(X_{\Fpbar},k_\lambda(2))_{\fkm}=0$ by Proposition \ref{prop: middle cohomology non-zero}, we have an isomorphism $$\rmH^3(X,k_\lambda(2))_\fkm\cong \rmH^1(\bbF_{p^{g}},\rmH^2(X_{\Fpbar},k_\lambda(2))_\fkm).$$ The diagram above then induces a map  $$\Phi_\fkm: \rmH^0(\bbF_{p^g},\rmH^3_{Z_1\cup  Z_2}(X_{\Fpbar},k_\lambda(2))_{\fkm})\rightarrow \rmH^1(\bbF_{p^g},\rmH^2(X_{\Fpbar},k_\lambda(2))_\fkm)/\rmH^1(\bbF_{p^g},\rmH^2_{Z_1\cup Z_2}(X_{\Fpbar},k_\lambda(2))_\fkm)$$
and  surjectivity of $\Phi_\fkm'$ is equivalent to the surjectivity of $\Phi_\fkm$. It therefore suffices to prove $\Phi_{\fkm}$ is surjective.

We may dualize the above to obtain a map $$\Phi_\fkm^*:\left(\rmH^1(\bbF_{p^g},\rmH^2(X_{\Fpbar},k_\lambda(2))_\fkm)/\rmH^1(\bbF_{p^g},\rmH_{Z_1\cup Z_2}^2(X_{\Fpbar},k_\lambda(2))_\fkm)\right)^*\rightarrow \rmH^0(\bbF_{p^g},\rmH^3_{Z_1\cup  Z_2}(X_{\Fpbar},k_\lambda(2))_{\fkm})^*.$$
Consider the exact sequence of cohomology
$$\rmH^1(X_{\Fpbar},k_\lambda)\rightarrow\rmH^1(Z_{1\Fpbar}\cup Z_{2\Fpbar},k_\lambda)\rightarrow \rmH^2_c(X^{\mathrm{ord}}_{\Fpbar},k_\lambda)\rightarrow \rmH^2(X_{\Fpbar},k_\lambda)\rightarrow \rmH^2(Z_{1\Fpbar}\cup Z_{2\Fpbar},k_\lambda)$$
arising from the triple $Z_1\cup Z_2\xrightarrow{i} X\xleftarrow{j}X^{\mathrm{ord}}$. When $X$ is the special fiber of the Hilbert modular surface (hence non-compact), we abuse notation and write $\rmH^2_c(X^{\mathrm{ord}}_{\Fpbar},k_\lambda)$ for $\rmH^2(X_{\Fpbar},j_!k_\lambda)$. Upon localizing at $\fkm$, $\rmH^1(X_{\Fpbar},k_\lambda)_\fkm=0$, and we  obtain the following boundary map for the long exact sequence of Galois cohomology:
\begin{equation}
\label{eq:dual of level raising map}
\ker\left(\rmH^0(\bbF_{p^g},\rmH^2(X_{\Fpbar},k_\lambda)_\fkm)\rightarrow \rmH^0(\bbF_{p^g},\rmH^2(Z_{1\Fpbar}\cup Z_{2\Fpbar},k_\lambda)_\fkm)\right)\rightarrow \rmH^1(\bbF_{p^g},\rmH^1(Z_{1\Fpbar}\cup Z_{2\Fpbar},k_\lambda)_\fkm).\end{equation}
 By Poincar\'e duality and the duality of Galois cohomology over finite fields, (\ref{eq:dual of level raising map}) is identified with $\Phi_{\fkm}^*$. We mention that in the case of Hilbert modular surfaces, we need to use the canonical isomorphism $$\rmH^2(X_{\Fpbar},k_\lambda)_\fkm\cong \rmH_c^2(X_{\Fpbar},k_\lambda)_\fkm,$$ which follows from \cite[Corollary 4.6]{LS} and the fact that the cuspidal part of the cohomology of Hilbert modular surfaces is the same as the cuspidal part of the cohomology with compact support, see \cite[Theorem 2.3]{Dim}.

Let $X_0(\fkp)$ be the special fiber $\scrS_{K_0(\fkp)}(G_{\rmS,\rmT})_{\bbF_{p^{g}}}$ for the Shimura variety with Iwahori level structure at $p$ constructed in Appendix \ref{sec:Appendix}. We claim $\Phi_\fkm^*$ can be related to a certain map relating the geometry of $X$ and $X_0(\fkp)$.

By Corollary \ref{cor:Iwahori level structure} there is a decomposition $$X_0(\fkp)_{\Fpbar}=\mathcal{X}_{1\Fpbar}\cup\mathcal{X}_{2\Fpbar}\cup\mathcal{X}_{3\Fpbar}$$ where $\calX_{1\Fpbar}$ and $\calX_{2\Fpbar}$ are the two copies of $X_{\Fpbar}$ corresponding to the essential Frobenius and Verschiebung isogenies respectively and $\calX_{3\Fpbar}$ is the ``supersingular locus.'' For $k,l,m\in\{1,2,3\}$ distinct, we write $$i_k:\calX_{k\Fpbar}\rightarrow X_0(\fkp)_{\Fpbar}$$ $$i_{kl}:\calX_{k\Fpbar}\cap\calX_{l\Fpbar}\rightarrow X_0(\fkp)_{\Fpbar}$$
$$i_{klm}:\calX_{k\Fpbar}\cap\calX_{l\Fpbar}\cap\calX_{m\Fpbar}\rightarrow X_0(\fkp)_{\Fpbar}$$
for the closed immersions. Then we have an exact sequence of sheaves on $X_0(\fkp)_{\Fpbar}$:
$$0\rightarrow k_\lambda\xrightarrow{f} i_{1*}k_\lambda\oplus i_{2*}k_\lambda\oplus i_{3*}k_\lambda\rightarrow i_{23*}k_\lambda\oplus i_{13*}k_\lambda\oplus i_{23*}k_\lambda\xrightarrow{g} i_{123*}k_\lambda\rightarrow 0.$$
Here the maps are induced by (pushing forward) the usual unit maps of the adjunction. We define $$C:=\text{coker}(f)=\ker(g).$$ Then the above exact sequence breaks up into two short exact sequences of sheaves on $X_0(\fkp)_{\Fpbar}$.
\begin{equation}\label{eq1}0\rightarrow k_\lambda\xrightarrow{f} i_{1*}k_\lambda\oplus i_{2*}k_\lambda\oplus i_{3*}k_\lambda\rightarrow C\rightarrow 0
\end{equation}
\begin{equation}\label{eq2}0\rightarrow C\rightarrow i_{23*}k_\lambda\oplus i_{13*}k_\lambda\oplus i_{23*}k_\lambda\xrightarrow{g} i_{123*}k_\lambda\rightarrow 0
\end{equation}
 Taking cohomology of the sequence (\ref{eq1}), we obtain:
\[\xymatrixcolsep{4pc}\xymatrix{0 \ar[r]& \bfR \ar[r] & \rmH^2(X_0(\fkp)_{\Fpbar},k_\lambda)_\fkm\ar[r]^-{ (i_{1}^*,i_{2}^*,i_{3}^*)}& \rmH^2(X_{\Fpbar},k_\lambda)_\fkm^2\oplus\rmH^2(\calX_{3\Fpbar},k_\lambda)_\fkm}\]
where $\bfR\cong \rmH^1(X_0(\fkp)_{\Fpbar},C)_\fkm/\rmH^1(\calX_{3\Fpbar},k_\lambda)_\fkm$. %On the other hand we may take cohomology of (\ref{eq2}) to compute that $$\rmH^1(X_0(\fkp)_{\Fpbar},C)_\fkm\cong \rmH^1(Z_{1\Fpbar}\cup Z_{2\Fpbar},k_\lambda)^2_\fkm.$$
Now consider the map $$\pi_{1}^*+\pi_2^*:\rmH^2(X_{\Fpbar},k_\lambda)_\fkm^2\rightarrow \rmH^2(X_0(\fkp)_{\Fpbar},k_\lambda)_\fkm$$
induced by the degeneracy maps $\pi_1,\pi_2:X_0(\fkp)\rightarrow X$. 
We consider the kernel $\bfS$ of the composition
$$\rmH^2(X_0(\fkp)_{\Fpbar},k_\lambda)^2\xrightarrow{\pi_1^*+\pi_2^*} \rmH^2(X_{\Fpbar},k_\lambda)_\fkm\xrightarrow{(i_{1}^*,i_{2}^*,i_{3}^*)} \rmH^2(X_0(\fkp)_{\Fpbar},k_\lambda)^2_\fkm\oplus\rmH^2(\calX_{3\Fpbar},k_\lambda)_\fkm.$$ 
We obtain a map $\bfS\rightarrow \bfR$ which fits into the diagram:
\[\xymatrixcolsep{4pc}\xymatrix{0 \ar[r]& \bfR \ar[r] & \rmH^2(X_0(\fkp)_{\Fpbar},k_\lambda)_\fkm\ar[r]^-{ (i_{1}^*,i_{2}^*,i_{3}^*)}& \rmH^2(X_{\Fpbar},k_\lambda)_\fkm^2\oplus\rmH^2(\calX_{3\Fpbar},k_\lambda)_\fkm\\
	& \bfS\ar[u]^{\Psi_\fkm}\ar[r]& \rmH^2(X_{\Fpbar},k_\lambda)^2_\fkm\ar[u]_{\pi_1^*+\pi_2^*}}\]

%induces the endomorphism $$\left(\begin{matrix}1 &\text{Fr}_\fkp \\ \text{Fr}_\fkp S_\fkp^{-1} & 1\end{matrix}\right).$$ By Assumption ??, this is the same as the endomorphism $$\left(\begin{matrix}1 &\text{Fr}_\fkp \\ \text{Fr}_\fkp& 1\end{matrix}\right),$$ thus we may idenitify $\ker(\theta)$ with the image of the injective map $$\rmH^2(X_{\Fpbar},k_\lambda)^{\mathrm{Fr}_\fkp^2=1}_\fkm\xrightarrow{(\text{Fr}_\fkp,-\text{id})}\rmH^2(X_{\Fpbar},k_\lambda)_\fkm^2.$$
%Therefore the kernel $\bfS$ of the composition $(i_{1*},i_{2*},i_{3*})\circ(\pi_1^*+\pi_2^*)$ is identified with $$\ker\left(\rmH^2(X_{\Fpbar},k_\lambda)^{\text{Fr}_\fkp^2=1}_\fkm\rightarrow \rmH^2(X_0(\fkp)_{\Fpbar},k_\lambda)_\fkm\xrightarrow{i_{3*}}\rmH^2(\calX_{3\Fpbar},k_\lambda)_\fkm\right)$$
%We obtain a map $$\Phi_\fkm':\bfS\rightarrow\bfR$$which fits into the following diagram:\[\xymatrixcolsep{4pc}\xymatrix{0 \ar[r]& \bfR \ar[r] & \rmH^2(X_0(\fkp)_{\Fpbar},k_\lambda)_\fkm\ar[r]^-{ (i_{1*},i_{2*},i_{3*})}& \rmH^2(X_{\Fpbar},k_\lambda)_\fkm^2\oplus\rmH^2(\calX_{3\Fpbar},k_\lambda)_\fkm\\
%& \bfS\ar[u]^{\Phi_\fkm'}\ar[r]^-{(\text{Fr}_\fkp,-\text{id})}& \rmH^2(X_{\Fpbar}k_\lambda)_\fkm\ar[u]_{\pi_1^*+\pi_2^*}}\]
By Theorem \ref{thm: Ihara's Lemma} and proper base change, the map $\Psi_\fkm$ is injective when $B$ is not totally totally split. For Hilbert modular surfaces, it is also injective by  \cite[Theorem 3.1]{Dim} and \cite[Corollary 4.6]{LS}. Therefore in order to prove Proposition \ref{prop: AJ surjective for surface}, it suffices to prove the following Proposition.

\begin{prop} The map $\Psi_\fkm$ can be identified with the map $\Phi^*_\fkm$.
	\end{prop}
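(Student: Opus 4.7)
The plan is to unwind both $\Phi^*_\fkm$ and $\Psi_\fkm$ into concrete cohomological data on the stratifications of $X$ and $X_0(\fkp)$ respectively, and then identify these data using the geometric description of $X_0(\fkp)_{\Fpbar}$ provided by Corollary \ref{cor:Iwahori level structure}.

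First I would rewrite $\Phi^*_\fkm$ explicitly. Using the Gysin sequence for $j:X^{\mathrm{ord}} \hookrightarrow X$ and $i:Z_1\cup Z_2 \hookrightarrow X$, together with the vanishing $\rmH^1(X_{\Fpbar},k_\lambda)_\fkm=0$, one gets that $\Phi^*_\fkm$ is a connecting map in the sequence
\[
\rmH^1(Z_{1\Fpbar}\cup Z_{2\Fpbar},k_\lambda)_\fkm \to \rmH^2_c(X^{\mathrm{ord}}_{\Fpbar},k_\lambda)_\fkm \to \rmH^2(X_{\Fpbar},k_\lambda)_\fkm \to \rmH^2(Z_{1\Fpbar}\cup Z_{2\Fpbar},k_\lambda)_\fkm.
\]
Applying Mayer--Vietoris for $Z_1 \cup Z_2$ (intersection $Z_{12}$ discrete by Proposition \ref{prop: Goren--Oort 2 dim}) and the description of the Goren--Oort divisors as $\bbP^1$-bundles over discrete Shimura sets, this expresses the source of $\Phi^*_\fkm$ entirely in terms of degree-one cohomology of those $\bbP^1$-bundles.

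Next I would unwind $\Psi_\fkm$ by analyzing $\bfR = \rmH^1(X_0(\fkp)_{\Fpbar},C)_\fkm/\rmH^1(\calX_{3\Fpbar},k_\lambda)_\fkm$ via the short exact sequence \eqref{eq2}. Taking the long exact sequence attached to \eqref{eq2} and quotienting by $\rmH^1(\calX_{3\Fpbar},k_\lambda)_\fkm$ identifies $\bfR$ with a cokernel of maps between cohomologies of the pairwise and triple intersections of $\calX_1,\calX_2,\calX_3$. By Corollary \ref{cor:Iwahori level structure}, each pairwise intersection $\calX_{ij}$ and the triple $\calX_{123}$ has an explicit description: $\calX_1\cap \calX_2$ is (a copy of) the supersingular locus of $X$, while $\calX_1\cap \calX_3$ and $\calX_2\cap \calX_3$ are $\bbP^1$-bundles matching (up to Frobenius twist) the Goren--Oort divisors $Z_1,Z_2$ themselves. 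Under the degeneracy maps $\pi_1,\pi_2$, the two components $\calX_1,\calX_2$ collapse onto $X$ and the ``extra'' component $\calX_3$ provides precisely the boundary data of the Gysin map attached to $Z_1\cup Z_2 \subset X$.

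Putting these two computations side by side, I would match the two boundary maps. Concretely, Poincar\'e duality on $X$ (smooth proper of dimension $2$) together with Tate duality over $\bbF_{p^g}$ turns the computation of $\Phi^*_\fkm$ into one involving $\rmH^1(Z_{1\Fpbar}\cup Z_{2\Fpbar},k_\lambda(1))_\fkm$, while the Mayer--Vietoris analysis of $\bfR$ produces exactly the same group via the identification of $\calX_3$ with (an iterated cover of) $Z_1\cup Z_2$. The compatibility with the Galois action, which is determined in both cases by the description of the $\Fpbar$-points as Shimura sets in \S\ref{sec:Goren--Oort dim 2}, matches after a simple Frobenius twist that accounts for the essential Frobenius/Verschiebung labelling of $\calX_1,\calX_2$. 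Functoriality of the Gysin/Mayer--Vietoris connecting maps then yields the desired identification $\Psi_\fkm = \Phi^*_\fkm$.

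The main obstacle will be Step 2: bookkeeping the precise isomorphisms between the strata of $X_0(\fkp)_{\Fpbar}$ and the Goren--Oort stratification of $X$, especially keeping track of the essential Frobenius twist and of how the triple intersection $\calX_{123}$ interacts with the discrete locus $Z_{12}$. The two weight-adjusted identifications of $\calX_1\cap \calX_3$ and $\calX_2\cap \calX_3$ with the Goren--Oort divisors (rather than with each other) are exactly what produces the pair structure in $\bfS$, and checking that this pair matches the $(Z_1,Z_2)$-pair entering $\Phi^*_\fkm$ is the technical heart of the argument.
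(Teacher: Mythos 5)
Your overall strategy---unwinding $\Phi^*_\fkm$ as the Galois coboundary attached to the triple $Z_1\cup Z_2\hookrightarrow X\hookleftarrow X^{\mathrm{ord}}$ and unwinding $\bfR,\bfS$ via the decomposition $X_0(\fkp)_{\Fpbar}=\calX_{1\Fpbar}\cup\calX_{2\Fpbar}\cup\calX_{3\Fpbar}$---is the same as the paper's. But the proposal has a genuine gap at exactly the point you defer to ``functoriality of the Gysin/Mayer--Vietoris connecting maps'': matching the sources and targets of the two maps does not identify the maps themselves, and no purely functorial argument can, because the identification is an \emph{arithmetic} statement. The paper's proof establishes it by an explicit element computation: for $x\in\bfS$ one chooses a lift $\tilde{x}\in\rmH^2_c(X^{\mathrm{ord}}_{\Fpbar},k_\lambda)_\fkm$ fixed by $S_\fkp$, and using the relations $\pi_1\circ\calF=\mathrm{id}$, $\pi_2\circ\calF=\mathrm{Fr}'$, $\pi_1\circ\calV=S_\fkp^{-1}\mathrm{Fr}'$, $\pi_2\circ\calV=\mathrm{id}$ for the essential Frobenius/Verschiebung sections, together with $\mathrm{Fr}'^2=S_\fkp^{-(g/2-1)}\mathrm{Fr}_\fkp$ and the level-raising congruences $T_\fkp\equiv p^g+1$, $S_\fkp\equiv 1\bmod\fkm$, one computes $\pi_2^*(\tilde{x})-\pi_1^*\mathrm{Fr}'(\tilde{x})=(0,(1-\mathrm{Fr}_\fkp)(\tilde{x}))$ on the two copies of $X^{\mathrm{ord}}$ inside $X_0(\fkp)_{\Fpbar}-\calX_{3\Fpbar}$; it is precisely the appearance of $1-\mathrm{Fr}_\fkp$ that makes $\Psi_\fkm(x)$ equal to the finite-field coboundary $\Phi^*_\fkm(x)$. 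The same ingredients are already needed \emph{before} the comparison, to identify $\bfS$ with the domain of $\Phi^*_\fkm$: the composite $(i_1^*,i_2^*)\circ(\pi_1^*+\pi_2^*)$ is the matrix with entries $1,\mathrm{Fr}',\mathrm{Fr}'S_\fkp^{-1},1$, its kernel is identified with $\rmH^0(\bbF_{p^g},\rmH^2(X_{\Fpbar},k_\lambda)_\fkm)$ via $(-\mathrm{Fr}',\mathrm{id})$ only after invoking the congruences, and one must also know that $\rmH^2(Z_{1\Fpbar}\cup Z_{2\Fpbar},k_\lambda)_\fkm\to\rmH^2(\calX_{3\Fpbar},k_\lambda)_\fkm$ is injective (the paper cites Liu's lemma for the $\bbP^1$-fibrations $\calZ_1,\calZ_2$ over $Z_1,Z_2$). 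None of this is in your outline, and without it the conclusion does not follow.

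There is also a factual slip in your geometric bookkeeping: by the appendix, $\calX_1\cap\calX_2=\calF(X_{\Fpbar})\cap\calV(X_{\Fpbar})$ is $0$-dimensional (the superspecial-type locus), not a copy of the supersingular locus of $X$; it is $\calX_1\cap\calX_3$ and $\calX_2\cap\calX_3$ that are identified with $X^{\mathrm{ss}}_{\Fpbar}=Z_{1\Fpbar}\cup Z_{2\Fpbar}$, while $\calX_3$ itself is the union of $\bbP^1$-fibrations over $Z_1$ and $Z_2$. Feeding the wrong intersection pattern into your Mayer--Vietoris computation of $\bfR$ would derail the identification $\bfR\cong\rmH^1(\bbF_{p^g},\rmH^1(Z_{1\Fpbar}\cup Z_{2\Fpbar},k_\lambda)_\fkm)$, which in the paper comes from a snake-lemma argument together with the fact (again from the level-raising hypothesis) that $\Gal(\Fpbar/\bbF_{p^g})$ acts trivially on $\rmH^1(Z_{1\Fpbar}\cup Z_{2\Fpbar},k_\lambda)_\fkm$.
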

\begin{proof}
	We first identify the groups $\bfS$ and $\bfR$ with the corresponding domain and codomain for the map $\Phi^*_\fkm$. 
	
	First note that the composition $$\rmH^2(X_{\Fpbar},k_\lambda)^2\xrightarrow{\pi_1^*+\pi_2^*} \rmH^2(X_0(\fkp)_{\Fpbar},k_\lambda)_\fkm\xrightarrow{(i_{1}^*,i_{2}^*)} \rmH^2(X_{\Fpbar},k_\lambda)^2_\fkm$$ induces the endomorphism
	\begin{equation}\label{eq: end of Cohomology}\left(\begin{matrix}1 &\text{Fr}'\\ \text{Fr}' S_\fkp^{-1} & 1\end{matrix}\right),\end{equation} where $\mathrm{Fr}'$ is the essential Frobenius as defined in \ref{sec: Appendix essentail Frobenius}.  By Definition \ref{def: level raising prime} (3), this is the same as the endomorphism $$\left(\begin{matrix}1 &\text{Fr}'\\ \text{Fr}'& 1\end{matrix}\right).$$ By Proposition \ref{prop: ess Frob squared} and Definition \ref{def: level raising prime} (3), $\mathrm{Fr}'^2=\mathrm{Fr}_\fkp$ the $p^g$-Frobenius, thus we may identify the kernel of (\ref{eq: end of Cohomology}) with the image of the injective map 
	$$\rmH^0(\bbF_{p^g},\rmH^2(X_{\Fpbar},k_\lambda))_\fkm\cong\rmH^2(X_{\Fpbar},k_\lambda)^{\mathrm{Fr}_\fkp=1}_\fkm\xrightarrow{(-\text{Fr}',\text{id})}\rmH^2(X_{\Fpbar},k_\lambda)_\fkm^2.$$
	
	Therefore we have an identification
	$$\bfS\cong\ker\left(\rmH^0(\bbF_{p^g},\rmH^2(X_{\Fpbar},k_\lambda))_\fkm\rightarrow \rmH^2(X_0(\fkp)_{\Fpbar},k_\lambda)_\fkm\xrightarrow{i_{3}^*}\rmH^2(\calX_{3\Fpbar},k_\lambda)_\fkm\right).$$
This map factors as $$\rmH^0(\bbF_{p^g},\rmH^2(X_{\Fpbar},k_\lambda))_\fkm\rightarrow \rmH^2(Z_{1\Fpbar}\cup Z_{2\Fpbar},k_\lambda)_\fkm\hookrightarrow \rmH^2(\calX_{3\Fpbar},k_\lambda)_\fkm$$	
Here the second map is an injection by \cite[Lemma 3.26 (2)]{Liu1} since $\calX_{3\Fpbar}\cong \mathcal{Z}_{1\Fpbar}\cup\mathcal{Z}_{2\Fpbar}$ where $\mathcal{Z}_{1\Fpbar}$ and $ \mathcal{Z}_{2\Fpbar}$ are $\bbP^1$-fibrations over $Z_{1\Fpbar}$ and $Z_{2\Fpbar}$  respectively and the intersection  $\mathcal{Z}_{1\Fpbar}\cap\mathcal{Z}_{2\Fpbar}$ is $0$-dimensional. Therefore we may identify $\bfS$ with the domain in the definition of $\Phi_\fkm^*$,

Consider the following commutative
diagram of $\text{Gal}(\Fpbar/\bbF_{p^g})$-modules with exact rows:

\begin{equation}\xymatrix{& &\rmH^2_c(X^{\text{ord}}_{\Fpbar},k_\lambda)_\fkm\ar[d]^{\pi_2^*-\pi_1^*\text{Fr}'}\ar[r] &\rmH^2(X_{\Fpbar},k_\lambda)_\fkm \ar[d]^{\pi_2^*-\pi_1^*\text{Fr}'} &\\
	0 \ar[r] & \rmH^1(\calX_{3\Fpbar},k_\lambda)_\fkm \ar[r]^-{\alpha} \ar[d]^{\Delta}&\rmH^2_c(X_{\Fpbar}^{\text{ord}},k_\lambda)_\fkm^2 \ar[r]\ar@{=}[d]&\rmH^2(X_0(\fkp)_{\Fpbar},k_\lambda)_\fkm \ar[r]^-{i_{3}^*}\ar[d]^{(i_{1}^*,i_{2}^*)}& \rmH^2(\calX_{3\Fpbar},k_\lambda)_\fkm\ar[d]^\Delta\\
0 \ar[r] & \rmH^1(Z_{1\Fpbar}\cup Z_{2\Fpbar},k_\lambda)_\fkm^2 \ar[r]^-{\gamma\oplus\gamma} &\rmH^2_c(X_{\Fpbar}^{\text{ord}},k_\lambda)_\fkm^2 \ar[r]&\rmH^2(X_{\Fpbar},k_\lambda)_\fkm^2 \ar[r]^{\delta\oplus\delta}& \rmH^2(Z_{1\Fpbar}\cup Z_{2\Fpbar},k_\lambda)_\fkm^2}.\end{equation}
Here the middle row is the exact sequence of cohomology arising from the triple $$\calX_{3\Fpbar}\xrightarrow{i_3} X_0(\fkp)_{\Fpbar}\leftarrow X_0(\fkp)_{\Fpbar}-\calX_{3\Fpbar}\cong X^{\mathrm{ord}}\sqcup X^{\mathrm{ord}}.$$ The maps  $\Delta$  and $\Delta'$ are induced by the inclusion maps $i_{12}, i_{13}$ restricted to $Z_{1\Fpbar}\cup Z_{2\Fpbar}$.

 Now recall $\Phi_\fkm^*$ is identified with the coboundary map after taking cohomology of the short exact sequence:
\begin{equation}\label{coboundary}0\rightarrow \rmH^1(Z_{1\Fpbar}\cup Z_{2\Fpbar},k_\lambda)_\fkm \xrightarrow{\gamma} \rmH^2_c(X_{\Fpbar}^{\text{ord}},k_\lambda)_\fkm\rightarrow\text{coker}(\gamma)\rightarrow 0\end{equation}
To identify $\bfR$ with the codomain $\rmH^1(\bbF_{p^g},\rmH^1(Z_{1\Fpbar}\cup Z_{2\Fpbar},k_\lambda))_\fkm,$ note that by definition we have a canonical identification $$\bfR\cong \ker\left(\text{coker}(\alpha)\xrightarrow{(i_{1*},i_{2*})}\text{coker}(\gamma\oplus\gamma)\right).$$ We then use the snake lemma applied to  the diagram:
\[\xymatrix{0 \ar[r] & \rmH^1(\calX_{3\Fpbar},k_\lambda)_\fkm \ar[r]^-{\alpha} \ar[d]^\Delta&\rmH^2_c(X_{\Fpbar}^{\text{ord}},k_\lambda)_\fkm^2 \ar[r]\ar@{=}[d]&\text{coker}(\alpha) \ar[r]\ar[d]& 0\\
0 \ar[r] & \rmH^1(Z_{1\Fpbar}\cup Z_{2\Fpbar},k_\lambda)_\fkm^2 \ar[r]^-{\gamma\oplus\gamma} &\rmH^2_c(X_{\Fpbar}^{\text{ord}},k_\lambda)_\fkm^2 \ar[r]&\text{coker}(\gamma\oplus\gamma) \ar[r] & 0}\] to deduce an isomorphism 
$$\bfR\cong \rmH^1(Z_{1\Fpbar}\cup Z_{2\Fpbar},k_\lambda)_\fkm\cong \rmH^1(\bbF_{p^g},\rmH^1(Z_{1\Fpbar}\cup Z_{2\Fpbar},k_\lambda)_\fkm)$$ where we have identified $\text{coker}(\Delta)$ with $\rmH^1(Z_{1\Fpbar}\cup Z_{2\Fpbar},k_\lambda)_\fkm$ via projection onto the second factor and the second isomorphism follows from Definition \ref{def: level raising prime} (3) which implies $\Gal(\Fpbar/\bbF_{p^g})$ acts trivially on $\rmH^1(Z_{1\Fpbar}\cup Z_{2\Fpbar},k_\lambda)_\fkm$. Using these identifications, we may now compare the maps $\Phi_\fkm^*$ and $\Psi_\fkm$.

Let $x\in\bfS\subset\rmH^0(\bbF_{p^g},\rmH^2(X_{\Fpbar},k_\lambda))_\fkm$. We will compute $\Psi_\fkm(x)\in\bfR$ considered as an element of $\text{coker}(\Delta)\cong\rmH^1(Z_{1\Fpbar}\cup Z_{2\Fpbar},k_\lambda)_\fkm $ and show that it coincides with the image under the coboundary map of (\ref{coboundary}). 

To compute $\Psi_\fkm(x)$ we let
 $\tilde{x}\in \rmH^2_c(X^{\text{ord}}_{\Fpbar},k_\lambda)_\fkm$ be a lift of $x$, which exists since $x$ maps to $0$ in $\rmH^2(Z_{1\Fpbar}\cup Z_{2\Fpbar},k_\lambda)_\fkm$. Moreover we may assume $\tilde{x}$ is fixed by $S_\fkp$ since $S_\fkp$ acts semi-simply on $\rmH^2_c(X^{\text{ord}}_{\Fpbar},k_\lambda)_\fkm$. Then $\pi_2^*(\tilde{x})-\pi_1^*\text{Fr}'(\tilde{x})\in \rmH^2_c(X_{\Fpbar}^{\text{ord}},k_\lambda)^2_\fkm$ is an element lifting $\pi_2^*(x)-\pi_1^*\text{Fr}'(x)\in\ker(i_{1}^*,i_{2}^*)$, hence comes from an element $$c(\tilde{x})\in \rmH^1(Z_{1\Fpbar}\cup Z_{2\Fpbar},k_\lambda)_\fkm^2.$$By definition, $\Psi_\fkm(x)$ is the image of $\pi_2^*(x)-\pi_1^*\mathrm{Fr}'(x)\in\ker(i_1^*,i_2^*)$ in  $\mathrm{coker}(\Delta)$. Therefore the image of $c(\tilde{x})$ is identified with $\Psi_{\fkm}(x)$.
 
 However we also have $$\pi_2^*(\tilde{x})-\pi_1^*\text{Fr}'(\tilde{x})=(S_{\fkp}^{-1}\text{Fr}'(\tilde{x}),\tilde{x})-(\text{Fr}'(\tilde{x}),\text{Fr}_\fkp(\tilde{x}))\cong (0,(1-\text{Fr}_\fkp)(\tilde{x})).$$
 By definition of the coboundary map, the image of this element in $\text{coker}(\Delta)\cong \rmH^1(Z_{1\Fpbar}\cup Z_{2\Fpbar},\Lambda)_\fkm$ is equal to $\Phi_\fkm ^*(x)$.
\end{proof}

We finish by stating the following corollary.

\begin{cor}
Let $p$ be a $\lambda$-level raising prime and suppose that  Assumption \ref{ass: property of l} is satisfied. Then the cycle  class map $$\mathrm{H}_\calM^{3}(\scrS_K(G_{\emptyset_\fka})_{\bbF_{p^g}},k_\lambda(2))_\fkm\rightarrow \mathrm{H}^{3}(\scrS_K(G_{\emptyset_{\fka}})_{\bbF_{p^g}},k_\lambda(2))_\fkm$$ is an isomorphism.
\end{cor}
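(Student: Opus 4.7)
\emph{Plan.} The surjectivity is essentially a restatement of Proposition \ref{prop: AJ surjective for surface}. Indeed, writing $X := \scrS_K(G_{\emptyset_\fka})_{\bbF_{p^g}}$, the vanishing $\rmH^3(X_{\Fpbar},k_\lambda(2))_\fkm = 0$ from Proposition \ref{prop: middle cohomology non-zero} together with the Hochschild--Serre spectral sequence identifies $\rmH^3(X,k_\lambda(2))_\fkm$ with $\rmH^1(\bbF_{p^g},\rmH^2(X_{\Fpbar},k_\lambda(2))_\fkm)$, and the cycle class map $\mathrm{Ch}^2(X,1,k_\lambda)_\fkm \to \rmH^3(X,k_\lambda(2))_\fkm$ becomes the Abel--Jacobi map $\mathrm{AJ}(\fka)_\fkm$. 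The factorization $\rmK(\fka)_\fkm \to \mathrm{Ch}^2_{\mathrm{lr}}(X,1,k_\lambda)_\fkm \to \mathrm{Ch}^2(X,1,k_\lambda)_\fkm \xrightarrow{\mathrm{AJ}(\fka)_\fkm} \rmH^3(X,k_\lambda(2))_\fkm$ is surjective on the outer composition by Proposition \ref{prop: AJ surjective for surface}, hence the cycle class map is surjective.

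For injectivity, my plan is to first reduce to showing that every class in $\mathrm{Ch}^2(X,1,k_\lambda)_\fkm$ is supported on the non-ordinary locus $Z_1 \cup Z_2$, and then to identify the resulting supported group with $\rmK(\fka)_\fkm$ via the dual graph. The localization sequence for higher Chow groups
\begin{equation*}
\mathrm{Ch}^1(Z_1 \cup Z_2,1,k_\lambda)_\fkm \to \mathrm{Ch}^2(X,1,k_\lambda)_\fkm \to \mathrm{Ch}^2(X^{\mathrm{ord}},1,k_\lambda)_\fkm
\end{equation*}
reduces the first reduction to establishing $\mathrm{Ch}^2(X^{\mathrm{ord}},1,k_\lambda)_\fkm = 0$. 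A class in $\mathrm{Ch}^2(X^{\mathrm{ord}},1,k_\lambda)$ is represented (via the complex in \S\ref{sec: dual-graph}) by data $(f_S)_{S \subset X^{\mathrm{ord}}}$ with $\sum_S \mathrm{div}(f_S) = 0$. Since the Hecke orbit of any ordinary point is Zariski dense in its connected component by Theorem \ref{thm:Hecke-Orbit quat}, an argument analogous to the one used in Proposition \ref{prop: Ihara coherent} should force such a class to be Eisenstein and thus annihilated by $\fkm$. Once this vanishing is established, we get $\mathrm{Ch}^2(X,1,k_\lambda)_\fkm = \mathrm{Ch}^2_{Z_1 \cup Z_2}(X,1,k_\lambda)_\fkm$, and by Proposition \ref{prop: level raising group surjects onto Chow group} combined with Definition \ref{def: level raising prime} (4) (which kills the $\bigoplus_S k_S^\times \otimes k_\lambda$ factor), this coincides with $\mathrm{Ch}^2_{\mathrm{lr}}(X,1,k_\lambda)_\fkm$ and is a quotient of $\rmK(\fka)_\fkm$.

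It then remains to show that the composition $\rmK(\fka)_\fkm \twoheadrightarrow \mathrm{Ch}^2(X,1,k_\lambda)_\fkm \to \rmH^3(X,k_\lambda(2))_\fkm$ is injective. Since this composition is precisely $\Psi(\fka)_\fkm$, and the proof of Proposition \ref{prop: AJ surjective for surface} identifies its dual $\Phi^*_\fkm$ with the map $\Psi_\fkm$ of Theorem \ref{thm: Ihara's Lemma} (via Poincar\'e duality on the surface and Galois duality over $\bbF_{p^g}$), the injectivity of $\Psi_\fkm$ supplied by Ihara's Lemma translates into the surjectivity of $\Phi_\fkm$; a dimension count then upgrades this to a bijection, forcing the cycle class map to be injective as well. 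The hard part of the plan is the Hecke-orbit vanishing for $\mathrm{Ch}^2(X^{\mathrm{ord}},1,k_\lambda)_\fkm$: rational functions on curves in $X^{\mathrm{ord}}$ are more flexible than global sections of line bundles, so the transfer of the argument from Proposition \ref{prop: Ihara coherent} to this higher Chow setting requires care, particularly in controlling the poles and zeros of $f_S$ under Hecke translates when one no longer has the rigidity coming from a prescribed line bundle.
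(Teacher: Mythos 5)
Your surjectivity argument is the same as the paper's and is fine: the vanishing from Proposition \ref{prop: middle cohomology non-zero} plus Hochschild--Serre identifies $\rmH^3(\scrS_K(G_{\emptyset_\fka})_{\bbF_{p^g}},k_\lambda(2))_\fkm$ with $\rmH^1(\bbF_{p^g},\rmH^2(\cdot_{\Fpbar},k_\lambda(2))_\fkm)$, and Proposition \ref{prop: AJ surjective for surface} gives surjectivity since $\Psi(\fka)_\fkm$ factors through the cycle class map. The injectivity half, however, has two genuine gaps. First, the claimed vanishing $\mathrm{Ch}^2(X^{\mathrm{ord}},1,k_\lambda)_\fkm=0$ is not something the Hecke orbit theorem can deliver by the mechanism of Proposition \ref{prop: Ihara coherent}. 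That argument works because the object being propagated along the dense Hecke orbit is the divisor of a \emph{single global section} of a fixed Hecke-equivariant line bundle, so vanishing at one ordinary point forces vanishing on a whole connected component. A class in $\mathrm{Ch}^2(X^{\mathrm{ord}},1,k_\lambda)_\fkm$ is only an element of a localized Hecke module: a chosen representative $(f_S)_S$ is in no sense stable under the Hecke correspondences, and localizing at $\fkm$ imposes no geometric rigidity on it (it only says the class is not killed by any element of $\bfT_{\rmR\cup\{\fkp\}}\setminus\fkm$). So there is no mechanism in the paper to force support on $Z_1\cup Z_2$, and the asserted vanishing would itself be a substantial new theorem about motivic cohomology of an open surface, not a transfer of Theorem \ref{thm:Hecke-Orbit quat}.

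Second, even granting the support reduction, the injectivity of the composite $\rmK(\fka)_\fkm\to\rmH^3(X,k_\lambda(2))_\fkm$ does not follow from anything in the paper: Ihara's Lemma (Theorem \ref{thm: Ihara's Lemma}), through the duality in the proof of Proposition \ref{prop: AJ surjective for surface}, yields only \emph{surjectivity} of $\Phi_\fkm$, and both $\rmK(\fka)_\fkm\to\rmH^3_{Z_1\cup Z_2}(X,k_\lambda(2))_\fkm$ and $\rmH^3_{Z_1\cup Z_2}(X,k_\lambda(2))_\fkm\to\rmH^3(X,k_\lambda(2))_\fkm$ can have nontrivial kernels; your ``dimension count'' would require a mod-$\lambda$ multiplicity-one comparison of the $\fkp$-new side with $\rmH^1(\bbF_{p^g},\rmH^2(X_{\Fpbar},k_\lambda(2))_\fkm)$ that is neither stated nor available under the given assumptions. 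The paper's route is much shorter and avoids all of this: injectivity is the $i=j+1$ case of the Beilinson--Lichtenbaum/Voevodsky theorem (the citation \cite[Theorem 6.17]{Voe1}), which for a smooth variety gives injectivity of $\rmH^{n+1}_{\calM}(X,k_\lambda(n))\to\rmH^{n+1}_{\et}(X,k_\lambda(n))$; with $n=2$ this is exactly the required statement, with no Shimura-variety or level-raising input, so the entire second half of your plan should be replaced by this citation.
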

\begin{proof}
	The surjectivity follows from Proposition \ref{prop: AJ surjective for surface}. The injectivity is  \cite[Theorem 6.17]{Voe1}.
\end{proof}

\appendix
\section{Bad reduction of quaternionic Shimura surfaces}

	In this section we give a global description of the special fiber of certain quaternionic Shimura surfaces with {\it Iwahori level structure at $p$}. These are associated to the group $G_{\rmS}$ with $|\Sigma_\infty-\rmS_\infty|=2$. The idea is that only the two places $\Sigma_\infty-\rmS_\infty$ should contribute to the geometry, and thus the structural results we obtain are completely analogous to the result of Stamm \cite{Stamm} in the case of the Hilbert modular surface. Indeed the proofs are also completely analogous, with some extra technical difficulties since we must transfer the results from unitary Shimura varieties. The key difference is that we must replace the notion of the usual notion of Frobenius and Verschiebung in the case of Hilbert modular surfaces with the notion of essential Frobenius and Verschiebung. For this reason, we will refer to \cite{Stamm} for the some of the computations, because they are exactly the same.
\subsection{Moduli interpretation and local models}\label{sec:Appendix}
	 We keep the notation of \S\ref{sec:basics}. In particular $B/F$ is a totally indefinite quaternion algebra and $\rmS\subset \Sigma_p\cup\Sigma_{\infty}$ a set of even cardinality. In fact in this section we will make the following further assumptions:
	
	(1) $g:=[F:\bbQ]$ is even and the prime $p$ is inert in $F$. We write $\fkp$ for the unique prime of $F$ above $p$. 
	
	(2) $\fkp\notin \rmS$  and $|\Sigma_{\infty}-\rmS_{\infty}|=2$.
	
	We will also exclude the case $B=GL_2(F)$ and $g=2$, i.e. the case of the Hilbert modular surfaces as this case is already covered in \cite{Stamm}. In particular this assumption implies the Shimura varieties we consider are compact. We fix an identification $\Sigma_\infty\cong \{\tau_1,\dotsc,\tau_g\}$ such that $\sigma(\tau_i)=\tau_{i+1}$ and $\Sigma_{\infty}-\rmS_{\infty}=\{\tau_1,\tau_c\}$ for some $c\in\{2,\dotsc,g\}$.
	
	Let $\rmT\subset \rmS_\infty$ which  in this appendix we will assume satisfies $2|\rmT|=|\rmS|$ and let $K_p$ be the standard hyperspecial subgroup of $B^\times_{\rmS}\otimes_F F_\fkp\cong GL_2(\calO_{F_\fkp})$.  We also define $K_{0,\fkp}$ to denote the following compact open subgroup $$K_0(\fkp)=\{\left(\begin{matrix}
	a &b\\c&d
	\end{matrix}\right)\in GL_2(\calO_{F_\fkp})|\left(\begin{matrix}
	a &b\\c&d
	\end{matrix}\right)\equiv \left(\begin{matrix}
	* &*\\0&*
	\end{matrix}\right)\mod\fkp \};$$
	it is an Iwahori subgroup of $GL_2(F_\fkp)$. 
	
	For $K^p$ sufficiently small, we define $K=K_pK^p$ and $K_0(\fkp)=K_{0,\fkp}K^p$. We then have the Shimura varieties $\text{Sh}_{K_0(\fkp)}(G_{\rmS,\rmT}),\  \text{Sh}_K(G_{\rmS,\rmT})$ defined over the common reflex field $\bfE_{\rmS,\rmT}$. These are equipped with finite \'etale maps $$\pi_1,\pi_2: \text{Sh}_{K_0(\fkp)}(G_{\rmS,\rmT})\rightarrow \text{Sh}_{K}(G_{\rmS,\rmT}).$$
	 Using a similar procedure as in \S\ref{sec:unitary}, we may define an integral model for $\text{Sh}_{K_0(\fkp)}(G_{\rmS,\rmT})$  as follows.

	We fix a choice  $E/F$ of CM-extension and a pair $\tilde{\rmS}=(\rmS,\tilde{\rmS}_\infty)$ as in \S\ref{sec:unitary}; in particular $E$ is split over the prime $\fkp$ and we write $\fkq$ and $\overline{\fkq}$ for the primes above $\fkp$. We make the following assumption on $\tilde{\rmS}_\infty$.
	
	\begin{ass}
		\label{ass: tildeS} Let $\tilde{\tau}\in\Sigma_{E,\infty/\fkq}$ be a lift of $\tau\in\Sigma_{\infty/\fkq}$. Then $\tilde{\tau}\in\tilde{\rmS}_\infty$ if and only if $\tau\in\rmT$.
	\end{ass}	 We let $D_{\rmS}:=B_{\rmS}\otimes E$ and $W:=D_{\rmS}$ considered as a right $D_{\rmS}$-module of rank 1. Then $W$ is equipped with a pairing $\psi$ of $(\ref{eq:pairing})$. Recall the associated groups $G_{\tilde{\rmS}}'$ and $G_{\tilde{\rmS}}''$ which fit in the diagram $$G_{\rmS}\leftarrow G_{\rmS}\times T_E\rightarrow G''_{\tilde{\rmS}}\leftarrow G'_{\tilde{S}}$$
	We  let $K_{0,\fkp}''$ denote the image of $K_{0,\fkp}\times K_{E,p}$ in $G''_{\tilde{\rmS}}(\bbQ_p)$.
	
To define the level structures for $G'_{\tilde{\rmS}}$, note that an element $g_p\in G_{\tilde{\rmS}}'(\bbQ_p)$ corresponds to an element of $\text{End}(W\otimes_{\bbQ}\bbQ_p)$ such that $$\psi(vg_p,wg_p)=c(g_p)\psi(v,w),\ \forall v,w\in W\otimes_{\bbQ}\bbQ_p.$$
	Here we abuse notation so that $\psi$ also denotes the base change of the pairing $\psi$ to $\bbQ_p$. We fix an isomorphism $\calO_{B_S}\otimes_F F_\fkp\cong\text{Mat}_2(\calO_{F_\fkp})$ which induces an identification $$\calO_{D_S}\otimes_FF_{\fkp}\cong \text{Mat}_2(\calO_{E_\fkq})\times\text{Mat}_2(\calO_{E_{\overline{\fkq}}}).$$We then define the following lattice chain $\Lambda_1\subset\Lambda_2$ of $\calO_{D_S}\otimes_F F_\fkp$-modules where
	$$\Lambda_1=\left(\begin{matrix}
	\calO_\fkq &\fkq\\
	\calO_{\fkq} &\fkq
	\end{matrix}\right)\oplus\left(\begin{matrix}
	\calO_{\overline{\fkq}}& \calO_{\overline{\fkq}}\\
	\calO_{\overline{\fkq}} & \calO_{\overline{\fkq}}
	\end{matrix}\right),\ \ \Lambda_2=\left(\begin{matrix}
	\calO_\fkq & \calO_\fkq \\
	\calO_{\fkq} & \calO_\fkq 
	\end{matrix}\right)\oplus\left(\begin{matrix}
	{\overline{\fkq}}^{-1} &\calO_{\overline{\fkq}}\\
	{\overline{\fkq}}^{-1} &\calO_{\overline{\fkq}}
	\end{matrix}\right)$$
	
	Let $K_{0,\fkp}'\subset G_{\tilde{\rmS}}'(\bbQ_p)$ denote the compact open subgroup stabilizing the lattice chain $\Lambda_2\subset\Lambda_1$. For a sufficiently small compact open subgroup of $K'^p\subset G_{\tilde{\rmS}}'(\bbA_f^p)$ (resp. $K''^p\subset G_{\tilde{\rmS}}''(\bbA_f^p)$), we write $K'_0(\fkp)$ (resp. $K''_0(\fkp)$) for the compact open subgroup $K'_{0,\fkp}K'^p\subset G_{\tilde{\rmS}}'(\bbA_f)$ (resp. $K''_{0,\fkp}K''^p\subset G_{\tilde{\rmS}}''(\bbA_f))$. We then have the associated Shimura varieties $\text{Sh}_{K_0'(\fkp)}(G_{\tilde{\rmS}}')$, $\text{Sh}_{K_0''(\fkp)}(G_{\tilde{\rmS}}'')$ and the inverse limit schemes: $$\text{Sh}_{K'_{0,\fkp}}(G_{\tilde{\rmS}}'):=\varprojlim_{K'^p}\text{Sh}_{K'_0(\fkp)}(G'_{\tilde{\rmS}}),\ \ \ \text{Sh}_{K''_{0,\fkp}}(G_{\tilde{\rmS}}''):=\varprojlim_{K''^p}\text{Sh}_{K''_0(\fkp)}(G''_{\tilde{\rmS}})$$
	
	As in the quaternionic case, there are finite \'etale maps $$\pi'_1,\pi_2':\text{Sh}_{K'_0(\fkp)}(G_{\tilde{\rmS}}')\rightarrow\text{Sh}_{K'}(G_{\tilde{\rmS}}')$$
	$$\pi''_1,\pi_2'':\text{Sh}_{K''_0(\fkp)}(G_{\tilde{\rmS}}'')\rightarrow\text{Sh}_{K''}(G_{\tilde{\rmS}}'').$$
	
	As before we also have the identification of neutral components:
	$$\text{Sh}_{K_{0,\fkp}}(G_{\rmS,\rmT}))_{\overline{\bbQ}_p}^\circ\xleftarrow{\sim}\text{Sh}_{K''_{0,\fkp}}(G''_{\tilde{\rmS}})_{\overline{\bbQ}_p}^\circ\xrightarrow{\sim}\text{Sh}_{K'_{0,\fkp}}(G'_{\tilde{\rmS}})_{\overline{\bbQ}_p}^\circ.$$
	In fact these isomorphisms descend to $\bbQ_p^{\mathrm{ur}}$, and this will allow us to transfer the integral models that we will construct.
	
	In order to define integral models, we follow the procedure in \cite{RZ} adapted to our situation.
	
	We first make the following definition.
	
	\begin{defn}
		Let $A/S$ be the abelian variety associated to some point of $\underline{\rmSh}_{K'}(G'_{\tilde{\rmS}})$. A {\it cyclic isogeny} $$\rho:A\rightarrow A'$$ is an isogeny of abelian varieties over $S$ such that the kernel  of $\rho$ is an $\calO_{D_{\rmS}}$-stable subgroup $H_\fkq\oplus H_{\overline{\fkq}}$ of $A[\fkq]\oplus A[\overline{\fkq}]$ of order $|k_\fkp|^4$ such that $H_\fkq$ and $H_{\overline{\fkq}}$ are dual to one another under the pairing $$H_\fkq\times H_{\overline{\fkq}}\rightarrow \mu_p$$ induced by the polarization. Moreover we require  that  the induced action of $\calO_{D_{\rmS}}$ on $A'$ to satisfy the condition (\ref{eq:char poly 1}).
	\end{defn}

	We now consider the moduli problem $\underline{\text{Sh}}_{K'_0(\fkp)}(G'_{\tilde{\rmS}})$ that associates to an  $\calO_{{\bfE}_{\tilde{\rmS},\tilde{v}}}$-scheme $S$ the set of isomorphism classes of tuples $(\rho:A\rightarrow A',\iota,\lambda,\epsilon_{K'^p})$ where :
	
	$\bullet$ $(A,\iota,\lambda,\epsilon_{K'^p})$ is an $S$-point of $\underline{\text{Sh}}_{K'}(G'_{\tilde{\rmS}})$.
	
	$\bullet$ $\rho:A\rightarrow A'$ is a cyclic isogeny.
	
	Then $\underline{\text{Sh}}_{K'_0(\fkp)}(G'_{\tilde{\rmS}})$ is representable by a quasi-projective variety over $\calO_{{\bfE}_{\tilde{\rmS},\tilde{v}}}$  and it is an integral model for ${\text{Sh}}_{K'_0(\fkp)}(G'_{\tilde{\rmS}})$. The two degeneracy maps extend to maps also denoted $\pi_1,\pi_2$ of $\calO_{{\bfE}_{\tilde{\rmS},\tilde{v}}}$-schemes:
	$$\pi_1,\pi_2:\underline{\text{Sh}}_{K'_0(\fkp)}(G'_{\tilde{\rmS}})\rightarrow\underline{\text{Sh}}_{K'}(G'_{\tilde{\rmS}}).$$

These maps can be described explicitly in terms of the moduli interpretation as follows. $\pi_1$ sends the tuple $(\rho:A\rightarrow A',\iota,\lambda,\epsilon_{K'^p})$ to $(A,\iota,\lambda,\epsilon_{K'^p})$. To define $\pi_2$, note that given a tuple $(\rho:A\rightarrow A',\iota,\lambda,\epsilon_{K'^p})$, since $H_\fkq\oplus H_{\overline{\fkq}}:=\ker(\rho)$ is $\calO_{D_{\rmS}}$-stable, the action of $\calO_{D_{\rmS}}$ on $A$ extends to an action of $A'$. Moreover the polarization $\lambda$ and level structure $\epsilon_{K'^p}$ induce a polarization $\lambda'$ and level structure $\epsilon'_{\rmK'^p}$ on $A'$. It is easy to check the tuple $(A',i',\lambda',\epsilon'_{K'^p})$ satisfies the conditions defining a point of $\underline{\text{Sh}}_{K'}(G'_{\tilde{\rmS}})$. This defines the map $\pi_2$.

In order to transfer the integral model to the quaternionic side, we can use the construction in \cite[\S2]{TX}. Note however that since the models we construct do not satisfy the correct extension property, there is a subtlety in defining the  $G^{\mathrm{ad}}(\bbQ)^+$-action on the integral model. We may instead use a direct description of this action as in  \cite[\S4.4]{KP} by twisting abelian varieties; the rest of the argument then goes through and we obtain an integral model $\underline{\Sh}_{K_0(\fkp)}(G_{\rmS,\rmT})$ for $\Sh_{K_0(\fkp)}(G_{\rmS,\rmT})$ over $\calO_{{\bfE}_{\tilde{\rmS},\tilde{v}}}$. \begin{rem}Alternatively, we may use \cite[Corollar 2.13]{TX} to define a universal $p$-divisible group with $\calD^\circ$-structure over $\underline{\rmSh}_K(G_{\rmS,\rmT})_{\calO_{{\bfE}_{\tilde{\rmS},\tilde{v}}}}$ in the sense of Definition \ref{def: D^0 structure}. We may then define $\Sh_{K_0(\fkp)}(G_{\rmS,\rmT})$ as classifying cyclic isogenies of this universal $p$-divisible group. We refer to \cite{Car} for the details in the case of Shimura curves. We prefer to transfer the results from the unitary Shimura variety since in this case we may directly apply theorems which are known for certain PEL type Shimura varieties.
	\end{rem}

The integral model $\underline{\Sh}_{K_0(\fkp)}(G_{\rmS,\rmT})$ is equipped with degeneracy maps $$\pi_1,\pi_2:\underline{\Sh}_{K_0(\fkp)}(G_{\rmS,\rmT})\rightarrow \underline{\Sh}_{K}(G_{\rmS,\rmT}).$$
Note also that we are not interested in any canonicity properties of these models. The only thing we use is the existence of such a model with the necessary geometric properties that we will describe in this section. The integral model $\underline{\Sh}_{K_0(\fkp)}(G_{\rmS,\rmT})$ is constructed from the connected component $$\underline{\Sh}_{K_{0,\fkp}}(G_{\rmS,\rmT})_{\Z_p^{\mathrm{ur}}}^\circ\xrightarrow{\sim}\underline{\Sh}_{K'_{0,\fkp}}(G'_{\tilde{\rmS}})_{\Z_p^{\mathrm{ur}}}^\circ$$
	and the action of a certain group $\calE_{G_{\rmS,\rmT},p}$, cf. \cite[2.11]{TX}.
	
	For ease of notation, we shall write $X'$ for the special fiber of $\underline{\mathrm{Sh}}_{K'}(G'_{\tilde{\rmS}})$ and $X'_0(\fkp)$ for the special fiber of $\underline{\text{Sh}}_{K'_0(\fkp)}(G'_{\tilde{\rmS}})$. We will also write $X$ and $X_0(\fkp)$ for the special fibers of $\underline{\text{Sh}}_{K}(G_{\rmS,\rmT})$ and $\underline{\text{Sh}}_{K_0(\fkp)}(G_{\rmS,\rmT})$ respectively.
	
	We begin with a few basic facts concerning these moduli spaces.
	\begin{prop}
(1) $\underline{\mathrm{Sh}}_{K_0'(\fkp)}(G'_{\tilde{\rmS}})$ is a flat normal scheme over $\calO_{{\bfE}_{\tilde{\rmS},\tilde{v}}}$ with reduced special fiber.

(2) Each irreducible component of $X'_0(\fkp)$ is smooth of dimension 2.
	\end{prop}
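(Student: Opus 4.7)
The plan is to reduce both statements to a computation with the Rapoport--Zink local model. By construction $\underline{\mathrm{Sh}}_{K'_0(\fkp)}(G'_{\tilde{\rmS}})$ is a PEL moduli problem with Iwahori-type parahoric level at $\fkp$, so it admits a local model diagram
\[
\underline{\mathrm{Sh}}_{K'_0(\fkp)}(G'_{\tilde{\rmS}}) \xleftarrow{\varphi} \widetilde{\mathrm{Sh}} \xrightarrow{\psi} M^{\mathrm{loc}},
\]
where $\varphi$ is a torsor under a smooth affine group scheme and $\psi$ is smooth, both of relative dimension $\dim G'_{\tilde{\rmS}}$. In particular, $\varphi$ and $\psi$ identify \'etale-local rings of $\underline{\mathrm{Sh}}_{K'_0(\fkp)}(G'_{\tilde{\rmS}})$ with those of $M^{\mathrm{loc}}$ up to smooth factors, so flatness, normality, reducedness of the special fiber, and smoothness of irreducible components can all be checked on $M^{\mathrm{loc}}$.

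First I would describe $M^{\mathrm{loc}}$ explicitly. Since $p$ splits in $E$ as $\fkq\overline{\fkq}$, the decomposition $\calO_{D_{\rmS},p}\cong M_2(\calO_{E_\fkq})\times M_2(\calO_{E_{\overline{\fkq}}})$ combined with Morita equivalence and the duality induced by the polarization reduces the local model to a moduli of filtrations on a chain $\Lambda^\circ_1\subset \Lambda^\circ_2$ of free $\calO_{E_\fkq}\otimes_{\bbZ_p}\calO_{\bfE_{\tilde{\rmS},\tilde{v}}}$-modules of rank $2$. The determinant condition (\ref{eq:char poly 1}) together with Assumption~\ref{ass: tildeS} forces the $\tilde{\tau}$-part of each Hodge filtration to have rank $s_{\tilde{\tau}}$. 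I then decompose $M^{\mathrm{loc}}$ as a product over $\tilde{\tau}\in\Sigma_{E,\infty/\fkq}$ of factors $M^{\mathrm{loc}}_{\tilde{\tau}}$. When $s_{\tilde{\tau}}\in\{0,2\}$ there is a unique filtration, so $M^{\mathrm{loc}}_{\tilde{\tau}}$ is a point. When $s_{\tilde{\tau}}=1$ there is no restriction coming from $\rmT$, so the lattice-chain condition reduces to the Deligne--Rapoport local model for $\mathrm{GL}_2$ with $\Gamma_0(p)$-structure, i.e.\ (\'etale locally) to
\[
\mathrm{Spec}\,\calO_{\bfE_{\tilde{\rmS},\tilde{v}}}[x,y]/(xy-\pi),
\]
where $\pi$ is a uniformiser. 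This is regular, flat, with special fiber a transverse union of two smooth lines.

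By our hypothesis $|\Sigma_\infty-\rmS_\infty|=2$, exactly two embeddings $\tilde{\tau}_1$ and $\tilde{\tau}_c$ satisfy $s_{\tilde{\tau}}=1$, so
\[
M^{\mathrm{loc}}\cong \mathrm{Spec}\,\calO_{\bfE_{\tilde{\rmS},\tilde{v}}}[x_1,y_1,x_c,y_c]/(x_1y_1-\pi,\,x_cy_c-\pi)
\]
\'etale locally. This ring is a complete intersection, hence Cohen--Macaulay, and its singular locus is cut out by $\{x_1=y_1=0\}\cap\{x_c=y_c=0\}$, which has codimension $3$. Serre's criterion therefore gives normality, and the special fiber $k[x_1,y_1,x_c,y_c]/(x_1y_1,x_cy_c)$ is visibly reduced with four smooth $2$-dimensional irreducible components meeting pairwise transversally. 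Transferring via the local model diagram yields part (1); part (2) follows by the same transfer, since the smooth projection $\psi$ sends each irreducible component of the special fiber of $\underline{\mathrm{Sh}}_{K'_0(\fkp)}(G'_{\tilde{\rmS}})$ smoothly onto one of the four smooth components of the special fiber of $M^{\mathrm{loc}}$, and the dimension count works out.

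The main obstacle is the explicit identification of $M^{\mathrm{loc}}$ in the presence of the quaternion algebra, the involution $*$, and the lattice chain coming from $K'_{0,\fkp}$. Once the bookkeeping is carried out (using the same Morita-type reductions as in \cite[\S3]{TX} and the classical Iwahori local models for $\mathrm{GL}_2$), the geometric claims reduce to the elementary commutative algebra of the displayed ring above.
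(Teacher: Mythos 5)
Your proposal is correct and follows essentially the same route as the paper, which establishes both statements by passing to the Rapoport--Zink local model via the standard local model diagram and invoking the known properties of that model (the paper simply cites these, whereas you carry out the explicit identification of the local model as, \'etale locally, a product of two Deligne--Rapoport models $xy=\pi$ and verify flatness, normality, reducedness and smoothness of components by hand). No essential difference in method, only in how much of the local-model computation is made explicit.
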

\begin{proof}
	Both these properties follow from the corresponding properties of the local models; see \cite{Go1}. 
\end{proof}

\subsection{Group theoretic preliminaries}\label{sec: Appendix B(G)}
Let $G$ be a reductive group over $\Z_p$; in particular its generic fiber is quasi-split and splits over an unramified extension of $\bbQ_p$. Let $k$ an algebraically closed field of characteristic $p$, $L:=W(k)[\frac{1}{p}]$ and $\Ok_L=W(k)$. We write  $\sigma$ for the Frobenius element of $\text{Aut}(L/\bbQ_p)$. We fix a maximal $\bbQ_p^{\mathrm{ur}}$-split  torus  $T$ and a Borel subgroup $B$ containing $G$. We let $X_*(T)$ (resp. $X^*(T)$) denote the group of cocharacters (resp. characters) of $T$, and $X_*(T)_+$ (resp. $X^*(T)_+$) the submonoid of dominant cocharacters (resp. characters) with respect to the choice of Borel subgroup $B$.

For $b\in G(L)$ we let $[b]=\{g^{-1}b\sigma(g)\in G(L)|  g\in G(L)\}$ denote its $\sigma$-conjugacy class in $G(L)$ and we write $B(G)$ to denote the set of all $\sigma$-conjugacy classes. The set $B(G)$ has been classified by Kottwitz in \cite{Ko1}, \cite{Ko2}. 

For $b\in G(L)$ we let $\overline{\nu}_b\in X_*(T)_{\bbQ,+}^\sigma$ denote its dominant Newton cocharacter; it depends only on the image of $b$ in $B(G)$. We let $$\kappa_G:G(L)\rightarrow \pi_1(G)_\Gamma$$ denote the Kottwitz homomorphism, where $\pi_1(G)_\Gamma$ denotes the $\Gamma$-coinvariants of $\pi_1(G)$. This induces a map, also denoted $\kappa_G$ from $B(G)$ to $\pi_1(G)_\Gamma$. 

By \cite[\S4.13]{Ko2}, the map $$B(G)\rightarrow X_*(T)_{\bbQ,+}^\sigma\times \pi_1(G)_\Gamma,\ \ [b]\mapsto (\overline{\nu}_b,\kappa_G(b))$$
is injective.

We define a partial order on the set $X_*(T)_{\bbQ,+}^\sigma\times \pi_1(G)_\Gamma$ by setting $(\nu_1,\kappa_1)\leq (\nu_2, \kappa_2)$ if $\kappa_1=\kappa_2$ and $\nu_2-\nu_1$ is a non-negative rational linear combination of positive coroots.

\begin{eg}\label{eg:B(G)}(1) Let $G=GL_n$. Then we have a bijection
	$$B(G)\leftrightarrow \{\text{isocrystals over $k$ of height $n$}\}$$
	given by taking $[b]\in B(G)$ to the isocrystal $(L^n,b\sigma)$. We may take $T$ to be the diagonal matrices and $B$ the upper triangular matrices. There is an identification $X_*(T)\cong \bbQ^n$ and we have $\pi_1(G)\cong \bbZ$. The first isomorphism identifies $$X_*(T)_{\bbQ,+} \leftrightarrow \{(\nu_1,\dotsc,\nu_n)\in\bbQ^n|\nu_1\geq\dotsc\geq\nu_n)\in\bbQ^n$$
	and $\kappa_G$ takes $b\in G(L)$ to the valuation of its determinant. For $[b]\in B(G)$, the element $\overline{\nu}_b$ corresponds under the above identification to the Newton slopes of the associated isocrystal. In this case $\kappa_G(b)$ is determined by the $\overline{\nu}_b$, and Kottwitz's classification recovers the Dieudonn\'e-Manin classification of isocrystals by their Newton slopes.

	(2)
	Let $\rmF$ denote a finite unramified extension of $\bbQ_p$ of degree $d$ and let $G=\text{Res}_{\calO_{\rmF}/\bbZ_p}GL_n$. As in the previous example, the association $[b]\mapsto (\rmF\otimes_{\bbQ_p}L,b\sigma)$ defines a bijection between $B(G)$ and the set of isocrystals of height $dn$ with an action of $\rmF$. Given such an isocrystal $N$, we have a decomposition $$N\cong \prod_{\tau:\rmF\rightarrow L}N_\tau$$ where $N_\tau$ is the subspace of $N$ over which $\rmF$ acts via the embedding $\tau:\rmF\rightarrow L$. As $N_\tau$ is fixed by $b\in G(L)$ and $\sigma$ induces a bijection between $N_\tau$ and $N_{\sigma\tau}$, it follows that $(b\sigma)^d$ takes $N_\tau$ to itself. This gives $N_\tau$ the structure of  a $\sigma^d$-isocrystal which is easily seen to be independent of the choice of $\tau$. One checks that the association $N\mapsto N_\tau$ induces a bijection between.
	$$\{\text{isocrystals of height $dn$ with an action of $\rmF$}\}\leftrightarrow \{\text{$\sigma^d$-isocrystals of height $n$}\}.$$	
	
	 We let $T$ denote the diagonal torus and $B$ the Borel subgroup of upper triangular matrices. We have an
	  identification $$X_*(T)^\sigma_{\bbQ}\cong \bbQ^n$$ identifying $$X_*(T)^\sigma_{\bbQ,+}\leftrightarrow \{(\nu_1,\dotsc,\nu_n)|\nu_1\geq\dotsc\geq\nu_n\}.$$ For $[b]\in B(G)$, the Newton cocharacter $\overline{\nu}_b$ corresponds under the above identification to the slopes of the $\sigma^d$-isocrystal $(N_\tau,(b\sigma)^d)$.
\end{eg}
 By the Cartan decomposition we have  an identification $$G(L)\cong \coprod_{\mu\in X_*(T)_+} G(\calO_L)\mu(p)G(\calO_L)$$
 Now fix a cocharacter $\mu \in X_*(T)_+$. We define $\overline{\mu}\in X_*(T)$ to be the Galois average of $\mu$. More precisely we take a Galois extension $\rmE/\bbQ_p$ over which $\mu$ is defined and we define $$\overline{\mu}=\frac{1}{[\rmE:\bbQ_p]}\sum_{\tau\in \text{Gal}(\rmE/\Q_p)}\tau(\mu).$$
 We also define $\mu^\natural$ to be the image of $\mu$ in $\pi_1(G)_\Gamma$.

 \begin{prop}[{\cite[Thm. 4.2]{RaRi}}]\label{RaRi} Let $b\in G(\calO_L)\mu(p)G(\calO_L)$. Then we have $$(\overline{\nu}_b,\kappa_G(b))\leq(\overline{\mu},\mu^\natural)$$
 \end{prop}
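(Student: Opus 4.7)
The plan is to establish the two components of the partial order separately: first the equality $\kappa_G(b) = \mu^\natural$, then the Newton inequality $\overline{\nu}_b \leq \overline{\mu}$. The first is essentially formal, while the second is the generalized Mazur inequality, which is the real content.

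For the Kottwitz component, I would argue as follows. The Kottwitz homomorphism $\kappa_G$ is invariant under $\sigma$-conjugation (this is part of its defining properties — it factors through $B(G)$), so it suffices to compute $\kappa_G(\mu(p))$ for the representative $\mu(p) \in G(\calO_L) \mu(p) G(\calO_L)$. But $\kappa_G$ also factors through $G(L)/G(\calO_L)^{\mathrm{der}}$ modulo the appropriate quotients, and on cocharacters $\mu \in X_*(T)$ the composition $X_*(T) \to \pi_1(G) \to \pi_1(G)_\Gamma$ sends $\mu$ to $\mu^\natural$. Combining, $\kappa_G(b) = \mu^\natural$.

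For the Newton component, I would reduce to the case $G = GL_n$ by the following device: choose a faithful representation $\rho: G \hookrightarrow GL(V)$ defined over $\bbZ_p$. Functoriality of $\overline{\nu}_{(-)}$ with respect to representations implies that the Newton cocharacter of $\rho(b)$ equals $\rho \circ \overline{\nu}_b$, and similarly $\rho \circ \mu$ is the Hodge cocharacter for $\rho(b)$. Furthermore, one can test the partial order $\leq$ on $X_*(T)^\sigma_\bbQ$ via representations: the condition that $\overline{\mu} - \overline{\nu}_b$ is a non-negative rational combination of positive coroots can be detected by pairing with dominant weights, and those are obtained from characters of $GL_n$-representations after decomposing into highest-weight components. (This step requires some care regarding which weights are needed and using the Borel--de Siebenthal type argument for the root system of $G$.) This reduces the problem to the classical Mazur inequality for $GL_n$.

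For $G = GL_n$, the statement becomes: given an isocrystal $(V_L, b\sigma)$ with an $\calO_L$-stable lattice $\Lambda$ such that the elementary divisors of $\Lambda / b\sigma(\Lambda)$ are given by $\mu = (\mu_1 \geq \dotsc \geq \mu_n)$, the Newton polygon of $(V_L, b\sigma)$ lies on or below the Hodge polygon determined by $\mu$, and the endpoints agree. The endpoints agreeing is precisely the Kottwitz condition $\kappa_G(b) = \mu^\natural$ (computed as $\sum \mu_i = v_p(\det b)$). The polygon comparison is proved by applying the statement to exterior powers $\wedge^k V$ — here the Newton slope sum is $v_p$ of the top exterior determinant, which is bounded by the Hodge sum $\mu_1 + \dotsc + \mu_k$ because $b \sigma(\wedge^k \Lambda) \subset p^{\mu_1 + \dotsc + \mu_k} \wedge^k \Lambda$ by a direct matrix computation in Smith normal form.

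The main obstacle is the reduction step from general reductive $G$ to $GL_n$: while the Newton inequality is preserved under representations, recovering the inequality for $G$ from a single faithful representation requires checking that the dominance order for $G$ is pulled back from the dominance order on $GL_n$ under $\rho$. This is non-trivial because positive coroots of $GL_n$ need not be in the image of $X_*$-level maps; the correct formulation uses that dominance is detected by pairing with dominant characters, and enough dominant characters of $T$ extend to $G$-representations by the highest weight theory of split reductive groups. Handling the $\sigma$-action (passing from $X_*(T)_\bbQ$ to $X_*(T)^\sigma_\bbQ$) by averaging introduces a further subtlety that needs to be addressed before invoking Mazur.
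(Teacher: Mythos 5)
This proposition is not proved in the paper at all: it is quoted verbatim from Rapoport--Richartz \cite[Thm.\ 4.2]{RaRi}, so the only fair comparison is with the argument in that reference, which your outline does shadow (Kottwitz point by a formal computation, Newton point by reducing to Mazur/Katz for $GL_n$ via highest-weight representations). Your treatment of $\kappa_G$ is fine. But the two substantive steps are not in order. First, the $GL_n$ step as written is wrong: the inclusion you invoke, $b\sigma(\wedge^k\Lambda)\subset p^{\mu_1+\cdots+\mu_k}\wedge^k\Lambda$, is false in general --- the elementary divisors of $\wedge^k(b\sigma)(\wedge^k\Lambda)$ in $\wedge^k\Lambda$ are the $p^{\mu_{i_1}+\cdots+\mu_{i_k}}$, so the correct containment uses the sum of the $k$ \emph{smallest} $\mu_i$; moreover a containment $F M\subset p^c M$ bounds slopes from \emph{below}, not above. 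The standard fix is either the reverse inclusion $p^{\mu_1+\cdots+\mu_k}\wedge^k\Lambda\subset\wedge^k(b\sigma)(\wedge^k\Lambda)$, which bounds the largest slope of $\wedge^k$ (i.e.\ the sum of the $k$ largest Newton slopes) from above by $\mu_1+\cdots+\mu_k$, or the lower bound on the $k$ smallest slopes combined with the determinant equality $\sum\nu_i=\sum\mu_i$. Also, ``the Newton slope sum is $v_p$ of the top exterior determinant'' conflates the largest slope of $\wedge^k$ with a determinant; the determinant only enters at $k=n$.

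Second, the reduction from general unramified $G$ to $GL_n$ is the real content of Rapoport--Richartz, and you leave it as an acknowledged gap. A single faithful representation does not suffice, for exactly the reason you note (the dominance order for $G$ is not pulled back from that of $GL_n$). The actual argument tests $\overline\mu-\overline\nu_b$ against the $\Gamma$-averages (Galois-orbit sums) of the fundamental weights, and for each such weight chooses a representation of $G$ defined over $\bbZ_p$ with that extreme weight and a lattice stabilized by the hyperspecial subgroup $G(\calO_L)$, so that Katz's inequality applies to the resulting isocrystal-with-lattice; one then needs the root-theoretic lemma that these pairings, together with the equality of Kottwitz points in $\pi_1(G)_\Gamma$, characterize the order $\leq$ on $X_*(T)^{\sigma}_{\bbQ}\times\pi_1(G)_\Gamma$, and a compatibility of $\sigma$-averaging with the pairings. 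None of this is carried out in your sketch, so as it stands the proposal is an outline of the right strategy with an incorrect key inequality at the $GL_n$ stage and the group-theoretic reduction missing.
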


	\subsection{$p$-divisible groups with $\calO$-structure and Dieudonn\'e theory.}\label{sec: Appendix O-structure}
	In this section we recall the notion of $p$-divisible groups with an action of the ring of integers of a finite unramified extension of $\bbQ_p$. 
	
	Let $\rmF$ be a finite unramified extension of $\bbQ_p$ of degree $d$ with ring of integers $\calO$. We let $q=p^d$ denote the cardinality of its residue field.
	\begin{defn}
		Let $S$ be a scheme. A $p$-divisible group with $\calO$-structure over $S$ is a pair $({\mathscr{G}},\iota)$ where ${\mathscr{G}}$ is a $p$-divisible group  over $S$ and $\iota:\calO\rightarrow \text{End}({\mathscr{G}})$ is a homomorphism.
	\end{defn}
For any $p$-divisible group with $\calO$-structure $({\mathscr{G}},\iota)$, we write ${\mathscr{G}}[p^n]$ for the kernel of multiplication by $p^n$. Then there exists an integer $h:=\text{ht}_\calO {\mathscr{G}}$, the $\calO$-height of ${\mathscr{G}}$,  such that ${\mathscr{G}}[p^n]$ has rank $q^{nh}$. It is easily verified that we have the equality $$\text{ht}{\mathscr{G}}=[\rmF:\bbQ_p]\text{ht}_{\calO}{\mathscr{G}}$$ where $\text{ht}{\mathscr{G}}$ is the usual height of ${\mathscr{G}}$ as a $p$-divisible group.

Let $S$ be a scheme in which $p$ is locally nilpotent. For $\scrG$ a $p$-divisible group over $S$, we write $\bbD({\mathscr{G}})$ for  the contravariant Dieudonn\'e crystal of ${\mathscr{G}}$. This is a locally free crystal on the crystalline site of $S$, equipped with a map $\sigma^*\bbD({\mathscr{G}})\rightarrow \bbD({\mathscr{G}})$.

Now suppose ${\mathscr{G}}$ is a $p$-divisible group with $\calO$-structure over an algebraically closed field $k$ of characteristic $p$ of $\calO$-height $n$. Then we identify $\bbD({\mathscr{G}})$ with its Dieudonn\'e module (i.e. $\bbD({\mathscr{G}})$ evaluated at $\calO_L:=W(k)$) which is a finite free $\calO_L$-module of rank $dn$ equipped with an injective $\sigma$ semi-linear map $\varphi:\bbD({\mathscr{G}})\rightarrow \bbD({\mathscr{G}})$ and a action of $\calO$. Fixing an $\calO\otimes_{\bbZ_p}\calO_L$-basis of $\bbD({\mathscr{G}})$, we obtain an element $b\in G(L)$ where $G=\text{Res}_{\calO/\bbZ_p}GL_n$, such that $\varphi=b\sigma$ under the identification $\bbD({\mathscr{G}})\cong (\calO_F\otimes_{\bbZ_p}\calO_L)^n$. The element $b$ is well-defined up to $\sigma$-conjugation by $G(\calO_L)$. 

We define the {\it Hodge polygon} of ${\mathscr{G}}$ to be the element $\mu\in X_*(T)^+$ such that $$b\in G(\calO_L)\sigma(\mu(p))G(\calO_L).$$
By Proposition \ref{RaRi}, it follows that $(\nu_b,\kappa_G(b))\leq(\overline{\mu},\mu^\natural)$. If $S$ is a scheme of characteristic $p$, its Hodge polygon  (resp. Newton polygon) is the function assigning to any geometric point $\overline{s}$ of $S$ the Hodge polygon (resp. the Newton polygon) of the base change of ${\mathscr{G}}$ to $\overline{s}$.

We now describe how the study of $p$-divisible groups which arise naturally from the moduli problem in the last subsection can be reduced to the case of $p$-divisible groups with $\calO$-structure. Recall we have the integral PEL datum $(D_{\rmS},*,W,\psi,\calO_{D_{\rmS}},\Lambda_2\subset \Lambda_1)$ and we write $\calD$ for the base change of this datum to $\bbZ_p$. Recall $g=[F:\bbQ]$. Let $D_{\rmS,p}$ denote the completion of $D_{\rmS}$ at the place $p$  and $F_p$ (resp. $E_p$) the completion of $F$ (resp. $E$) at $p$. Then $F_p\cong F_\fkp$ and $E_p\cong E_{\fkq}\times E_{\overline{\fkq}}$; we write $\calO$ for the ring of integers $\calO_{F_{\fkp}}$. Fixing isomorphisms $E_\fkq\cong F_\fkp$ and $E_{\overline{\fkq}}\cong F_{\fkp}$, we obtain an isomorphism $$D_{\rmS,p}\cong\text{Mat}_2(E_\fkq)\times\text{Mat}_2(E_{\overline{\fkq}})\cong \text{Mat}_2(F_\fkp)\times\text{Mat}_2(F_\fkp).$$ Let $\calO_{D_{\rmS,p}}$ denote the maximal order $\text{Mat}_2(\calO)\times\text{Mat}_2(\calO)$. Then the involution $*$ on $D_{\rmS,p}$ can be identified with $$(a,b)\mapsto (b^t,a^t).$$

\ignore{	\begin{defn}
	Let $S$ be a $\calO$-scheme.	A $p$-divisible group with $\calD$-structure is a triple $({\mathscr{G}},\lambda,\iota)$ where:
	
	$\bullet$ ${\mathscr{G}}$ is a $p$-divisible group over $S$ of height $\dim W$.
	
	$\bullet$ $\iota:\calO_{D_{\rmS},p}\rightarrow \text{End}({\mathscr{G}})$ is a homomorphism satisfying \begin{equation}\label{eq:char poly 4}\text{char}(\iota(a)|\text{Lie}{\mathscr{G}})=\text{char}(a|W^1).\end{equation}
	
	$\bullet$ $\lambda:{\mathscr{G}}\rightarrow {\mathscr{G}}^\vee$ is a $\bbZ_p^\times$-homogeneous polarization such that $$\iota(a)=\lambda^{-1}\circ\iota(a^*)^\vee\circ\lambda. $$
		
	\end{defn}}

Recall we have defined the notion of $p$-divisible group with $\calD$-structure in Definition \ref{def:D-structure}. It follows easily from the definitions that if $(A,\iota,\lambda,\epsilon_{K'^p})$ is an $S$-point of $\underline{\Sh}_{K'}(G_{\tilde{\rmS}}')$, then the associated $p$-divisible group $A[p^\infty]$ together with the induced $\calO_{D_{\rmS,p}}$-action and polarization is a $p$-divisible group with $\calD$-structure.

We write $W^1$ for the  sub $F_{\fkp}$ vector space over which $D_{\rmS,p}$ acts via the first factor, and we write $W^\circ$ for subspace $eW^1$ where $e$ is the idempotent $\left(\begin{matrix}
	1 &0 \\0 & 0
\end{matrix}\right)$.
We define an integer  $s_\tau\in\{0,1,2\}$ for $\tau\in\rmS_{\infty}$ by $$\label{eq:s tau}
s_{\tau}=\begin{cases} 0 & \text{if $\tau\in{\rmT}$}\\
2 & \text{if $\tau\in\rmS_\infty-\rmT$}\\
1 &\text{otherwise}
\end{cases}.
$$

\begin{defn}\label{def: D^0 structure}
	Let $S$ be  scheme over $\calO_{{\bfE}_{\tilde{\rmS},\tilde{v}}}$. A $p$-divisible group with $\mathcal{D}^\circ$-structure is a $p$-divisible group ${\mathscr{G}}$ with $\calO$-structure such that for $a\in\calO$, we have an equality:
\begin{equation}\label{eq:char poly 2}\mathrm{char}(\iota(a)|\text{Lie}{\mathscr{G}})=\prod_{\tau\in\rmS_{\infty}} (T-\tau(a))^{s_\tau}.\end{equation}
In particular this implies that $\mathrm{ht}_{\calO}{\mathscr{G}}=2$.
\end{defn}

Let ${\mathscr{G}}/S$ be a $p$-divisible with $\calD$-structure. Then we have a decomposition $${\mathscr{G}}={\mathscr{G}}_\fkq\times {\mathscr{G}}_{\overline{\fkq}}$$
where $\calO_{D_{\rmS},p}$ acts on ${\mathscr{G}}_{\fkq}$ via the projection to $\text{Mat}_2(E_{\fkq})$ and on ${\mathscr{G}}_{\overline{\fkq}}$ via the projection to $\text{Mat}_2(E_{\overline{\fkq}})$. Moreover by \cite[Lemma 4.1]{Ham2}, there exists a $p$-divisible group ${\mathscr{G}}'$ with $\calO$-structure such that ${\mathscr{G}}_{\fkq}\cong {\mathscr{G}}'^2$. The condition (\ref{eq:char poly}) and the Assumption \ref{ass: tildeS} implies that ${\mathscr{G}}'$ is a $p$-divisible group with $\calD^\circ$-structure.

The following Proposition follows from the discussion above and \cite[Corollary 4.5 (2)]{Ham2}.

\begin{prop}\label{prop:red-pdiv}
The association ${\mathscr{G}}\mapsto {\mathscr{G}}'$ induces an equivalence of categories
\begin{equation}\label{eq:equiv cat. pdiv gps}\{\text{$p$-divisible groups with $\calD$-structure}\}\xrightarrow\sim\{\text{$p$-divisible groups with $\calD^\circ$-structure}\}\end{equation}
preserving isogenies.
\end{prop}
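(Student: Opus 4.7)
The cited reference \cite[Corollary 4.5 (2)]{Ham2} does most of the work, but I would spell out the construction so that the correspondence between the characteristic polynomial conditions is transparent. The plan is to build the functor explicitly from Morita equivalence and the polarization, then to invert it.

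First I would treat the forward functor. Given a $p$-divisible group with $\mathcal{D}$-structure $(\mathscr{G},\iota,\lambda)$ over an $\calO_{\bfE_{\tilde{\rmS},\tilde{v}}}$-scheme $S$, decompose $\mathscr{G} = \mathscr{G}_\fkq \times \mathscr{G}_{\overline{\fkq}}$ using the central idempotents of $\calO_{D_{\rmS},p} \cong M_2(\calO) \times M_2(\calO)$. Apply Morita equivalence to the factor at $\fkq$: setting $e = \left(\begin{smallmatrix}1&0\\0&0\end{smallmatrix}\right)$, define $\mathscr{G}' := e\mathscr{G}_\fkq$, which inherits a natural $\calO$-action from the diagonal copy of $\calO$ inside $M_2(\calO)$. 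Standard Morita yields a canonical isomorphism $\mathscr{G}_\fkq \cong (\mathscr{G}')^{\oplus 2}$ of $p$-divisible groups with $M_2(\calO)$-action, and the polarization $\lambda$ (whose Rosati involution swaps the two matrix factors by the description of $*$) identifies $\mathscr{G}_{\overline{\fkq}} \cong \mathscr{G}_\fkq^\vee \cong ((\mathscr{G}')^\vee)^{\oplus 2}$. I would then verify that the Lie algebra character condition (\ref{eq:char poly 1}) for $\mathscr{G}$ translates to (\ref{eq:char poly 2}) for $\mathscr{G}'$: the contribution from $\mathscr{G}_{\overline{\fkq}}$ corresponds under complex conjugation to $\tilde{\tau}^c$, so by Assumption \ref{ass: tildeS} the exponents $s_{\tilde\tau}$ for $\tilde\tau \in \Sigma_{E,\infty/\fkq}$ reduce exactly to the $s_\tau$ defined just before Definition \ref{def: D^0 structure}.

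Next I would construct the quasi-inverse. Given $\mathscr{G}'$ with $\mathcal{D}^\circ$-structure, set $\mathscr{G}_\fkq := (\mathscr{G}')^{\oplus 2}$ with its natural $M_2(\calO)$-action, and $\mathscr{G}_{\overline{\fkq}} := ((\mathscr{G}')^\vee)^{\oplus 2}$ with the $M_2(\calO)$-action pulled back through $a \mapsto a^t$. Put $\mathscr{G} := \mathscr{G}_\fkq \times \mathscr{G}_{\overline{\fkq}}$ and equip it with the tautological polarization $\lambda$ coming from the duality pairing between the two factors. A direct check using Assumption \ref{ass: tildeS} shows that this $(\mathscr{G},\iota,\lambda)$ satisfies the Kottwitz condition (\ref{eq:char poly 1}). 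Morita equivalence then gives natural isomorphisms showing the two assignments are mutually quasi-inverse.

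Finally I would address the statement about isogenies. Since $e\mathscr{G}_\fkq$ is a Morita-theoretic functor on the $M_2(\calO)$-linear category, it takes $M_2(\calO)$-linear isogenies to $\calO$-linear isogenies, and conversely the doubling construction $\mathscr{G}' \mapsto (\mathscr{G}')^{\oplus 2}$ takes $\calO$-isogenies to $M_2(\calO)$-isogenies; the polarization-preservation on the $\overline{\fkq}$ side is automatic since that factor is determined by duality from the $\fkq$ side. I do not expect any serious obstacle here: the main (very mild) subtlety is tracking indices so that the character condition (\ref{eq:char poly 1}), which involves a product over all of $\Sigma_{E,\infty}$ with exponents $2s_{\tilde\tau}$, matches (\ref{eq:char poly 2}) after collapsing $\fkq$ and $\overline{\fkq}$ factors via the polarization; this is precisely where Assumption \ref{ass: tildeS} is needed. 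Alternatively, as the statement itself notes, one can simply invoke \cite[Corollary 4.5 (2)]{Ham2} once these compatibilities are in place.
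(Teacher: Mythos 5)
Your proposal is correct and follows essentially the same route as the paper: the paper's proof is precisely the preceding discussion (the $\fkq$/$\overline{\fkq}$ decomposition, Morita equivalence giving $\scrG_\fkq\cong\scrG'^2$, and the check via Assumption \ref{ass: tildeS} that the Kottwitz condition (\ref{eq:char poly 1}) descends to (\ref{eq:char poly 2})) together with the citation of \cite[Corollary 4.5 (2)]{Ham2}, and your write-up merely makes the Morita functor, the duality identification $\scrG_{\overline{\fkq}}\cong\scrG_\fkq^\vee$ coming from the polarization, and the quasi-inverse explicit. No gap here; this is a faithful expansion of the argument the paper intends.
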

Let ${\mathscr{G}}'$ be a $p$-divisible group with $\calD^\circ$-structure over an algebraically closed field $k$ of characteristic $p$. Fixing a trivialization of $\bbD({\mathscr{G}})(W(k))$ respecting the $\calO$-structure, we obtain an element $b\in G(L)$ where $G=\mathrm{Res}_{\calO/\bbZ_p}GL_2$. Let $T$ be the diagonal maximal torus of $G$, then we may identify $X_*(T)$ (resp. $X_*(T)_{\bbQ}$) with $g$ copies of $\Z^{2}$ (resp. $g$ copies of $\Q^2$) and $X_*(T)_+$  (resp. $X_*(T)_{\bbQ,+}$) with the subset such that for each factor of $\Z^2$, the terms $(a,b)$ are decreasing. The condition \ref{eq:char poly 2} implies that the Hodge polygon $\mu$ of ${\mathscr{G}}$ corresponds to $[(a_i,b_i)]_{i=1,\dotsc,g}$ where 
\begin{align*}a_i=&{\begin{cases} 1 & \text{if $i=1,c$ or $\tau_i\in\rmS_\infty-\rmT$}\\
0 & \text{if $\tau_i\in\rmT$}
\end{cases}}\\
b_i=&{\begin{cases} 1 & \text{if $\tau_i\in\rmS_\infty-\rmT$}\\
0 & \text{if $i=1,c$ or $\tau_i\in\rmT$}
\end{cases}}\end{align*}
We write $\mu\in X_*(T)$ for this cocharacter.

\begin{prop}\label{prop:Newton}
(1)	There exists exactly two elements $[b]\in B(G,\mu)$

(2) For the maximal element $[\mu(p)]$, there exists a unique $p$-divisible group with $\calO$-structure with these Newton slopes.
\end{prop}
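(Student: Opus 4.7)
The plan is to apply Example \ref{eg:B(G)}(2) to identify $B(G)$ with isomorphism classes of $\sigma^g$-isocrystals of height $2$ over $L$, and then use Proposition \ref{RaRi} to enumerate the admissible Newton polygons. First I would compute the Galois average $\overline{\mu}$ and the invariant $\mu^\natural$. Because $\Gamma$ acts on $X_*(T) \cong (\Z^2)^g$ by cyclic permutation, $\overline{\mu}$ is the constant tuple $\bigl(\tfrac{\sum_i a_i}{g},\, \tfrac{\sum_i b_i}{g}\bigr)$; using the combinatorial hypotheses (in particular $2|\rmT| = |\rmS_\infty| = g-2$, so that the only $(1,0)$-contributions come from $i = 1, c$), this gives $\overline{\mu} = \bigl(\tfrac{g+2}{2g},\,\tfrac{g-2}{2g}\bigr)$ and $\mu^\natural = g$.

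Next I would enumerate the Newton cocharacters $\overline{\nu}_b = (\nu_1,\nu_2)$ satisfying $\nu_1 \geq \nu_2$, $\nu_1 + \nu_2 = 1$ (from the $\kappa_G$-condition), and $\nu_1 \leq (g+2)/(2g)$ (dominance). Translating to the $\sigma^g$-isocrystal picture, the slopes sum to $g$ with dominant slope in $[g/2,\,g/2+1]$. By Dieudonn\'e--Manin, a rank-$2$ $\sigma^g$-isocrystal over $L$ is either a sum of two rank-$1$ pieces (forcing integer slopes) or a single simple piece of rank $2$ (forcing a common slope in $\tfrac12 + \Z$). Since $g$ is even the simple case is excluded, so only integer pairs $(k, g-k)$ with $k \in \{g/2,\,g/2+1\}$ can occur. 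This gives exactly the two classes: the basic class with slopes $(g/2, g/2)$ and the maximal class with slopes $(g/2+1, g/2-1)$; the latter is $[\mu(p)]$ since $\overline{\nu}_{\mu(p)} = \overline{\mu}$. This proves~(1).

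For part~(2), in the maximal class the two slopes are distinct, so the slope filtration of the associated $p$-divisible group with $\calO$-structure splits canonically over the algebraically closed residue field, yielding a decomposition $\mathscr{G} \cong \mathscr{G}_1 \times \mathscr{G}_2$ into $\calO$-height-$1$ pieces of the prescribed slopes. Each $\mathscr{G}_i$ is a $p$-divisible group of $\calO$-height $1$ with prescribed Newton slope; its Dieudonn\'e module is free of rank $1$ over $\calO \otimes_{\Z_p} \calO_L$, and the associated element of $B(H)$ for $H = \mathrm{Res}_{\calO/\Z_p}\mathbb{G}_m$ is determined by the slope alone, since $B(H) \cong X_*(H)_\Gamma \cong \Z$ for a torus. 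Hence each $\mathscr{G}_i$ is unique up to isomorphism over an algebraically closed field, and assembling the factors yields the claimed uniqueness. The main technical obstacle is keeping straight the normalizations of ``Newton slope'' (Kottwitz's $\overline{\nu}_b$, the $\sigma^g$-isocrystal slope, and the standard $[0,1]$-valued slope of a $p$-divisible group) and verifying that the $(0,0)$ and $(1,1)$ contributions to $\overline{\mu}$ are correctly absorbed---they shift both entries equally and do not separate the slopes; once aligned, everything reduces to Dieudonn\'e--Manin together with the parity constraint coming from $g$ being even.
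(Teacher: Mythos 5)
Your part (1) is sound and is in substance the verification the paper dismisses as ``easy to check'': the conditions $\nu_1+\nu_2=1$ and $\nu_1\le \overline{\mu}_1$ from Proposition \ref{RaRi}, together with Dieudonn\'e--Manin integrality for rank-two $\sigma^g$-isocrystals (two distinct slopes force a sum of two rank-one objects, hence integer slopes, while a simple rank-two object has odd slope-sum), leave exactly the basic point $(g/2,g/2)$ and the $\mu$-ordinary point $(g/2+1,g/2-1)$, the latter being $g\cdot\overline{\mu}$. Your values are the correct ones: since $\sum_i a_i=\tfrac{g+2}{2}$, the $\nu^{\mathrm{ord}}=(\tfrac{g+1}{2},\tfrac{g-1}{2})$ displayed in the paper's proof appears to be a misprint. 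You might add one line noting that the basic class with $\kappa_G=\mu^\natural$ is actually attained, which is immediate from Kottwitz's classification.

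Part (2) is where there is a genuine gap, and it sits exactly where the paper instead cites \cite[Theorem 3.2.7]{Mo}. Distinct Newton slopes give a splitting of the isocrystal, i.e.\ of $\scrG$ up to isogeny, but they do not imply that the slope filtration of $\scrG$ itself splits over $\Fpbar$: for slopes strictly between $0$ and $1$ there are in general non-split extensions of isoclinic $p$-divisible groups over an algebraically closed field, and a non-basic Newton stratum typically contains many isomorphism classes, of which the split ($\mu$-ordinary) one is only one. Here the usual slopes are $\tfrac12\pm\tfrac1g$, so for $g>2$ no connected--\'etale type of soft argument applies; the sentence ``the slope filtration splits canonically'' is precisely the nontrivial content, namely that the $\calO$-action together with the Lie-type condition (\ref{eq:char poly 2}) kills the relevant integral $\mathrm{Ext}^1$, and your proof must either quote Moonen's theorem (as the paper does) or carry out that Dieudonn\'e-module computation. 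A secondary imprecision: $B(\mathrm{Res}_{\calO/\bbZ_p}\bbG_m)$ classifies the $\calO$-height-one factors only up to isogeny; up to isomorphism such a factor is one of Moonen's groups $\scrG(1,\fkf)$ and is determined by its Lie type $\fkf$, not by its slope (distinct $\fkf$ of equal degree give non-isomorphic groups with the same slope), and uniqueness for fixed $\fkf$ over $\Fpbar$ is a Lang-type argument on the units of $\calO\otimes_{\bbZ_p}W(\Fpbar)$, not a statement about $B$ of a torus. In the case at hand the prescribed slopes do force $\tau_1$ and $\tau_c$ to contribute to the Lie type of the same factor, so the pair of Lie types is pinned down by (\ref{eq:char poly 2}); but this bookkeeping has to be said, and as written your argument only yields uniqueness up to isogeny.
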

\begin{proof}
 (1) It is easy to check using the explicit description of $\mu$ that under the identification $X_*(T)^\sigma\cong \bbQ^2$, that the only two elements of $B(G,\{\mu\})$ correspond to $\nu^{\mathrm{ord}}=(\frac{g+1}{2},\frac{g-1}{2})$ and
$\nu^{\mathrm{ss}}=(\frac{g}{2},\frac{g}{2})$. Here we use the fact that $2|\rmT|=|\rmS|$.
	
	(2) This is \cite[Theorem 3.2.7]{Mo}; note in this case the Newton cocharacter is $\nu^{\mathrm{ord}}$.
\end{proof}
We say a $p$-divisible group $\scrG/k$ with $\calD^\circ$-structure is ordinary (resp. supersingular) if the corresponding Newton vector is equal to $\nu^{\mathrm{ord}}$ (resp. $\nu^{\mathrm{ss}}$). Similarly a $p$-divisible group $\scrG$ with $\calD$-structure is ordinary (resp. supersingular) if the corresponding $p$-divisible group with $\calD^\circ$-structure is. For $k$ an algebraically closed field of characteristic $p$, we write $\scrG^{\mathrm{ord}}$ for the unique isomorphism class of ordinary $p$-divisible groups from part (2) of Proposition \ref{prop:Newton}.

If  $A_{X'}$ denotes the universal abelian variety over $X'$, we write $X'^{\mathrm{ss}}$  (resp. $X'^{\mathrm{ord}}$) for the locus where $A_{X'}[p^\infty]$ is supersingular (resp. ordinary). Similarly if $A\rightarrow A'$ denotes the universal cyclic isogeny over $X'_0(\fkp)$, we define $X'_0(\fkp)^{\mathrm{ss}}$  (resp. $X'_0(\fkp)^{\mathrm{ord}}$) as the locus where $A[p^\infty]$ (equivalently $A'[p^\infty]$) is supersingular (resp. ordinary). It is easy to see that $X'^{\mathrm{ss}}$  is the union of the Goren--Oort divisors corresponding to $\tau_1$ and $\tau_c$, see \cite[\S3]{LT} for example.

\begin{prop}\label{prop:ord point smooth}
Let $x\in X'_0(\fkp)^{\mathrm{ord}}(\overline{\bbF}_p)$. Then $x$ is a smooth point of $X'_0(\fkp)$.
\end{prop}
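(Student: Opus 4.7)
The plan is to establish formal smoothness at $x$ via deformation theory of the associated $p$-divisible groups. By the moduli interpretation of $X'_0(\fkp)$, together with Grothendieck--Messing/Serre--Tate theory, the complete local ring at $x$ represents the deformation functor of the $p$-divisible group $A[p^\infty]$ with its $\calD$-structure, equipped with the finite flat subgroup scheme $H := \ker(\rho) \subset A[\fkp]$. Applying the equivalence of categories from Proposition~\ref{prop:red-pdiv}, this further reduces to deforming a pair $(\scrG', H')$, where $\scrG'$ is a $p$-divisible group with $\calD^\circ$-structure and $H' \subset \scrG'[\fkp]$ is the $\calO$-stable finite flat subgroup scheme of $\calO$-height one obtained from $H_\fkq$ via Morita equivalence.

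Since $x$ is ordinary, Proposition~\ref{prop:Newton}(2) gives $\scrG' \cong \scrG^{\mathrm{ord}}$, the unique ordinary $p$-divisible group with $\calD^\circ$-structure. By Moonen's extension of Serre--Tate theory \cite{Mo}, the deformation functor of $\scrG^{\mathrm{ord}}$ alone is representable by a formal $p$-divisible group, which is in particular formally smooth. Moreover, since the two Newton slopes of $\scrG^{\mathrm{ord}}$ are distinct (again by Proposition~\ref{prop:Newton}), the isoclinic decomposition of the underlying isocrystal gives rise to a canonical slope filtration $0 \subset \scrG^{\mathrm{ord},+} \subset \scrG^{\mathrm{ord}}$ by $\calO$-stable $p$-divisible subgroups of $\calO$-height one, and this filtration lifts canonically to any infinitesimal deformation.

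The key geometric input is to identify $H'$ with $\scrG^{\mathrm{ord},+}[\fkp]$. A direct Dieudonn\'e-module computation using the explicit description of $\scrG^{\mathrm{ord}}$ from \S\ref{sec: Appendix O-structure} will show that the only $\calO$-stable subgroup scheme of $\scrG^{\mathrm{ord}}[\fkp]$ of $\calO$-height one which is compatible with the self-duality condition imposed on the cyclic isogeny is this canonical subgroup. Since $\scrG^{\mathrm{ord},+}[\fkp]$ lifts uniquely along any deformation of $\scrG^{\mathrm{ord}}$, the forgetful map from the deformation functor of $(\scrG^{\mathrm{ord}}, H')$ to that of $\scrG^{\mathrm{ord}}$ will be an isomorphism of formal schemes. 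Hence the deformation functor at $x$ is formally smooth, and since $X'_0(\fkp)$ is of finite type over its base, we conclude that $x$ is a smooth point.

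The main obstacle is the rigidity/uniqueness claim for $H'$: this is where the ordinariness is essentially used, since at supersingular points there is a positive-dimensional moduli of such subgroups (which is the source of the non-smoothness detected by the local model), whereas at the ordinary point the isoclinic decomposition forces $H'$ to coincide with the unique canonical sub-$p$-divisible group of the correct $\calO$-height. Checking this rigidity cleanly will require an explicit analysis of $\calO$-submodules of the ordinary Dieudonn\'e module, compatible with the $\sigma$-linear Frobenius and the polarization, which is routine but needs to be carried out carefully.
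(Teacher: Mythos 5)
Your reduction to deformation theory of the pair $(\scrG',H')$ is reasonable, but the key rigidity claim on which everything rests is false, and this is a genuine gap. You assert that at an ordinary point the only $\calO$-stable, duality-compatible subgroup of $\calO$-height one is the canonical (connected/small-slope) subgroup. This contradicts what actually happens, and indeed what the paper itself proves: by Proposition \ref{Frob-Ver} an ordinary $p$-divisible group with $\calD^\circ$-structure admits exactly \emph{two} cyclic isogenies, the essential Frobenius and the essential Verschiebung, so there are two admissible kernels, not one. Correspondingly, the ordinary locus of $X'_0(\fkp)$ consists of the two sheets $\calF(X')$ and $\calV(X')$ (Theorem \ref{thm:Iwahori-structure}), and $\pi_1$ is two-to-one over $X'^{\mathrm{ord}}$; if your uniqueness claim held, $\pi_1$ would be injective there. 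Your argument only treats the Frobenius-type kernel. On the Verschiebung sheet the subgroup is of \'etale type relative to the canonical filtration, it does \emph{not} lift to an arbitrary infinitesimal deformation of $\scrG^{\mathrm{ord}}$ (the obstruction is exactly the Serre--Tate-type parameter), and the forgetful map of deformation functors is not an isomorphism --- consistent with the identity $\pi_1\circ\calV=S_\fkp^{-1}\mathrm{Fr}'$, which shows this map is Frobenius-like rather than \'etale. So formal smoothness on that component cannot be deduced as you propose; you would have to handle it separately, e.g.\ by passing to the dual or quotient isogeny so that the lifted subgroup becomes the canonical one, or by an explicit Serre--Tate coordinate computation as in Stamm. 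Your closing remark that ordinariness forces a unique subgroup while supersingularity gives positive-dimensional moduli is likewise off: at ordinary points the fiber is finite of cardinality two.

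For comparison, the paper avoids deformation theory entirely: it invokes the local model and its Kottwitz--Rapoport stratification by the $\mu$-admissible set (via \cite{HZ}), notes that the strata indexed by translation elements are smooth, and uses \cite[Theorem 2.6]{HeNie1} to place $X'_0(\fkp)^{\mathrm{ord}}$ inside those strata. If you want to keep a deformation-theoretic proof, the repaired version (two cases, with duality to reduce the Verschiebung case to the Frobenius case) would work, but as written the proposal does not establish smoothness at points of $\calV(X')$.
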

\begin{proof}
	The local model in this case has a stratification by the $\mu$-admissible  set $\Adm(\{\mu\})$ and this induces a stratification of $X'_0(\fkp)_{\overline{\bbF}_p}$, cf. \cite[\S9]{HZ}. The strata corresponding to translation elements are all smooth. By \cite[Theorem 2.6]{HeNie1},  $X'_0(\fkp)^{\mathrm{ord}}$ is contained in these strata, hence $x$ is a smooth point of $X'_0(\fkp)$. Note that the Axioms of \cite{HR} for these Shimura varieties have been verified in \cite{HZ} so that \cite[Theorem 2.6]{HeNie1} is applicable.
\end{proof}
%For convenience we recall the description of unique ordinary $p$-divisible group with $\calO$-structure.

\begin{defn} Let $(\scrG,\lambda,\iota)$ and $(\scrG',\lambda',\iota')$ be $p$-divisible groups with $\calD$-structure over $S$. A \textit{cyclic isogeny} between $\scrG$ and $\scrG'$ over $S$ is an isogeny $f: \scrG\rightarrow\scrG'$  such that
	
	$\bullet$ $f$ is compatible with the  actions $\iota,\iota'$ and the polarizations $\lambda,\lambda'$.
	
	$\bullet$ $\ker(f)= K_\fkq \oplus K_{\overline{\fkq}}\subset \scrG_\fkq[\fkq]\oplus \scrG_{\overline{\fkq}}[\fkq]$  is of order $|k_\fkp|^4$ and $K_{\fkq}$ is dual to $K_{\overline{\fkq}}$ under the pairing induced by $\lambda$.
	
	%$\bullet$ For each $\tau\in\Sigma_{E,\infty}$, we have $\ker(f)_\tau$ is rank 1.
	
Similarly, for  $p$-divisible groups with $\calD^\circ$-structure $\scrG$ and $\scrG'$ over $S$, a {\it cyclic isogeny} between $\scrG$ and $\scrG'$ is an isogeny $f:\scrG\rightarrow \scrG'$ such that:

$\bullet$ $f$ is compatible with the action of $\calO$ on $\scrG$ and $\scrG'$.

$\bullet$ $\ker(f)\subset \scrG[p]$ is of order $|k_\fkp|$ .

%$\bullet$ For each $\tau$, we have $\ker(f)_\tau$ is rank 1.
\end{defn}

It is easy to check that under the equivalence of categories in Proposition \ref{prop:red-pdiv}, the cyclic isogenies correspond to one another. Moreover if $(A,\iota,\lambda,\epsilon_{K'^p})$ is an $S$-point of $\underline{\mathrm{Sh}}_{K'}(G'_{\tilde{\rmS}})$, then a cyclic isogeny $A\rightarrow A'$ corresponds precisely to a cyclic isogeny of the associated $p$-divisible group.
	\subsection{Essential Frobenius and Verschiebung isogenies.}\label{sec: Appendix essentail Frobenius}
In this section we define two canonical cyclic isogenies associated to a point in $X'$. These isogenies will define sections to the projections $\pi_1,\pi_2:X'_0(\fkp)\rightarrow X'$ analogous to the Frobenius and Verschiebung isogenies in the case of the Hilbert modular surface (see \cite[\S4]{Stamm}). For this reason we will call these isogenies the essential Frobenius and essential Verschiebung isogenies respectively.

We let $q=p^g$ and let $(A,\iota,\lambda,\epsilon_{K'^p})$ be an $S$-point of $X'$ where $S$ is a smooth $\bbF_q$-scheme.  We first define the essential Frobenius isogeny $A\rightarrow A'$. Note that in order to define a cyclic isogeny it suffices to define a cyclic isogeny of the associated $p$-divisible group $\scrG:=A[p^\infty]$. Let $\pdiv=(\pdiv')^2\times(\pdiv'^{\vee})^2$ denote the decomposition of $\scrG$ coming from Proposition \ref{prop:red-pdiv}; then it suffices to define a cyclic isogeny of $\scrG'$. In order to do this we introduce some notation.

Let $k$ be a perfect field of characteristic $p$ and $R$ a smooth $k$-algebra. By a frame for $R$ we mean a $p$-adically complete and separated flat $W(k)$-algebra lifting $R$ together with a lift of Frobenius $\sigma:\scrR\rightarrow\scrR$. 

A {\it Dieudonn\'e $F$-crystal over $R$} is a quadruple $(M,F,V,\nabla)$ where

$\bullet$ $M$ is a finite locally free $\scrR$ module.

$\bullet$ $F:\sigma^*M\rightarrow M$ and $V:M\rightarrow \sigma^*M$ are injective $\scrR$-linear maps such that $FV=p$ and $VF=p$.

$\bullet$ $\nabla$ is a topologically nilpotent integrable connection such that $F$ is parallel for $\nabla$.

By \cite{DeJ}, $\bbD(\scrG)(\scrR)$ is a Dieudonn\'e $F$-crystal and the association $\scrG\mapsto \bbD(\scrG)(R)$ induces an anti-equivalence of categories between $p$-divisible groups over $R$ and Dieudonn\'e $F$-crystals. Similarly the association induces an anti-equivalence of categories between $p$-divisible groups with $\calO$-structure and Dieudonn\'e crystals with an action of $\calO$.

It will follow from the canonicity of the construction that we may assume $S$ is affine, since we may glue the construction over an affine cover. Thus let $S=\Spec R$ be a smooth $\bbF_q$-scheme and $\scrR$ a frame for $R$ as above.  Then in order to define a cyclic isogeny, it suffices to find an $\scrR$-lattice $M\subset \bbD(\scrG)(\scrR)$ satisfying the following conditions:

(1)  $M$ is stable for the action of $\calO$ 

(2) $M$ is stable under $F,V,\nabla$.

(3) We have the inclusions $p\bbD(\scrG)(\scrR)\subset M\subset \bbD(\scrG)(\scrR)$.

%(4) For each $\tau$, $\bbD(\scrG)(\scrR)_\tau/M_\tau$ has rank $1$ over $R$.

(4) (\ref{eq:char poly 2}) holds for the module $M/VM$.

We now construct such a subgroup. For $\tau\in\Sigma_{\infty}$, we identify this with an embedding $\tau:\calO\rightarrow W(\bbF_q)$ and we let $\bbD(\scrG)(\scrR)_\tau$ the submodule where $\calO$ acts via $\tau$. Similarly to  \S\ref{sec: Goren-Oort} we define the essential Verschiebung to be $$V_{\text{es},\tau}:\bbD(\scrG)(\scrR)_\tau\rightarrow \bbD(\scrG)(\scrR)_{\sigma^{-1}(\tau)}$$ to be the usual Verschiebung if $\sigma^{-1}(\tau)\notin \rmS_{\infty}$ or $\sigma^{-1}(\tau)\in \rmS_{\infty}-\mathrm{T}$ and the inverse of Frobenius  otherwise. For $\tau\in \Sigma_{\infty}$, let $n_\tau$ be the smallest positive integer  such that $\sigma^{-n_\tau}(\tau)\notin \rmS_\infty$. We define $M_\tau\in \bbD(\scrG)(\scrR)_\tau$ to be the preimage of $p\bbD(\scrG)(\scrR)_{\sigma^{-n_\tau}(\tau)}$ under the map $V_{\mathrm{es}}^{n_\tau}$. Then we define $M:=\bigoplus_{\tau\in\Sigma_{\infty}}M_\tau$.  Then $M$ is a locally free $\scrR$-submodule of $\bbD(\scrG)(\scrR)$ of full rank.

\begin{prop}
	$M\subset \bbD(\scrG)(\scrR)$ satisfies the properties (1), (2), (3), (4), above.
\end{prop}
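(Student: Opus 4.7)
Properties (1) and (3) should follow essentially directly from the construction. For (1), each $M_\tau$ lies inside the $\tau$-isotypic component $\bbD(\scrG)(\scrR)_\tau$ of the $\calO$-action, which is automatically $\calO$-stable; hence $M = \bigoplus_\tau M_\tau$ inherits $\calO$-stability. For (3), the inclusion $M \subset \bbD(\scrG)(\scrR)$ is tautological, while $p\bbD(\scrG)(\scrR) \subset M$ reduces to showing that $V_{\mathrm{es}}^{n_\tau}$ preserves the $p$-adic filtration on each $\tau$-component. This holds in both regimes of the definition of $V_{\mathrm{es}}$: if $V_{\mathrm{es},\tau'} = V$ then $V(p\bbD) \subset p\bbD$ trivially, while if $V_{\mathrm{es},\tau'} = F^{-1}$ (which occurs when $\sigma^{-1}\tau' \in \rmT$, so that $\Lie(\scrG)_{\sigma^{-1}\tau'} = 0$ and $F$ is an isomorphism on the $\sigma^{-1}\tau'$-component by Nakayama), then $F^{-1}$ carries $p\bbD$ isomorphically onto $p\bbD$.

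For property (2), the $\calO$-stability was already handled. The $\nabla$-stability follows because $F$ and $V$ are horizontal for $\nabla$, so $V_{\mathrm{es}}^{n_\tau}$ is horizontal, and $M_\tau$ is then the preimage under a horizontal map of the $\nabla$-stable submodule $p\bbD(\scrG)(\scrR)_{\sigma^{-n_\tau}\tau}$. Stability of $M$ under $V$ amounts to $V(M_\tau) \subset M_{\sigma^{-1}\tau}$; I would split this into three cases. When $\sigma^{-1}\tau \notin \rmS_\infty$ or $\sigma^{-1}\tau \in \rmS_\infty - \rmT$, we have $V_{\mathrm{es},\tau} = V$ and $n_{\sigma^{-1}\tau} = n_\tau - 1$ (the boundary case $n_\tau = 1$ reducing to $V(x) \in p\bbD \subset M_{\sigma^{-1}\tau}$ by (3)), so the identity $V_{\mathrm{es}}^{n_\tau}(x) = V_{\mathrm{es}}^{n_{\sigma^{-1}\tau}}(V(x))$ gives the desired membership directly. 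When $\sigma^{-1}\tau \in \rmT$, we have $V_{\mathrm{es},\tau} = F^{-1}$ and $F$ is an isomorphism on the $\sigma^{-1}\tau$-component, whence $V = pF^{-1}$ there and $V(x) \in p\bbD_{\sigma^{-1}\tau} \subset M_{\sigma^{-1}\tau}$ by (3). Stability under $F$ is handled by the symmetric argument using the essential Frobenius dual to $V_{\mathrm{es}}$.

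Property (4) is the main computational content: one must show that the $R$-rank of each graded piece $(M/VM)_\tau$ equals $s_\tau$. Tracking through the three cases in the definition of $V_{\mathrm{es}}$ one expects to find that $V \colon M_{\sigma\tau} \to M_\tau$ is an isomorphism when $\tau \in \rmT$ (so $(M/VM)_\tau = 0$, matching $s_\tau = 0$), equals multiplication by $p$ up to isomorphism when $\tau \in \rmS_\infty - \rmT$ (so $(M/VM)_\tau$ has rank $2 = s_\tau$), and has cokernel of rank $1$ matching the Hodge filtration on the $\tau$-component when $\tau \in \Sigma_\infty - \rmS_\infty$ (so $s_\tau = 1$). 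This last case is the most delicate: the defining condition for membership of $M_\tau$ relaxes by exactly one $p$ of $V$-divisibility relative to $M_{\sigma\tau}$, and the resulting codimension-one sublattice is identified with the pullback of $\omega_\tau$ under the Hodge exact sequence. The whole calculation is directly analogous to the corresponding computation of Stamm \cite{Stamm} in the Hilbert modular case, transferred to the unitary setting via Proposition \ref{prop:red-pdiv}. The main obstacle throughout is the bookkeeping needed to track how the essential Verschiebung interacts with the ordinary Verschiebung under the cyclic $\sigma$-action on the $\tau$-indexing, but once the case analysis is fixed the verifications are mechanical.
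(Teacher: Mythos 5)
Your handling of (1), (3), $\nabla$-stability and $V$-stability in (2) is essentially the paper's argument run in mirror image: the paper checks $F$-stability directly via the factorization $V_{\mathrm{es}}^{n_{\sigma(\tau)}}=V_{\mathrm{es}}^{n_\tau}\circ V_{\mathrm{es},\sigma(\tau)}$ and declares $V$-stability ``similar,'' while you check $V$-stability and wave at $F$ by symmetry; that part is fine, though the symmetric argument for $F$ is left implicit.

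The genuine gap is in (4), which you yourself single out as the main computational content: your target ranks, and the mechanism you propose to verify them, are wrong on the two cases inside $\rmS_\infty$. The appendix works with the \emph{contravariant} Dieudonn\'e crystal, and for a $p$-divisible group with $\calD^\circ$-structure whose Lie algebra has $\tau$-rank $s_\tau$, the graded piece $(\bbD(\scrG)(\scrR)/V)_\tau$ has rank $2-s_\tau$, not $s_\tau$: the quotient by $V$ computes the Lie algebra of the Cartier dual, so the group corresponding to $M$ satisfies (\ref{eq:char poly 2}) precisely when $\dim(M/VM)_\tau$ equals $1$ for $\tau\in\Sigma_\infty-\rmS_\infty$, $0$ for $\tau\in\rmS_\infty-\rmT$ and $2$ for $\tau\in\rmT$ --- which is exactly how the paper's proof restates (4), whereas you require $1$, $2$, $0$ respectively. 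Correspondingly, on the constructed lattice the behaviour of $V$ is the opposite of what you claim: $V\colon M_{\sigma\tau}\to M_\tau$ is an isomorphism for $\tau\in\rmS_\infty-\rmT$ (there $s_\tau=2$, so $V\colon\bbD_{\sigma\tau}\to\bbD_\tau$ is already bijective) and has elementary divisors $(p,p)$ for $\tau\in\rmT$ (there $s_\tau=0$, so $V\bbD_{\sigma\tau}\subset p\bbD_\tau$). You can confirm this fiberwise at an ordinary point by writing out $\bbD(\scrG)$ in a basis adapted to the Hodge filtration and computing $M_\tau$ from the definition. As written, your verification of (4) would either prove the wrong numerical condition or, if carried out honestly against the constructed $M$, appear to fail; the fix is to keep track of the contravariant convention (your ``$(M/VM)_\tau$ has rank $s_\tau$'' is the covariant, homological statement, which is not the setting of the appendix).
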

\begin{proof}
	(1) is clear since $M$ is a direct sum of locally free submodules of $\bbD(\scrG)(\scrR)_\tau$. 
	To show $M$ is stable under $F$, we must show $F(M_\tau)\subset M_{\sigma(\tau)}$. We consider the separate cases   $\tau\in\rmS_\infty$ and $\tau\notin \rmS_\infty$. If $\tau\in \rmS_\infty$,  we have
	$$V^{n_{\sigma(\tau)}}_{\text{es}}|_{\bbD(\scrG)(\scrR)_{\tau}}=V_{\text{es}}^{n_{\tau}}\circ V_{\text{es},\sigma(\tau)}$$  and  $V_{\text{es},\sigma(\tau)}$ is the usual Verschiebung or the inverse of Frobenius. Since $VF=p$, we have $V_{\mathrm{es},\sigma(\tau)}FM_\tau\subset M_{\tau}$ and hence $FM_\tau\subset V_{\mathrm{es},\sigma(\tau)}^{-1}M_\tau=M_{\sigma(\tau)}$.
	
	In the second case we have $$M_{\sigma(\tau)}=V_{\text{es},\sigma(\tau)}^{-1}(p\bbD(\scrG)(\scrR)_\tau)=F(\bbD(\scrG)(\scrR)_\tau).$$ Since $$M_\tau=(V_{\mathrm{es}}^{n_\tau})^{-1}(p\bbD(\scrG)(\scrR)_{\sigma^{n_\tau}(\tau)})\subset \bbD(\scrG)(\scrR)_\tau,$$ we have $F(M_\tau)\subset M_{\sigma(\tau)}$.  The verification of the stability for $V$ follows similarly.
	
	The stability under $\nabla$ follows from the stability of $p\bbD(\scrG)(\scrR)_\tau$ under $\nabla$ and the fact that $F$ and $V$ are horizontal for $\nabla$. It follows that (2) is satisfied. 
	
	(3) follows from the inclusions $$pV\bbD(\scrG)(\scrR)_{\sigma(\tau)}\subset p\bbD(\scrG)(\scrR)_\tau\subset V \bbD(\scrG)(\scrR)_{\sigma(\tau)}$$  for $\tau\in \Sigma_{\infty}-\rmS_{\infty}$.
	
	(4) is equivalent to the condition  $$\dim (M/VM)_{\tau}=\begin{cases}1 & \text{if $\tau\in\Sigma_\infty-\rmS_{\infty}$}\\
	0 &\text{if $\tau\in\rmS_{\infty}-\rmT$}\\
	2& \text{if $\tau\in\rmT$}
	\end{cases}$$
	which follows similarly.
\end{proof}
It follows that $M$ corresponds to a $p$-divisible group $\scrG^{(p')}$ equipped with a cyclic isogeny $\text{Fr}':\scrG\rightarrow \scrG^{(p')}$. We define $\scrG^{(p')}$ to be {\it the essential Frobenius twist of $\scrG$} and $\text{Fr}'$ to be the {\it essential Frobenius isogeny}. Similarly we may define the {\it essential Verschiebung} $$\text{Ver}':\scrG^{(p')}\rightarrow \scrG'$$ to be defined by the submodule $$p\bbD(\scrG)(\scrR)\subset M$$
It follows from the definition that $\text{Fr}'\circ\text{Ver}'=p$ and $\text{Ver}'\circ\text{Fr}'=p$.

Let $A$ be the universal abelian scheme over $X'$. Taking a cover of $X'$ by smooth affine opens and gluing the construction we obtain isogenies $$\text{Fr}'_A:A\rightarrow A^{(p')}, \ \ \ \text{Ver}_A':A^{(p')}\rightarrow A.$$ 
%and hence sections $\mathrm{Fr}':X'\rightarrow X'_0(\fkp)$ and $\mathrm{Ver}':X'\rightarrow  X'_0(\fkp)$.

\begin{prop}\label{prop: ess Frob squared}
	Let $A$ be an abelian variety with $\calD^\circ$-structure over a scheme $S$ of characteristic $p$. Then $$(A^{(p')})^{(p')}=A^{(p)}$$ where $A^{(p)}$ is the usual $p^g$-Frobenius twist. Moreover the composition $\mathrm{Fr}'\circ\mathrm{Fr}'$ considered as a map $X'\rightarrow X'$ corresponds to $S_\fkp^{-(g/2-1)}\mathrm{Fr}_\fkp$ where $S_\fkp$ is the standard Hecke operator at $p$ and  $\mathrm{Fr}_\fkp$ is the $p^g$-Frobenius.
	\end{prop}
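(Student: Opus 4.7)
The strategy is to compute $\mathrm{Fr}'\circ\mathrm{Fr}'$ at the level of Dieudonn\'e modules and then translate the resulting scalar discrepancy from the $p^g$-Frobenius into a Hecke correspondence. By Proposition~\ref{prop:red-pdiv}, it is enough to work with the $p$-divisible group $\scrG'$ with $\calD^\circ$-structure over a frame of an affine cover of $X'$, since the essential Frobenius is defined compatibly on both sides of the equivalence.

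Unwinding the construction, the sublattice $M\subset\bbD(\scrG')$ corresponding to $\mathrm{Fr}'$ satisfies $M_\tau=(V_{\mathrm{es}}^{n_\tau})^{-1}(p\bbD(\scrG')_{\sigma^{-n_\tau}(\tau)})$. Since $\sigma$ cyclically permutes the two elements of $\Sigma_\infty-\rmS_\infty$, we have $n_\tau+n_{\sigma^{-n_\tau}(\tau)}=g$, so iterating once more yields the sublattice $M''\subset\bbD(\scrG')$ associated to $\mathrm{Fr}'\circ\mathrm{Fr}'$, given by $M''_\tau=(V_{\mathrm{es}}^g)^{-1}(p^2\bbD(\scrG')_\tau)$. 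To evaluate $V_{\mathrm{es}}^g$, observe that as $j$ runs over $\{1,\dots,g\}$ the point $\sigma^{-j}(\tau)$ runs through all of $\Sigma_\infty$ exactly once, and $V_{\mathrm{es}}$ in the $j$-th step equals $F^{-1}$ precisely when $\sigma^{-j}(\tau)\in\rmT$. By Assumption~\ref{ass: tildeS} and the running condition $2|\rmT|=|\rmS|=g-2$, this happens exactly $g/2-1$ times. Using $F^{-1}=V/p$ (from $FV=VF=p$) one obtains
\[
V_{\mathrm{es}}^g = V^{g/2+1}F^{-(g/2-1)} = p^{-(g/2-1)}V^g,
\]
whence $M''_\tau=p^{g/2+1}V^{-g}\bbD(\scrG')_\tau = p^{1-g/2}F^g\bbD(\scrG')_\tau$, using $V^g=p^g F^{-g}$.

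Since $F^g\bbD(\scrG')$ is the Dieudonn\'e module of the relative $p^g$-Frobenius twist of $\scrG'$, the scalar $p^{1-g/2}$ identifies $(\scrG'^{(p')})^{(p')}$ with the $p^g$-Frobenius twist of $\scrG'$ as abstract $p$-divisible groups. Via Morita equivalence and the moduli description this gives the first claim $(A^{(p')})^{(p')}=A^{(p)}$. For the ``moreover'' assertion, the quasi-isogeny from $\scrG'$ to $(\scrG'^{(p')})^{(p')}$ differs from the natural $p^g$-Frobenius by multiplication by the central element $p^{1-g/2}\in F_\fkp^\times\subset G'_{\tilde{\rmS}}(\bbQ_p)$. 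Since the standard Hecke operator $S_\fkp$ corresponds to the central double coset of $\mathrm{diag}(\varpi_\fkp,\varpi_\fkp)$, i.e.\ to $p\in F_\fkp^\times$, this scalar twist is precisely the Hecke correspondence $S_\fkp^{-(g/2-1)}$. Composing with $\mathrm{Fr}_\fkp$ yields the claimed identity of morphisms $X'\to X'$.

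The main technical obstacle is carrying out the Dieudonn\'e-level computation consistently on affine opens of $X'$ and gluing it back globally, and precisely translating the central scalar at the $p$-adic place into the Hecke correspondence $S_\fkp^{-(g/2-1)}$ with the correct sign convention.
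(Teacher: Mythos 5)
Your strategy is the same as the paper's: reduce to an affine open with a frame, pass to the associated $p$-divisible group with $\calD^\circ$-structure via Proposition \ref{prop:red-pdiv}, and show that applying the essential-Frobenius construction twice produces the lattice $p^{1-g/2}F^g\bbD(\scrG')(\scrR)$ inside $\bbD(\scrG')(\scrR)$; the paper asserts the identity $p^{g/2-1}M'=F^g\bbD(\scrG')(\scrR)$ essentially by inspection, whereas you extract the exponent by counting that $V_{\mathrm{es}}$ is an inverse Frobenius exactly $|\rmT|=g/2-1$ times per full cycle, which is a correct and useful way to see where $g/2-1$ comes from. The treatment of the ``moreover'' part (the scalar $p^{-(g/2-1)}$ is central and is the Hecke operator $S_\fkp^{-(g/2-1)}$) is at the same level of detail as the paper's.

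There is, however, a misstep in the key intermediate claim. The construction defines the twisted lattice at \emph{every} $\tau\in\Sigma_\infty$ by $M_\tau=(V_{\mathrm{es}}^{n_\tau})^{-1}(p\,\bbD(\scrG')_{\sigma^{-n_\tau}(\tau)})$, and your identity $n_\tau+n_{\sigma^{-n_\tau}(\tau)}=g$ only holds when $\tau\in\Sigma_\infty-\rmS_\infty$, i.e.\ when $\sigma^{-n_\tau}(\tau)$ is the other distinguished place. For $\tau\in\rmS_\infty$ the two integers add up to the number of steps from $\tau$ to the \emph{second} non-$\rmS$ place encountered, which is strictly less than $g$ in general: for instance with $g=4$, $\rmS_\infty=\{\tau_2,\tau_4\}$ and $\tau=\tau_2$ one gets $n_{\tau_2}+n_{\tau_1}=1+2=3$. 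So the second iteration at such $\tau$ is literally the condition $V_{\mathrm{es}}^{n_\tau}x\in p\,\bbD_{\sigma^{-n_\tau}(\tau)}$ together with $V_{\mathrm{es}}^{n_\tau+n_{\sigma^{-n_\tau}(\tau)}}x\in p^2\bbD$ at the second non-$\rmS$ place, not your single condition $V_{\mathrm{es}}^{g}x\in p^2\bbD_\tau$. The conclusion $M''_\tau=p^{1-g/2}F^g\bbD_\tau$ is nonetheless correct at every $\tau$, but it needs a short supplementary argument, e.g.: every step of $V_{\mathrm{es}}$ into a slot of $\rmS_\infty$ is bijective (at $\rmS_\infty-\rmT$ slots $V$ is an isomorphism, at $\rmT$ slots $F$ is), and only the two steps into $\Sigma_\infty-\rmS_\infty$ have colength $1$; hence each application of the construction has colength $1$ in each $\tau$-component, so the iterated lattice has colength $2$ in $\bbD_\tau$, while it is visibly contained in $(V_{\mathrm{es}}^{g})^{-1}(p^2\bbD_\tau)=p^{1-g/2}F^g\bbD_\tau$, which also has colength $2$; the two therefore coincide. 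With this patch your computation of the scalar, and hence the identification of $\mathrm{Fr}'\circ\mathrm{Fr}'$ with $S_\fkp^{-(g/2-1)}\mathrm{Fr}_\fkp$, goes through exactly as in the paper.
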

\begin{proof}We may reduce to the case $S=\Spec R$ a smooth affine scheme. Moreover, it suffices to prove this for the  $p$-divisible $\scrG'$  associated to $A[p^\infty]$ by Proposition \ref{prop:red-pdiv}. 
	
	The submodule $F^ge\bbD(\scrG')(\scrR)\subset \bbD(\scrG')(\scrR)$ corresponds to the $p^g$-Frobenius isogeny. We let $M\subset\bbD(\scrG')(\scrR)$ be the submodule corresponding to $\scrG'^{(p')}$ and $M'\subset M$ the submodule corresponding to $(\scrG'^{(p')})^{(p')}$. 
	Then by the definition of $M'$, we have that $p^{\frac{g}{2}-1}M'=F^g\bbD(\scrG')(\scrR)$. In particular, $F^g\bbD(\scrG')(\scrR)\subset M'$ corresponds to the isogeny \begin{equation}\label{eq: two Frob}p^{\frac{g}{2}-1}:(\scrG'^{(p')})^{(p')}\rightarrow (\scrG'^{(p')})^{(p')}.\end{equation}
	Therefore $(\scrG'^{(p')})^{(p')}$  and $\scrG^{(p)}$ are isomorphic and the moreover part follows.
\end{proof}

\begin{prop}\label{Frob-Ver}Let $\scrG$ be an ordinary $p$-divisible with $\calO$-structure over a reduced and irreducible $\bbF_q$-scheme $S$. Let $f:\scrG\rightarrow \scrG'$ be a cyclic isogeny, then $f=\mathrm{Fr}'$ or $f=\mathrm{Ver}'$.
\end{prop}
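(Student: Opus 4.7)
My plan is to combine a pointwise classification at geometric points of $S$ with a rigidity argument exploiting the reducedness and irreducibility of $S$. At a geometric point $\bar{s} \in S$, the fiber $\scrG_{\bar s}$ is, by Moonen's uniqueness result (Proposition \ref{prop:Newton} (2)), isomorphic to the unique ordinary $p$-divisible group with $\calO$-structure and signature $(s_\tau)$ over $\overline{\bbF}_p$. A cyclic isogeny out of $\scrG_{\bar s}$ is determined by its kernel, an $\calO$-stable finite flat subgroup scheme $K \subset \scrG_{\bar s}[p]$ of order $|k_\fkp| = p^g$, subject to the constraint that the quotient $\scrG_{\bar s}/K$ again satisfy the signature condition (\ref{eq:char poly 2}).

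The key pointwise step is to show that there are exactly two such kernels, namely those of the essential Frobenius $\mathrm{Fr}'_{\bar s}$ and the essential Verschiebung $\mathrm{Ver}'_{\bar s}$ (the latter reinterpreted as a cyclic isogeny with source $\scrG_{\bar s}$ via the isomorphism $\scrG_{\bar s}^{(p')} \cong \scrG_{\bar s}$ coming from Moonen's uniqueness). This reduces to a Dieudonn\'e module computation: setting $M = \bbD(\scrG_{\bar s})$ and decomposing $M/pM = \bigoplus_{\tau \in \Sigma_\infty} (M/pM)_\tau$ into $\tau$-isotypic components, each of $k$-dimension $2$, one analyzes the $\sigma$-semi-linear Frobenius $F_\tau: (M/pM)_\tau \to (M/pM)_{\sigma\tau}$ whose rank is dictated by $s_{\sigma\tau}$ via the signature condition on $\scrG_{\bar s}$. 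A $k_\fkp$-line $N = \bigoplus_\tau N_\tau$ in $M/pM$ which is $F$- and $V$-stable, and whose quotient again has signature $(s_\tau)$, is highly constrained; a direct case analysis at the positions $\tau \in \Sigma_\infty - \rmS_\infty$ (where $s_\tau = 1$, so $N_\tau$ can be a proper line in $(M/pM)_\tau$), combined with $\sigma$-equivariance, pins $N$ down to exactly two choices. Tracing through the explicit lattice construction in \S\ref{sec: Appendix essentail Frobenius} identifies these with $\ker(\mathrm{Fr}'_{\bar s})$ and $\ker(\mathrm{Ver}'_{\bar s})$.

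With the pointwise step in hand, the globalization is by rigidity. Define $E_1 \subset S$ and $E_2 \subset S$ to be the loci where $\ker(f) = \ker(\mathrm{Fr}')$ and $\ker(f) = \ker(\mathrm{Ver}')$ respectively; these are closed subschemes of $S$ since they are cut out by equalities of finite flat closed subgroup schemes of the finite flat $S$-group scheme $\scrG[p]$. The pointwise classification gives $|S| = |E_1| \cup |E_2|$ on underlying sets, reducedness of $S$ promotes this to the scheme-theoretic identity $S = E_1 \cup E_2$, and irreducibility of $S$ then forces $S = E_1$ or $S = E_2$, yielding the proposition. The main obstacle is the pointwise Dieudonn\'e module classification: ruling out the ``diagonal'' $k_\fkp$-lines at the signature-$1$ positions requires a careful interaction between $F$-stability and the signature condition on the quotient, and is the technical heart of the proof. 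The argument parallels Stamm's treatment of ordinary Hilbert modular abelian varieties, with additional bookkeeping required at positions $\tau \in \rmS_\infty$ (where $s_\tau \in \{0,2\}$ forces $N_\tau \in \{0,(M/pM)_\tau\}$ but introduces compatibility conditions under iteration of $\sigma$).
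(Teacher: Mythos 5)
Your proposal is correct and follows essentially the same route as the paper: a pointwise classification at geometric points (reducing via Proposition \ref{prop:Newton} (2) to the unique ordinary $p$-divisible group with $\calO$-structure) followed by a rigidity argument using closedness of the loci where $\ker(f)$ agrees with $\ker(\mathrm{Fr}')$ or $\ker(\mathrm{Ver}')$, plus reducedness and irreducibility of $S$. The only difference is cosmetic: the paper delegates the pointwise Dieudonn\'e module computation to the same argument as \cite[Proposition 4.3]{Stamm}, whereas you sketch it directly.
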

\begin{proof}First assume $S=\Spec k$ where $k$ is an algebraically closed field of characteristic $p$. Then $\scrG\cong\scrG^{\ord}$ by Proposition \ref{prop:Newton}. The same proof as in \cite[Proposition 4.3]{Stamm} shows that $f=\mathrm{Fr}'$ or $f=\mathrm{Ver}.'$
	
	Let $S^{\text{Fr}'}$ (resp $S^{\text{Ver}'}$) denote the subscheme of $S$ where $f$ coincides with $\text{Fr}'$ (resp. $\text{Ver}'$). Then $S^{\text{Fr}'}$ and $S^{\text{Ver}'}$ are closed subschemes of $S$, since the locus where two subgroups of $\scrG$ coincide is closed. For each closed $s$ point of $S$, we have by the case above that $f_s=\text{Fr}'$ or $f_s=\text{Ver}'$. Therefore $S^{\text{Fr}'}\cup S^{\text{Ver}'}=S$, hence by irreducibility $S=S^{\text{Ver}'}$ or $S=S^{\text{Fr}'}$. 
	
\end{proof}

The association $(A,\iota,\lambda,\epsilon_{K'^p})\mapsto (A^{(p')},\iota',\lambda',\epsilon'_{K'^p})$ induces a map $$\text{Fr}':X'\rightarrow X'$$ By Proposition \ref{prop: ess Frob squared}, we have $\text{Fr}'\circ\text{Fr}'=S_\fkp^{-(g/2-1)}\text{Fr}_\fkp$.
These isogenies induce maps $$\calF,\calV:X'\rightarrow X'_0(\fkp)$$
which are defined by $$\calF(A,\iota,\lambda,\epsilon_{K'^p})=(\text{Fr}'_A:A\rightarrow A^{(p')},\iota,\lambda,\epsilon_{K'^p})$$ $$ \calV(A,\iota,\lambda,\epsilon_{K'^p})=(\text{Ver}'_A:A^{(p')}\rightarrow A,\iota',\lambda',\epsilon'_{K'^p}) .$$
By definition we have the following properties $$\pi_1\circ \calF=\text{id}_{X'},\ \ \pi_2\circ\calF=\text{Fr}'$$ $$\pi_1\circ\calV=S^{-1}_{\fkp}\text{Fr}',\ \  \pi_2\circ\calV=\text{id}_{X'}.$$

We have the following proposition regarding these maps.
\begin{prop}\label{FV-section}
	(1) $\calF$ and $\calV$ are closed immersions.
	
	(2) Let $(\rho:A\rightarrow A',\iota,\lambda,\epsilon_{K'^p})$ be an $S$-point of  $X_0(\fkp)^{\ord}$, then $x\in \calF(X)\cup\calV(X)$.  
	
\end{prop}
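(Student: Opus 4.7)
The plan is to handle the two parts separately, both of which follow in a relatively direct manner from results already established earlier in the text.

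For part (1), I would observe that the stated identity $\pi_1 \circ \calF = \mathrm{id}_{X'}$ exhibits $\calF: X' \to X'_0(\fkp)$ as a section of $\pi_1$, and similarly $\pi_2 \circ \calV = \mathrm{id}_{X'}$ exhibits $\calV$ as a section of $\pi_2$. Since $\pi_1$ and $\pi_2$ arise from the quasi-projective moduli scheme $\underline{\mathrm{Sh}}_{K'_0(\fkp)}(G'_{\tilde\rmS})$ over $\underline{\mathrm{Sh}}_{K'}(G'_{\tilde\rmS})$, they are separated, and the general categorical principle that a section of a separated morphism is a closed immersion applies: $\calF$ is the pullback of the diagonal $\Delta_{\pi_1}: X'_0(\fkp) \to X'_0(\fkp) \times_{X'} X'_0(\fkp)$ along the map $(\calF \times \mathrm{id}): X'_0(\fkp) \to X'_0(\fkp) \times_{X'} X'_0(\fkp)$, and $\Delta_{\pi_1}$ is a closed immersion by separatedness. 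The same argument handles $\calV$.

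For part (2), I would reduce to the case of geometric points. Since the assertion $x \in \calF(X') \cup \calV(X')$ is a set-theoretic statement about the image of the given $S$-point in $X'_0(\fkp)$, it suffices to check on geometric points; I would therefore take $S = \Spec k$ for $k$ algebraically closed of characteristic $p$. Such a point corresponds to the data of a cyclic isogeny $\rho: A \to A'$ between abelian varieties with $\calD$-structure, with $A$ ordinary. Applying the equivalence of categories in Proposition~\ref{prop:red-pdiv}, I pass to the associated cyclic isogeny $\rho': \scrG' \to \scrG''$ of $p$-divisible groups with $\calD^\circ$-structure, and in particular with $\calO$-structure; this $\scrG'$ is ordinary. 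Proposition~\ref{Frob-Ver} then forces $\rho'$ to coincide with either $\mathrm{Fr}'$ or $\mathrm{Ver}'$, as defined in \S\ref{sec: Appendix essentail Frobenius} via the lattice $M \subset \bbD(\scrG')(\scrR)$ (or $pM$ respectively). Translating this back to the abelian variety, this is exactly the statement that the geometric point factors through $\calF$ or $\calV$.

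There is no real obstacle: the substantive content has already been packaged into Proposition~\ref{Frob-Ver} (the classification of cyclic isogenies of ordinary $p$-divisible groups with $\calO$-structure, proved pointwise and then globalized by irreducibility) and into the functorial construction of the maps $\calF, \calV$ themselves. The one small subtlety is that Proposition~\ref{Frob-Ver} is stated over a reduced and irreducible base, which is precisely why the proof of (2) proceeds by first reducing to geometric points.
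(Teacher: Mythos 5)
Your proof is correct. For part (2) you do essentially what the paper does: the paper's proof is the one line ``follows directly from Proposition \ref{Frob-Ver}'', and your write-up simply makes explicit the two implicit reductions (checking membership in the closed subsets $\calF(X')\cup\calV(X')$ on geometric points, and passing from the cyclic isogeny of abelian varieties with $\calD$-structure to the cyclic isogeny of $p$-divisible groups with $\calD^\circ$-structure via Proposition \ref{prop:red-pdiv}) before invoking Proposition \ref{Frob-Ver} over $\Spec k$, which is reduced and irreducible.

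For part (1) you take a genuinely different route. The paper argues that $\calF$ is injective on points because $\pi_1\circ\calF=\mathrm{id}$, and then establishes properness of $\calF$ by the valuative criterion using standard properties of N\'eron models, concluding that it is a closed immersion. You instead observe that $\calF$ (resp.\ $\calV$) is a scheme-theoretic section of $\pi_1$ (resp.\ $\pi_2$), that these morphisms are separated because $\underline{\mathrm{Sh}}_{K'_0(\fkp)}(G'_{\tilde\rmS})$ and $\underline{\mathrm{Sh}}_{K'}(G'_{\tilde\rmS})$ are quasi-projective over the base, and that a section of a separated morphism is a closed immersion. This is a valid and in fact cleaner argument: it avoids the N\'eron-model/valuative-criterion input entirely and also sidesteps the point that injectivity on points plus properness alone does not formally yield a closed immersion (one needs the monomorphism/section structure anyway, which your argument uses directly). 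The only blemish is your description of the cartesian square: the map $(\calF\times\mathrm{id})$ out of $X'_0(\fkp)$ does not typecheck as written; the section $\calF$ is the base change of the diagonal $\Delta_{\pi_1}$ along $(\mathrm{id},\calF\circ\pi_1):X'_0(\fkp)\to X'_0(\fkp)\times_{X'}X'_0(\fkp)$ (equivalently, one quotes the standard lemma that sections of separated morphisms are closed immersions). This is cosmetic and does not affect the correctness of the argument.
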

\begin{proof}
	(1) We prove the result for $\calF$, the case of $\calV$ is analogous. First note that $\calF$ is injective on points since $\pi_1\circ\calF=\text{id}_X$. Therefore it suffices to show $\calF$ is proper; the valuative criterion in this case follows from standard properties of Neron models.
	
	(2) Follows directly from Proposition \ref{Frob-Ver}.
\end{proof}
\subsection{Global structure of quaternionic Shimura surfaces with Iwahori level structure}In this subsection we prove the main theorem. We first need to study the fibers of the maps $\pi_1$.

Let $\scrG/\overline{\bbF}_p$ be a supersingular $p$-divisible group with $\calD^\circ$-structure.  We will define a universal cyclic isogeny of $\scrG$; this will parameterize the fiber of the projection $\pi_1:X'_0(\fkp)_{\overline{\bbF}_p}\rightarrow X'_{\overline{\bbF}_p}$. As in \S\ref{sec: Goren-Oort}, we may define a version of essential Verschiebung for any $p$-divisible group with $\calD^\circ$ structure  $\scrG$ over an $\overline{\bbF}_p$-scheme $S$. We have the exact sequence of sheaves over $S$ \begin{equation}\label{eq:exact seq Dieudonne}0\rightarrow\omega_{\scrG,\tau_i}\rightarrow \mathbb{D}(\mathscr{G})(S)_{\tau_i}\rightarrow \omega_{\scrG^\vee,\tau_i}^*\rightarrow 0\end{equation} for each $i=1,\dotsc,g$, where $\scrG^\vee$ denotes the Cartier dual and $\omega_{\scrG}$ (resp. $\omega_{\scrG^\vee}$) is the sheaf of invariant differential forms on $\scrG$ (resp. $\scrG^\vee$). If $\scrG$ arises from an abelian variety with $\calD$-structure $A$, then $\omega_{\scrG^\vee,\tau_i}\cong \omega^\circ_{A^\vee,\tilde{\tau}_i}$ as in \S\ref{sec: Goren-Oort}, where $\tilde{\tau}_i\in\Sigma_{E,\infty/\fkq}$ lifts $\tau_i$. We obtain sections $$h_{\tau_i}\in\Gamma(S,\omega_{\scrG^\vee,\sigma^{-n_{\tau_i}}\tau_i}^{\otimes p^{n_{\tau_i}}}\otimes\omega_{\scrG^\vee,\tau_i}^{\otimes-1})$$ for $i=1,c,$ using the same construction from \S\ref{sec: Goren-Oort}; here we need to use the essential \textit{Frobenius} to define these sections since we are using contravariant Dieudonn\'e theory. Applying this to the universal $p$-divisible group with $\calD^\circ$-structure on $X'$, these may be identified with the partial Hasse invariants defined in \S\ref{sec: Goren-Oort}. 

Let $\scrG/\Fpbar$ be a supersingular $p$-divisible group with $\calD^\circ$-structure. We separate the following two cases:

(1) Both $h_{\tau_1}$ and $h_{\tau_c}$ vanish.

(2) Exactly one of $h_{\tau_1}$ or $h_{\tau_c}$ vanish.

The following Proposition can be proved in the same way as \cite[\S5]{Stamm} where the analogous calculations are carried out for the Hilbert modular surface.
\begin{prop}Let $\calO_L:=W(\Fpbar)$.
	
	In Case (1), there exists an $\calO_L$-basis $\{e_i,f_i\}$ of $\bbD(\scrG)(\calO_L)_{\tau_i}$ for $i=1,...,g$ such that $$\varphi(e_i)=f_{i+1},  \varphi(f_i)=pe_{i+1}, \text{ for $i=1,c$}$$
	$$\varphi(e_i)=e_{i+1}, \varphi(f_i)=f_{i+1}, \text{ for $\tau_i\in \rmS_\infty-\rmT$}$$
	$$\varphi(e_i)=pe_{i+1}, \varphi(f_i)=pf_{i+1}, \text{ for $\tau_i\in \rmT$}$$
	
	In Case (2); we assume without loss of generality $h_{\tau_c}\neq0 $. Then there exists an element $u\in\overline{\bbF}-\bbF_{p^g}$ and $\calO_L$-basis $\{e_i,f_i\}$ of $\bbD(\scrG)(\Ok_L)_{\tau_i}$ such that  
	$$\varphi(e_i)=f_{i+1},\ \varphi(f_i)=pe_{i+1}-[u^{p^{g-c+1}}]f_{i+1}, \text{ for $i=1$}$$
	$$\varphi(e_i)=f_{i+1}+[u^p]e_{i+1},\ \varphi(f_i)=pe_{i+1}, \text{ for $i=c$}$$
	$$\varphi(e_i)=e_{i+1},\ \varphi(f_i)=pe_{i+1}, \text{ for $\tau_i\in \rmS_\infty-\rmT$}$$
	$$\varphi(e_i)=pe_{i+1},\ \varphi(f_i)=pf_{i+1}, \text{ for $\tau_i\in \rmT$}$$
	where $[u]\in\calO_L$ denotes the Teichmuller lifting.
\end{prop}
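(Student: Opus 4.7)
My plan is to imitate the strategy of Stamm \cite{Stamm} for Hilbert modular surfaces, adapted to the $\calD^\circ$-setting where only the two embeddings $\tau_1, \tau_c$ behave ``nontrivially''. The Dieudonn\'e module $M:=\bbD(\scrG)(\calO_L)$ decomposes as $M=\bigoplus_{i=1}^g M_{\tau_i}$ with each $M_{\tau_i}$ free of rank $2$ over $\calO_L$, and the $\sigma$-semilinear Frobenius $\varphi$ sends $M_{\tau_i}$ to $M_{\tau_{i+1}}$. The condition (\ref{eq:char poly 2}) together with $FV=VF=p$ translates into the following constraints on $\varphi$ restricted to each graded piece: for $\tau_i\in \rmS_\infty-\rmT$ (i.e.\ $s_{\tau_i}=2$), $\varphi\colon M_{\tau_i}\to M_{\tau_{i+1}}$ is a $\sigma$-linear isomorphism; for $\tau_i\in \rmT$ (i.e.\ $s_{\tau_i}=0$), $\varphi$ is divisible by $p$ with $\varphi/p$ a $\sigma$-linear isomorphism; and for $i\in\{1,c\}$ (i.e.\ $s_{\tau_i}=1$), the image $\varphi(M_{\tau_i})$ has index $p$ in $M_{\tau_{i+1}}$.

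The first move is to propagate bases through the ``passive'' positions. Starting from any $\calO_L$-basis $\{e_i,f_i\}$ at a chosen position, I will define the bases at $\tau_{i+1}$ by declaring $e_{i+1}:=\varphi(e_i)$, $f_{i+1}:=\varphi(f_i)$ if $\tau_i\in \rmS_\infty-\rmT$ and $e_{i+1}:=p^{-1}\varphi(e_i)$, $f_{i+1}:=p^{-1}\varphi(f_i)$ if $\tau_i\in\rmT$. This forces the stated shape of $\varphi$ at all such embeddings, and it reduces the problem to determining $\varphi$ at the two critical positions $i=1$ and $i=c$. At these positions the essential Frobenius $\varphi^{n_{\tau_i}}\colon M_{\tau_{i}^-}\to M_{\tau_i}$ computes the partial Hasse invariant $h_{\tau_{i}^-}$ via its interaction with the Hodge line $\omega_{\scrG,\tau_i}\subset M_{\tau_i}$, which is rank $1$.

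In Case (1), both partial Hasse invariants vanish, so the essential Frobenius preserves the Hodge line at each of the critical embeddings. I will choose $e_i$ to span this line and $f_i$ to be any lift of a basis of $M_{\tau_i}/\omega_{\scrG,\tau_i}$. Supersingularity (all slopes equal to $1/2$) together with $FV=p$ then forces $\varphi(e_i)\in f_{i+1}+pM_{\tau_{i+1}}$ and $\varphi(f_i)\in pe_{i+1}+p\omega_{\scrG,\tau_{i+1}}$; after rescaling $f_{i+1}$ by a unit I can arrange the equalities $\varphi(e_i)=f_{i+1}$ and $\varphi(f_i)=pe_{i+1}$. This is essentially the superspecial normal form.

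In Case (2) (say $h_{\tau_1}=0$, $h_{\tau_c}\ne 0$), the Hodge line at $\tau_c$ is \emph{not} preserved by the essential Frobenius, and one picks up an unramified unit twist $[u]\in \calO_L^\times$ whose reduction $u\in \Fpbar$ encodes the deviation. Choosing the basis so that $e_i$ spans the Hodge line and tracking how $\varphi^{g}$ acts on $M_{\tau_1}$ gives an equation of the form $\varphi^g(e_1)=p^{g/2}(e_1+[\text{stuff in } u]f_1)$; the requirement that all Newton slopes equal $1/2$ (as opposed to $\scrG$ being ordinary, which would make one slope $1$) is precisely $u\notin \bbF_{p^g}$. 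The powers $p$ and $g-c+1$ appearing as exponents of $u$ come from applying $\sigma$ the appropriate number of times to move between the $\tau_1$ and $\tau_c$ components. The main obstacle will be bookkeeping the $\sigma$-semilinearity through the iterations of $\varphi$ so that the Teichm\"uller lifts appear with the correct Frobenius twists; this is precisely the calculation carried out in \cite[\S5]{Stamm} for the Hilbert case, which goes through mutatis mutandis since, after the propagation in the first step, only the two positions $i=1,c$ remain active.
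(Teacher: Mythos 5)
Your overall strategy (propagate bases through the passive embeddings, do the real work at $\tau_1,\tau_c$, imitate Stamm) is exactly the paper's, whose proof is literally the remark that the computation of \cite[\S5]{Stamm} carries over. However, two steps as you have written them do not work. First, your normalization at the critical positions is backwards. With the paper's contravariant conventions the Hodge line $\omega_{\scrG,\tau_i}\subset\bbD(\scrG)(\Fpbar)_{\tau_i}$ is $\ker(F\bmod p)=\operatorname{im}(V\bmod p)$, so any element spanning it is sent by $\varphi$ into $pM_{\tau_{i+1}}$; if you place $e_i$ in the Hodge line you cannot conclude $\varphi(e_i)\equiv f_{i+1}\bmod p$. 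In the asserted normal form it is $f_i$ that reduces into $\omega_{\scrG,\tau_i}$ (since $\varphi(f_i)=pe_{i+1}$), while $e_i$ must lift a generator of the quotient $\omega^*_{\scrG^\vee,\tau_i}$; this is the choice the intended argument makes. The same mix-up occurs in your Case (2), where moreover at $\tau_1$ neither basis vector lies in the Hodge line of the final normal form.

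Second, in Case (1) you cannot choose and rescale bases at the two critical positions independently and only check congruences position by position: after propagating through the passive embeddings the relations must close up around the cycle $\tau_1\to\tau_2\to\cdots\to\tau_g\to\tau_1$, and a unit rescaling at one critical position propagates to the other, so the per-position argument does not yield the exact equalities. The loop is closed by using supersingularity at the outset: $p^{-g/2}\varphi^{g}$ is a $\sigma^{g}$-linear automorphism of $M_{\tau_1}[1/p]$ all of whose slopes are zero, so over $\Fpbar$ one may choose $e_1$ lifting a generator of the quotient line with $\varphi^{g}(e_1)=p^{g/2}e_1$; the remaining $e_i,f_i$ are then \emph{defined} by applying $\varphi$ and dividing by $p$ exactly where divisibility is guaranteed (by $\tau_i\in\rmT$ at the passive positions, and by the vanishing of $h_{\tau_1},h_{\tau_c}$ at the critical ones), after which one checks they generate the lattices. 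The analogous loop-closing in Case (2) is where the Teichm\"uller unit $[u]$ enters, and supersingularity is what excludes $u\in\bbF_{p^g}$ (note that ordinary here means $\varphi^{g}$-slopes $\tfrac{g+1}{2},\tfrac{g-1}{2}$, not slope $1$). With these corrections your sketch does reduce to the computation of \cite[\S5]{Stamm}, as in the paper.
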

\ignore{\begin{proof}We write $\varphi_{\mathrm{es},\tau}:\bbD(\scrG)(\Fpbar)_\tau\rightarrow\bbD(\scrG)(\Fpbar)_{\sigma(\tau)}$ for the essential Frobenius which is defined to be the usual Frobenius for $s_\tau=1,2$ or the inverse of Verschiebung if $s_\tau=0$. We write $\varphi_{\mathrm{es}}:\bbD(\scrG)(\Fpbar)\rightarrow\bbD(\scrG)(\Fpbar)$ for the sum of these maps.
	
	Suppose we are in Case (1). Then by definition, the maps $\omega_{\scrG^\vee,\tau_1}^*\rightarrow \omega_{\scrG^\vee,\tau_c}^*$ and $\omega_{\scrG^\vee,\tau_c}^*\rightarrow \omega_{\scrG^\vee,\tau_1}^*$ induced by $\varphi_{\mathrm{es}}^{c-1}$ and $\varphi_{\mathrm{es}}^{g-c+1}$ are both zero. For $i=1,c$, let $e_i\in \bbD(\scrG)(\Ok_L)_{\tau_i}$ be a lift of a generator of $\omega^*_{\scrG^\vee,\tau_i}$; since $\scrG$ is supersingular we may choose $e_i$ such that $\varphi^g(e_i)=p^{g/2}e_i$. Let $f_2=\varphi( e_1)$ and for $j=2,\dotsc,c-1$  define $f_{j+1}=\varphi_{\mathrm{es}}(f_{j})$. Similarly let $f_{c+1}=\varphi(e_c)$ and for $j=c+1,\dotsc,g$ define $f_{j+1}=\varphi_{\mathrm{es}}(f_{j})$. Since $h_{\tau_1}$ and $h_{\tau_c}$ vanish, the image of $f_i$ in  $\bbD(\scrG)(\Fpbar)_{\tau_i}$ lands in $\omega_{\scrG,\tau_i}$  for $i=1,c$. It follows that $\varphi(f_i)=pe_{i+1}$ for some $e_{i+1}\in \bbD(\scrG)(\Ok_L)_{\tau_{i+1}}.$ For $j\neq 1,2,c,c+1$, we let $e_{i}=\varphi_{\mathrm{es}}(e_{i-1})$. One checks that $e_i,f_i$ is a suitable basis as in the Proposition.
	
	Now suppose we are in Case (2). We let $f_1\in\bbD(\scrG)(\Ok_L)_{\tau_1}$  be a lift of a generator of $\omega^*_{\scrG^\vee,\tau_1}$ such that $\varphi^g(f_1)=p^{g/2}f_1$. Let $e_2=\varphi(f_1)$ and for $i=2,\dotsc,c-1$ we let $e_{i+1}=\varphi_{\mathrm{es}}(e_i)$. Then as above, we have $\varphi(e_c)=pf_{c+1}$ for some $f_{c+1}\in\bbD(\scrG)(\Ok_L)_{\tau_{c+1}}$ and we let $f_{i+1}=\varphi_{\mathrm{es}}(f_i)$ for $i=c+1,\dotsc,g$. Now let $f_c\in\in\bbD(\scrG)(\Ok_L)_{\tau_c}$ be a lift of a generator of $\omega^*_{\scrG^\vee,\tau_c}$. Then since $h_{\tau_1}$ does not vanish. There exists $u\in\Fpbar$ such that the image of
	
\end{proof}}
We now define the  universal cyclic  isogeny of $\scrG$. In the two cases above we fix a basis of $\bbD(\scrG)(\Ok_L)_{\tau_i}$ as in the previous Proposition.

Case (1): In this case we will define a cyclic isogeny of $\scrG\times T$ where $T$ is the scheme consisting of two copies of $\bbP^1$ intersecting transversally at an $\overline{\bbF}_p$-point. To do this we first introduce some notation. 

Let $S=\Spec R$ be a smooth $\overline{\bbF}_p$-scheme. 

Let $\scrR=\Ok_L\langle x\rangle$ denote the ring of restricted power series over $\Ok_L$ equipped with the lift of Frobenius given by the usual Frobenius on $\Ok_L$ and $x\mapsto x^p$. Then $\scrR$ is a frame for $R=\overline{\bbF}_p[x]$.

We let $M$ be the $\scrR$-module $\bbD(\scrG)(\Ok_L)\otimes_{\Ok_L}\scrR$, equipped with the induced Frobenius, Verschiebung and the trivial connection $$\nabla:=1\otimes \text{d},$$ 
where $d:\mathscr{R}\rightarrow \Omega^1_{\scrR/\Ok_L}$ is the universal derivation.
Then $(M,F,V,\nabla)$ is the Dieudonn\'e $F$-crystal associated to $\scrG\otimes \scrR$. We now define the submodule $\tilde{M}_0\subset M$ to be the submodule generated by generated by $pM$ and $$\langle e_i+x^{p^{i-c}}f_i|i=c+1,\dotsc,g,1\rangle\cup\langle f_i|i=2,\dotsc,c\rangle.$$

One checks that $\tilde{M}_0$ is stable under $F,V$ and $\nabla$ hence corresponds to a cyclic isogeny $$\tilde{\rho}_0:\scrG\times \Spec R\rightarrow \tilde{\scrG}_0$$ where  $\tilde{\scrG}_0$ is a $p$-divisible group with $\calO$-structure over $R$.

Similarly letting $\scrR'=\Ok_L\langle y\rangle$, we define the submodule $\tilde{M}_0'$ of $M':=\bbD(\scrG)(\Ok_L)\otimes_{\Ok_L}\scrR'$ to be generated by $pM'$ and $$\langle y^{p^{i-c}}e_i+f_i|i=c+1,\dotsc,g,1\rangle\cup\langle f_i|i=2,\dotsc,c\rangle $$ As before this corresponds to a cyclic isogeny $$\tilde{\rho}_0':\scrG\times\Spec R\rightarrow \tilde{\scrG}_0'.$$ Using the identification $x\leftrightarrow \frac{1}{y}$, we see that $\tilde{\rho_0}$ and $\tilde{\rho}_0'$ glue to give an isogeny $$\rho_0:\scrG\times \bbP^1\rightarrow \scrG_0.$$

Similarly we may define an isogeny $$\rho_1:\scrG\times \bbP^1\rightarrow \scrG_1$$  by gluing the isogenies corresponding to submodules $\tilde{M}_1\subset M$ and $\tilde{M}_1'\subset M'$ where  $$\langle f_i|i=c+1,\dotsc,g,1\rangle\cup\langle e_i+x^{p^{i-1}}f_i|i=2,\dotsc,c\rangle. $$ and $$\langle f_i|i=c+1,\dotsc,g,1\rangle\cup\langle y^{p^{i-1}}e_i+f_i|i=2,\dotsc,c\rangle.$$

Let $z_0=(1:0)$ and $z_1=(0:1)$. Then $\rho_0|_{z_0}:\scrG\rightarrow \scrG_{0,z_0}$ and $\rho_1|_{z_1}:\scrG\rightarrow \scrG_{1,z_1}$ agree and hence we may glue $\rho_0$ and $\rho_1$ to give an isogeny $$\rho:\scrG\times T\rightarrow \scrG'.$$

Case (2): Assume $h_{\tau_c}\neq 0$. 

As in case (1), we let $\mathscr{R}=\Ok_L\langle x\rangle$ and  $M=\bbD(\scrG)(\scrR)$. We define the submodule $\tilde{M}$ of $M$ to be generated by $pM$ and $$\langle e_i+x^{p^{i-c}}f_i|i=c+1,...,g,1\rangle\cup\langle f_i|i=2,...,c\rangle.$$
Similarly $\tilde{M}'$ is the submodule generated by  $pM$ and $$\langle y^{p^{i-c}}e_i+f_i|i=c+1,...,g,1\rangle\cup\langle f_i|i=2,...,c\rangle.$$
As in Case (1), these submodules correspond to isogenies which glue along $x\leftrightarrow\frac{1}{y}$ and define an isogeny $\scrG\times\bbP^1\rightarrow \scrG'$.

If $h_{\tau_1}\neq 0$, we may switch the roles of $1$ and $c$ to obtain an isogeny $\scrG\times\bbP^1\rightarrow \scrG'$.

	Let $x\in X'(\overline{\bbF}_p)$ lie in the intersection of the two Goren--Oort divisors $X'_{\tau_1,\Fpbar}$, $X'_{\tau_c,\Fpbar}$ corresponding to $\tau_1$ and $\tau_c$; we write $A_x$ for the associated abelian variety and $\scrG_{x}'$ for the $p$-divisible group with $\calD^\circ$-structure associated to $A_x[p^\infty]$ as in Proposition \ref{prop:red-pdiv}.  Then we are in Case (1) and have defined a cyclic isogeny $$\rho:\scrG'_x\times T\rightarrow \scrG'$$ where $T$ is the union of two copies of $\bbP^1$ intersecting transversally at point. This induces a cyclic isogeny $A_x\times T\rightarrow A'$. We therefore obtain a map $$\beta:T\rightarrow X'_0(\fkp)_{\overline{\bbF}_p}$$ whose image lies in the fiber of $x$ under the degeneracy map $\pi_1$.
	
	Similarly if $x\in X'(\overline{\bbF}_p)$ lies in a unique Goren--Oort strata for $\tau_1$  or $\tau_c$, then we are in Case (2) and we obtain a cyclic isogeny $A_x\times\bbP^1\rightarrow A'$.  This induces a map $$\alpha:\bbP^1\rightarrow X'_0(\fkp)_{\overline{\bbF}_p}$$ whose image lies in the fiber of $x$ under $\pi_1$.

\begin{prop}\label{prop:fiber}
	In Case (1), the map $\beta:T\rightarrow \pi_1^{-1}(x)$ is a bijection on $\Fpbar$-points.
	
	In Case (2), the map  $\alpha:\bbP^1\rightarrow \pi_1^{-1}(x)$ is a bijection on $\Fpbar$-points.
\end{prop}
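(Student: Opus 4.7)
The plan is to work entirely on the Dieudonn\'e side, using the equivalence of categories between $p$-divisible groups with $\calD^\circ$-structure over $\overline{\bbF}_p$-schemes and the appropriate Dieudonn\'e $F$-crystals. A cyclic isogeny $\scrG'_x \to \scrG'$ corresponds to a sub-module $M \subset \bbD(\scrG'_x)(\calO_L) =: D$ which (i) is $\calO$-stable, (ii) contains $pD$, (iii) is stable under $F$, $V$ and the connection, (iv) has colength $g$ (i.e.\ $[D:M] = p^g = |k_\fkp|$), and (v) satisfies the character polynomial condition of Definition~\ref{def: D^0 structure}. By the explicit Dieudonn\'e bases produced in the previous proposition I can write down all candidate submodules $M$ componentwise, i.e.\ $M = \bigoplus_{i=1}^g M_{\tau_i}$ with $pD_{\tau_i} \subset M_{\tau_i} \subset D_{\tau_i}$.

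First I will dispose of the components indexed by $\tau_i \in \rmS_\infty$. For $\tau_i \in \rmT$ (so $s_{\tau_i} = 0$) condition (v) forces $M/VM$ to have rank $0$ at $\tau_i$, and for $\tau_i \in \rmS_\infty - \rmT$ (so $s_{\tau_i}=2$) it forces $M/VM$ to have rank $2$; together with the explicit action of $\varphi$ from the preceding proposition (which is either $p\cdot\mathrm{id}$ or a unit times the identity on each basis vector after reduction) this pins $M_{\tau_i}$ down completely: it must equal $pD_{\tau_i}$ or $D_{\tau_i}$ respectively. The only freedom left is in the two components at $\tau_1$ and $\tau_c$, and here $M_{\tau_i}$ must be a rank-one sub-lattice of $D_{\tau_i}/pD_{\tau_i}$, i.e.\ a line in a 2-dimensional $\overline{\bbF}_p$-vector space. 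So a priori the fiber can be described by a pair of lines in $(D_{\tau_1}/p) \times (D_{\tau_c}/p)$, i.e.\ a point of $\bbP^1 \times \bbP^1$.

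Next I impose $F$- and $V$-stability globally (across the cycle of embeddings). In Case~(1) both partial Hasse invariants vanish and one finds, after a direct walk around the cycle $\tau_1 \to \tau_2 \to \cdots \to \tau_g \to \tau_1$ using the explicit formulas $\varphi(e_i), \varphi(f_i)$, that the two chosen lines are constrained by exactly one equation: either the line at $\tau_1$ is the ``$f_1$-axis'' (which forces no constraint on the line at $\tau_c$, giving the second $\bbP^1$) or the line at $\tau_c$ is the ``$f_c$-axis'' (forcing no constraint on the line at $\tau_1$, giving the first $\bbP^1$). The two cases coincide on the single point where both lines are the respective $f$-axes; this reproduces exactly the scheme $T$ (two $\bbP^1$'s meeting transversally at one point), and matching coordinates with the submodules $\tilde M_0, \tilde M_0', \tilde M_1, \tilde M_1'$ used in the construction of $\beta$ shows the bijection on $\overline{\bbF}_p$-points. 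In Case~(2), say $h_{\tau_c} \neq 0$, the non-vanishing of $h_{\tau_c}$ implies $\varphi$ carries a unit basis vector at $\tau_c$ to the ``$e$-type'' vector at $\tau_{c+1}$, which in turn forces the line at $\tau_c$ to equal the $f_c$-axis; the line at $\tau_1$ then becomes the free parameter, producing a single $\bbP^1$ and matching the formulas for $\tilde M$, $\tilde M'$ used to define $\alpha$.

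The main obstacle will be the bookkeeping in the second paragraph: one has to chase the relation between the line at $\tau_1$ and the line at $\tau_c$ through the whole cycle of embeddings, using that $F \circ V = V \circ F = p$ and the precise form of $\varphi$ at each embedding, and verify that the resulting subvariety of $\bbP^1 \times \bbP^1$ is exactly the union of two rulings through a common point (giving $T$) in Case~(1), and a single ruling in Case~(2). Once this is done, injectivity is automatic because the parameter $x$ (or $y$) literally records one of the two line coordinates, and the $\calO$-stability plus character polynomial conditions are already built into the construction of $\rho_0, \rho_1, \rho$. Finally, the argument is entirely local in Dieudonn\'e modules, so it applies equally well whether $\scrG'_x$ comes from a point in the intersection of Goren--Oort divisors (Case~(1)) or in a single Goren--Oort divisor (Case~(2)).
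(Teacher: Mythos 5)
Your final picture of the fiber (two rulings through a common point in Case (1), a single ruling in Case (2)) is correct, but the key reduction in your first paragraph is wrong, and the rest of the argument is built on it. You claim that the determinant condition forces $M_{\tau_i}=pD_{\tau_i}$ for $\tau_i\in\rmT$ and $M_{\tau_i}=D_{\tau_i}$ for $\tau_i\in\rmS_\infty-\rmT$, so that the only freedom is a pair of lines at $\tau_1,\tau_c$ and the fiber sits inside $\bbP^1\times\bbP^1$. This contradicts $F$- and $V$-stability in general: with the explicit basis of the preceding proposition, if $\tau_i\in\rmS_\infty-\rmT$ is followed by $\tau_{i+1}\in\rmT$, then $F(D_{\tau_i})=\langle e_{i+1},f_{i+1}\rangle=D_{\tau_{i+1}}\not\subset pD_{\tau_{i+1}}$, so the pattern you propose is not even a Dieudonn\'e submodule (and the inference from ``$\dim(M/VM)_{\tau_i}$ is $0$ resp.\ $2$'' to ``$M_{\tau_i}$ is $pD_{\tau_i}$ resp.\ $D_{\tau_i}$'' is not valid). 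More tellingly, it excludes the very submodules $\tilde M_0,\tilde M_0',\tilde M_1,\tilde M_1'$ (and $\tilde M,\tilde M'$) that define $\beta$ and $\alpha$: in each of those, \emph{every} $\tau_i$-component has colength exactly $1$ (it is $pD_{\tau_i}$ plus a single line, e.g.\ $\langle f_i\rangle$ for $i=2,\dotsc,c$ and $\langle e_i+x^{p^{i-c}}f_i\rangle$ for $i=c+1,\dotsc,g,1$), so under your constraints the image of $\beta$ would not lie in the fiber at all. Consequently the ``walk around the cycle'' in your second paragraph cannot be carried out as stated, because the data actually being propagated by $F$ and $V$ are precisely the colength-one lines at the intermediate places $\tau_i\in\rmS_\infty$, which you have frozen.

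The repair is the computation you were implicitly aiming at, done with the correct bookkeeping: show that any $\calO$-stable, $F,V$-stable lattice $pD\subset M\subset D$ of colength $p^g$ whose quotient satisfies the $\calD^\circ$-determinant condition must have colength one in every $\tau_i$-component; then the line at $\tau_{i+1}$ is determined by the line at $\tau_i$ through the explicit formulas for $\varphi$ except at the two places $\tau_1,\tau_c$, and chasing around the cycle yields exactly the dichotomy you describe (line at $\tau_c$ equal to $\langle f_c\rangle$ with the line at $\tau_1$ free, or line at $\tau_1$ equal to $\langle f_1\rangle$ with the line at $\tau_c$ free, meeting in one point) in Case (1), and a single family in Case (2). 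This matching with the parameters $x,y$ of $\tilde M_0,\tilde M_1$ etc.\ then gives bijectivity on $\Fpbar$-points. Note also that the paper itself does not redo this computation: it simply invokes the identical calculation in Stamm's Proposition 6.5 for Hilbert modular surfaces, with the usual Frobenius and Verschiebung replaced by their essential variants; if you want a self-contained proof, the corrected lattice chase above is the content you need to supply.
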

\begin{proof}
This follows from the same calculation as in \cite[Proposition 6.5]{Stamm}
\end{proof}
	
	\begin{thm}\label{thm:Iwahori-structure}
	(1)	$X'_0(\fkp)_{\overline{\bbF}_p}$ can be decomposed as $$\calV(X_{\overline{\bbF}_p}')\cup\calF(X_{\overline{\bbF}_p}')\cup X'_0(\fkp)_{\overline{\bbF}_p}^{\mathrm{ss}}$$
	where each term is a union of irreducible components of $X'_0(\fkp)_{\overline{\bbF}_p}$ of dimension 2. Any irreducible component of $X'_0(\fkp)_{\overline{\bbF}_p}$ is contained in exactly one of the terms  and $\calV(X_{\overline{\bbF}_p}')\cap\calF(X_{\overline{\bbF}_p}')$ is 0-dimensional.
	
	(2) For each irreducible component $C$ of $X'^{\mathrm{ss}}_{\overline{\bbF}_p}$, there is a unique irreducible component $R$ of $X'_0(\fkp)$ such that $\pi_1(R)=C$. Moreover the projection $\pi_{1}:R\rightarrow C$ exhibits $R$ as a $\bbP^1$-bundle over $C\cong\bbP^1$ and for any other irreducible component $C'$ of $X'^{\mathrm{ss}}_{\overline{\bbF}_p}$, we have:
	
	$$R\cap R'=\begin{cases}
	\{pt\} & \text{if } C\cap C'\neq\emptyset\\
	\emptyset & \text{otherwise}
	\end{cases}.$$
	Similarly, the result holds if we replace $\pi_1$ by $\pi_2$.
	\end{thm}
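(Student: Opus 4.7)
The plan is to prove (1) by combining the closed-immersion property of $\calF, \calV$ (Proposition \ref{FV-section}(1)) with the fiber analysis of $\pi_1$, and then to prove (2) by globalizing the universal cyclic isogeny construction over each irreducible component of $X'^{\mathrm{ss}}$.

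For (1), I would first observe that $\calF(X'_{\Fpbar})$ and $\calV(X'_{\Fpbar})$ are closed $2$-dimensional subschemes of $X'_0(\fkp)_{\Fpbar}$ by Proposition \ref{FV-section}(1), and that the closed subscheme $X'_0(\fkp)^{\mathrm{ss}}_{\Fpbar} := \pi_1^{-1}(X'^{\mathrm{ss}}_{\Fpbar})$ is also $2$-dimensional: its base is the $1$-dimensional union of Goren--Oort divisors $X'_{\tau_1}\cup X'_{\tau_c}$, and by Proposition \ref{prop:fiber} the fibers of $\pi_1$ over it are $1$-dimensional. That these three closed pieces cover $X'_0(\fkp)_{\Fpbar}$ is precisely Proposition \ref{FV-section}(2) together with the definition of $X'_0(\fkp)^{\mathrm{ss}}$. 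Since every irreducible component of $X'_0(\fkp)_{\Fpbar}$ is $2$-dimensional, each must be contained in one of the three $2$-dimensional covering pieces. For $\calF \cap \calV$, Proposition \ref{Frob-Ver} forces the intersection to lie in $\calF(X'^{\mathrm{ss}})$, which is $1$-dimensional; the further reduction to $0$-dimensional would be carried out by inspecting the two fiber cases of Proposition \ref{prop:fiber}: over a case (2) supersingular point one checks from the explicit Dieudonné-module description that $\calF(x)$ and $\calV(x)$ land on distinct points of the $\bbP^1$ fiber, so the collision can only occur over case (1) points, which comprise the $0$-dimensional locus $X'_{\tau_1}\cap X'_{\tau_c}$.

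For (2), fix an irreducible component $C$ of $X'^{\mathrm{ss}}_{\Fpbar}$; by Theorem \ref{thm:Goren-Oort strata}(1), $C \cong \bbP^1$. The strategy is to construct a relative version of the universal cyclic isogeny preceding Proposition \ref{prop:fiber}: applying that construction in families to the universal $p$-divisible group with $\calD^\circ$-structure on $C$ produces a morphism from a $\bbP^1$-bundle $\tilde R \to C$ into $X'_0(\fkp)$ whose image lies in $\pi_1^{-1}(C)$. Fibrewise bijectivity (Proposition \ref{prop:fiber}) over the open locus $C^\circ \subset C$ of case (2) points shows this map is an isomorphism onto its image there; taking the schematic image produces a closed $2$-dimensional subscheme $R \subset X'_0(\fkp)^{\mathrm{ss}}$, which is irreducible because $\tilde R$ is. Since irreducible components of $X'_0(\fkp)_{\Fpbar}$ are $2$-dimensional and $\pi_1: R \to C$ is dominant, $R$ is a component, and any other component mapping to $C$ would have to coincide with $R$ by irreducibility and dimension, giving uniqueness. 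For the intersection statement, components $R, R'$ sitting over distinct $C, C'$ can meet only over points where the generic $\bbP^1$-fiber of $\pi_1$ degenerates, i.e. at case (1) supersingular points, which by definition are the points of $C \cap C'$. At such a point the fiber is the nodal $T = \bbP^1 \cup_{pt} \bbP^1$ of Proposition \ref{prop:fiber}; one branch lies in $R$ (as the limit of fibers from $C$) and the other in $R'$, so the intersection is the single node. The $\pi_2$-version follows by the duality $(A \to A') \leftrightarrow (A'^{\vee} \to A^\vee)$ using the polarizations, which exchanges the roles of $\pi_1$ and $\pi_2$ and swaps $\calF$ with $\calV$.

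The main obstacle will be the construction of $R$: I need to verify that the relative universal cyclic isogeny extends to a globally defined $\bbP^1$-bundle over all of $C$, including the case (1) points where the local model gives the nodal fiber $T$. This requires gluing the two local models from the preceding proposition along their common case (2) degenerations, and showing that exactly one of the two $\bbP^1$'s at a case (1) node belongs to the closure of $R$ while the other belongs to the closure of $R'$. This is the analogue of the corresponding step in \cite{Stamm}, and should go through after carefully identifying which branch of $T$ is picked out by the universal construction over each neighbouring $C$.
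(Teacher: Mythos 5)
Your part (1) is essentially sound: the covering of $X'_0(\fkp)_{\Fpbar}$ by $\calF(X'_{\Fpbar})$, $\calV(X'_{\Fpbar})$ and the supersingular locus does follow from Proposition \ref{FV-section}(2) plus the fact that there are only two Newton strata, and an irreducible component contained in a finite union of closed sets lies in one of them. (The paper argues the containment differently, via smoothness of ordinary points, Proposition \ref{prop:ord point smooth}, and for the $0$-dimensionality of $\calF(X'_{\Fpbar})\cap\calV(X'_{\Fpbar})$ it invokes the Kottwitz--Rapoport stratification of the local model rather than your deferred computation that $\calF$ and $\calV$ hit distinct points of the fibre over case (2) points; your route is plausible but that computation is not supplied.)

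The genuine gap is in part (2). Your construction of the component $R$ rests on a \emph{relative} version of the universal cyclic isogeny over a whole component $C$ of $X'^{\mathrm{ss}}_{\Fpbar}$, but the construction in the paper preceding Proposition \ref{prop:fiber} is carried out only over a single geometric point, after choosing a normal form for the Dieudonn\'e module (including the parameter $u$ in case (2)); there is no argument that such a normal form, and hence the isogeny, can be chosen algebraically in the family over $C$, nor that the resulting $\bbP^1$-bundle over the case (2) locus extends over the case (1) points with exactly one branch of the nodal fibre $T$ lying in the closure of $R$ and the other in $R'$. You correctly identify this as ``the main obstacle,'' but it is precisely the content of the theorem at the delicate points, so the proposal as it stands does not prove (2). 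Two further steps are stated too loosely: uniqueness of the component over $C$ does not follow ``by irreducibility and dimension'' alone (two distinct $2$-dimensional components could a priori both dominate $C$); one needs that the fibres of $\pi_1$ are irreducible over a dense subset of $C$ (Proposition \ref{prop:fiber}), so that any component surjecting onto $C$ contains those fibres and hence their closure. Likewise the claim that $R\cap R'$ is a single point presupposes the branch-separation you have not established. The paper's proof avoids the family construction altogether: it uses smoothness of each irreducible component of $X'_0(\fkp)$ (from the local model), proves that $\pi_1|_R$ is smooth by checking surjectivity on tangent spaces as in \cite{Stamm}, and then upgrades the pointwise bijections of Proposition \ref{prop:fiber} to isomorphisms; at a node of $C$ the fibre $\pi_1^{-1}(x)\cap R$ is then smooth and $1$-dimensional inside $T$, hence a single branch $\cong\bbP^1$, which is exactly the statement your approach leaves open. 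To repair your argument you would either have to carry out the relative Dieudonn\'e-theoretic construction (a nontrivial extension of the pointwise computation) or import the smoothness/tangent-space argument, at which point you are back to the paper's proof.
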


The rest of this subsection will be devoted to the proof. We refer to \cite[p409]{Stamm} for a pictorial representation of the geometry.
\begin{proof}
(1) By the theory of local models every irreducible component of $X'_0(\fkp)$ has dimension 2. Now since $\calF$ and $\calV$ are closed immersions, $\calV(X_{\overline{\bbF}_p}')$ and $\calF(X'_{\overline{\bbF}_p})$ are unions of irreducible components of $X'_0(\fkp)_{\overline{\bbF}_p}$. Let $Z$ be an irreducible component of $X'_0(\fkp)_{\overline{\bbF}_p}$ not contained in $\calF(X'_{\overline{\bbF}_p})\cup\calV(X'_{\overline{\bbF}_p})$ and let $x\in Z(\Fpbar)$. If $x$ corresponds to an ordinary abelian variety, then $x\in \calF(X'_{\overline{\bbF}_p})\cup\calV(X'_{\overline{\bbF}_p})$ by Proposition \ref{FV-section}, hence it lies in more than one irreducible component of $X'_0(\fkp)_{\overline{\bbF}_p}$. In particular $x$ is not a smooth point of $X'_0(\fkp)_{\overline{\bbF}_p}$ contradicting Proposition \ref{prop:ord point smooth}, therefore $x$ is supersingular.

To show $\calV(X_{\overline{\bbF}_p}')\cap\calF(X_{\overline{\bbF}_p}')$ is 0-dimensional, we note that by the closure relations of the Kottwitz-Rapoport stratification on the local model, the intersection is contained in the minimal stratum which is $0$-dimensional. Part (1) follows.

(2) Let $R$ be an irreducible component of $X'_0(\fkp)_{\overline{\bbF}_p}$. Then since $\pi_1|_{X'_0(\fkp)_{\overline{\bbF}_p}^{\mathrm{ss}}}$ is surjective with one dimensional fibers, there exists a unique irreducible component $C$ of $X'^{\mathrm{ss}}_{\overline{\bbF}_p}$ such that $\pi_1(R)=C$. Now Proposition \ref{prop:fiber} implies the fibers of $\pi_1|_{X'_0(\fkp)^{\mathrm{ss}}_{\overline{\bbF}_p}}$ are irreducible over a dense subset of $C$, hence $R$ is the unique irreducible component of $X'_0(\fkp)^{\mathrm{ss}}_{\overline{\bbF}_p}$ mapping to $C$.

To show $R$ is a $\bbP^1$-bundle over $C$, we first show the morphism $\pi_1|_{R}$ is smooth. Since $R$ and $C$ are both smooth and ${\overline{\bbF}_p}$ is algebraically closed, it suffices to show the map is surjective on tangent spaces. This follows in the same way as \cite[Proposition 6.7]{Stamm}.

For $x$ a smooth point of $X_{\overline{\bbF}_p}'^{\mathrm{ss}}$, we have a map $\mathbb{P}^1\rightarrow \pi_1^{-1}(x)$ which is bijective on $\overline{\bbF}_p$-points by Proposition \ref{prop:fiber}. Since $\pi_1^{-1}(x)$ is smooth, it follows that $\mathbb{P}^1\cong \pi_1^{-1}(x)$. Thus it suffices to show that for $x$ a non-smooth point of $C\subset X'^{\mathrm{ss}}$, we have $\pi_1^{-1}(x)\cap R\cong\bbP^1$. In this case there is a map $T\rightarrow \pi^{-1}(x)$ which is bijective on $\overline{\bbF}_p$-points by Proposition \ref{prop:fiber}, where $T$ is the transversal intersection of two copies of $\bbP^1$. Since each component of $\pi^{-1}(x)$ is smooth, the same argument as above shows that $T\cong\pi^{-1}(x)$.

Since $\pi_1^{-1}(x)\cap R$ is smooth and one dimensional, it follows that $\pi_1^{-1}(x)\cap R\cong \bbP^1$.

\end{proof}

Transferring to the quaternionic side, we obtain the analogous results for quaternionic Shimura varieties.

\begin{cor}\label{cor:Iwahori level structure}
	(1)	$X_0(\fkp)_{\overline{\bbF}_p}$ can be decomposed as $$\calV(X_{\overline{\bbF}_p})\cup\calF(X_{\overline{\bbF}_p})\cup X_0(\fkp)_{\overline{\bbF}_p}^{\mathrm{ss}}$$
	where each term is a union of irreducible components of $X_0(\fkp)_{\overline{\bbF}_p}$ of dimension 2. Any irreducible component of $X_0(\fkp)_{\overline{\bbF}_p}$ is contained in exactly one of the terms and $\calV(X_{\overline{\bbF}_p})\cap\calF(X_{\overline{\bbF}_p})$ is 0-dimensional.
	
	(2) For each irreducible component $C$ of $X^{\mathrm{ss}}_{\overline{\bbF}_p}$, there is a unique irreducible component $R$ of $X_0(\fkp)$ such that $\pi_1(R)=C$. Moreover the projection $\pi_{1}:R\rightarrow C$ exhibits $R$ as a $\bbP^1$-bundle over $C\cong\bbP^1$ and for any other irreducible component $C'$ of $X^{\mathrm{ss}}_{\overline{\bbF}_p}$, we have:
	
	$$R\cap R'=\begin{cases}
	\{pt\} & \text{if } C\cap C'\neq\emptyset\\
	\emptyset & \text{otherwise}
	\end{cases}.$$
	Similarly, the result holds if we replace $\pi_1$ by $\pi_2$.
\end{cor}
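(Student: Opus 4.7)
The plan is to transfer Theorem \ref{thm:Iwahori-structure} from the unitary side to the quaternionic side by means of the formalism of \cite[\S2.11]{TX}, which was already used to construct the integral models $\underline{\mathrm{Sh}}_K(G_{\rmS,\rmT})$ and $\underline{\mathrm{Sh}}_{K_0(\fkp)}(G_{\rmS,\rmT})$ from the unitary data. Concretely, both $X_{\overline{\bbF}_p}$ and $X_0(\fkp)_{\overline{\bbF}_p}$ are obtained by starting with the neutral geometric connected components $X_{\overline{\bbF}_p}^{\circ}$ and $X_0(\fkp)_{\overline{\bbF}_p}^{\circ}$, which under the isomorphisms of \S\ref{sec:unitary} are identified with the neutral connected components of $X'_{\overline{\bbF}_p}$ and $X'_0(\fkp)_{\overline{\bbF}_p}$ respectively, and then taking $G^{\mathrm{ad}}(\bbQ)^{+}$-translates together with the Hecke/Galois action of $\calE_{G_{\rmS,\rmT},p}$. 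The two degeneracy maps $\pi_1,\pi_2$ on the quaternionic side are, by construction, induced from the two degeneracy maps on the unitary side, and commute with the action of $\calE$.

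The first step is to verify that the geometric features we need on the unitary side are invariant under the $\calE$-action, so that they descend to invariants of $X_0(\fkp)_{\overline{\bbF}_p}$. The decomposition $X'_0(\fkp)_{\overline{\bbF}_p}=\calV(X'_{\overline{\bbF}_p})\cup\calF(X'_{\overline{\bbF}_p})\cup X'_0(\fkp)^{\mathrm{ss}}_{\overline{\bbF}_p}$ is intrinsic: the sections $\calF,\calV$ are characterized in Proposition \ref{Frob-Ver} as the only two cyclic isogenies of an ordinary $p$-divisible group with $\calD^{\circ}$-structure, and the supersingular locus is defined in Newton-theoretic terms; both conditions involve only the $p$-divisible group with $\calD$-structure of the universal abelian variety, which by \cite[Corollary 2.13]{TX} transfers between $X'$ and $X$. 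Similarly the ordinary locus $X'^{\mathrm{ord}}$ and supersingular locus $X'^{\mathrm{ss}}$ are $\calE$-stable and descend to the corresponding loci on $X$, as already used in \S\ref{sec: Hecke orbit quaternionic}.

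Next I would define the sections $\calF,\calV:X\hookrightarrow X_0(\fkp)$ on the quaternionic side by taking the $\calE$-orbits of the restrictions of the unitary sections to the neutral components, which glue because both sections are $\calE$-equivariant by construction. The statement of (1) then follows: the closedness of $\calF(X),\calV(X)$ in $X_0(\fkp)$ and the dimension/union-of-components statements are local on $X_0(\fkp)_{\overline{\bbF}_p}$ and are inherited from the unitary case, while the $0$-dimensionality of $\calV(X_{\overline{\bbF}_p})\cap\calF(X_{\overline{\bbF}_p})$ follows since this intersection maps under the covering $X'_0(\fkp)_{\overline{\bbF}_p}\to X_0(\fkp)_{\overline{\bbF}_p}$ to $\calV(X'_{\overline{\bbF}_p})\cap\calF(X'_{\overline{\bbF}_p})$, which is $0$-dimensional.

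For (2), I would reduce to the unitary version as follows. The covering map $X'^{\mathrm{ss}}_{0}(\fkp)_{\overline{\bbF}_p}\to X^{\mathrm{ss}}_{0}(\fkp)_{\overline{\bbF}_p}$ is finite \'etale and $\calE$-equivariant, and similarly for $X'^{\mathrm{ss}}$ and $X^{\mathrm{ss}}$. Given an irreducible component $C\subset X^{\mathrm{ss}}_{\overline{\bbF}_p}$, lift it to an irreducible component $C'\subset X'^{\mathrm{ss}}_{\overline{\bbF}_p}$; Theorem \ref{thm:Iwahori-structure}(2) supplies a unique component $R'$ of $X'_0(\fkp)^{\mathrm{ss}}$ with $\pi_1(R')=C'$ which is a $\bbP^1$-bundle over $C'\cong\bbP^1$. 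Taking the image of $R'$ in $X_0(\fkp)_{\overline{\bbF}_p}$ produces the desired $R$; uniqueness follows because two irreducible components of $X_0(\fkp)^{\mathrm{ss}}$ mapping to $C$ would lift to two distinct components of $X'_0(\fkp)^{\mathrm{ss}}$ both mapping to some $\calE$-translate of $C'$. The $\bbP^1$-bundle structure and the description of intersections $R\cap R'$ are \'etale local on the base and can thus be read off from the unitary statement.

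The main obstacle I anticipate is bookkeeping: one has to check that the sections $\calF$ and $\calV$, which were originally defined via Dieudonn\'e modules of abelian varieties with $\calD$-structure, are genuinely compatible with the twisting procedure of \cite[\S4.4]{KP} used to define $\underline{\mathrm{Sh}}_{K_0(\fkp)}(G_{\rmS,\rmT})$ from $\underline{\mathrm{Sh}}_{K'_0(\fkp)}(G'_{\tilde{\rmS}})$. This amounts to verifying that the essential Frobenius and Verschiebung isogenies depend only on the $p$-divisible group with $\calD^{\circ}$-structure and not on the particular abelian variety realizing it, which is clear from the construction via the crystal in \S\ref{sec: Appendix essentail Frobenius}. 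Once this is in place, all other assertions are formal consequences of Theorem \ref{thm:Iwahori-structure} via the isomorphism of neutral connected components together with equivariance under $\calE$.
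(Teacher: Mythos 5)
Your proposal is correct and follows essentially the same route as the paper, which deduces the corollary from Theorem \ref{thm:Iwahori-structure} simply by transferring to the quaternionic side via the isomorphism of geometric connected components and the $\calE_{G_{\rmS,\rmT},p}$-action used to build $\underline{\mathrm{Sh}}_{K_0(\fkp)}(G_{\rmS,\rmT})$. Your write-up merely makes explicit the equivariance and descent bookkeeping that the paper leaves implicit.
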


\bibliographystyle{amsalpha}
\bibliography{bibfile}

\end{document}